\newcommand{\ba}{{\bf{a}}}
\newcommand{\bb}{{\bf{b}}}
\numberwithin{equation}{section}
\let\originalleft\left
\let\originalright\right
\renewcommand{\left}{\mathopen{}\mathclose\bgroup\originalleft}
\renewcommand{\right}{\aftergroup\egroup\originalright}
\newlength{\bibitemsep}
\newlength{\bibparskip}\setlength{\bibparskip}{0pt}
\let\oldthebibliography\thebibliography
\renewcommand\thebibliography[1]{\oldthebibliography{#1}
  \setlength{\parskip}{\bibitemsep}
  \setlength{\itemsep}{\bibparskip}}
\DeclareMathOperator{\re}{Re}
\DeclareMathOperator{\im}{Im}
\DeclareMathOperator{\Var}{Var}
\DeclareMathOperator{\OO}{O}
\DeclareMathOperator{\oo}{o}
\DeclareMathOperator{\supp}{supp}
\DeclareMathOperator{\Arg}{Arg}
\newcommand{\ii}{\mathrm{i}}
\newcommand{\diff}{\mathop{}\mathopen{}\mathrm{d}}
\newcommand{\rd}{\mathop{}\mathopen{}\mathrm{d}}
\newcommand{\CC}{\mathbb{C}}
\theoremstyle{plain} 
\newtheorem{theorem}{Theorem}[section]
\newtheorem{lemma}[theorem]{Lemma}
\newtheorem{corollary}[theorem]{Corollary}
\newtheorem{proposition}[theorem]{Proposition}
\newtheorem{remark}[theorem]{Remark}
\renewcommand{\epsilon}{\varepsilon}
\newcommand{\e}{{\varepsilon}}
\renewcommand{\leq}{\leqslant}
\renewcommand{\geq}{\geqslant}
\renewcommand{\P}{\mathbb{P}}
\newcommand{\E}{\mathbb{E}}
\newcommand{\R}{\mathbb{R}}
\newcommand{\C}{\mathbb{C}}
\newcommand{\1}{\mathbbm{1}}
\newcommand{\cC}{\mathcal{C}}
\newcommand{\cN}{\mathcal{N}}
\newcommand{\cP}{\mathcal{P}}
\newcommand{\cR}{\mathcal{R}}
\newcommand{\pa}[1]{\left({#1}\right)}
\newcommand{\abs}[1]{\lvert #1 \rvert}
\newcommand{\absa}[1]{\left\lvert #1 \right\rvert}
\newcommand{\vertiii}[1]{{\left\vert\kern-0.25ex\left\vert\kern-0.25ex\left\vert #1 
    \right\vert\kern-0.25ex\right\vert\kern-0.25ex\right\vert}}
\theoremstyle{plain} 
\renewcommand{\subsection}{\@startsection
{subsection}
{2}
{0mm}
{-\baselineskip}
{0 \baselineskip}
{\normalfont\bf\itshape}} 
\renewcommand{\subsubsection}{\@startsection
{subsubsection}
{3}
{0mm}
{-\baselineskip}
{0 \baselineskip}
{\normalfont\bf\itshape}} 
\def\author#1{\par
    {\centering{\authorfont#1}\par\vspace*{0.05in}}
}
\def\titlefont{\fontsize{13}{15}\bfseries\boldmath\selectfont\centering{}}
\def\authorfont{\fontsize{13}{15}}
\def\abstractfont{\fontsize{8}{10}}
\let\affiliationfont\rhfont
\def\address#1{\par
    {\centering{\affiliationfont#1\par}}\par\vspace*{11pt}
}
\def\title#1{
    \thispagestyle{plain}
    \vspace*{-14pt}
    \vskip 79pt
    {\centering{\titlefont #1\par}}%
    \vskip 1em
}
\def\titlefont{\fontsize{13}{15}\bfseries\boldmath\selectfont\centering{}}
\def\authorfont{\fontsize{13}{15}}
\def\abstractfont{\fontsize{8}{10}}
\let\affiliationfont\rhfont
\def\address#1{\par
    {\centering{\affiliationfont#1\par}}\par\vspace*{11pt}
}
\def\body{
\setcounter{footnote}{0}
\def\thefootnote{\alph{footnote}}
\def\@makefnmark{{$^{\rm \@thefnmark}$}}
}
\def\title#1{
    \thispagestyle{plain}
    \vspace*{-14pt}
    \vskip 79pt
    {\centering{\titlefont #1\par}}%
    \vskip 1em
}
\renewenvironment{abstract}{\par%
    \vspace*{6pt}\noindent 
    \abstractfont
    \noindent\leftskip10pt\rightskip10pt
}{%
  \par}
\renewcommand{\section}{\@startsection
{section}
{1}
{-3mm}
{-2\baselineskip}
{1\baselineskip}
{\normalfont\large\scshape\centering}} 
\newcommand{\mynameis}[1]{#1\renewcommand{\@currentlabel}{#1}}
\begin{document}

~\vspace{-1cm}

\title{Optimal local law and central limit theorem for $\beta$-ensembles}

\vspace{0.7cm}
\noindent

 \begin{minipage}[c]{0.31\textwidth}
 \author{Paul Bourgade}
\address{Courant Institute\\ bourgade@cims.nyu.edu}
 \end{minipage}
 \begin{minipage}[c]{0.31\textwidth}
 \author{Krishnan Mody}
\address{Courant Institute\\ km2718@nyu.edu}
 \end{minipage}
\begin{minipage}[c]{0.31\textwidth}
 \author{Michel Pain}
\address{Courant Institute\\ michel.pain@nyu.edu}
 \end{minipage}

\vspace{0.1cm}
\noindent
 
\begin{abstract}
\indent In the setting of generic $\beta$-ensembles, we use the loop equation hierarchy to prove a local law with optimal error up to a constant, valid on any scale including microscopic. This local law has the following consequences. 
(i) The optimal rigidity scale of the ordered particles is of order $(\log N)/N$ in the bulk of the spectrum. 
(ii) Fluctuations of the particles satisfy a central limit theorem with covariance corresponding to a logarithmically correlated field; in particular each particle in the bulk fluctuates on scale $\sqrt{\log N}/N$.
(iii) The logarithm of the electric potential also satisfies a logarithmically correlated central limit theorem.

Contrary to much progress on random matrix universality, these results do not proceed  by comparison. Indeed, they are new for the Gaussian $\beta$-ensembles. By comparison techniques, (ii) and (iii) also hold for Wigner matrices.
\end{abstract}

{\renewcommand{\contentsname}{}
	\hypersetup{linkcolor=black}
	\tableofcontents
}

\vspace{0.1cm}

\section{Introduction}

The $\beta$-ensembles are both a generalization of the Gaussian orthogonal, unitary and symplectic ensembles, and a
natural statistical physics model, the 1d log-gas.
Their distribution, on $N$ points $\lambda_1\leq \dots\leq\lambda_N$, is 
\begin{equation}
	\label{eq:beta_ensembles_density}
	\diff \mu_N (\lambda_1,\dots,\lambda_N)
	\coloneqq \frac{1}{Z_N}
	\prod_{1 \leq k < l \leq N} 
	\absa{\lambda_k - \lambda_l}^\beta 
	e^{- \frac{\beta N}{2} \sum_{k=1}^N V(\lambda_k)} 
	\rd \lambda_1 \dots \rd \lambda_N.
\end{equation}
The initial motivation for this article is to provide a characterization of the fluctuations of individual eigenvalues. Specifically, we ask does the convergence
\begin{equation}\label{eqn:question}
\frac{\lambda_k-\E (\lambda_k)}{\sigma}\to X
\end{equation}
hold in distribution for some $\sigma$ depending on $N,k,\beta$ and some random variable $X$?
What is the  decay of correlations  between the particles? 
In \cite[Section 3.4]{Tao2012}, based on an approximation of the Hamiltonian in (\ref{eq:beta_ensembles_density}) by a quadratic form, and wavelet calculations, Tao developed a heuristic picture for (\ref{eqn:question}). It suggests the fluctuations of eigenvalues 
converge to a limiting log-correlated Gaussian vector. 

Via different arguments, this paper makes these predictions rigorous (see Corollary \ref{corr:eig_fluct}), first phrasing the problem in terms of
fluctuations of $\sum f(\lambda_i)$, with $f$ an indicator function.  Despite considerable attention, fluctuations of such linear statistics were previously obtained only for integrable models ($\beta=1,2,4$) or smooth enough $f$. Our method also applies to the $f=\log$ singularity\footnote{For higher order singularities such as $f=|x|^{-\alpha}$, the fluctuations  are supposedly of order $N^{\alpha}$, non-Gaussian, and essentially local functions of the limiting point process.}, i.e. the electric potential, or log-characteristic polynomial (Theorem \ref{thm:log_corr_field}).

For the proof, we combine equations from the loop equation hierarchy to obtain an optimal local law (Theorem \ref{th:local_law_bulk}) which holds on the microscopic scale. This allows treatment of singular test functions, and non-linear statistics such as $\max|\lambda_i-\E(\lambda_i)|$, identifying the true rigidity scale of the particles (Corollary \ref{cor:rigidity}).

\subsection{Optimal local law. } 

In this paper, $\beta>0$ is fixed and our assumptions on $V$  are the following.
\begin{enumerate}[label=(A\arabic*)]
\item $V$ is analytic on $\R$.  \label{assumption:analytic}%
\item \label{assumption:V'_at_infinity}
There exist constants $M_0,C,c > 0$ such that 
$$ 
	V'(x) \geq c \quad \text{and} \quad
	\sup_{y \in [M_0,x]} \frac{V'(y)}{y} \leq C V(x) 
	\quad \text{for all} \quad x \geq M_0,
$$
and similar estimates apply for $x\leq -M_0$, i.e. the above holds for $\tilde V(x):=V(-x)$.
\item \label{assumption:off-criticality} Under the previous assumptions, it is known that $\E[N^{-1}\sum_{i=1}^N\delta_{\lambda_i}]$ converges weakly to a probability measure $\mu_V$, with density $\varrho_V$. 
We assume $\varrho_V$ is positive and supported on a single interval $[A,B]$ (one-cut hypothesis), with square root singularities at $A$ and $B$ (generic behavior), see (\ref{eqn:rhoV}).
\item \label{assumption:large_dev} The function $x \mapsto V(x)/2 - \int \log\abs{x-t} \rd \mu_V(t)$ achieves its minimum value only on the interval $[A,B]$.
\end{enumerate}

To state our first results, we introduce the following notation.
For $z$ with $\im z \neq 0$, let
\[
	s_N(z) = \frac{1}{N} \sum_{k=1}^N \frac{1}{\lambda_k - z},\ \ \ 
	m_V(z) = \int_{\mathbb{R}}\frac{\rd \mu_V(x)}{x-z},\ \ \ {\rm and\ define\ }\gamma_k\ {\rm through}\ \int_{-\infty}^{\gamma_k}\rd\mu_V=\frac{k}{N}.
\]
\begin{theorem} \label{th:local_law_bulk}
    There exist $\tilde{\eta},C>0$  such that for any $q \geq 1$, $N \geq 1$ and  $z = E + \ii \eta$ with $0 < \eta \leq \tilde{\eta}$ and $A-\eta \leq E \leq B+\eta$, we have
	\[
		\E\left[ \absa{ s_N(z) - m_V(z) }^q \right]
		\leq \frac{(Cq)^{q/2}}{(N\eta)^q}
			+ \frac{(Cq)^q}{N^q \abs{z-A}^{q/2} \abs{z-B}^{q/2}}.
	\]
\end{theorem}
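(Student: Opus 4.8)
The plan is to run a bootstrap (self-improving estimate) on the quantity $\delta_q(z) := \E[|s_N(z) - m_V(z)|^q]^{1/q}$, using the loop equations as the engine. The starting point is the first loop equation, which — after writing $s_N = m_V + (s_N - m_V)$ and linearizing — takes the schematic form
\[
  b(z)\,\E\big[(s_N(z)-m_V(z))\,W\big]
  = \frac{1}{N}\,\E\big[s_N'(z)\,W\big]
    + \Big(1-\tfrac{2}{\beta}\Big)\frac{1}{N}\,\E\big[s_N'(z)\,W\big]
    + \E\big[(s_N(z)-m_V(z))^2\,W\big] + (\text{error}),
\]
where $b(z)$ is a deterministic coefficient bounded below by $c\,|z-A|^{1/2}|z-B|^{1/2}$ (this is where the square-root edge behavior enters and produces the second term in the bound), and $W$ is an arbitrary test random variable that I will eventually take to be $|s_N(z)-m_V(z)|^{q-2}\,\overline{(s_N(z)-m_V(z))}$ or similar, to extract a recursion on $\delta_q$. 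The key algebraic point, going back to Johansson and to Shcherbina, is that the ``quadratic'' term $\E[(s_N-m_V)^2 W]$ is genuinely smaller and can be folded into the recursion, while the $\frac{1}{N}\E[s_N' W]$ term is handled by noting $s_N'(z) = \partial_z s_N(z)$ and using a Cauchy-integral / contour-deformation argument to bound $\E[|s_N'(z)|^q]$ in terms of $\delta_q$ at a slightly larger value of $\eta$.

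The main work is to convert the loop equation identity into a closed inequality of the form
\[
  \delta_q(z) \;\lesssim\; \frac{\sqrt{q}}{N\eta}
    + \frac{q}{N\,|z-A|^{1/2}|z-B|^{1/2}}
    + \frac{1}{b(z)}\,\big(\text{terms in }\delta_{q}\text{ and }\delta_{2q}\text{ at nearby points}\big),
\]
and then to iterate. I would proceed as follows. First, establish a crude a priori bound (a non-optimal local law valid down to some scale $\eta \geq N^{-1+\epsilon}$, or even just control of $\delta_q$ by something like $N^{o(1)}/(N\eta)$) — this can be quoted or re-derived quickly from known rigidity/loop-equation estimates under assumptions \ref{assumption:analytic}--\ref{assumption:large_dev}. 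Second, plug this a priori bound into the quadratic term of the loop equation to gain a factor, obtaining an improved bound; the gain comes because $\E[(s_N-m_V)^2 W]$ contributes $\delta_{2q}(z)^2$-type quantities which, once $\delta$ is already small, are lower-order. Third, iterate this improvement finitely many times (the number of iterations depending on $\epsilon$) until the bound saturates at the claimed $\frac{\sqrt{q}}{N\eta} + \frac{q}{N|z-A|^{1/2}|z-B|^{1/2}}$. Throughout, the $q$-dependence must be tracked carefully: the $\sqrt{q}$ in the first term reflects the Gaussian-type concentration of the martingale/fluctuation part (optimal constant, not just optimal rate), and getting the power of $q$ right — rather than a crude $q^q$ everywhere — is exactly what makes this ``optimal up to a constant.''

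\textbf{Main obstacles.}
I expect two genuine difficulties. The first is handling the regime where $z$ is near the edge, i.e. $|z - A|$ or $|z-B|$ comparable to $\eta$ or smaller: there $b(z)$ degenerates, the two error terms in the theorem become comparable, and one must be careful that the contour-deformation estimates for $s_N'$ do not cross the spectrum in a way that loses the optimal power. This likely requires treating $E \in [A-\eta, A + K\eta]$ (and the symmetric edge) separately, using a dedicated edge version of the loop equation or a monotonicity-in-$\eta$ argument to transfer bulk estimates. The second, and I think the harder, obstacle is the self-consistency of the bootstrap at \emph{microscopic} scales $\eta \asymp 1/N$: at that scale $N\eta \asymp 1$, so the first error term is $O(\sqrt q)$ — only an order-one bound — and the recursion has no small parameter to contract against, so one cannot simply iterate naively. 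The resolution must exploit the precise structure of the loop equation (in particular that the ``bad'' terms come with an extra $1/(N\eta)$ relative to $\delta_q$ itself, or that they involve $\delta$ at scale $2\eta$ where there \emph{is} still a small parameter), together with a careful choice of the test variable $W$ so that the quadratic term is controlled by $\delta_q$ at the \emph{same} scale with a coefficient strictly less than $b(z)$. Making this contraction work uniformly down to $\eta = \tilde\eta^{-1}$... down to $\eta \gtrsim 1/N$, with explicit control of all constants as functions of $q$, is the crux of the argument.
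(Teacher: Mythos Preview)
Your proposal identifies the right ingredients (loop equations, the role of $b(z)$, the square-root edge behavior) and correctly diagnoses the central obstacle: at microscopic scales $\eta \asymp 1/N$ there is no small parameter, so a naive bootstrap cannot contract. But you do not actually resolve this obstacle --- your suggested fix (``exploit the precise structure of the loop equation'' and ``a careful choice of $W$'') is a hope, not a mechanism. A finite iteration starting from a crude $N^{o(1)}/(N\eta)$ bound will, at best, reproduce a bound of that type; each pass through the quadratic term introduces constants that compound, and there is no reason the scheme saturates at the sharp $(Cq)^{q/2}$ rather than $(Cq)^{Cq}$ or worse. The sharp Gaussian $q$-dependence is not a by-product of careful bookkeeping in an iteration --- it has to come from a structural identity.

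The paper's approach is genuinely different in exactly this respect, and it is worth internalizing. Rather than linearize around $m_V$ and iterate, one works with $P(z)=s(z)^2+V'(z)s(z)+h(z)$ (the defect in the fixed-point equation) and combines the loop equations of all ranks up to $4q{-}1$, taking test variables $s(z)^a\overline{s}(z)^b$ and resumming, to obtain a \emph{single exact identity} for $\E[|P(z)|^{2q}]$ (see (\ref{eq:loop_eq_combined})). After triangle and Young inequalities, this yields directly
\[
\E[|P(z)|^{2q}]\le \frac{(Cq)^q}{(N\eta)^{2q}}\,\E\big[(\im s)^{2q}+(\im s)^q|2s+V'|^q\big]+\frac{(Cq)^{2q}}{N^{2q}},
\]
with no a priori input and no iteration. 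The right-hand side is then closed by the elementary stability bound $\im s\vee |2s+V'|\le C(|P|^{1/2}+|b(z)|)$, giving a quadratic self-consistent inequality for $\E[|P|^{2q}]$ that is solved algebraically. The $(Cq)^{q/2}$ is produced by the Young-inequality splitting in one shot, and the second error term $(Cq)^q/(N^q|b(z)|^q)$ comes from the bound on $\E[|\Delta(z)|^{2q}]$ (Lemma~\ref{lem:moments_Delta}), which itself requires a separate contour argument. Your scheme, by contrast, never reaches a closed inequality: the quadratic term $\E[(s-m_V)^2W]$ with $W\sim|s-m_V|^{q-1}$ produces a higher moment, not the same one, and the contour/Cauchy control of $s'$ you propose only shifts the scale rather than closing the loop.
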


\begin{remark} \label{rem:comment_local_law1}
Proposition \ref{prop:local_law_past_the_edge} extends this local law to  $E \notin [A-\eta,B+\eta]$, with the slightly worse bound $\frac{(Cq)^{2q}}{(N\eta)^q}$.
\end{remark}

\begin{remark} \label{rem:comment_local_law2} For quadratic $V$, that is for the Gaussian $\beta$-ensembles, the second term in the bound of the local law can be removed (see Remark \ref{rem:gaussian}): for $0 < \eta \leq \tilde{\eta}$ and $A-\eta \leq E \leq B+\eta$, we have
$\E\left[ \absa{ s_N(z) - m(z) }^q \right]
		\leq \frac{(Cq)^{q/2}}{(N\eta)^q}$.
\end{remark}

\begin{remark} \label{rem:comment_local_law3} The technical assumption \ref{assumption:V'_at_infinity}, which
states that the potential grows at least linearly,
can be replaced to cover the case of $V$ growing slower than $x^2$, see Remark \ref{rem:alternative_assumption}.
\end{remark}
Theorem \ref{th:local_law_bulk} is an important ingredient for the following estimates on the location of the particles, improving the polynomial error terms of \cite{BouErdYau2014a,BouErdYau2012,BouErdYau2014b}. The logarithmic factor in these rigidity bounds is optimal. Indeed, 
 \cite{ClaFahLamWeb2019} shows that when $\beta=2$, eigenvalues in the bulk can fluctuate from their expected locations by as much as $c(\log N)/N$. Below and in the remainder of this paper, we denote $\hat k=\min(k,N+1-k)$.
\begin{corollary}
\label{cor:rigidity}
For any $D>0$, there exists $C>0$ such that, for any $a_N/\log N\to\infty$, for $N$ large enough,
\[
	\mathbb{P} \left( \max_{k\in\llbracket a_N,N-a_N\rrbracket} \abs{\lambda_k-\gamma_k} > C(\log N)N^{-\tfrac{2}{3}}{\hat k}^{-\tfrac{1}{3}} \right) \leq N^{-D}.
\]
\end{corollary}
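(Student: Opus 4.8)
The plan is to deduce rigidity from the local law of Theorem \ref{th:local_law_bulk} via a standard but careful Helffer--Sjöstrand / contour argument, controlling $\lambda_k - \gamma_k$ by the counting function $\mathcal{N}(E) := \#\{i : \lambda_i \le E\}$ and then by $s_N$. First I would recall that $\lambda_k > \gamma_k + t$ is equivalent to $\mathcal{N}(\gamma_k + t) < k = N\mu_V((-\infty,\gamma_k])$, so it suffices to bound the fluctuations of $\mathcal{N}(E) - N\mu_V((-\infty,E])$ uniformly for $E$ in the bulk. The difference of counting functions is controlled by the imaginary part of $s_N - m_V$ integrated against a smooth cutoff: choosing a scale $\eta = \eta(E)$ and writing $\mathcal{N}(E) - N\int^E \rd\mu_V$ through an integral of $N\,\im(s_N - m_V)(x + \ii\eta)\,\rd x$ over $x$ near $E$, plus a smoothing error of order $N\eta \cdot \varrho_V(E)$ plus a term from the region near $E$ of width $\eta$. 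The key input is that in the bulk one has $\varrho_V(E) \asymp \sqrt{|E-A||E-B|}$, which matches the denominators appearing in Theorem \ref{th:local_law_bulk}.

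The heart of the matter is the choice of $\eta$ and the union bound over $k$. For the $k$-th particle with $\hat k$ as in the statement, the natural local scale is $\eta_k \asymp (\hat k/N)^{-1/3} N^{-2/3}$, i.e. the scale on which $N \varrho_V(\gamma_k)\eta_k \asymp 1$ (using $\varrho_V(\gamma_k) \asymp (\hat k/N)^{1/3}$ near a square-root edge). With this choice, Theorem \ref{th:local_law_bulk} gives, at $z = \gamma_k + \ii\eta_k$, that $\E[|s_N(z) - m_V(z)|^q]^{1/q} \le C\sqrt{q}/(N\eta_k) + Cq/(N |z-A|^{1/2}|z-B|^{1/2})$; both terms are of order $\sqrt{q}/(\hat k)^{?}$ — more precisely the first is $\asymp \sqrt{q}\,(\hat k)^{1/3} N^{-1} \cdot$(something) — I would track constants so that multiplying by $N$ and integrating over an $O(\log N)$-length window in $E$ produces a bound of the form $Cq \log N$ for $\E[|\mathcal{N}(\gamma_k+t)-k|^q]$ at $t \asymp \eta_k$. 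Then Markov's inequality with $q \asymp \log N$ converts this into $\P(|\mathcal N(\gamma_k + C\eta_k\log N) - k| > \tfrac12) \le e^{-cq} = N^{-c'}$, and choosing the constant $C$ and then $q = C_D \log N$ large enough yields probability $\le N^{-D-1}$; a union bound over the at most $N$ indices $k$ finishes the proof. The conversion from a counting-function estimate at a single shifted point to the statement $|\lambda_k - \gamma_k| \le C(\log N)\eta_k$ uses monotonicity of $\mathcal N$ and the lower bound $\int_{\gamma_k}^{\gamma_k + t}\rd\mu_V \ge c\,\varrho_V(\gamma_k)\,t$ for $t$ not too large, valid since $\varrho_V$ is bounded below by a positive multiple of $\sqrt{|x-A||x-B|}$ on the bulk.

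I expect the main obstacle to be the uniformity: one must run the above simultaneously for all $k \in \llbracket a_N, N-a_N\rrbracket$ and for both signs of $t$, and the error terms in Theorem \ref{th:local_law_bulk} degrade as $E$ approaches $A$ or $B$. The role of the hypothesis $a_N/\log N \to \infty$ is exactly to keep $\varrho_V(\gamma_k) \gg (\log N)/N$ so that the relevant scale $\eta_k \log N$ stays below $\tilde\eta$ and within the regime where the local law applies and where the linearization $\int_{\gamma_k}^{\gamma_k+t}\rd\mu_V \asymp \varrho_V(\gamma_k) t$ is valid; I would need to check that $a_N \asymp \hat k$ at the boundary of the range makes all these asymptotics consistent. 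A secondary technical point is handling the Helffer--Sjöstrand error terms coming from the imaginary part of $m_V$ and from the non-compactly-supported tail, for which one invokes Remark \ref{rem:comment_local_law1} (the extension past the edge) together with Assumption \ref{assumption:large_dev} to control the probability of particles leaving a neighborhood of $[A,B]$.
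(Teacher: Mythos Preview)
Your approach has a genuine gap: a direct Helffer--Sj\"ostrand argument fed only by the pointwise moment bounds of Theorem~\ref{th:local_law_bulk} cannot reach the optimal scale $(\log N)/N$; it saturates at $(\log N)^{3/2}/N$. The reason is structural. By Minkowski, the $q$-th moment of the counting error picks up both the Gaussian factor $\sqrt{q}$ from the local law \emph{and} a factor $\log N$ from the $y$-integral $\int_{\eta_1}^{\gamma}\frac{\diff y}{y}$ in the Helffer--Sj\"ostrand representation, giving $\E[|\mathcal N(E)-k|^q]^{1/q}\lesssim C\sqrt{q}\,\log N$. Optimizing Markov with $q\asymp\log N$ then forces the threshold to be of order $\sqrt{\log N}\cdot\log N=(\log N)^{3/2}$, not $\log N$. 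The paper states this obstruction explicitly in Section~\ref{sec:outline}.

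What the paper actually does (Section~\ref{subsec:rigidity}) is to split the smoothed indicator as $f=f_0+g$, where $f_0$ is smoothed at the slightly supercritical scale $\eta_0=e^{(\log N)^{1/4}}/(N\sqrt{\kappa})$ and $g=f-f_0$ is supported on an interval of length $\sim\eta_0$. For $g$, the Helffer--Sj\"ostrand/local-law argument you sketch \emph{does} work, because the $y$-integral now only runs over $[\eta_1,\eta_0]$ and contributes $(\log N)^{1/4}$ instead of $\log N$; this yields fluctuations $o(\log N)$ (Lemma~\ref{lem:smallscale}). For $f_0$, the paper uses a completely different mechanism: Johansson's loop-equation method under the biased measure $e^{tS_N(f_0)}\diff\mu_N$, which requires proving rigidity under these biased measures (Lemma~\ref{lem:rigid2}, itself resting on the Gaussian tail in Theorem~\ref{th:local_law_bulk}). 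The output is the Laplace-transform bound $\log\E[e^{tS_N(f_0)}]=\OO(\log N)$ for small fixed $t$ (Lemma~\ref{lem:loop}), which by Chernoff gives $\P(|S_N(f_0)|>K\log N)\le N^{-cK}$. This exploits the true variance scale $\sigma^2(f_0)\asymp\log N$ rather than the crude $(\log N)^2$ that pointwise moment bounds would suggest, and is the missing ingredient in your proposal.
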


Universality of the Tracy-Widom distribution \cite{KriRidVir2016,BouErdYau2014b,BekFigGui2015} suggests $\mathbb{P}(\lambda_N>B+xN^{-2/3})\leq e^{-c x^{3/2}}$, and \cite{LedRid2010} establishes this right tail for the G$\beta$E. 
For general $V$, we prove an exponential decay with exponent $3/4$, as a consequence of our local law outside of the trapezoidal region where Theorem \ref{th:local_law_bulk} holds.
\begin{corollary} \label{cor:rigidity_past_the_edge}
There are constants $c,C>0$ such that, for any $N \geq 1$ and $x \in [0,N^{2/3}]$,
\begin{align*} 
\P\left( \exists k \in \llbracket 1,N \rrbracket, 
\lambda_k \notin \left[A-x N^{-2/3}, B + x N^{-2/3}\right] \right) 
\leq C \exp \big(-c x^{3/4} \big).
\end{align*}
\end{corollary}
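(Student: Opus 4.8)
The plan is to derive the tail bound from a local law valid outside the trapezoidal region, namely Proposition \ref{prop:local_law_past_the_edge} (invoked in Remark \ref{rem:comment_local_law1}), which controls $\E[|s_N(z) - m_V(z)|^q]$ by $(Cq)^{2q}/(N\eta)^q$ for $E \notin [A-\eta, B+\eta]$. The key idea is that the presence of an eigenvalue far to the right of $B$ forces $s_N(z)$ to be anomalously large when $z$ is chosen near that eigenvalue, contradicting the local law. Concretely, fix $x \in [0, N^{2/3}]$ and suppose there is an eigenvalue $\lambda_k \geq B + x N^{-2/3}$. Choose the test point $z = E + \ii \eta$ with $E = B + \tfrac12 x N^{-2/3}$ (so that $E \notin [A-\eta, B+\eta]$ once $\eta$ is small relative to $xN^{-2/3}$) and $\eta$ comparable to $x N^{-2/3}$, or perhaps a touch smaller. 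On the event that $\lambda_k$ lies to the right of $B + xN^{-2/3}$, the distance $|\lambda_k - z| \lesssim x N^{-2/3}$, so that single term contributes $\im s_N(z) = \tfrac1N \sum_j \tfrac{\eta}{(\lambda_j-E)^2+\eta^2} \gtrsim \tfrac{1}{N\eta} \gtrsim \tfrac{1}{x N^{1/3}}$. Meanwhile $\im m_V(z)$ is exponentially small in the relevant scale since $z$ is at distance $\sim xN^{-2/3}$ outside $\supp \mu_V$ and $\varrho_V$ has a square-root edge — here I would use that $\im m_V(E + \ii \eta) \lesssim \eta / (\mathrm{dist}(E, [A,B]))^{1/2}$ or simply that it is much smaller than $\tfrac1{N\eta}$ for this regime of $x$.

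Next I would quantify the discrepancy. On the bad event, $|s_N(z) - m_V(z)| \geq \im(s_N(z) - m_V(z)) \gtrsim \tfrac{1}{N\eta}$ minus a negligible term, hence $\geq c/(N\eta)$ for a suitable constant. Wait — this shows the deviation is of order the \emph{typical} size predicted by the local law, not larger, so a naive Markov bound is not enough. The resolution is the standard one: tune $\eta$ to be genuinely small, say $\eta = \delta x N^{-2/3}$ with $\delta$ a small constant, and note that the single eigenvalue at distance $\leq \tfrac12 x N^{-2/3}$ from $E$ in the \emph{real part} but possibly much closer actually forces $\im s_N(z) \geq \tfrac{1}{N}\cdot \tfrac{\eta}{(x N^{-2/3})^2 + \eta^2} \sim \tfrac{1}{N}\cdot\tfrac{1}{\eta}\cdot\tfrac{\eta^2}{(xN^{-2/3})^2}$... this is getting small. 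Better: place $E$ \emph{at} the putative eigenvalue location is not allowed since we only know $\lambda_k \geq B + xN^{-2/3}$, not its exact position. So instead integrate: I would choose a family of test points, or more cleanly, use that if \emph{any} eigenvalue exceeds $B + xN^{-2/3}$ then for the specific choice $E = B + xN^{-2/3}$, $\eta = \kappa$ for a tiny fixed $\kappa>0$ (or $\eta$ polynomially small), we get $\re s_N(z) = \tfrac1N\sum_j \tfrac{E-\lambda_j}{(E-\lambda_j)^2+\eta^2}$ — the sign analysis here is cleaner if we compare $\re(s_N - m_V)$. Rather than belabor the geometry, the cleanest route: the bad eigenvalue contributes $\gtrsim 1/(Nx N^{-2/3}) = 1/(N^{1/3}x)$ to $\im s_N$ at a point with $\im m_V$ exponentially small, and we apply the high-moment local law with $q$ chosen optimally — $q \sim x^{3/4}$ — to get $\P(\text{bad}) \leq \E[|s_N-m_V|^q] (N\eta/c)^q \leq (Cq)^{2q}(N\eta)^{-q}(N\eta)^q c^{-q} = (C'q)^{2q} q^{-q}$... this does not decay. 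The exponent $3/4$ rather than $3/2$ is the tell: the factor $(Cq)^{2q}$ (with exponent $2q$, worse than the $(Cq)^{q/2}$ of the bulk law) is exactly what degrades $x^{3/2}$ to $x^{3/4}$. So I must account for the fact that the bad eigenvalue at distance $\sim xN^{-2/3}$ produces a deviation in $s_N$ of relative size $x^{3/2}$ compared to the local-law scale (because $\im m_V$ at distance $t$ outside the edge is $\sim t^{3/2}$ for a square-root edge, while the fluctuation scale is $1/(N\eta)$, and these match when... ), giving $\P(\text{bad}) \leq (\text{const})^q x^{-3q/2}$, then optimizing over $q \leq cx^{3/4}$ subject to the $(Cq)^{2q}$ loss yields $\exp(-cx^{3/4})$.

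Let me restate the mechanism properly, since this is the heart of the argument. The point is: at $z = B + t + \ii\eta$ with $t \sim \eta$ and both $\sim s N^{-2/3}$ for a parameter $s$, the harmonic measure identity gives $|m_V(z)| \sim t^{1/2}$ (from the square-root edge), whereas a single eigenvalue sitting at distance $\geq s N^{-2/3}$ to the right forces $|s_N(z) - m_V(z)| \gtrsim \min(1/(N\eta), \ldots)$; the local law says this is at most $(Cq)^{2}/(N\eta)$ with probability $\geq 1 - (\text{that})^{-q}\cdot\ldots$. I would carefully pick $\eta = s N^{-2/3}$ and track that $N\eta = s N^{1/3}$, so $1/(N\eta) = 1/(sN^{1/3})$, while the square-root edge behavior of $\varrho_V$ makes the "expected" fluctuation at this location effectively $N^{-1}(\text{something})$; the excess caused by the misplaced eigenvalue is a factor $s^{3/2}$ above threshold. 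Then $\P(\exists \lambda_k > B + sN^{-2/3}) \leq \E[|s_N - m_V|^q](N\eta)^q s^{-3q/2} \cdot C^q \leq (Cq)^{2q} s^{-3q/2}$; choosing $q = \lfloor c s^{3/4}\rfloor$ gives $(Cq)^{2q} s^{-3q/2} = \exp(2q\log(Cq) - \tfrac32 q \log s) = \exp(q(2\log(Cc s^{3/4}) - \tfrac32\log s)) = \exp(q(- \tfrac32 \cdot \tfrac12 \log s + O(1))) \leq \exp(-c' s^{3/4})$ for $s$ large, and for bounded $s$ the bound is trivial by adjusting constants. The same argument applies symmetrically at the left edge $A$ using assumption \ref{assumption:V'_at_infinity} for $\tilde V$, and a union bound over the two edges (costing a factor $2$) completes the proof.

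The main obstacle I anticipate is making the lower bound on $|s_N(z) - m_V(z)|$ on the bad event robust and quantitatively sharp enough to produce the factor $s^{3/2}$ — in particular, controlling the contribution of all the \emph{other} eigenvalues (which lie in or near $[A,B]$) to $s_N(z)$ so that they cannot conspire to cancel the contribution of the misplaced one. This is where I would lean on the local law itself (bootstrapping: on the complementary good event the bulk eigenvalues behave, so their aggregate contribution is within $O(1/(N\eta))$ of $m_V$), or more simply on the elementary observation that $\im$ of each bulk term has a definite sign, so there can be no cancellation in the imaginary part — the misplaced eigenvalue only \emph{adds} to $\im s_N(z)$, and $\im m_V(z)$ is negligibly small outside the support. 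Getting the constants and the precise edge exponent $3/2$ right from the square-root singularity of $\varrho_V$ in \eqref{eqn:rhoV} is the remaining bookkeeping.
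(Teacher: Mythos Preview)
Your approach has two genuine gaps.

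\textbf{Circularity.} You propose to invoke Proposition~\ref{prop:local_law_past_the_edge}, but in the paper that proposition is proved \emph{after} Corollary~\ref{cor:rigidity_past_the_edge} and uses it as an input (via Lemma~\ref{lem:number_of_eigenvalues_past_the_edge}, whose proof explicitly follows that of the corollary). The correct input is Proposition~\ref{prop:bound_s-m_1_s-m_2_past_the_edge}, established in Section~\ref{sec:partial_local_law_past_the_edge}, whose bounds carry extra factors of $\kappa = \abs{E-B}$ in the denominator. This is not cosmetic: with the coarser bound $(Cq)^{2q}/(N\eta)^q$ you cite, the threshold for detecting a nearby eigenvalue via $\im s_N$ is of order $1/(N\eta)$, which is exactly the fluctuation scale, so Markov gives $\P(\text{bad}) \leq (C'q)^{2q}$ with no decay. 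Your claimed ``excess factor $s^{3/2}$'' does not follow from that bound; it comes precisely from the ratio $\eta/\kappa$ appearing in the refined estimate $\E[\cN(I)^q] \leq (Cq\eta/\kappa)^{q/2} + (Cq\eta/\kappa)^q$ of \eqref{eq:bound_N(I)}.

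\textbf{A single test point cannot work.} The event is $\{\exists k:\lambda_k > B + x N^{-2/3}\}$, with no upper bound on $\lambda_k$. If you fix $z$ near $B + x N^{-2/3}$ with small $\eta$, an eigenvalue sitting at $B + 100\, x N^{-2/3}$ contributes almost nothing to $\im s_N(z)$. You noticed this yourself and mention ``a family of test points'' but then abandon the idea. The paper's proof is exactly such a covering: it tiles $(B+KN^{-2/3},B+\tilde\eta]$ by intervals $I_j = (E_j,E_j+\eta_j]$ with $\kappa_j = a_j N^{-2/3}$, $\eta_j \sim a_j^{1/4} N^{-2/3}$, and $q_j \sim a_j^{3/4}$ (here $a_0=K$, $a_{j+1}=a_j+a_j^{1/4}/L$). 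These parameters are tuned so that $q_j\eta_j/\kappa_j$ is a small constant, yielding $\P(\cN(I_j)\geq 1)\leq e^{-q_j}$; summing the geometric-like series in $j$ then gives $Ce^{-cK^{3/4}}$. The exponent $3/4$ emerges from balancing the constraints $\eta_j \geq (C'q_j)^{1/3}N^{-2/3}$ and $q_j\eta_j \ll \kappa_j$, which force $q_j \lesssim a_j^{3/4}$.
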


Finally, we note that Theorem \ref{th:local_law_bulk} easily gives subsequential convergence of the point process at microscopic scale at any energy level $E$. For  $E$ in the bulk or at the exact edge much more is known, namely fixed energy universality \cite{Shc2014,LanSosYau2019,KriRidVir2016,BouErdYau2014b,BekFigGui2015}. The result is new for varying $E$ at intermediate energy levels. More importantly, it shows tightness directly follows from loop equations.

\begin{corollary} \label{cor:tightness}
Let  $E=E_N\in[A,B]$ be a deterministic sequence and let $\ell(E)$ be as in (\ref{def:l(E)_and_kappa(E)}) below. Then the point process $\sum_i \delta_{(\lambda_i-E)/\ell(E)}$ is tight for the vague topology.
\end{corollary}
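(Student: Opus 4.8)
The plan is to deduce tightness of the point process $\Xi_N := \sum_i \delta_{(\lambda_i - E)/\ell(E)}$ directly from the local law (Theorem~\ref{th:local_law_bulk}). Tightness in the vague topology on the space of point measures on $\R$ reduces to showing that for every fixed compact interval $I \subset \R$, the family of random variables $\Xi_N(I) = \#\{i : (\lambda_i - E)/\ell(E) \in I\}$ is tight, i.e.\ $\sup_N \P(\Xi_N(I) > M) \to 0$ as $M \to \infty$. Equivalently, it suffices to control $\E[\Xi_N(I)]$ by a constant uniform in $N$, and then apply Markov's inequality. So the whole statement boils down to an \emph{a priori} bound on the expected number of particles in a microscopic window of size $O(\ell(E))$ around $E$.

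First I would extract from Theorem~\ref{th:local_law_bulk} a bound on $\im s_N(z)$ at the scale $\eta \asymp \ell(E)$. Taking $z = E + \ii\eta$ with $\eta = \ell(E)$ (which is $\asymp N^{-1}$ times a bounded factor in the bulk, and the relevant local scale near the edge via the definition~(\ref{def:l(E)_and_kappa(E)})), the theorem gives $\E[|s_N(z) - m_V(z)|] \le \frac{C}{N\eta} + \frac{C}{N|z-A|^{1/2}|z-B|^{1/2}}$. Since $N\eta \asymp 1$ and, for $E \in [A,B]$ at distance $\gtrsim \ell(E)$ from the edges, $N|z-A|^{1/2}|z-B|^{1/2} \gtrsim 1$ as well (one should check the worst case $E$ very close to $A$ or $B$ using that $\ell(E)$ shrinks there accordingly — this is exactly the role of the $\kappa(E)$-dependent definition of $\ell(E)$), we get $\E[\im s_N(z)] \le \im m_V(z) + C$. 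Because $\im m_V(E+\ii\eta) = \int \frac{\eta}{(x-E)^2+\eta^2}\rd\mu_V(x) = O(1)$ uniformly (the density $\varrho_V$ is bounded, being a square-root profile on a compact interval), we conclude $\E[\im s_N(z)] \le C$ for a constant independent of $N$.

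Next I would convert this into the counting bound. Writing out $\im s_N(z) = \frac1N \sum_i \frac{\eta}{(\lambda_i - E)^2 + \eta^2}$, every particle with $(\lambda_i - E)/\ell(E) \in I$, say $|I| \le L$, contributes at least $\frac1N \cdot \frac{\eta}{(L\ell(E))^2 + \eta^2} \gtrsim \frac{1}{N\eta}$ when $\eta = \ell(E)$, i.e.\ a contribution bounded below by $c/(N\eta) \asymp c$ (up to $L$-dependent constants). Hence $\Xi_N(I) \le C' N\eta \cdot \im s_N(z) \le C'' (1 + \im s_N(z))$, and taking expectations, $\E[\Xi_N(I)] \le C_I$ uniformly in $N$. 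Markov then gives $\P(\Xi_N(I) > M) \le C_I/M$, uniformly in $N$, which is precisely tightness of $\Xi_N$ in the vague topology (via the standard criterion, e.g.\ tightness of $\Xi_N(I)$ for a countable generating family of intervals $I$ with rational endpoints that are a.s.\ continuity points, plus a diagonal argument).

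The main obstacle is purely bookkeeping about the edge: one must verify that the scaling $\ell(E)$ from~(\ref{def:l(E)_and_kappa(E)}) is exactly calibrated so that, simultaneously, (a) $N\ell(E) \gtrsim 1$ so the first error term in the local law is $O(1)$, and (b) $N|z-A|^{1/2}|z-B|^{1/2} \gtrsim 1$ with $z = E + \ii\ell(E)$, so the second error term is also $O(1)$, and (c) $\im m_V(E + \ii\ell(E)) = O(1)$. For $E$ in the bulk all three are trivial since $\ell(E) \asymp N^{-1}$ and the density is bounded below; near the soft edge $A$ or $B$ the correct local scale is $\ell(E) \asymp N^{-2/3}$ when $\kappa(E) := \mathrm{dist}(E, \{A,B\}) \lesssim N^{-2/3}$ and $\ell(E) \asymp (N\sqrt{\kappa(E)})^{-1}$ in between, and one checks that this choice makes $N|z-A|^{1/2}|z-B|^{1/2} \asymp N\sqrt{\kappa(E) + \ell(E)} \gtrsim 1$. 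Once the scale $\ell(E)$ is pinned down this way the rest is the elementary argument above; no further input beyond Theorem~\ref{th:local_law_bulk} is needed, which is the point emphasized in the statement.
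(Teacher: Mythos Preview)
Your overall strategy --- bound $\E[\im s_N(E+\ii\ell(E))]$ via the local law, then convert to a uniform bound on $\E[\cN(I)]$ through the elementary inequality $\cN([E-\eta,E+\eta]) \leq 2N\eta\,\im s_N(E+\ii\eta)$ --- is exactly what the paper does (it packages this as Lemma~\ref{lem:first_step_density}, which gives $\E[\cN(I)^q]\le (Cq)^{q/2}$ for $I=[E-\ell(E),E+\ell(E)]$, and then covers a general compact window by finitely many such intervals). So the route is the same.

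However, your edge bookkeeping contains a real error. You repeatedly use $N\eta \asymp 1$ (``a contribution bounded below by $c/(N\eta)\asymp c$'' and ``$C'N\eta\cdot\im s_N(z)\le C''(1+\im s_N(z))$''), but with $\eta=\ell(E)$ this is false near the edge: $N\ell(E)=\kappa(E)^{-1/2}$ for $\kappa(E)\ge N^{-2/3}$ and $N\ell(E)=N^{1/3}$ for $\kappa(E)\le N^{-2/3}$, so $N\eta$ can be as large as $N^{1/3}$. Consequently the crude bound $\im m_V(z)=O(1)$ only yields $\E[\cN(I)]\le CN\eta$, which blows up. What rescues the argument is that $\im m_V(z)$ is itself small at the edge: since $m_V=-V'/2+r\,b$ with $r$ real on $\R$ and bounded, one has $\im m_V(z)\le C\eta + C\abs{b(z)}$, and at $\eta=\ell(E)$ one checks (case by case as you did for item~(b)) that $\abs{b(z)}\le C/(N\eta)$. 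Hence
\[
\E[\cN(I)] \;\le\; 2N\eta\bigl(\im m_V(z)+\E[\abs{s_N(z)-m_V(z)}]\bigr)
\;\le\; CN\eta^2 + CN\eta\abs{b(z)} + C + C\,\frac{\eta}{\abs{b(z)}} \;\le\; C,
\]
uniformly in $E$. This is precisely the refinement the paper makes: it bounds $\im s_N$ directly via the intermediate estimate~\eqref{eq:bound_Lambda} (which already has the $\abs{b(z)}$ term) rather than passing through $\im m_V$, and then uses $N\eta\,\abs{b(z)}\le C$. Once you replace ``$\im m_V=O(1)$'' by ``$N\eta\,\im m_V(z)=O(1)$'' your argument goes through.
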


\subsection{Logarithmically correlated field. } 
Before stating our second main result, we introduce some further notation.
We consider the principal branch of the logarithm, extended to the negative real numbers by continuity from above, that is $\log(r e^{\ii \theta}) = \log(r) + \ii \theta$ for any $r>0$ and $\theta \in (-\pi,\pi]$.
As is usual, we define $z^\alpha$ by $e^{\alpha \log(z)}$. 
For any $E \in \R$, we set
\begin{equation} \label{def:l(E)_and_kappa(E)}
    \kappa(E) \coloneqq \abs{A-E} \wedge \abs{B-E}
	\qquad \text{and} \qquad
    \ell(E) \coloneqq 
    \begin{cases} 
    N^{-1} \kappa(E)^{-1/2} & \text{if } E \in [A+N^{-2/3},B-N^{-2/3}], \\
    N^{-2/3} & \text{otherwise}.
    \end{cases} 
\end{equation}
The length $\ell(E)$ is the microscopic scale at $E \in [A,B]$, that is the typical spacing between particles close to $E$. We also define
\begin{equation}
	\label{def:L_N}
	L_N(E) 
	\coloneqq \sum_{j=1}^N \log\pa{E-\lambda_j} 
	- N \int \log\pa{E-x} \diff \mu_V(x).
\end{equation}
The following theorem says that fluctuations of the field $L_N$ are asymptotically Gaussian 
with log-correlated structure independent of $V$ up to the edge. Further, the real and imaginary parts of $L_N$ are asymptotically independent. 	
We refer to Section \ref{sec:History} for a review of previous related results. 

\begin{theorem}
	\label{thm:log_corr_field}
	For fixed $m \geq 1$, let $(E_1,\dots,E_m)_{N \geq 1}$ be energy levels in $[A,B]$ possibly depending on $N$. Let $\delta_i=\frac{1}{4} (\frac{2}{\beta}-1)
	\log (\kappa(E_i) \vee N^{-2/3})$ for $1 \leq i \leq m$. 
	Assume that for each $1 \leq i,j \leq m$, the following limits exist,
	\[
		a_{ij} = \lim_{N \to \infty} 
		\frac{\log (\abs{E_i-E_j} \vee \ell(E_i))}{-\log N} 
		\qquad \text{and} \qquad
		b_{ij} = \lim_{N \to \infty} 
		\frac{\log \Bigl( 
		    \frac{\abs{E_i-E_j} \vee \ell(E_i)}
		    {\kappa(E_i)}
		    \wedge 1
		    \Bigr)}{-\log N},
	\]
	and denote $\ba=(a_{ij})_{1 \leq i,j \leq m}$ and $\bb =(b_{ij})_{1 \leq i,j \leq m}$.
	Then, the following convergence holds in distribution:
	\[
		\sqrt{\frac{\beta}{\log N}}	
		\pa{\re L_N(E_1) - \delta_1, \dots, \re L_N(E_m) - \delta_m,
		\im L_N(E_1), \dots, \im L_N(E_m})
		\xrightarrow[N\to\infty]{}
		\mathscr{N} \left( 0, \left( \begin{matrix}
	\ba & 0 \\ 0 & \bb
		\end{matrix} \right)\right).
	\] 
\end{theorem}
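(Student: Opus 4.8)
The plan is to deduce the multivariate CLT for $L_N$ from the optimal local law of Theorem~\ref{th:local_law_bulk}, via a careful analysis of the characteristic function, using the loop equations at the level of the cumulant generating function rather than just the second moment. First I would reduce the problem to computing, for a fixed test vector, the limit of $\E[\exp(\sum_i (\xi_i \re L_N(E_i) + \zeta_i \im L_N(E_i)))]$ after the stated normalization. The key observation is that $L_N(E_i) = \sum_j \log(E_i-\lambda_j) - N\int\log(E_i-x)\rd\mu_V(x)$ is, up to the deterministic counterterm, a linear statistic $\sum_j f_i(\lambda_j)$ with $f_i(x) = \log(E_i - x)$, a function that is analytic off $[E_i,\infty)$ and has a logarithmic singularity exactly at $E_i$. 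The point of the optimal local law is that it controls $s_N(z) - m_V(z)$ all the way down to $\eta \sim \ell(E)$, which is precisely the scale needed to resolve the $\log$ singularity; a subleading local law would only give smooth-enough test functions.

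The main technical engine would be a Helffer–Sjöstrand or contour-integral representation: write $\sum_j f_i(\lambda_j) - N\int f_i \rd\mu_V = \frac{1}{2\pi\ii}\oint f_i(z)(N s_N(z) - N m_V(z))\rd z$ for an appropriate contour, or its almost-analytic extension version, and then split the contour into a ``macroscopic'' part (distance $\gtrsim 1$ from the spectrum), where the local law gives $O(N^{-1+o(1)})$ fluctuations that are negligible after dividing by $\sqrt{\log N/\beta}$, and a ``mesoscopic-to-microscopic'' part near $E_i$ down to height $\ell(E_i)$, which produces the $\sqrt{\log N}$ of variance. On this near-singularity part, the heavy lifting is a loop-equation analysis: one differentiates the partition function identity (integration by parts against the Gibbs measure) to get a closed hierarchy for $\E[\prod(\text{resolvents})]$, and then shows that after centering, the cumulants of order $\ge 3$ of the relevant linear statistics vanish in the limit once rescaled, while the second cumulant converges to the quadratic form with matrices $\ba$ (for $\re$) and $\bb$ (for $\im$). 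The structure of $\ba_{ij} = \lim \log(|E_i-E_j|\vee\ell(E_i))/(-\log N)$ is exactly what comes out of the logarithmic kernel $\int\int \log|x-y|^{-1}\,d(\cdot)$ integrated against the test functions $f_i$, truncated at scale $\ell(E_i)$; the $\bb$ matrix and the decoupling of real and imaginary parts arise because $\im\log(E-\lambda_j)$ counts (up to $\pi$) the particles to the right of $E_i$, a quantity whose correlations saturate at the edge-distance scale $\kappa(E_i)$ rather than running all the way to $N^{-2/3}$, and which is asymptotically uncorrelated with the ``smooth'' real part.

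Concretely the steps are: (1) set up the almost-analytic/contour representation of each $L_N(E_i)$ and reduce the CLT to convergence of cumulants of the vector of linear statistics; (2) use the local law (Theorem~\ref{th:local_law_bulk}) plus Remark~\ref{rem:comment_local_law1} to truncate the contour at height $\ell(E_i)$ near $E_i$ and at $O(1)$ far away, bounding the discarded pieces by $N^{-c}$ after normalization; (3) establish the loop equation hierarchy for multi-point resolvent expectations and extract from it both the limiting covariance (second cumulant) and the negligibility of higher cumulants, tracking the precise $\log$-asymptotics that yield $a_{ij}$ and $b_{ij}$; (4) compute the deterministic shift $\delta_i = \frac14(\frac2\beta - 1)\log(\kappa(E_i)\vee N^{-2/3})$ as the surviving bias in $\E[\re L_N(E_i)]$ coming from the square-root edge behavior of $\varrho_V$ and the $\beta$-dependent correction in the loop equations; (5) assemble the pieces, noting the block-diagonal structure follows from $\re$ and $\im$ of $\log(E-z)$ being ``orthogonal'' against the limiting log-correlated covariance.

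The hard part will be step (3): controlling the loop equation hierarchy uniformly down to the microscopic scale $\ell(E_i)$ with enough precision to see that all cumulants beyond the second are $o((\log N)^{k/2})$, and simultaneously nailing the constant-order and $\log$-order terms in the second cumulant so that the matrices $\ba,\bb$ come out exactly as stated (including the delicate saturation at $\kappa(E_i)$ that distinguishes the imaginary part). This requires an inductive/bootstrapped use of the local law itself — the local law feeds into the error control of the loop equations, which in turn sharpen the estimates — and careful bookkeeping of how the $|z-A|^{-1/2}|z-B|^{-1/2}$ factor in Theorem~\ref{th:local_law_bulk} interacts with the edge behavior of the test functions near $E_i$ when $E_i$ approaches $A$ or $B$.
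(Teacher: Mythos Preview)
Your outline captures the right spirit but diverges from the paper's proof in two substantive ways, and one of them is a genuine gap.

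\textbf{Missing regularization via the Wegner estimate.} You propose to truncate the Helffer--Sj\"ostrand integral at height $\ell(E_i)$ and bound the discarded part by $N^{-c}$. But the test function $\log(E_i-\cdot)$ is singular on the real axis, and the contribution from heights $0<y<\ell(E_i)$ is \emph{not} a priori small: the local law only gives $\abs{s_N-m_V}\le C/(N\eta)$, and integrating $\abs{f_i''}\cdot y\cdot N\abs{s_N-m_V}$ down to $y=0$ diverges. The paper's fix is to first replace $L_N(E_i)$ by $L_N(E_i+\ii\eta(E_i))$ with $\eta(E_i)=e^{(\log N)^{1/4}}\ell(E_i)$ (Proposition~\ref{prop:smoothing}), and this step critically requires the Wegner estimate (Proposition~\ref{prop:density}): one must know that the expected number of particles within a submicroscopic window of $E_i$ tends to zero, so that on a high-probability event the integral $\int_0^{\eta'}\abs{s_N(E+\ii u)-s_N(E+\ii\eta')}\diff u$ can be controlled by $\im s_N$ at a fixed height. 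Your plan does not mention this ingredient, and without it the reduction to a smooth linear statistic above the axis does not go through.

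\textbf{Different CLT engine.} For the smoothed statistic, you propose to run the full loop-equation hierarchy and show that cumulants of order $\ge 3$ vanish after rescaling. The paper instead uses Johansson's method: it tilts the measure by $e^{tS_N(f)}$, proves rigidity under the \emph{biased} measure (Lemma~\ref{lem:rigid2}, which needs the Gaussian tail $q^{q/2}$ in Theorem~\ref{th:local_law_bulk} via Lemma~\ref{cor:momentsBound}), and then analyzes only the \emph{first} loop equation under the tilt to compute $Z'(t)/Z(t)=\E_{\mu_N^t}[S_N(f)]$ exactly, extracting $\sigma^2(f)$ and the shift $\delta(f)$ by a contour argument (Lemmas~\ref{lem:estimate_varphi}--\ref{lem:estimates_sigma_delta}). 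Your cumulant route is viable in principle, but it requires uniform control of the entire hierarchy at scale $\eta(E)$, which is considerably more bookkeeping; the paper's route trades that for the single nontrivial step of rigidity under the bias, which is where the optimal local law is actually spent.
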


Note that by our definition of $\log$, for $x \in \R^*$, $\im \log(x) = \pi \1_{x < 0}$
and therefore $\frac{1}{\pi} \sum_{k=1}^N \im \log\pa{E-\lambda_k}$ counts the number of eigenvalues greater than $E$.
We can therefore answer (\ref{eqn:question}), extending to generic $V$ and any $\beta$,
Gustavsson's famous central limit theorem \cite{Gus2005} for the GUE, and its analogue for the GOE \cite{ORo2010}. For the statement, consider the normalized eigenvalue displacements, for $1 \leq n \leq N$,
\[
		Y_N(n)=
		\pi N
		\sqrt{\frac{\beta}{\log N}}\varrho_V(\gamma_n)
		(\lambda_n - \gamma_n).
	\]

\begin{corollary}
	\label{corr:eig_fluct}
For fixed $m\geq 1$, let $n_1, \dots, n_m \in \llbracket 1,N\rrbracket$ possibly depending on $N$.
	Assume that for each $1 \leq i,j \leq m$, the limit
	\[
		b_{ij} = \lim_{N \to \infty} 
		\frac{\log \Bigl( 
		    \frac{\abs{\gamma_{n_i}-\gamma_{n_j}} \vee \ell(\gamma_{n_i})}
		    {\kappa(\gamma_{n_i})}
		    \wedge 1
		    \Bigr)}{-\log N}
	\]
exists, and denote  $\bb = (b_{ij})_{1\leq i, j \leq m}$. Then, the following convergence holds in distribution:
    \[
        \pa{Y_N(n_1), \dots, Y_N(n_m)} \xrightarrow[N\to\infty]{} \mathscr{N}(0,\bb).
    \]
\end{corollary}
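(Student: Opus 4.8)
The plan is to deduce the CLT for eigenvalue displacements from Theorem \ref{thm:log_corr_field} via the classical ``inversion'' argument relating the displacement $\lambda_n-\gamma_n$ to the counting function. First I would observe that, by definition of $\log$ in this paper, the number of eigenvalues strictly greater than a level $E$ equals $\frac{1}{\pi}\sum_k \im\log(E-\lambda_k)$, so that the centered and normalized counting function at $E$ is $-\frac{1}{\pi}\im L_N(E)$ up to the deterministic term $N\int \im\log(E-x)\diff\mu_V(x)$, which counts $N(1-k/N)$ at $E=\gamma_k$. Thus if we set $E=\gamma_{n}$ for one of the chosen indices, the event $\{\lambda_n \le \gamma_n\}$ is essentially the event that the centered counting function at $\gamma_n$ is $\le 0$; more precisely, $\{\lambda_n>E\}=\{\#\{k:\lambda_k>E\}\ge N-n+1\}$, which ties the displacement to a single coordinate $\im L_N(\gamma_n)$.

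Next I would linearize. Since $\varrho_V(\gamma_n)>0$ in the bulk and the typical displacement is of order $\sqrt{\log N}/N \ll$ the scale on which $\varrho_V$ varies, one expects $\#\{k:\lambda_k>\gamma_n+t\} - (N-n) \approx -N\varrho_V(\gamma_n)\,t$ for $t$ in the relevant window; equivalently, evaluating $\im L_N$ at $E=\gamma_n$ versus at $E=\lambda_n$ and using rigidity (Corollary \ref{cor:rigidity}) to control the error, one gets
\[
\pi N \varrho_V(\gamma_n)(\lambda_n-\gamma_n) = -\im L_N(\gamma_n) + o(\sqrt{\log N})
\]
with the error uniform over the finitely many indices $n_1,\dots,n_m$. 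Here I would need: (a) rigidity to confine $\lambda_n$ to a window of size $(\log N)^{O(1)}N^{-2/3}\hat n^{-1/3}$ around $\gamma_n$; (b) a local-law-type bound showing the counting function at two nearby levels differs from its linearization by $o(\sqrt{\log N})$ — this should follow by integrating the estimate of Theorem \ref{th:local_law_bulk} (or more directly from the rigidity statement, since on that small window the main contribution to the counting-function increment is the deterministic slope $N\varrho_V$). The key regularity input is that $\varrho_V$ is $C^1$ and bounded below on the relevant compact subset of $(A,B)$, which holds under \ref{assumption:analytic}--\ref{assumption:off-criticality}.

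Finally I would apply Theorem \ref{thm:log_corr_field} to the vector $(\im L_N(\gamma_{n_1}),\dots,\im L_N(\gamma_{n_m}))$: after dividing by $\sqrt{\log N/\beta}$ it converges to $\mathscr N(0,\bb)$ with $\bb$ exactly the matrix in the statement, since $E_i=\gamma_{n_i}$ makes the $b_{ij}$ of the theorem coincide with those of the corollary; note the $\delta_i$ drift affects only the real part, not the imaginary part we use. Combining with the linearization and the sign $-\im L_N$, and noting $Y_N(n)=\pi N\sqrt{\beta/\log N}\,\varrho_V(\gamma_n)(\lambda_n-\gamma_n)$, Slutsky's theorem gives $(Y_N(n_1),\dots,Y_N(n_m))\to\mathscr N(0,\bb)$. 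The main obstacle is step (b): making the replacement of the counting-function increment by its linear approximation with an error genuinely $o(\sqrt{\log N})$ rather than $O(\sqrt{\log N})$, uniformly in the possibly $N$-dependent indices $n_i$ (including $n_i$ near the edge, where $\hat n$ may be small and $\varrho_V(\gamma_{n_i})\to 0$); this requires carefully combining the rigidity bound of Corollary \ref{cor:rigidity} with the edge behavior $\varrho_V(x)\asymp\sqrt{\kappa(x)}$ and, near the edge, with Corollary \ref{cor:rigidity_past_the_edge}.
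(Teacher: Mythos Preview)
Your overall strategy is correct in spirit, but it takes a harder route than the paper and your step (b) is not as innocuous as you suggest. The paper does \emph{not} linearize the random counting function at the fixed point $\gamma_n$ and then apply Slutsky. Instead it uses the exact (non-asymptotic) equivalence
\[
  \{\lambda_n \le E\} = \{\#\{j:\lambda_j\le E\}\ge n\}
\]
at the \emph{$\xi$-shifted} deterministic point $E(\xi)=\gamma_n+\frac{\xi}{\pi N\varrho_V(\gamma_n)}\sqrt{\log N/\beta}$, which converts $\{Y_N(n)\le\xi\}$ \emph{exactly} into $\{\im L_N(E(\xi))\le N\pi\int_{\gamma_n}^{E(\xi)}\varrho_V\}$. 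Then only the deterministic right-hand side needs to be linearized (trivial, since $\varrho_V$ is smooth), and Theorem~\ref{thm:log_corr_field} is applied at the points $E_1(\xi_1),\dots,E_m(\xi_m)$, which differ from $\gamma_{n_i}$ by $O(\sqrt{\log N}/N)$ and hence give the same limits $b_{ij}$. This is the standard Gustavsson-type argument; the paper points to \cite[Lemma~B.2]{BouMod2019} for the bookkeeping. No rigidity and no control of a random-interval count are needed.

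Your route requires showing $\#\{j:\lambda_j\in(\gamma_n,\lambda_n]\}-N\varrho_V(\gamma_n)(\lambda_n-\gamma_n)=o_{\P}(\sqrt{\log N})$, i.e.\ that the centered count on the \emph{random} interval $(\gamma_n,\lambda_n]$ is $o(\sqrt{\log N})$. Rigidity alone gives only an $O(\log N)$ bound on such centered counts, and the local law via Helffer--Sj\"ostrand likewise yields $O(\log N)$; neither gives $o(\sqrt{\log N})$ directly. One can rescue (b) by proving that $\im L_N(E)-\im L_N(E')=o_{\P}(\sqrt{\log N})$ uniformly for $|E-E'|\le C\log N/N$ (e.g.\ via moment bounds of the type in the paper's Section~\ref{section:CLT_above_the_axis}), but this is genuine extra work that the paper's exact-equivalence trick sidesteps entirely. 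Also note a sign: with the paper's convention $\im\log(E-\lambda)=\pi\mathbbm{1}_{\lambda>E}$, one gets $\im L_N(\gamma_n)=\pi(n-\#\{\lambda_j\le\gamma_n\})$, so the correct heuristic is $\pi N\varrho_V(\gamma_n)(\lambda_n-\gamma_n)\approx +\im L_N(\gamma_n)$, not $-\im L_N(\gamma_n)$.
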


\begin{remark}
If  $n_1\wedge (N-n_1)=\OO(1)$, then $b_{11}=0$: This corollary does not identify fluctuations at the edge of the spectrum, as expected, e.g. for $n_1=1$ from the convergence of 
$N^{2/3} (A-\lambda_1)$
to the Tracy-Widom distribution.
\end{remark}

Finally, 
     \cite[Theorem 1.5]{BouMod2019} states that the real part of the log characteristic polynomial 
    of a (real or complex)
    Wigner matrix is log-correlated (in the bulk of the spectrum) in the limit of large
    dimension, conditional on the same being true in the GOE
    and GUE cases. The proof of this theorem applies
    equally to the imaginary part of the log-characteristic 
    polynomial, and therefore
Theorem \ref{thm:log_corr_field} also holds in the Wigner case.

\begin{corollary} Let
$\lambda_1\leq \dots\leq \lambda_N$ be the eigenvalues
of a real (resp. complex) Wigner matrix as defined in \cite{BouMod2019}. Let $\kappa>0$ and $E_1,\dots,E_m\in[-2+\kappa,2-\kappa]$ satisfy the hypothesis  of Theorem \ref{thm:log_corr_field}.
Then the conclusion of Theorem \ref{thm:log_corr_field} holds with $\beta=1$ (resp. $\beta=2$).
\end{corollary}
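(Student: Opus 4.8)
The strategy is to deduce the statement from Theorem \ref{thm:log_corr_field} applied to the Gaussian $\beta$-ensembles, together with the comparison argument of \cite{BouMod2019}; no new random-matrix input is required. \textbf{Step 1 (Gaussian case).} The quadratic potential $V(x)=x^2/2$ corresponding to the G$\beta$E satisfies \ref{assumption:analytic}--\ref{assumption:large_dev}: it is analytic, $V'$ grows (at least) linearly, its equilibrium measure $\mu_V$ is the semicircle law on $[-2,2]$ with square-root behavior at $\pm 2$, and the effective potential $V/2-\int\log\abs{\cdot-t}\rd\mu_V(t)$ is minimized exactly on $[-2,2]$. Hence Theorem \ref{thm:log_corr_field} applies to the GOE ($\beta=1$) and the GUE ($\beta=2$), giving the joint Gaussian limit for $\pa{\re L_N(E_i)-\delta_i,\ \im L_N(E_i)}_{1\le i\le m}$ at bulk energies — crucially, for \emph{both} the real parts (log-modulus of the characteristic polynomial) and the imaginary parts (centered eigenvalue counting functions). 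For $E_i\in[-2+\kappa,2-\kappa]$ one has $\kappa(E_i)\asymp 1$ and $\ell(E_i)\asymp N^{-1}$, so $\delta_i=\OO(1)$ is negligible after normalization by $\sqrt{\log N}$, and $\ba=\bb$ collapses to the standard logarithmically correlated covariance.

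\textbf{Step 2 (transfer to Wigner matrices).} By \cite[Theorem 1.5]{BouMod2019}, the real part of the log-characteristic polynomial of a real (resp. complex) Wigner matrix has the same limit as for the GOE (resp. GUE), \emph{conditionally} on the Gaussian case. As noted in the excerpt, the proof there is a Green-function (Lindeberg-type) comparison carried out at the level of the resolvents $s_N(E+\ii\eta)$ on scales $\eta$ slightly above $N^{-1}$, combined with rigidity to control the near-microscopic cutoff, and it does not distinguish real from imaginary parts. Indeed, writing both $\re\log(E-\lambda_j)$ and the centered counting function $\tfrac1\pi\im L_N(E)$ as line integrals in $\eta$ of $\re s_N$ and $\im s_N$ respectively, the same comparison bounds control the joint characteristic functional of the full vector $\pa{\re L_N(E_i),\,\im L_N(E_i)}_{1\le i\le m}$. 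Therefore the argument of \cite{BouMod2019} carries the conclusion of Theorem \ref{thm:log_corr_field} from the Gaussian ensembles to Wigner matrices, with $\beta=1$ in the real case and $\beta=2$ in the complex case.

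Combining Steps 1 and 2 removes the conditionality in \cite[Theorem 1.5]{BouMod2019} and proves the corollary. The only delicate point is the bookkeeping in Step 2: one must run the comparison jointly in the $m$ energies and for both families of statistics at once, and regularize the sharp indicator defining $\im L_N$ at a scale $N^{-1+\e}$ so that the regularization error is absorbed by rigidity. Both are already contained in, or are immediate extensions of, the arguments of \cite{BouMod2019}; I expect this verification — confirming that nothing in the comparison is genuinely special to the real part — to be the main (and only minor) obstacle.
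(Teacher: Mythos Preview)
Your proposal is correct and follows the same approach as the paper: the corollary is stated in the paper without a separate proof, the justification being the paragraph immediately preceding it, which says exactly that Theorem \ref{thm:log_corr_field} covers the GOE/GUE (Step~1) and that the comparison argument of \cite[Theorem 1.5]{BouMod2019} applies equally to the imaginary part (Step~2). Your write-up merely fleshes out these two sentences with the relevant details.
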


\subsection{Related Works. } 
\label{sec:History}
We now describe part of the rich literature on rigidity and central limit theorems in contexts related to the measure (\ref{eq:beta_ensembles_density}).
For many other facets of the 1d log-gas we refer to Peter Forrester's book \cite{For2010}.\\

\vspace{-0.2cm}

\noindent {\it Local law and rigidity.} Typically, (weak) local laws refer to the number of particles in any mesoscopic ball behaving as predicted by the macroscopic equilibrium measure, while rigidity (sometimes called strong local law) means the fluctuations are smaller than for independent particles. Rigidity often 
appears in the context of long-range, repulsive interactions and is an important step in many  proofs of universality in random matrix theory \cite{ErdYau2017}. These notions and the following results are meant to hold with overwhelming probability, $1-\OO(N^{-D})$ for any $D>0$.

For  $\beta$-ensembles, \cite{BouErdYau2014a,BouErdYau2012,BouErdYau2014b} provided the first rigidity bounds (see also \cite{SosWon2012} for the  G$\beta$E), with optimal polynomial scale, i.e.\@ Corollary \ref{cor:rigidity} with $N^{\oo(1)}$ in place of $C\log N$. 
For particles in the bulk, \cite{Li2016} extended these results to the multicut case.
For systems out of equilibrium, a dynamic approach  provided rigidity bounds which are of order $(\log N)^C/N$ in the bulk \cite{HuaLan2019}, and hold up to  the edge \cite{AdhHua2020}.
For discrete $\beta$-ensembles, \cite{GuiHua2019} obtained rigidity bounds similar to \cite{BouErdYau2014a,BouErdYau2012,BouErdYau2014b}, also by combining loop equations with a multiscale analysis of local Gibbs measures.
For the circular $\beta$-ensembles, \cite{Lam2019b} proved
rigidity on the scale $C(\log N)/N$ based on Selberg's integrals, and 
\cite{NajVir2020} identified the correct order of the variance for the number of particles in intervals for the G$\beta$E.

We expect that the optimal rigidity scale $(\log N)/N$ from Corollary \ref{cor:rigidity} holds for Wigner-type  matrices. The first rigidity bounds for (generalized) Wigner matrices were proved in \cite{ErdYauYin2012}, of order $e^{c(\log\log N)^2}/N$. For Wigner matrices, \cite{TaoVu2013} gave the extreme fluctuations $\OO((\log N)^C/N)$, and the current best explicit $C=2$
follows from results in  \cite{GotNauTik2018}.

In dimension two, the log-gas corresponds to the Coulomb interaction. Local laws for general temperature appeared in \cite{Leb2017,BauBouNikYau2017}, together with rigidity in \cite{BauBouNikYau2017}, in the sense of  $\OO(N^\e)$ fluctuations for smooth enough linear statistics, on any mesoscopic scale. For the Coulomb gas in greater dimension, \cite{ArmSer2021} recently obtained local laws and \cite{Ser2020} proved rigidity bounds. Rigidity of the number of particles in domains with smooth boundary is still open  for the Coulomb gas in dimension $d\geq 2$. One expects the variance to be proportional to the boundary's surface \cite{MarYal1980}.  For advances on this conjecture in the case of the hierarchical Coulomb gas (resp. determinantal point processes), see \cite{Cha2019,GanSar2020} (resp. \cite{FenLam2020}).

Theorem \ref{thm:log_corr_field} identifies the exact fluctuations for the number of particles in intervals, for general $\beta$-ensembles. Its proof exploits the full strength of
Theorem~\ref{th:local_law_bulk}, in particular its validity up to the microscopic scale, and Gaussian decay reflected by the factor $q^{q/2}$ (see Section \ref{sec:outline}).\\
	
\vspace{-0.2cm}

\noindent {\it Fluctuations of singular linear statistics.} 
Johansson's method \cite{Johansson1998} has inspired many works on anomalously small  Gaussian fluctuations, for smooth enough linear statistics of particles distributed as in (\ref{eq:beta_ensembles_density}). 
Approaches also related to  a renormalized energy  \cite{BekLebSer2018}, resp. Stein's method \cite{LamLedWeb2019}, have allowed  the possibility of a critical external potential $V$, resp. quantitative such CLTs.
However much less was known for test functions with poor regularity, such as indicators.
Charge and potential fluctuations  were predicted in \cite[Sections 14.5.1, 14.5.2]{For2010} to be Gaussian with logarithmic variance, but all rigorous results were restricted to eigenvalues densities which either are integrable (e.g. determinantal), or admit a sparse random matrix model.

On the integrable side, Theorem \ref{thm:log_corr_field} unifies and extends previous results about the classical invariant  ensembles.
As previously mentioned, Gustavsson proved a joint central limit theorem for $\im L_N$ for the GUE \cite{Gus2005}, based on a general technique by Costin and Lebowitz \cite{CosLeb1995}, and  O'Rourke proved analogous results for GOE and GSE \cite{ORo2010}. 
These results also hold for general external potential $V$ in the bulk \cite{LanSos2020} and $\beta=1,2,4$, thanks to a comparison technique which reduces the result for general $\beta$ to the case of quadratic $V$. 
Concerning $\re L_N$, joint Gaussian fluctuations were proved  for $\beta = 2$, quadratic $V$ and energy levels  independent of $N$ \cite{Kra2007}. 
More was known for random unitary matrices:  $\re L_N$ and $\im L_N$ evaluated at  one point are asymptotically Gaussian and independent \cite{KeaSna2000}, as in Theorem \ref{thm:log_corr_field}, and $\re L_N,\,\im L_N$ evaluated at multiple points convergence to a log-correlated field  \cite{Bou2010}.

On the sparse matrix model  side, Augeri, Butez and Zeitouni have recently proved the one dimensional central limit theorem for $\re L_N(E)$ in the bulk, and quadratic $V$ \cite{AugButZei2020}.
 Their method is completely different from ours and applies to a wide class of Jacobi matrices: The characteristic polynomial of the G$\beta$E satisfies a recurrence, a consequence of the Dumitriu-Edelman tridiagonal model \cite{DumEde2002}.
Using this approach, Tao and Vu \cite{TaoVu2012} proved a CLT for $\re L_N$ at $E=0$, for GOE and GUE (see also \cite{MehNor1998,DelLeC2000}); their method applies to any $\beta$, and from this point of comparison they extended the result to some Wigner matrices (see below).
The study of the recurrence when $E \neq 0$ \cite{AugButZei2020} is considerably harder. This recurrence was also analyzed in \cite{LamPaq20201}, resp.\@ \cite{LamPaq20202}, for $E$ in the upper half plane, resp.\@ at the edge of the spectrum. In particular, \cite{LamPaq20202} proves the CLT for $\re L_N(B+\lambda N^{-2/3})$, with convergence to the same Gaussian random variable for any $\lambda=\OO(1)$.
Concerning $\im \log L_N$, Gaussianity of the number of points in one large interval was obtained for the limiting ${\rm sine}_\beta$ process, again based on the inductive analysis of a related random Schr\" odinger operator \cite{KriValVir2012}.

Such results are universal in the class of Wigner matrices.
Tao and Vu generalized Gustavsson's theorem on $\im L_N$ to Hermitian Wigner matrices under a four moment matching condition, in the bulk \cite{TaoVu2011}. This CLT and its real symmetric analogue \cite{ORo2010} were then extended to  Wigner matrices with finite moments \cite{BouMod2019,LanSos2020}. At the edge, joint fluctuations of $\im L_N$ are known for generalized Wigner matrices  \cite{BouErdYau2014b,Bou2018}.
Concerning $\re L_N$,   the CLT at $E=0$ was known again under a four moment matching assumption \cite{TaoVu2012}, a condition removed
in \cite{BouMod2019}.\\

\noindent {\it Related topics.} Our work is connected to the following lines of research. First,  central limit theorems for smooth enough linear statistics also
hold in higher dimension, for Coulomb systems, as first proved for the Ginibre ensemble  \cite{RidVir2007}, then general $V$, $\beta=2$ \cite{AmeHedMak2011}, any temperature for $d=2$ \cite{LebSer2018,BauBouNikYau2019} and $d=3$ \cite{Ser2020}. This raises the question of upgrading these results to an analogue of Theorem \ref{thm:log_corr_field}, i.e. fluctuations of the electric potential and the charges.

Second, Theorem \ref{thm:log_corr_field} states that $L_N$ belongs to the universality class of log-correlated fields, see \cite{Arg2017} for a survey on these fields and their connections with branching random walks, the Gaussian free field, random matrices,  and analytic number theory.
This suggests the following  asymptotic behavior for the maximum of $\re L_N$ (or $\im L_N$): 
\begin{align}
	\Big( \max_{E \in [A,B]} \re L_N(E) \Big)
	 - \sqrt{\frac{2}{\beta}}
	 \Big( \log N - \frac{3}{4} \log \log N \Big)
	 \xrightarrow[N\to\infty]{\text{(d)}} Z,\ \ \ \ \ \ \ \ \ 
\mbox{$Z$ a randomly shifted Gumbel},
\end{align}
see \cite{FyoSim2016} for a precise conjecture in the case of the GUE.
Fyodorov, Hiary and Keating  initiated such predictions, both on macroscopic and mesoscopic intervals, motivated by the analogous question for $\zeta$ \cite{FyoHiaKea2012,FyoKea2014}.
Parts of their program are proved, including, in the context of log-gases: the first order for the CUE \cite{ArgBelBou2017}, the Ginibre ensemble \cite{Lam2019a} and unitarily invariant Hermitian ensembles \cite{LamPaq2019};
the second order  for the CUE \cite{PaqZei2018}; tightness of the third order for the more general C$\beta$E ensemble \cite{ChhMadNaj2018}.

Another common property of log-correlated fields is the convergence to Gaussian multiplicative chaos of the measure obtained by exponentiating the field.
For the measure (\ref{eq:beta_ensembles_density}), we expect that
\begin{align}
	\frac{e^{\gamma \re L_N(x)}}{\E[e^{\gamma \re L_N(x)}]} \diff x
\end{align}
should converge in distribution with respect to the weak topology to a Gaussian multiplicative chaos measure for any $\gamma \in (0,\gamma_c)$,  and to zero for $\gamma \geq \gamma_c$, with $\gamma_c = \sqrt{2\beta}$. 
A similar result should hold for  $\im L_N$.
Such a convergence has been shown for the CUE \cite{Web2015,NikSakWeb2020}, unitarily invariant Hermitian random matrices \cite{BerWebWon2018,ClaFahLamWeb2019},
classical compact groups \cite{ForKea2020}, and the GOE, GSE \cite{Kiv2021}.

In a different direction, the method presented in this article is based solely on the loop equations. This provides 
an example  where such a hierarchy alone
implies convergence of the point process along subsequences, and precise fluctuations of the individual particles and the potential, despite non summable decay of correlations.
For more on (generalized) BBGKY hierarchies and their consequences on charge and potential fluctuations, see the review \cite{Mar1988}.

\subsection{Outline of the paper. }\label{sec:outline}
We briefly describe the next sections and the ideas of the proofs, which are based on loop equations. This hierarchy was instrumental in obtaining 
partition function expansions in \cite{BorGui2013}. Combined with the rigidity from \cite{BouErdYau2014b}, it also provides
a CLT on mesoscopic scales \cite{BekLod2018}. We show it  gives information up to the microscopic scale.

Section \ref{sec:local_law} contains the proof of local law in Theorem \ref{th:local_law_bulk}. 
We encourage the reader to first consider the case of quadratic $V$, so that the technical Section \ref{subsection:dealing_with_Delta} can be skipped.
The main novelty is algebraic, with a pertinent combination of loop equations. These  are first simply written in terms of moments of  Stieltjes transforms in Proposition \ref{uncentered loop eq moments}, presenting a crucial combinatorial gain compared to the expression in terms of cumulants \cite{BorGui2013}. Then
Section \ref{subsec:combin} shows combinations of this hierarchy up to order $4q-1$ control the $2q$-th moments of the centered Stieltjes transform, see Equation (\ref{eq:loop_eq_combined}). The possibility of this combination was inspired by  Lee and Schnelli  \cite{LeeSch2018}, who 
introduced a new method for the proof of local laws for matrices with independent entries, based on recursive moment estimates. 
For non-quadratic $V$, Section \ref{subsection:dealing_with_Delta} bounds a critical term based on complex analysis, explaining our Assumption \ref{assumption:analytic}\footnote{We believe our results hold for smooth $V$, by replacing the use of Cauchy's formula by Green's theorem for high order quasi-analytic extensions of $V$. We do not pursue this direction, for the sake of simplicity.}.
Section \ref{subsec:between} concludes the proof of Theorem \ref{th:local_law_bulk}, appealing to an appropriate stability lemma from Appendix \ref{section:stability_lemma}.

Section \ref{sec:consequences} proves some consequences of the local law. Proposition \ref{prop:density} gives Wegner estimates, a result 
essential for the proof of Theorem \ref{thm:log_corr_field} and of independent interest; the key input is the Gaussian tail of $s_N-m_V$.
Section \ref{section:rigidity_past_the_edge} proves Corollary \ref{cor:rigidity_past_the_edge}, based on an extension of the local law outside the trapezoidal region,  obtained in Section \ref{sec:partial_local_law_past_the_edge}. The derivation of Corollary \ref{cor:rigidity} in Section \ref{subsec:rigidity} is more subtle: 
A direct application of the Helffer-Sj{\H o}strand formula with the local law as input would give fluctuations $(\log N)^{3/2}/N$ in the bulk. To reach 
$(\log N)/N$, we combine the local law and Johansson's method \cite{Johansson1998}; this relies on rigidity on scale $(\log N)^C/N$ for some biased measures, which is obtained thanks to the Gaussian decay in Theorem \ref{th:local_law_bulk}, again.  In Section \ref{subsec:smoothed}, the local law and the Wegner estimate  reduce the proof of Theorem \ref{thm:log_corr_field} to Gaussian fluctuations of $L_N(z)$
with $\im z$ slightly above the microscopic scale.
 
This central limit theorem for  $L_N(z)$ is proved in Section \ref{section:CLT_above_the_axis}. For these smooth linear statistics, Johansson's classical strategy  applies. We follow an effective implementation of this method on mesoscopic scales, from \cite{BouErdYauYin2016}.\\

\noindent{\bf Acknowledgement.} We thank Gaultier Lambert for his careful reading which improved the paper. We are also grateful to an anonymous referee for pointing out a mistake in an older version of Lemma \ref{lem:delta(f)_and_sigma(f)}.\\

\noindent{\bf Notation.} In this paper, the large (resp. small) constant $C$ (resp. $c$) may vary from line to line, and only depends on the fixed $\beta>0$ and $V$ satisfying the assumptions \ref{assumption:analytic}, \ref{assumption:V'_at_infinity}, \ref{assumption:off-criticality}, \ref{assumption:large_dev}.

\section{Proof of the local law}
\label{sec:local_law}

In this section, our goal is to prove Theorem \ref{th:local_law_bulk}, the local law in a trapezoidal region above the segment $[A,B]$.
In Section \ref{sec:partial_local_law_past_the_edge}, we apply the same strategy to obtain a partial local law outside of the trapezoid.

\subsection{Preliminaries. }  
\label{subsec:preliminaries}

We present in this section several known results concerning the equilibrium measure, $\mu_V$, see for example \cite{AlbPasShc2001}, Proposition 1 and Equation (2.22).
The equilibrium measure $\mu_V(\diff t) = \varrho_V(t) \diff t$ is supported on a single interval $[A,B]$ and satisfies, for any $x \in (A,B)$,
\begin{equation}
	\frac{1}{2} V'(x) = {\rm p.v.} \int_A^B \frac{\varrho_V(t) \diff t}{x-t}.
\end{equation}
Recall that $V$ is analytic in $\R$ and let $\Omega$ denote a simply connected open set of the complex plane, containing $\R$ such that we can extend $V$ analytically in $\Omega$.
For any $t \in [A,B]$, we can write the equilibrium density as
\begin{equation}\label{eqn:rhoV}
	\varrho_V(t) = \frac{1}{\pi} r(t) \sqrt{(t-A)(B-t)},
\end{equation}
where 
\begin{equation} \label{eq:def_r}
	r(z) \coloneqq \frac{1}{2 \pi} \int_A^B \frac{V'(z) - V'(t)}{z-t}
	\frac{\diff t}{\sqrt{(t-A)(B-t)}}
\end{equation}
is analytic in $\Omega$.
Moreover,  Assumption \ref{assumption:off-criticality} means that
the function $r$ has no zero in $[A,B]$.

Recall that for any $z \in \CC \setminus [A,B]$, 
we define the Stieljes transform of the equilibrium measure as
\begin{equation}
	m_V(z) = \int_A^B \frac{\varrho_V(t) }{t-z}\diff t.
\end{equation}
Then, for any $z \in \Omega \setminus [A,B]$, we have
\begin{equation} \label{eq:link_2m+V'_and_r}
	2 m_V(z) + V'(z) = 2 r(z) b(z),
\end{equation}
where 
\begin{equation} \label{eq:def_b}
	b(z) \coloneqq \sqrt{z-A} \sqrt{z-B}
\end{equation}
and we recall that we always use the principal branch of the square root, extended to negative real numbers by $\sqrt{-x} = \ii \sqrt{x}$ for $x>0$.
Note that $b$ is an analytic function in $\CC \setminus [A,B]$,
and satisfies $b(z) \sim z$ at infinity.

For any $z \in \Omega$, introduce
\begin{equation} \label{eq:def_function_h}
	h(z) \coloneqq \int_A^B 
	\frac{V'(\lambda)-V'(z)}{\lambda-z} \varrho_V(\lambda) \diff \lambda,
\end{equation}
which defines an analytic function in $\Omega$.
Then, for any $z \in \Omega \setminus [A,B]$, we have
\begin{equation} \label{eq:fixed_point_equation}
	m_V(z)^2 + V'(z) m_V(z) + h(z) = 0,
\end{equation}
which we refer to as the fixed point equation for $m_V(z)$.
The main strategy for the proof of the local law is to show that the empirical Stieljes transform $s_N(z)$ satisfies approximately the same fixed point equation.
Properties of this quadratic equation are discussed in Appendix \ref{section:stability_lemma}.

Finally, we state two estimates for the distribution of particles that we will use. 
The first one is a rough rigidity result. For any $\varepsilon > 0$, there exist constants $c,C > 0$ such that
\begin{align}
	\P (\exists k \in \llbracket 1,N \rrbracket, \abs{\lambda_k - \gamma_k} \geq \varepsilon) 
	& \leq C e^{-c N}. 
	\label{eq:large_deviation_rigidity}
\end{align}
This is a consequence of the large deviation principle with speed $N^2$ for the empirical measure, see \cite[Theorem 2.6.1]{AndGuiZei10}, combined with the large deviation principle with speed $N$ for the extreme eigenvalues, \cite[Theorem 2.6.6]{AndGuiZei10} which holds up to a condition on the partition function that follows from \cite[Theorem 1 (iii)]{Shc2011}. The large deviation principle with speed $N$ for the extreme eigenvalues can also be found in \cite[Proposition 2.1]{BorGui2013}. Note that we need Assumption \ref{assumption:large_dev} here to guarantee that the rate functions appearing in the large deviation principle for the extreme eigenvalues do not vanish outside of $[A,B]$.

To state the second estimate we use, 
let $\varrho_1^{(N)}(s)$ denote the 1-point function for the eigenvalues under $\mu_N$, which satisfies, for any continuous bounded function $g$,
\[
	\int_\R g(s) \varrho_1^{(N)}(s) \diff s 
	= \E \left[ \frac{1}{N} \sum_{j=1}^N g(\lambda_j) \right].
\]
Then, there are constants $M_1,c>0$ depending only on $V$ and $\beta$, such that for any $\abs{s} \geq M_1$,
\begin{align} \label{eq:bound_density_at_infinity}
	\varrho_1^{(N)}(s) \leq e^{-c NV(s)}.
\end{align}
For a proof, see Pastur and Shcherbina \cite[Theorem 2.2 (i)]{PasShc2008}.

\subsection{Combining loop equations. }\label{subsec:combin}

In order to prove that the empirical Stieljes transform $s(z)$ approximately satisfies the fixed point equation \eqref{eq:fixed_point_equation}, 
for any $z \in \Omega \setminus \R$, we introduce the random function, 
\begin{align*}
	P(z) = s(z)^2+V'(z)s(z)+h(z),
\end{align*}
where for brevity we have written and will continue to write $s(z) = s_N(z)$.
In this section, aiming at a result analogous to \cite[Equation (3.3)]{LeeSch2018}, we combine the loop equations (see Appendix \ref{section:stability_lemma}) to express them in terms of $P(z)$.

Recalling the definition of the function $h$ in \eqref{eq:def_function_h}, for any $z \in \Omega$, we introduce the random variable, 
\begin{align} \label{eq:def_Delta}
	\Delta(z) 
	\coloneqq 
	\left( \frac{1}{N} \sum_{k=1}^N \frac{V'(\lambda_k)-V'(z)}{\lambda_k-z} \right) - h(z),
\end{align}
where the dependence in $N$ is again kept implicit in the notation for brevity.
Furthermore, for any $z,w \in \Omega \setminus \R$, we set 
\begin{equation} \label{eq:def_f(z,w)}
	f(z,w) \coloneqq 
	\begin{cases}
	\partial_w \left( \frac{s(z)-s(w)}{z-w} \right) & \text{if } w \neq z, \\
	\frac{1}{2} s''(z) & \text{if } w = z.
	\end{cases}
\end{equation}
Then, for any $n \geq 1$ and $z,z_1,\dots,z_{n-1} \in \Omega \setminus \R$, 
we write the loop equations for moments from Proposition \ref{uncentered loop eq moments} as
\begin{multline*}
	 \E\left[ (s(z)^2+V'(z)s(z)+h(z)) \prod_{i=1}^{n-1} s(z_i) \right]
	+ \frac{1}{N} \pa{\frac{2}{\beta}-1} 
		\E\left[ s'(z) \prod_{i=1}^{n-1} s(z_i) \right] \\
	 + \frac{2}{N^2\beta} \sum_{j=1}^{n-1}
		\E\left[ \frac{ \prod_{i=1}^{n-1} s(z_i)}{s(z_j)} f(z,z_j) \right]
	+ \E\left[ \Delta(z) \prod_{i=1}^{n-1} s(z_i) \right]
	= 0.
\end{multline*}
Fix some $z,w \in \Omega \setminus \R$.
For any integers $u,v\geq 0$, take $n=u+v+1$, $z_1,\dots, z_u = w$ and $z_{u+1}, \dots, z_{n-1} = \overline{w}$.
Then, noting that $s(\overline{w}) = \overline{s}(w)$, the loop equation becomes
\begin{align*}
	& \E\left[ (s(z)^2+V'(z)s(z)+h(z)) s(w)^u \overline{s}(w)^v \right]
	+ \frac{1}{N} \pa{\frac{2}{\beta}-1} 
	\E\left[ s'(z) s(w)^u \overline{s}(w)^v \right] \\ 
	& + \frac{2}{N^2\beta} 
	\E\left[ u s(w)^{u-1} \overline{s}(w)^v f(z,w)
		+ v s(w)^u \overline{s}(w)^{v-1} f(z,\overline{w}) \right]
	+ \E\left[ \Delta(z) s(w)^u \overline{s}(w)^v \right]
	= 0,
\end{align*}
and recalling the definition of $P(z)$, we have 
\begin{align*}
	\E\left[ P(z) s(w)^u \overline{s}(w)^v \right]
	& = \frac{1}{N} \pa{1-\frac{2}{\beta}} 
	\E\left[ s'(z) s(w)^u \overline{s}(w)^v \right]
	- \E\left[ \Delta(z) s(w)^u \overline{s}(w)^v \right] \\ 
	& \quad {} - \frac{2}{N^2\beta} 
	\E\left[ f(z,w) \partial_s \Bigl( s(w)^u \overline{s}(w)^v \Bigr)
		+ f(z,\overline{w}) \partial_{\overline{s}} \Bigl( s(w)^u \overline{s}(w)^v \Bigr) \right].
\end{align*}
For any integer $q \geq 1$, we compute
\begin{align*}
	& \E\left[ P(z) P(w)^{q-1} \overline{P}(w)^q \right] \\
	& = \sum_{\substack{a_1+b_1+c_1=q-1\\ a_2+b_2+c_2= q}} \frac{(q-1)!}{a_1!b_1!c_1!} \frac{q!}{a_2!b_2!c_2!} 
	V'(w)^{b_1} h(w)^{c_1}
	\overline{V'}(w)^{b_2} \overline{h}(w)^{c_2}
	\E\left[
	P(z) s(w)^{2a_1+b_1} \overline{s}(w)^{2a_2+b_2} 
	\right] \\
	& = \frac{1}{N} \pa{1-\frac{2}{\beta}} 
	\E\left[ s'(z) P(w)^{q-1} \overline{P}(w)^q \right]
	- \E\left[ \Delta(z) P(w)^{q-1} \overline{P}(w)^q \right] \\ 
	& \quad {} - \frac{2}{N^2\beta} 
	\E\left[ f(z,w) \partial_s\left( P(w)^{q-1} \overline{P}(w)^q \right)
		+ f(z,\overline{w}) 
		\partial_{\overline{s}} \left( P(w)^{q-1} \overline{P}(w)^q \right) \right].
\end{align*}
Hence we have that for any $z,w \in \Omega \setminus \R$,
\begin{align}
	\begin{split} \label{eq:loop_eq_combined}
		\E\left[ P(z) P(w)^{q-1} \overline{P}(w)^q \right]
		& = \E\left[ 
		\left( \frac{1}{N} \pa{1-\frac{2}{\beta}} s'(z) - \Delta(z) \right)
		P(w)^{q-1} \overline{P}(w)^q \right] \\
		& \quad {} - \frac{2(q-1)}{N^2\beta} 
		\E\left[ f(z,w) (2s(w) + V'(w)) P(w)^{q-2} \overline{P}(w)^q \right] \\
		& \quad {} - \frac{2q}{N^2\beta} 
		\E\left[ f(z,\overline{w}) 
			(2\overline{s}(w) + \overline{V}'(w)) \abs{P(w)}^{2q-2} \right]. 
	\end{split}
\end{align}
Finally note that under the measure $\mu_N^{[a,b]}$ defined in \eqref{eq:beta_ensembles_density_confined}, for which particles are confined to the interval $[a,b]$, by combining the loop equations from Proposition \ref{prop:loop_equations_confined}, we can prove in a similar way, for any $z,w \in \Omega \setminus [a,b]$,
\begin{align}
	\begin{split} \label{eq:loop_eq_confined_combined}
		\E^{[a,b]} \left[ P(z) P(w)^{q-1} \overline{P}(w)^q \right]
		& = \E^{[a,b]}\left[ 
		\left( \frac{1}{N} \pa{1-\frac{2}{\beta}} s'(z) - \Delta(z) \right)
		P(w)^{q-1} \overline{P}(w)^q \right] \\
		& \quad {} - \frac{2(q-1)}{N^2\beta} 
		\E^{[a,b]} \left[ f(z,w) (2s(w) + V'(w)) P(w)^{q-2} \overline{P}(w)^q \right] \\
		& \quad {} - \frac{2q}{N^2\beta} 
		\E^{[a,b]} \left[ f(z,\overline{w}) 
			(2\overline{s}(w) + \overline{V}'(w)) \abs{P(w)}^{2q-2} \right] \\
		& \quad {} + \frac{2}{ N^2\beta} \left( 
			\frac{\partial_a \E^{[a,b]}[P(w)^{q-1} \overline{P}(w)^q]}{z-a} 
			+ \frac{\partial_b \E^{[a,b]}[P(w)^{q-1} \overline{P}(w)^q]}{z-b} \right) \\
		& \quad {} + \frac{2}{\beta N^2} \left( \frac{\partial_a \ln Z^{[a,b]}_N}{z-a} 
			+ \frac{\partial_b \ln Z^{[a,b]}_N}{z-b} \right)
		\E^{[a,b]} \left[ P(w)^{q-1} \overline{P}(w)^q \right].
	\end{split}
\end{align}

\subsection{Bound on $\Delta$. }
\label{subsection:dealing_with_Delta}

In this section, our goal is to prove the following bound on the quantity $\Delta(z)$ defined in \eqref{eq:def_Delta}.
\begin{lemma} \label{lem:moments_Delta}
For any compact set $K \subset \Omega$, there exist $C >0$ such that for any $z \in K$ and any $q,N \geq 1$,
\[
	\E\left[ \abs{\Delta(z)}^{2q} \right]
	\leq \frac{(Cq)^{2q}}{N^{2q}}.
\]
\end{lemma}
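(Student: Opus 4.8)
The quantity $\Delta(z)$ is a centered empirical average: writing $g_z(\lambda) = \frac{V'(\lambda)-V'(z)}{\lambda-z}$, we have $\Delta(z) = \frac1N\sum_{k=1}^N g_z(\lambda_k) - \E_{\mu_V}[g_z(\lambda)]$ after noting that $h(z) = \int g_z \,\rd\mu_V$. The plan is to treat this as a linear statistic of the $\beta$-ensemble with a \emph{smooth} test function $g_z$ (which, by analyticity of $V$ and the excision of the singularity at $\lambda=z$, extends analytically to a neighborhood of $\R$), and to extract moment bounds of the claimed size $(Cq)^{2q}/N^{2q}$. The natural route is concentration: linear statistics of log-gases with smooth test functions concentrate at scale $1/N$, and the $\beta$-ensemble measure satisfies a log-Sobolev-type inequality (or one can use the explicit energy structure) giving sub-Gaussian-type tails at that scale, which yields $\E[|\Delta(z)|^{2q}] \le (Cq)^{2q}/N^{2q}$ for each fixed $z$, uniformly over the compact set $K$.

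Concretely, I would proceed as follows. First I would record that $g_z$ and its derivatives are bounded uniformly for $z \in K$ and $\lambda$ in a neighborhood of $\supp \mu_V$; away from that neighborhood the density bound \eqref{eq:bound_density_at_infinity} and the confinement estimate \eqref{eq:large_deviation_rigidity} control the contribution of particles far out (here Assumption \ref{assumption:V'_at_infinity} ensures $g_z$ does not grow too fast relative to $e^{-cNV}$). Second, I would invoke a concentration inequality for $\frac1N\sum g_z(\lambda_k)$ around its mean: using that $\mu_N$ has the form $e^{-\beta N^2 \mc{E}(\cdot)/2 \,+\,\text{lower order}}$ with $\mc{E}$ uniformly convex in the relevant sense (the Hessian of the interaction energy is positive on the space of signed measures), one gets $\P(|\Delta(z)| > t) \le C e^{-c N^2 t^2}$ for $t$ up to order $1$, after accounting for the $O(1/N)$ correction from the finite-$N$ equilibrium measure versus $\mu_V$. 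Third, I would convert this tail bound into moments: $\E[|\Delta(z)|^{2q}] = \int_0^\infty 2q\, t^{2q-1}\P(|\Delta(z)|>t)\,\rd t$, and the Gaussian-type tail at scale $1/N$ (truncated appropriately, with the far range absorbed into a $Ce^{-cN}$ term that is negligible against $(Cq)^{2q}/N^{2q}$ once $q \le cN$, and handled by crude bounds when $q$ is large) gives the stated $(Cq)^{2q}/N^{2q}$.

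The main obstacle I anticipate is making the concentration estimate quantitatively clean and \emph{uniform in $z \in K$ and in $q$} — in particular, that the constant $c$ in the tail $e^{-cN^2 t^2}$ does not degrade, and that the difference between $\E_{\mu_N}[\frac1N\sum g_z(\lambda_k)]$ and $\int g_z\,\rd\mu_V = h(z)$ is genuinely $O(1/N)$ (not just $o(1)$), since $\Delta(z)$ is defined with $h(z)$ rather than the finite-$N$ mean. For the $\beta$-ensemble this $1/N$ bias is standard (it follows from the loop equations / Pastur–Shcherbina-type expansions, or can itself be read off from earlier estimates in this section applied at macroscopic scale), but it must be invoked carefully. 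An alternative, perhaps cleaner, route that sidesteps external concentration machinery: expand $g_z$ and use the loop equations directly at macroscopic scale — since $g_z$ is analytic, $\frac1N\sum g_z(\lambda_k) - h(z)$ can be written via a contour integral as $\frac{1}{2\pi\ii}\oint g_z(w)(s_N(w) - m_V(w))\,\rd w$ over a fixed contour at distance $O(1)$ from $[A,B]$, whence Lemma's bound follows from the (already available, macroscopic-scale) control of $\E[|s_N(w)-m_V(w)|^{2q}]$ with the Gaussian factor $q^q$ — but this risks circularity if the present lemma feeds into the proof of Theorem \ref{th:local_law_bulk}, so I would lean on the self-contained concentration argument and only use loop equations at a scale/order that is genuinely prior in the logical order.
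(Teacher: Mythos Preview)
Your contour-integral route (Plan 2) is exactly what the paper does, and it is the right mechanism: write
\[
\Delta(z) = \frac{1}{2\pi\ii}\int_{\cR} \frac{V'(w)-V'(z)}{w-z}\,\bigl(s_N(w)-m_V(w)\bigr)\diff w
\]
over a fixed rectangle $\cR$ at macroscopic distance from $[A,B]$, then plug in a bound on $\E[|s_N-m_V|^{2q}]$ along $\cR$. The gap in your proposal is precisely the circularity you flag and do not resolve. The paper breaks it with a specific idea you are missing: on a macroscopic contour one can prove the needed moment bound on $s_N-m_V$ \emph{without} any control of $\Delta$, because $\Delta(\cdot)$ is analytic inside the contour. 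Concretely, in the combined loop equation \eqref{eq:loop_eq_confined_combined} one divides by $r(z)(z-w)$ and integrates $z$ over an inner rectangle; the term $\E[\Delta(z)P(w)^{q-1}\overline{P}(w)^q]/(r(z)(z-w))$ is holomorphic and drops out. What remains is a closed inequality for $\E[|P(w)|^{2q}]$ on the contour, which after a maximum-principle step yields $\E[|s_N(w)-m_V(w)|^{2q}]\le (Cq)^q/N^{2q}+C^qe^{-cN}$ uniformly on $\cR$ (Lemma \ref{lem:first_estimate_stieljes_transform}). This is logically prior to Lemma \ref{lem:moments_Delta} and to Theorem \ref{th:local_law_bulk}. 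Only then is $\Delta$ controlled via the contour integral. The paper works under the confined measure $\mu_N^{[A-\delta,B+\delta]}$ to make all contour manipulations legitimate, and removes the confinement at the end; the tail contribution from particles outside $[-M,M]$ is where Assumption \ref{assumption:V'_at_infinity} and the density bound \eqref{eq:bound_density_at_infinity} enter and produce the extra factor $q^{q}$ (giving $(Cq)^{2q}$ rather than $(Cq)^{q}$).

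Your Plan 1 (concentration via log-Sobolev/uniform convexity) does not go through in this generality. The convexity of the energy functional $\mu\mapsto \int V\,\rd\mu - \frac12\iint\log|x-y|\,\rd\mu\,\rd\mu$ on measures is not the same as a lower bound on the Hessian of the Hamiltonian on $\R^N$; the latter requires $V''\ge c$, which is not assumed (the paper explicitly covers non-convex $V$). Without such a spectral gap, the sub-Gaussian tail $\P(|\Delta(z)|>t)\le Ce^{-cN^2t^2}$ is unavailable by Bakry--\'Emery/Herbst, and no substitute concentration input at the required strength is known to be ``self-contained'' here. So the concentration route, as you sketch it, has a genuine hole for general analytic $V$; the paper's analyticity-kills-$\Delta$ trick on the macroscopic contour is the missing idea.
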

\begin{remark}\label{rem:gaussian}
For quadratic $V$, $\Delta(z)=0$, so the above bound is trivial and this section can be skipped. Non-vanishing $\Delta$ is 
responsible for the second error term in Theorem \ref{th:local_law_bulk} (see the chain of inequalities from (\ref{eq:bound_after_Young}) to (\ref{eq:second_case}) below), so that the local law is improved for Gaussian $\beta$-ensembles as mentioned in Remark \ref{rem:comment_local_law2}
\end{remark}
Recall that the function $r$ (see \eqref{eq:def_r}) 
is analytic in $\Omega$ and, by Assumption \ref{assumption:off-criticality}, $r$ has no zero in $[A,B]$. 
We fix some constant $\delta \in (0,1]$ such that the region
\[
\{ x+\ii y : x \in [A-6\delta,B+6\delta], y \in [-6\delta,6\delta] \}
\]
is included in $\Omega$ and does not contain any zero of $r$.
In order to prove Lemma~\ref{lem:moments_Delta}, we will work under the confined distribution of particles
\begin{equation*}
	\diff \mu_N^{[A-\delta,B+\delta]} (\lambda_1,\dots,\lambda_N)
	\coloneqq \frac{1}{Z_N^{[A-\delta,B+\delta]}}
	\cdot
	\prod_{1 \leq k < l \leq N} \absa{\lambda_k - \lambda_l}^\beta 
	\cdot
	\prod_{k=1}^N e^{- \frac{\beta N}{2} V(\lambda_k)}  \1_{\lambda_k \in [A-\delta,B+\delta]} 
	\rd \lambda_k.
\end{equation*}
We denote by $\E^{[A-\delta,B+\delta]}$ the integral with respect to $\mu_N^{[A-\delta,B+\delta]}$.

For any $k \geq 1$, consider the rectangle with vertices $A-k\delta+\ii k\delta$, $B+k\delta+\ii k\delta$, $B+k\delta-\ii k\delta$, $A-k\delta-\ii k\delta$, and denote by $\cR_k$ the corresponding closed contour with positive orientation.
We will first prove the following preliminary lemma, establishing a local law on the rectangle $\cR_3$ under the law $\P^{[A-\delta,B+\delta]}$. We will then write $\Delta(z)$ as a contour integral on $\cR_3$ in terms of $s - m_V$ in order to prove Lemma \ref{lem:moments_Delta}.
\begin{lemma} \label{lem:first_estimate_stieljes_transform}
There exist constants $C,c > 0$ such that, for any $w \in \cR_3$ and $q,N \geq 1$,
\begin{align*}
	\E^{[A-\delta,B+\delta]} \left[ \abs{s(w) - m_V(w)}^{2q} \right] 
	\leq \frac{(Cq)^q}{N^{2q}} + C^q e^{-c N}.
\end{align*}
\end{lemma}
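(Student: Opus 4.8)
The target is a crude local law for $s(w)-m_V(w)$ on the fixed contour $\cR_3$ under the confined measure $\P^{[A-\delta,B+\delta]}$, with the (suboptimal but sufficient) rate $(Cq)^q/N^{2q}+C^q e^{-cN}$. Since $w$ ranges over a compact set uniformly bounded away from $[A,B]$, $\im w$ is bounded below by a constant, so we are really in a ``macroscopic'' regime: the loop-equation machinery should give this cheaply, and the $e^{-cN}$ term is exactly what one pays to pass from the unconfined measure (where \eqref{eq:loop_eq_combined} holds) to the confined one, or to replace $h$ by a near-equal quantity using \eqref{eq:large_deviation_rigidity}.

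\textbf{Step 1: set up the self-consistent equation.} For $w\in\cR_3$ write $P(w)=s(w)^2+V'(w)s(w)+h(w)$. Using the fixed-point equation \eqref{eq:fixed_point_equation} for $m_V$, factor
\[
P(w)=(s(w)-m_V(w))\bigl(s(w)+m_V(w)+V'(w)\bigr)=(s(w)-m_V(w))\bigl(2m_V(w)+V'(w)\bigr)+(s(w)-m_V(w))^2.
\]
By \eqref{eq:link_2m+V'_and_r}, $2m_V(w)+V'(w)=2r(w)b(w)$, and on $\cR_3$ both $r$ and $b$ are bounded below in modulus by a positive constant (this is why $\delta$ was chosen so that $r$ has no zero in the enlarged rectangle, and $b$ is nonvanishing off $[A,B]$). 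Hence $|s(w)-m_V(w)|\le C\bigl(|P(w)|+|s(w)-m_V(w)|^2\bigr)$, which after the a priori bound of Step 3 can be bootstrapped to $|s(w)-m_V(w)|\le C|P(w)|$ on the relevant event.

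\textbf{Step 2: estimate the moments of $P(w)$ from the combined loop equation.} Apply \eqref{eq:loop_eq_combined} (or its confined analogue \eqref{eq:loop_eq_confined_combined}) with $z=w$, getting $\E[|P(w)|^{2q}]$ controlled by: (a) a term $\frac1N(1-\tfrac2\beta)\E[s'(w)|P(w)|^{2q-2}\cdots]$, where on $\cR_3$ one has the deterministic bound $|s'(w)|\le C$ (since $|\lambda_k-w|\ge c$ for all confined $\lambda_k$); (b) the $\Delta(w)$ term, bounded by Lemma \ref{lem:moments_Delta} — \emph{but} that lemma is proved \emph{after} this one, so here I would instead bound $|\Delta(w)|$ directly by $\frac{C}{N}\sum_k|\cdot|+\text{(deterministic }O(1))$ using only the trivial fact that the summand is bounded on $\cR_3$; more honestly, on $\cR_3$ the quantity $\frac1N\sum_k\frac{V'(\lambda_k)-V'(w)}{\lambda_k-w}$ is a bounded Lipschitz function of the $\lambda_k$, so $\Delta(w)=O(1)$ deterministically and $\E[|\Delta(w)-\E\Delta(w)|^{2q}]$ can be handled by a concentration inequality (log-Sobolev / Brascamp-Lieb for the confined log-concave-after-convexification measure, as in \cite{BorGui2013}); and (c) the $f(w,w)$, $f(w,\bar w)$ terms, which carry a prefactor $q/N^2$ and bounded $f$, giving lower-order contributions. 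Collecting, one gets a recursive inequality $\E[|P(w)|^{2q}]\le \frac{Cq}{N^2}\E[|P(w)|^{2q-1}]+\frac{(Cq)^{2q}}{N^{2q}}+C^q e^{-cN}$ type, which by Young's inequality and induction on $q$ closes to $\E[|P(w)|^{2q}]\le (Cq)^q/N^{2q}+C^q e^{-cN}$.

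\textbf{Step 3: a priori bound and conclusion.} To run the bootstrap in Step 1 we need $|s(w)-m_V(w)|\le c_0$ with overwhelming probability; this follows from the rough rigidity \eqref{eq:large_deviation_rigidity}, which gives $|s(w)-m_V(w)|\le\e$ off the event of probability $\le Ce^{-cN}$ (and on that bad event, $|s(w)-m_V(w)|$ and $|P(w)|$ are deterministically $O(N)$, contributing $\le N^{Cq}e^{-cN}\le C^q e^{-cN}$ after absorbing). Combining with Step 2, $\E[|s(w)-m_V(w)|^{2q}]\le C\E[|P(w)|^{2q}]+C e^{-cN}\E[(N\cdot O(1))^{2q}]\le (Cq)^q/N^{2q}+C^q e^{-cN}$, as claimed; uniformity over $w\in\cR_3$ is automatic since all constants depend only on the fixed contour, $V$, $\beta$.

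\textbf{Main obstacle.} The delicate point is the chicken-and-egg with $\Delta$: the clean bound on $\Delta$ (Lemma \ref{lem:moments_Delta}) is proved \emph{using} this lemma (via a contour integral of $s-m_V$ over $\cR_3$), so here I must bound the $\Delta(w)$ term by self-contained means — either a direct concentration estimate for the bounded Lipschitz statistic $\frac1N\sum_k\frac{V'(\lambda_k)-V'(w)}{\lambda_k-w}$ under the confined Gibbs measure, or by absorbing it into the iteration using only $|\Delta(w)|=O(1)$ deterministically and its variance $O(1/N^2)$. Getting the power of $q$ right (namely $(Cq)^q$, not $(Cq)^{2q}$) in this crude step is the only quantitatively nontrivial part, and it is exactly what the $q$-dependent log-Sobolev-type concentration buys; everything else is soft since we are bounded away from the spectrum.
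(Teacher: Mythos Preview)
Your proposal has a genuine gap precisely at the point you yourself flag as the ``main obstacle'': the treatment of $\Delta(w)$. Neither of your suggested workarounds succeeds. The deterministic bound $|\Delta(w)|=O(1)$ is far too weak: plugging it into the loop equation yields $\E[|P(w)|^{2q}]\le C\,\E[|P(w)|^{2q-1}]+\text{(lower order)}$, which after Young's inequality gives only $\E[|P(w)|^{2q}]\le C^{2q}$ with no decay in $N$ at all. The concentration route is not available either: the potential $V$ is not assumed convex, so log-Sobolev and Brascamp--Lieb do not apply to $\mu_N^{[A-\delta,B+\delta]}$; the ``convexification'' you allude to is not a standard tool and would itself require an a~priori local law to set up. Moreover, even if you had Gaussian concentration for $\Delta(w)-\E[\Delta(w)]$, you would still need $|\E[\Delta(w)]|=O(1/N)$, i.e.\ that the one-point function is close to $\varrho_V$ at macroscopic scale --- which is essentially what this lemma is trying to establish. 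So the argument is circular.

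The paper's proof avoids estimating $\Delta$ altogether, and this is the key idea you are missing. One does not take $z=w$ in the combined loop equation; instead one integrates it in $z$ over the contour $\cR_2$ against the kernel $\frac{1}{r(z)(z-w)}$. Because $z\mapsto \frac{\Delta(z)}{r(z)(z-w)}$ is holomorphic inside $\cR_3$ (here the analyticity of $V$ and the choice of $\delta$ so that $r\ne 0$ are used), its contour integral vanishes and the $\Delta$-term disappears entirely. One is left with a contour integral of $\E[\varphi_w(z)]$, which by Cauchy's formula equals $\E[\varphi_w(w)]/r(w)$ plus an integral over $\cR_4$. On $\cR_4$ one uses \eqref{eq:link_2m+V'_and_r} and a residue-at-infinity argument (exploiting $s(z),m_V(z)=z^{-1}+O(z^{-2})$) to show that the linear term $\int_{\cR_4}\frac{(2m_V+V')(s-m_V)}{r(z)(z-w)}\,dz$ vanishes, leaving only the quadratic $(s-m_V)^2$; the maximum principle then pulls this back to $\cR_3$. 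This yields an inequality of the form
\[
\max_{w\in\cR_3}\E[|P(w)|^{2q}]\le \frac{(Cq)^q}{N^{2q}}+C^q\max_{z\in\cR_3}\E[|s(z)-m_V(z)|^{4q}]+C^qe^{-cN},
\]
and the rough large-deviation rigidity \eqref{eq:large_deviation_rigidity} together with the stability $|s-m_V|\le |P|/c_2$ on the high-probability event closes the bootstrap. Your Steps~1 and~3 are essentially correct and match the paper; it is the contour-integration mechanism in Step~2 --- designed precisely to sidestep $\Delta$ --- that is the missing idea.
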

The proof of this lemma makes use of ideas of Bourgade, Erdös and Yau, \cite[Lemma 2.2]{BouErdYau2012} and \cite[Lemma 3.3]{BouErdYau2014a},  for convex and non-convex potentials, respectively. 
The idea is to integrate the combined loop equation \eqref{eq:loop_eq_confined_combined} with respect to $z$ on a contour around the spectrum to get rid of the term involving $\Delta(z)$, which is analytic inside this contour.
We use this to get bounds on moments of $P(w)$ in terms of moments of $P(z)$ at other points $z$ and we conclude using the maximum principle. See Figure \ref{fig:my_label}.
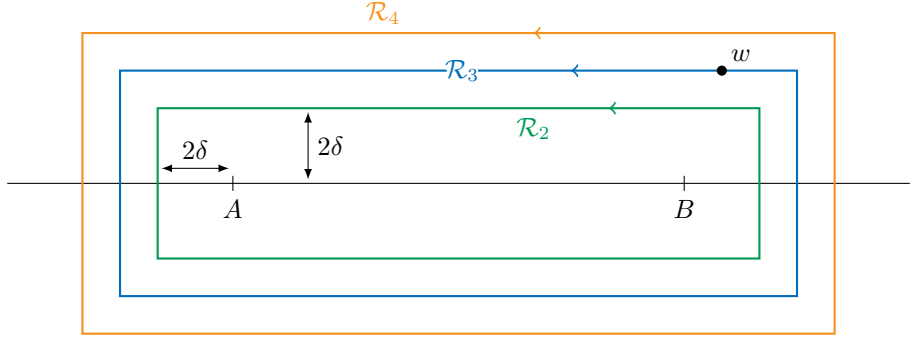
\begin{figure}
    \centering
    {\color{black}
    \begin{tikzpicture}
    \draw (-6,0) -- (6,0);
    \draw (-3,0.1) -- (-3,-0.1) node[below]{$A$};
    \draw (3,0.1) -- (3,-0.1) node[below]{$B$};
    \draw[thick,ForestGreen] (-4,-1) rectangle (4,1);
    \draw[ForestGreen] (1,1) node[below]{$\cR_2$};
    \draw[<->,>=latex] (-2,0.05) -- (-2,0.95); 
    \draw (-2,0.5) node[right]{$2\delta$};
    \draw[<->,>=latex] (-3.95,0.2) -- (-3.05,0.2); 
    \draw (-3.5,0.2) node[above]{$2\delta$};
    \draw[thick,NavyBlue] (-4.5,-1.5) rectangle (4.5,1.5);
    \draw[fill,white] (-0.15,1.4) rectangle (0.25,1.6);
    \draw[NavyBlue] (0.05,1.5) node{$\cR_3$};
    \draw[fill] (3.5,1.5) circle (0.06);
    \draw (3.5,1.5) node[above right]{$w$};
    \draw[thick,BurntOrange] (-5,-2) rectangle (5,2);
    \draw[BurntOrange] (-1,2) node[above]{$\cR_4$};
    \draw[->,thick,ForestGreen] (2.01,1) -- (2,1); 
    \draw[->,thick,NavyBlue] (1.51,1.5) -- (1.5,1.5); 
    \draw[->,thick,BurntOrange] (1.01,2) -- (1,2); 
    \end{tikzpicture}
    }
    \caption{Illustration of the contours used in the proof of Lemma \ref{lem:first_estimate_stieljes_transform}. The point $w$ is fixed on $\cR_3$. The point $z$ will be successively taken in $\cR_2$, then in $\cR_4$ and finally in $\cR_3$}
    \label{fig:my_label}
\end{figure}
\begin{proof}
In this proof we only work under the measure $\mu_N^{[A-\delta,B+\delta]}$, so we write $\E$ instead of $\E^{[A-\delta,B+\delta]}$ for brevity.
For any $z,w \in \Omega \setminus [A-\delta,B+\delta]$, we set 
\begin{align*}
	\varphi_w(z) 
	& \coloneqq 
	P(z) P(w)^{q-1} \overline{P}(w)^q
	- \frac{1}{N} \pa{1-\frac{2}{\beta}} s'(z) P(w)^{q-1} \overline{P}(w)^q \\
	& \quad {} + \frac{2(q-1)}{N^2\beta} 
		f(z,w) (2s(w) + V'(w)) P(w)^{q-2} \overline{P}(w)^q 
		+ \frac{2q}{N^2\beta} f(z,\overline{w}) 
			(2\overline{s}(w) + \overline{V}'(w)) \abs{P(w)}^{2q-2}. 
\end{align*}
Then, combining \eqref{eq:loop_eq_confined_combined} with Lemma \ref{lem:terms_derivative_wrt_a}, there exist constants $c_1 > 0$ such that, for any $w \in \cR_3$ and $z \in \cR_2$,
\begin{align} \label{eq:loop_eq_rewritten}
	\E[\varphi_w(z) ]
	+ \E\left[ \Delta(z) P(w)^{q-1} \overline{P}(w)^q \right] 
	= \OO \left( C^q e^{-c_1 N} \right),
\end{align}
where the error term is uniform in $z$ and $w$.
From \eqref{eq:loop_eq_rewritten} and recalling that $r$ is uniformly bounded away from 0 on $\cR_2$ because its zeroes are at distance at least $6 \delta$ from $[A,B]$, we have
\begin{align*}
	\frac{1}{2\ii\pi}
	\int_{\cR_2}
	\frac{\E[\varphi_w(z)] + \E[\Delta(z) P(w)^{q-1} \overline{P}(w)^q]}
	{r(z) (z-w)}
	\diff z
	= \OO \left( C^q e^{-c_1 N} \right),
\end{align*}
uniformly in $w \in \cR_3$.
The function 
\[
z \mapsto \frac{\E[\Delta(z) P(w)^{q-1} \overline{P}(w)^q]}{r(z) (z-w)}
\]
is holomorphic inside the rectangle $\cR_3$ so its contour integral over $\cR_2$ is zero and we deduce
\begin{align} \label{eq:integral_over_R_2}
	\frac{1}{2\ii\pi}
	\int_{\cR_2}
	\frac{\E[\varphi_w(z)]}{r(z) (z-w)}
	\diff z
	= \OO \left( C^q e^{-c_1 N} \right),
\end{align}
uniformly in $w \in \cR_3$.
The function $z \mapsto \frac{\E[\varphi_w(z)]}{r(z)}$ is holomorphic on $\Omega \setminus [A-\delta,B+\delta]$ so the Cauchy integral formula yields
\begin{align} \label{eq:from_R_2_to_R_4}
	\frac{1}{2\ii\pi}
	\int_{\cR_4}
	\frac{\E[\varphi_w(z)]}{r(z) (z-w)}
	\diff z
	= \frac{\E[\varphi_w(w)]}{r(w)}
	+ \frac{1}{2\ii\pi}
	\int_{\cR_2}
	\frac{\E[\varphi_w(z)]}{r(z) (z-w)}
	\diff z.
\end{align}
On the other hand, recalling the definition of $\varphi_w(z)$, uniformly in $z \in \cR_3 \cup \cR_4$ and $w \in \cR_3$, we have
\begin{align} \label{eq:varphi_w(z)-P(z)}
	\absa{ \varphi_w(z) - P(z) P(w)^{q-1} \overline{P}(w)^q }
	& \leq \frac{C \abs{P(w)}^{2q-1}}{N} + \frac{C q \abs{P(w)}^{2q-2}}{N^2}. 
\end{align}
Combining this with \eqref{eq:integral_over_R_2} and \eqref{eq:from_R_2_to_R_4} and recalling that $r$ is uniformly bounded away from 0 on $\cR_3$, we get
\begin{align*}
	\E \left[ \abs{P(w)}^{2q} \right]
	& \leq \E \left[ \frac{C \abs{P(w)}^{2q-1}}{N} + \frac{Cq \abs{P(w)}^{2q-2}}{N^2} \right]
	+ C \absa{\int_{\cR_4}
		\frac{\E[P(z) P(w)^{q-1} \overline{P}(w)^q]}{r(z) (z-w)}
		\diff z}
	+ C^q e^{-c_1 N}. 
\end{align*}
Recall Young's inequality says that if $x,y\geq 0$ and $a,b > 1$ 
are such that $a^{-1} + b^{-1} = 1$, then $xy \leq \frac{x^a}{a} + \frac{y^b}{b}$.
Choosing $x= 4C/N$, $y = \abs{P(w)}^{2q-1}/4$, $a=2q$, $b=2q/(2q-1)$ we have
\begin{align*}
	\frac{C \abs{P(w)}^{2q-1}}{N}
	& \leq \frac{1}{2q} \frac{(4C)^{2q}}{N^{2q}} 
	+ \frac{2q-1}{2q} \frac{\abs{P(w)}^{2q}}{4^{2q/(2q-1)}}
	\leq \frac{(4C)^{2q}}{N^{2q}} 
	+ \frac{\abs{P(w)}^{2q}}{4}.
\end{align*}
Similarly, we have $\frac{Cq \abs{P(w)}^{2q-2}}{N^2} \leq \frac{(4Cq)^q}{N^{2q}} + \frac{\abs{P(w)}^{2q}}{4}$ 
and therefore, 
up to a modification of the constant~$C$, we have
\begin{align} \label{eq:first_bound_for_P(w)}
	\E \left[ \abs{P(w)}^{2q} \right]
	& \leq \frac{(Cq)^q}{N^{2q}}
	+ C \absa{\int_{\cR_4}
		\frac{\E[P(z) P(w)^{q-1} \overline{P}(w)^q]}{r(z) (z-w)}
		\diff z}
	+ C^q e^{-c_1 N}, 
\end{align}
for any $w \in \cR_3$.
 
We now bound the integral over $\cR_4$.
Recall $P = s^2+V's+h$ and $m_V^2 + V' m_V + h = 0$. It follows that
\begin{align} \label{eq:P_rewritten}
	P = s^2 - m_V^2 + V' (s - m_V) = (s-m_V)^2 + (2m_V+V') (s-m_V)
\end{align}
and therefore
\begin{align*}
	& \int_{\cR_4}
	\frac{\E[P(z) P(w)^{q-1} \overline{P}(w)^q]}{r(z) (z-w)}
	\diff z \\
	& = \E \left[ P(w)^{q-1} \overline{P}(w)^q 
	\int_{\cR_4} \frac{(s(z)-m_V(z))^2 + (2m_V(z)+V'(z)) (s(z)-m_V(z))}{r(z) (z-w)}
	\diff z\right].
\end{align*}
Moreover, recall from \eqref{eq:link_2m+V'_and_r} that, for any $z \in \Omega \setminus [A,B]$,
$2 m_V(z) + V'(z) = 2 r(z) b(z)$, where $b$ is an analytic function in $\CC \setminus [A,B]$, which is always a square root of $(z-A)(z-B)$ and such that $b(z) \sim z$ at infinity.
Therefore, we have 
\begin{align*}
	\int_{\cR_4} \frac{(2m_V(z)+V'(z)) (s(z)-m_V(z))}{r(z) (z-w)} \diff z
	= \int_{\cR_4} \frac{b(z) (s(z)-m_V(z))}{z-w} \diff z.
\end{align*}
Since $s$ and $m_V$ are Stieljes transform of compactly supported probability measures, they both satisfy $s(z) = z^{-1} + \OO(z^{-2})$ and $m_V(z) = z^{-1} + \OO(z^{-2})$ as $\abs{z} \to \infty$. 
Hence, the function $z \mapsto \frac{b(z) (s(z)-m_V(z))}{z-w}$ is holomorphic on and outside $\cR_3$ and behaves as $\OO(z^{-2})$ as $\abs{z} \to \infty$. Therefore, by the Cauchy integral formula with residue at infinity, we get
\begin{align*}
	\int_{\cR_4} \frac{b(z) (s(z)-m_V(z))}{z-w} \diff z = 0.
\end{align*}
Hence, we have
\begin{align}
	\absa{ \int_{\cR_4}
	\frac{\E[P(z) P(w)^{q-1} \overline{P}(w)^q]}{r(z) (z-w)}
	\diff z }
	& = \absa{ \int_{\cR_4} 
	\frac{\E[(s(z)-m_V(z))^2 P(w)^{q-1} \overline{P}(w)^q]}{r(z) (z-w)}
	\diff z } \nonumber \\
	& \leq 
	C \max_{z \in \cR_4} 
	\absa{
	\E \left[ (s(z)-m_V(z))^2 P(w)^{q-1} \overline{P}(w)^q \right] 
	} \nonumber \\
	& \leq C \max_{z \in \cR_3} 
	\absa{
	\E \left[ (s(z)-m_V(z))^2 P(w)^{q-1} \overline{P}(w)^q \right] 
	}, \label{eq:bound_integral_over_R_4}
\end{align}
where we applied the maximum principle to the function $z \mapsto \E[(s(z)-m_V(z))^2 P(w)^{q-1} \overline{P}(w)^q]$, which is analytic outside the contour $\cR_3$ and tends to 0 at infinity.
Applying Young's inequality again, for some $\lambda > 0$, with $x=\lambda \abs{s(z)-m_V(z)}^2$, $y = \frac{1}{\lambda} \abs{P(w)}^{2q}$, $a=2q$, $b=2q/(2q-1)$, we have
\begin{align}
	\absa{
		\E \left[ (s(z)-m_V(z))^2 P(w)^{q-1} \overline{P}(w)^q \right] 
	}
	& \leq
	\frac{\lambda^{2q}}{2q} 
	\E \left[ \abs{s(z)-m_V(z)}^{4q} \right] 
	+ \frac{1}{\lambda} 
	\E \left[ \abs{P(w)}^{2q} \right]. 
	\label{eq:bound_Young}
\end{align}
Coming back to \eqref{eq:first_bound_for_P(w)}, applying \eqref{eq:bound_integral_over_R_4} and \eqref{eq:bound_Young} and choosing $\lambda$ large enough depending on the other constants, we get that, for some constant $C>1$,
\begin{align} \label{eq:second_bound_for_P(w)}
	\E \left[ \abs{P(w)}^{2q} \right]
	& \leq \frac{(Cq)^q}{N^{2q}}
	+ C^q \max_{z \in \cR_3} 
	\E \left[ \abs{s(z)-m_V(z)}^{4q} \right] 
	+ C^q e^{-c_1 N}, 
\end{align}
for any $w \in \cR_3$.

Using $2 m_V + V' = 2 rb$ and our definition of $\delta$, we see there is a constant $c_2 >0$ such that $\abs{2 m_V + V'}(z) \geq 2 c_2$ for any $z \in \cR_3$.
Therefore, if $\abs{s(z)-m_V(z)} \leq c_2$, then it follows from \eqref{eq:P_rewritten} that $\abs{s(z)-m_V(z)} \leq \abs{P(z)}/c_2$.
We define the event $E_z \coloneqq \{ \abs{s(z)-m_V(z)} \leq \varepsilon \}$.
Then, if $\varepsilon \leq c_2$, we get
\begin{align} \label{eq:bound_using_E_z}
	\E \left[ \abs{s(z)-m_V(z)}^{4q} \right] 
	& \leq 
	\varepsilon^{2q} \cdot \frac{1}{c_2^{2q}}
	\E \left[ \abs{P(z)}^{2q} \right] 
	+ C^{4q} \P(E_z^c)
\end{align}
where on event $E_z^c$ we simply used that $\abs{s(z)-m_V(z)} \leq C$.
We choose $\varepsilon \coloneqq c_2/(2 \sqrt{C})$ where $C$ is the constant appearing in \eqref{eq:second_bound_for_P(w)}.
Then, it follows from \eqref{eq:large_deviation_rigidity}%
\footnote{Recall that we are working under $\P^{[A-\delta,B+\delta]}$.
Note that the function $\lambda \in [A-\delta,B+\delta] \mapsto \frac{1}{\lambda-z}$ is bounded, as well as its derivative, uniformly in $z \in \cR_3$.
Therefore, for any $\varepsilon > 0$, there exists $\varepsilon' > 0$ such that, for any $z \in \cR_3$, $\P^{[A-\delta,B+\delta]}(E_z^c) \leq \P^{[A-\delta,B+\delta]}(F)$ where $F \coloneqq \{ \exists k \in \llbracket 1,N \rrbracket, \abs{\lambda_k - \gamma_k} \geq \varepsilon' \}$.
Then, we have 
$\P^{[A-\delta,B+\delta]}(F) \leq  \P (F) / \P(\forall k, \lambda_k \in [A-\delta,B+\delta])$, 
and we can apply \eqref{eq:large_deviation_rigidity} to both probabilities to get the desired result.}, 
that there exist constants $c_3,C_3 > 0$ such that for any $z \in \cR_3$, $\P(E_z^c) \leq C_3 e^{-c_3 N}$.
Coming back to \eqref{eq:second_bound_for_P(w)}, we get, for any $w \in \cR_3$,
\begin{align*}
	\E \left[ \abs{P(w)}^{2q} \right]
	& \leq \frac{(Cq)^q}{N^{2q}}
	+ \frac{1}{2^q} \max_{z \in \cR_3} 
	\E \left[ \abs{P(z)}^{2q} \right] 
	+ C^{4q} C_3 e^{-c_3 N}
	+ C^q e^{-c_1 N}. 
\end{align*}
We take the maximum over $w \in \cR_3$ and bring the term $\frac{1}{2^q} \max_{z \in \cR_3} \E[ \abs{P(z)}^{2q}]$ to the left-hand side to get, up to a modification of the constant $C$,
\begin{align} \label{eq:almost_final_bound}
	\max_{w \in \cR_3} \E \left[ \abs{P(w)}^{2q} \right]
	& \leq \frac{(Cq)^q}{N^{2q}}
	+ C^q e^{-(c_1 \vee c_3) N}. 
\end{align}
Finally, proceeding as in \eqref{eq:bound_using_E_z}, for any $w \in \cR_3$, we have
\begin{align*}
	\E \left[ \abs{s(w)-m_V(w)}^{2q} \right] 
	& \leq \frac{1}{c_2^{2q}} \E \left[ \abs{P(w)}^{2q} \right] 
	+ C^{2q} \cdot C_3 e^{-c_3 N},
\end{align*}
which, combined with \eqref{eq:almost_final_bound}, concludes the proof.
\end{proof}
\begin{proof}[Proof of Lemma \ref{lem:moments_Delta}]
Fix a compact set $K \subset \Omega$.
We will first prove there exist $C,c >0$ such that for any $z \in K$ and any $q \geq 1$,
\begin{align} \label{eq:moments_of_Delta_confined}
	\E^{[A-\delta,B+\delta]} \left[ \abs{\Delta(z)}^{2q} \right]
	\leq \frac{(Cq)^q}{N^{2q}} + C^q e^{-cN}.
\end{align}
For this we write $\Delta(z)$ as a contour integral on $\cR_3$: $\P^{[A-\delta,B+\delta]}$-a.s., we have
\begin{align*}
	\Delta(z) = \frac{1}{2\ii \pi} \int_{\cR_3} \frac{V'(w) - V'(z)}{w-z} \left( s(w) - m_V(w) \right) \diff w,
\end{align*}
because the function $w \mapsto \frac{V'(w) - V'(z)}{w-z}$ is analytic in $\Omega$.
Since $\frac{V'(w) - V'(z)}{w-z}$ is uniformly bounded for $w \in \cR_3$ and $z \in K$, we get, using Jensen's inequality,
\begin{align*}
	\abs{\Delta(z)}^{2q}
	\leq C^q \int_{\cR_3} \absa{s(w) - m_V(w)}^{2q}  \mathrm{Leb}(\diff w),
\end{align*}
where $\mathrm{Leb}$ denotes the Lebesgue measure on $\cR_3$.
Taking the expectation $\E^{[A-\delta,B+\delta]}$ and applying Lemma \ref{lem:first_estimate_stieljes_transform}, we get \eqref{eq:moments_of_Delta_confined}.

Now we want to replace $\E^{[A-\delta,B+\delta]}$ by $\E$.
First note that
\begin{align} \label{eq:Delta_bound_1}
	\E \left[ \abs{\Delta(z)}^{2q} 
	\1_{\forall k, \lambda_k \in [A-\delta,B+\delta]} \right]
	\leq \E^{[A-\delta,B+\delta]} \left[ \abs{\Delta(z)}^{2q} \right]
	\leq \frac{(Cq)^q}{N^{2q}} + C^q e^{-cN}.
\end{align}
by \eqref{eq:moments_of_Delta_confined}.
Let $M_0$ and $M_1$ be the constants given by Assumption \ref{assumption:V'_at_infinity} and \eqref{eq:bound_density_at_infinity} respectively.
We fix some $M \geq \max(M_0,M_1)$ such that $[A-\delta,B+\delta] \subseteq [-M,M]$.
On the event $\{ \forall k, \lambda_k \in [-M,M] \}$, there exists a constant $C>0$ such that $\abs{\Delta(z)} \leq C$ for any $z \in K$. Therefore, using \eqref{eq:large_deviation_rigidity}, we have
\begin{align} \label{eq:Delta_bound_2}
	\E \left[ \abs{\Delta(z)}^{2q} 
	\1_{\exists k, \lambda_k \notin [A-\delta,B+\delta]} 
	\1_{\forall k, \lambda_k \in [-M,M]} \right]
	& \leq C^{2q} \P(\exists k, \lambda_k \notin [A-\delta,B+\delta])
	\leq C^{2q} e^{-cN}.
\end{align}
It remains to bound 
\begin{align} \label{eq:Delta_remaining_term}
	\E \left[ \abs{\Delta(z)}^{2q} 
	\1_{\exists k, \lambda_k \notin [-M,M]} \right]
	& = \E \left[ \abs{\Delta(z)}^{2q} 
		\1_{\lambda_{\min} \leq -M \text{ or } \lambda_{\max} \geq M} \right],
\end{align}
where $\lambda_{\min} \coloneqq \min_{1 \leq k \leq N} \lambda_k$ and $\lambda_{\max} \coloneqq \max_{1 \leq k \leq N} \lambda_k$.
By definition of $\Delta(z)$, we have, uniformly in $z \in K$,
\begin{align*} 
	\abs{\Delta(z)}
	\leq C + \frac{1}{N} \sum_{k=1}^N 
	C \left( \frac{\abs{V'(\lambda_k)}}{\abs{\lambda_k}+1} +1 \right)
	\leq C + C \sup_{y \in [\lambda_{\min},\lambda_{\max}]} \frac{\abs{V'(y)}}{\abs{y}+1}
	\leq C \left(1 + V(\lambda_{\min}) + V(\lambda_{\max}) \right),
\end{align*}
where, in the last inequality, we used Assumption \ref{assumption:V'_at_infinity}.
Therefore, we get
\begin{align*} 
	\E \left[ \abs{\Delta(z)}^{2q} 
	\1_{\lambda_{\min} \leq -M \text{ or } \lambda_{\max} \geq M} \right]
	\leq C^q \E \left[
	\left(1 + V(\lambda_{\min})^{2q} + V(\lambda_{\max})^{2q} \right)
	\1_{\lambda_{\min} \leq -M \text{ or } \lambda_{\max} \geq M} \right].
\end{align*}
The first term in this expectation gives $\P(\lambda_{\min} \leq -M \text{ or } \lambda_{\max} \geq M) \leq C e^{-cN}$ by \eqref{eq:large_deviation_rigidity}, 
and two other terms can be treated similarly. We now study
\begin{align*} 
	\E \left[ V(\lambda_{\max})^{2q} 
	\1_{\lambda_{\min} \leq -M \text{ or } \lambda_{\max} \geq M} \right]
	& = \E \left[ V(\lambda_{\max})^{2q} 
	\1_{\lambda_{\min} \leq -M \text{ and } \lambda_{\max} < M} \right]
	+ \E \left[ V(\lambda_{\max})^{2q} 
	\1_{\lambda_{\max} \geq M} \right] \\
	& \leq \E \left[ \left( V(\lambda_{\min})^{2q} + C \right)
	\1_{\lambda_{\min} \leq -M} \right]
	+ \E \left[ V(\lambda_{\max})^{2q} 
	\1_{\lambda_{\max} \geq M} \right].
\end{align*}
We postpone momentarily the proof of the bounds 
\begin{align} \label{eq:Delta_bound_3}
	\E \left[ V(\lambda_{\min})^{2q} \1_{\lambda_{\min} \leq -M} \right]
	\leq \frac{(Cq)^{2q}}{N^{2q}}
	\qquad \text{and} \qquad
	\E \left[ V(\lambda_{\max})^{2q} \1_{\lambda_{\max} \geq M} \right]
	\leq \frac{(Cq)^{2q}}{N^{2q}}.
\end{align}
Assuming them, we have
\begin{align*} 
	\E \left[ \abs{\Delta(z)}^{2q} 
	\1_{\lambda_{\min} \leq -M \text{ or } \lambda_{\max} \geq M} \right]
	\leq \frac{(Cq)^{2q}}{N^{2q}} + C^q e^{-cN},
\end{align*}
which, combined with \eqref{eq:moments_of_Delta_confined}, \eqref{eq:Delta_bound_1} and \eqref{eq:Delta_bound_2} proves that
\begin{align}  \label{eq:almost_final_bound_Delta}
	\E\left[ \abs{\Delta(z)}^{2q} \right]
	\leq \frac{(Cq)^{2q}}{N^{2q}} + C^q e^{-cN}.
\end{align}
Using that $x \log x \geq -1/e$ for any $x \geq 0$, we have that $\frac{(Cq)^{2q}}{N^{2q}} \geq \exp(-2N/(Ce))$, so up to a modification of the constant $C$, the second term on the right-hand side of \eqref{eq:almost_final_bound_Delta} can be neglected and the result follows.

Finally, we prove \eqref{eq:Delta_bound_3}. The proofs of both cases are identical, so we focus on the case $\lambda_{\max}$.
Recall $\varrho_1^{(N)}(s)$ denotes the 1-point function for the eigenvalues under $\mu_N$ and note that, for any interval $[a,b]$, we have $\P(\lambda_{\max} \in [a,b]) \leq \int_a^b N \varrho_1^{(N)}(s) \diff s$. Hence, 
\begin{align*} 
	\E \left[ V(\lambda_{\max})^{2q} 
	\1_{\lambda_{\max} \geq M} \right]
	& \leq \int_M^\infty V(s)^{2q} N \varrho_1^{(N)}(s) \diff s
	\leq N \int_M^\infty V(s)^{2q} e^{-c_1 NV(s)} \diff s,
\end{align*}
using \eqref{eq:bound_density_at_infinity}. By Assumption \ref{assumption:V'_at_infinity}, we have $V'(s) \geq c$ for $s \geq M$, so we can use the change of variable $x = V(s)$ and induction on $q$ to find
\begin{align*} 
	\int_M^\infty V(s)^{2q} e^{-c_1 NV(s)} \diff s
	& \leq \int_{V(M)}^\infty x^{2q} e^{-c_1 Nx} \frac{\diff s}{c}
	\leq C \int_0^\infty x^{2q} e^{-c_1 Nx} \diff s
	= C \frac{(2q)!}{(c_1N)^{2q+1}},
\end{align*}
which proves the second part of \eqref{eq:Delta_bound_3}.
\end{proof}

\begin{remark} \label{rem:alternative_assumption}
The proof of Lemma \ref{lem:moments_Delta} is the only place where we use Assumption \ref{assumption:V'_at_infinity}.
As mentioned in Remark~\ref{rem:comment_local_law3}, this assumption can be replaced 
to include slower divergence. For example, 
\begin{equation} \label{eq:alternative_assumption}
	\liminf_{x \to \pm \infty} \frac{V(x)}{2 \ln \abs{x}} > 1
	\qquad \text{and} \qquad 
	\limsup_{x \to \pm \infty} \frac{\abs{V'(x)}}{\abs{x}} < \infty
\end{equation}
works.
The first part of this assumption is the usual assumption that ensures that the measure in \eqref{eq:beta_ensembles_density} is finite.
The second part of \eqref{eq:alternative_assumption} implies that $\abs{\Delta(z)}$ is uniformly bounded for $z$ in a compact set, so we can directly bound the expectations in \eqref{eq:Delta_bound_2} and \eqref{eq:Delta_remaining_term} by $C^q e^{-cN}$.
Therefore, under the assumption (\ref{eq:alternative_assumption}), Lemma \ref{lem:moments_Delta} becomes 
\[
	\E\left[ \abs{\Delta(z)}^{2q} \right]
	\leq \frac{(Cq)^q}{N^{2q}} + C^q e^{-cN},
\]
which is slightly better,
and the local law in Theorem \ref{th:local_law_bulk} becomes
\[
	\E\left[ \absa{ s(z) - m_V(z) }^q \right]
	\leq \frac{(Cq)^{q/2}}{(N\eta)^q} 
		+ \frac{C^q e^{-cN}}{\abs{z-A}^{q/2} \abs{z-B}^{q/2}},
\]
by following the same proof as below.
\end{remark}

\subsection{Proof of the local law between the edges. }\label{subsec:between}

In this section we prove the local law in a trapezoid above $[A,B]$, that is Theorem \ref{th:local_law_bulk}.
With the loop equations combined as in \eqref{eq:loop_eq_combined}, we are able to show $P(z) = s(z)^2 + V'(z)s(z) + h(z)$ is small.
Since $m_V(z)$ is solution of the equation $m_V(z)^2 + V'(z)m_V(z) + h(z) = 0$, this means that $m_V(z)$ and $s(z)$ are close (see Lemma \ref{lem:fixed_point_equation} for the precise statement).
\begin{proof}[Proof of Theorem \ref{th:local_law_bulk}] 
We consider the constant $\tilde{\eta} > 0$ given by Lemma \ref{lem:preliminaries_fixed_point_equation}, and work with $z = E + \ii \eta$ such that $\eta \in (0,\tilde{\eta}]$ and $E \in [A-\tilde{\eta},B+\tilde{\eta}]$.
Recall that by the choice of $\tilde{\eta}$, we have in particular $z \in \Omega$, that is $V$ is analytic at $z$.
We start from the loop equation \eqref{eq:loop_eq_combined} in which we take $w=z$ and apply the triangle inequality:
\begin{align} 
	\begin{split} \label{eq:first_bound_loop_eq}
	\E\left[ \abs{P(z)}^{2q} \right]
	& \leq \E\left[ 
	\absa{\frac{1}{N} \pa{1-\frac{2}{\beta}} s'(z)} 
	 \cdot \abs{P(z)}^{2q-1} \right] 
	+ \E\left[ \absa{ \Delta(z) } \cdot \abs{P(z)}^{2q-1} \right] \\
	& \quad {} + 
	\E\left[
	\absa{ \frac{2(q-1)}{N^2\beta} f(z,z) (2s(z) + V'(z))} 
	\cdot \abs{P(z)}^{2q-2} \right] \\
	& \quad {} +  
	\E\left[ 
	\absa{ \frac{2q}{N^2\beta} f(z,\overline{z}) (2s(z) + V'(z))}
	\cdot \abs{P(z)}^{2q-2} \right]. 
	\end{split}
\end{align}
Recall Young's inequality: if $x,y\geq 0$ and $a,b > 1$ such that $a^{-1} + b^{-1} = 1$, then $xy \leq \frac{x^a}{a} + \frac{x^b}{b}$.
We apply it to each term, introducing artificially factors $\lambda$ and $\frac{1}{\lambda}$ for some $\lambda >0$ and taking $a=2q$, $b=2q/(2q-1)$ for the two first terms and $a=q$, $b=q/(q-1)$ for the two last terms. It follows that the right-hand side of \eqref{eq:first_bound_loop_eq} is smaller than
\begin{align*}
	& \frac{\lambda^{2q}}{2q} \E\left[ 
	\absa{\frac{1}{N} \pa{1-\frac{2}{\beta}} s'(z)}^{2q}
	\right] 
	+ \frac{\lambda^{2q}}{2q} \E\left[ \absa{ \Delta(z) }^{2q} \right] 
	+ \frac{\lambda^q}{q} \E\left[
	\absa{ \frac{2(q-1)}{N^2\beta} f(z,z) (2s(z) + V'(z))}^q 
	\right] \\
	& {} +  
	\frac{\lambda^q}{q} \E\left[ 
	\absa{ \frac{2q}{N^2\beta} f(z,\overline{z}) (2s(z) + V'(z))}^q
	\right]
	+ \left( 2 \cdot \frac{2q-1}{2q \lambda^{2q/(2q-1)}}
	+ 2 \cdot \frac{q-1}{q \lambda^{q/(q-1)}} \right)
	\E\left[ \abs{P(z)}^{2q} \right].
\end{align*}
Taking $\lambda = 8$, the last term is smaller than $\frac{1}{2} \E[\abs{P(z)}^{2q}]$, so we can bring it to the left-hand side and get
\begin{align*}
	\E\left[ \abs{P(z)}^{2q} \right]
	& \leq \frac{C^q}{N^{2q}} \E\left[\absa{s'(z)}^{2q}\right] 
	+ C^q \E\left[ \absa{ \Delta(z) }^{2q} \right] 
	+ \frac{(Cq)^q}{N^{2q}} 
	\E\left[ \left( \absa{f(z,z)}^q + \absa{f(z,\overline{z})}^q \right)  \absa{2s(z) + V'(z)}^q \right].
\end{align*}
Recall that we write $z = E + \ii \eta$.
Then, we have
\begin{align*}
	\absa{s'(z)}
	= \absa{ \frac{1}{N} \sum_{k=1}^N \frac{1}{(\lambda_k-z)^2} }
	\leq \frac{1}{N} \sum_{k=1}^N \frac{1}{\abs{\lambda_k-z}^2}
	= \frac{\im s(z)}{\eta}
\end{align*}
and, recalling the definition of $f$ in \eqref{eq:def_f(z,w)} and proceeding similarly,
\begin{align*}
	\absa{f(z,z)} 
	= \frac{1}{2} \abs{s''(z)} 
	\leq \frac{\im s(z)}{\eta^2}
	\qquad \text{and} \qquad
	\absa{f(z,\overline{z})}
	= \absa{ \frac{s(z)-\overline{s(z)}}{(z-\overline{z})^2} - \frac{\overline{s'(z)}}{z-\overline{z}} }
	\leq 2 \frac{\im s(z)}{\eta^2}.
\end{align*}
Hence, applying also Lemma \ref{lem:moments_Delta} to bound $\E[ \absa{ \Delta(z) }^{2q}]$, we get
\begin{align} \label{eq:bound_after_Young}
	\E\left[ \abs{P(z)}^{2q} \right]
	& \leq \frac{(Cq)^q}{(N\eta)^{2q}} 
	\E\left[ (\im s(z))^{2q} 
	+ (\im s(z))^q \cdot \absa{2s(z) + V'(z)}^q \right] 
	+ \frac{(Cq)^{2q}}{N^{2q}},
\end{align}
uniformly in $z$ such that $\eta \in (0,\tilde{\eta}]$ and $E \in [A-\tilde{\eta},B+\tilde{\eta}]$.

We now assume additionally that $E \in [A-\eta,B+\eta]$.
We can therefore apply bound \eqref{eq:stability_lemma_bound_3} of Lemma \ref{lem:fixed_point_equation}, noting that $\im s(z) > 0$ to get
\begin{align} \label{eq:consequence_stability}
	\absa{s(z)-m_V(z)} 
	\leq C \left( \frac{\abs{P(z)}}{\abs{b(z)}} \wedge \abs{P(z)}^{1/2} \right).
\end{align}
Let $\Lambda \coloneqq (\im s(z)) \vee \abs{2s(z)+V'(z)}$.
Note that
\begin{align*}
	\Lambda 
	\leq 2 \absa{s(z)-m_V(z)} 
	+ \left(\im m_V(z) \vee \abs{2m_V(z)+V'(z)}\right)
	\leq C(\abs{P(z)}^{1/2} + \abs{b(z)}),
\end{align*}
where we applied \eqref{eq:consequence_stability} for the first term, and, for the second term, we used that $2m_V(z)+V'(z) = 2r(z)b(z)$ by \eqref{eq:link_2m+V'_and_r} and $\im m_V(z) \leq \frac{3}{2} \im(r(z)b(z))$ by \eqref{eq:preliminary_lemma_bound_2}.
Hence, we get
\begin{align*}
	\E \left[ \absa{\Lambda}^{2q} \right] 
	& \leq C^q \left( \E \left[ \abs{P(z)}^q \right] 
	+ \abs{b(z)}^{2q} \right)
	\leq \frac{(Cq)^{q/2}}{(N\eta)^q} 
	\E \left[ \absa{\Lambda}^q \right] 
	+ \frac{(Cq)^{q}}{N^{q}}
	+ C^q \abs{b(z)}^{2q},
\end{align*}
by \eqref{eq:bound_after_Young}.
Using that $x \leq a \sqrt{x} + b$ for some $a,b,x > 0$ implies $x \leq a^2 + b$, we get
\begin{align} \label{eq:bound_Lambda}
	\E \left[ \absa{\Lambda}^{2q} \right] 
	\leq \frac{(Cq)^{q}}{(N\eta)^{2q}} 
	+ \frac{(Cq)^{q}}{N^{q}} 
	+ C^q \abs{b(z)}^{2q}.
\end{align}
Plugging this in \eqref{eq:bound_after_Young}, we get
\begin{align} \label{eq:final_bound_P(z)}
	\E\left[ \abs{P(z)}^{2q} \right]
	& \leq \frac{(Cq)^{2q}}{(N\eta)^{4q}} 
	+ \frac{(Cq)^q \abs{b(z)}^{2q}}{(N\eta)^{2q}} 
	+ \frac{(Cq)^{2q}}{N^{2q}},
\end{align}
noting that the crossed term $\frac{(Cq)^{2q}}{(N\eta)^{2q} N^q}$ can be neglected.

We now distinguish two cases. First assume that $\abs{b(z)} \geq \sqrt{q}/(N\eta)$. Then we apply successively the first bound in \eqref{eq:consequence_stability} and \eqref{eq:final_bound_P(z)} to get
\begin{align*}
	\E\left[ \abs{s(z)-m_V(z)}^{2q} \right]
	\leq \frac{C^q}{\abs{b(z)}^{2q}} \E\left[ \abs{P(z)}^{2q} \right]
	\leq \frac{(Cq)^q}{(N\eta)^{2q}} 
	+ \frac{(Cq)^{2q}}{N^{2q} \abs{b(z)}^{2q}},
\end{align*}
where we used the assumption $\abs{b(z)} \geq \sqrt{q}/(N\eta)$ to get rid of one term.
Hence the result is proved in this first case.
We now assume that $\abs{b(z)} < \sqrt{q}/(N\eta)$.
In this case, we apply the second bound in \eqref{eq:consequence_stability} and get
\begin{align} \label{eq:second_case}
	\E\left[ \abs{s(z)-m_V(z)}^{2q} \right]
		\leq C^q \E\left[ \abs{P(z)}^q \right]
	\leq C^q \E\left[ \abs{P(z)}^{2q} \right]^{1/2}
	\leq \frac{(Cq)^q}{(N\eta)^{2q}} 
	+ \frac{(Cq)^{q/2} \abs{b(z)}^q}{(N\eta)^q} 
	+ \frac{(Cq)^q}{N^q}.
\end{align}
Using $\abs{b(z)} < \sqrt{q}/(N\eta)$, the second term in the right-hand side of \eqref{eq:second_case} is smaller than the first one. So we only have to prove that
\begin{align*} 
	\frac{(Cq)^q}{N^q} \leq \frac{(Cq)^q}{(N\eta)^{2q}} 
		+ \frac{(Cq)^{2q}}{N^{2q} \abs{b(z)}^{2q}}.
\end{align*}
If $\eta \leq N^{-1/2}$, we simply bound $N^{-q} \leq (N\eta)^{-2q}$. If $\eta > N^{-1/2}$, we get $\abs{b(z)} < \sqrt{q}/(N\eta) < \sqrt{q/N}$ and therefore $N^{-q} \leq q^q N^{-2q} \abs{b(z)}^{-2q}$.
So the result is proved in the second case and this concludes the proof.
\end{proof}

\subsection{A partial local law past the edge. }
\label{sec:partial_local_law_past_the_edge}

The local law in Theorem \ref{th:local_law_bulk} has only been established in a trapezoid region above the interval $[A,B]$, that is at a point $z = E + \ii \eta$ such that $A-\eta \leq E \leq B+\eta$. 
In this section we show how the method used to prove this local law can be extended past the edges.
Recall that the constraint $A-\eta \leq E \leq B+\eta$ was required in order to use bound \eqref{eq:stability_lemma_bound_3} of Lemma~\ref{lem:fixed_point_equation} concerning the stability of the fixed point equation satisfied by $m_V$.
However, if this constraint is not satisfied we can still apply the other bounds of Lemma~\ref{lem:fixed_point_equation} to control the Stieljes transform $s(z)$, with $z$ beyond the edge.

As in Appendix \ref{section:stability_lemma}, we define 
\[
	\widetilde{m}_V(z) \coloneqq -\frac{V'(z)}{2} - r(z) b(z).
\]
which is the other root of the equation $u^2 + V'(z) u + h(z) = 0$ satisfied by $m_V(z)$. 
The following proposition shows that $s(z)$ has to be close to $m_V(z)$ or $\widetilde{m}_V(z)$, and a similar bound on $\im(s(z)-m_V(z))$ follows from \eqref{eq:stability_lemma_bound_2}.
\begin{proposition} \label{prop:bound_s-m_1_s-m_2_past_the_edge}
Let $\tilde{\eta} > 0$ be the constant given by Lemma \ref{lem:preliminaries_fixed_point_equation} and recall the definition of $\kappa$ (\ref{def:l(E)_and_kappa(E)}).
There exist constants $C,C' >0$ 
such that for any $q \geq 1$, $N \geq 1$ and any $z = E + \ii \eta$ 
with $0 < \eta \leq \tilde{\eta}$ and $E \notin [A-\eta, B+\eta]$, we have, if $\eta \geq (C'q)^{1/2}/ (N \sqrt{\kappa})$,
\begin{align} \label{eq:bound_1_s-m_1_s-m_2}
	\E \left[ \abs{s(z)-m_V(z)}^{2q} \wedge \abs{s(z)-\widetilde{m}_V(z)}^{2q} \right]
	\leq 
	\dfrac{(Cq)^{2q}}{(N\eta)^{4q} \kappa^q}
	+ \dfrac{(Cq)^q}{N^{2q} \eta^q \kappa^q} 
	+ \dfrac{(Cq)^{2q}}{N^{2q} \kappa^q} 
\end{align}
and, if $\eta \leq (C'q)^{1/2}/ (N \sqrt{\kappa})$,
\begin{align} \label{eq:bound_2_s-m_1_s-m_2}
	\E \left[ \abs{s(z)-m_V(z)}^{2q} \wedge \abs{s(z)-\widetilde{m}_V(z)}^{2q} \right]
	\leq \dfrac{(C q)^q}{(N\eta)^{2q}} + \frac{(Cq)^q}{N^q}.
\end{align}
Moreover, the same bounds hold for $\E[ \abs{\im(s(z)-m_V(z))}^{2q} ]$.
\end{proposition}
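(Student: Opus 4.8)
The plan is to mimic the proof of Theorem \ref{th:local_law_bulk} verbatim up through the bound \eqref{eq:final_bound_P(z)} on $\E[\abs{P(z)}^{2q}]$, which was derived only using the loop equation \eqref{eq:loop_eq_combined} with $w=z$, Young's inequality, the elementary bounds $\absa{s'(z)} \le \im s(z)/\eta$ and $\absa{f(z,z)},\absa{f(z,\overline z)} \le 2\im s(z)/\eta^2$, and Lemma \ref{lem:moments_Delta}. None of these steps used the constraint $E \in [A-\eta,B+\eta]$, so they remain valid for $E \notin [A-\eta,B+\eta]$; the only input that changes is the stability lemma. Concretely, I would first rerun the argument from \eqref{eq:first_bound_loop_eq} to \eqref{eq:bound_after_Young} to obtain
\begin{align*}
	\E\left[ \abs{P(z)}^{2q} \right]
	& \leq \frac{(Cq)^q}{(N\eta)^{2q}}
	\E\left[ (\im s(z))^{2q}
	+ (\im s(z))^q \cdot \absa{2s(z) + V'(z)}^q \right]
	+ \frac{(Cq)^{2q}}{N^{2q}},
\end{align*}
which holds uniformly for $\eta \in (0,\tilde\eta]$ and all $E \in [A-\tilde\eta,B+\tilde\eta]$.

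Next I would replace the use of \eqref{eq:stability_lemma_bound_3} by the appropriate ``past the edge'' bounds of Lemma \ref{lem:fixed_point_equation}: outside $[A-\eta,B+\eta]$ one has $\absa{b(z)} \asymp \sqrt{\kappa}$ (up to constants, for $\eta \le \tilde\eta$), and the stability lemma gives $\abs{s(z)-m_V(z)} \wedge \abs{s(z)-\widetilde m_V(z)} \le C(\abs{P(z)}/\abs{b(z)} \wedge \abs{P(z)}^{1/2})$ together with an analogous bound on $\im(s(z)-m_V(z))$ via \eqref{eq:stability_lemma_bound_2}. The key structural point, exactly as in the bulk proof, is that $\Lambda := (\im s(z)) \vee \abs{2s(z)+V'(z)}$ is controlled by $C(\abs{P(z)}^{1/2} + \abs{b(z)})$ — here I would need the bound on $\im s(z)$ and on $2m_V+V' = 2rb$, i.e.\ $\abs{\Lambda} \le C(\abs{P(z)}^{1/2} + \sqrt\kappa)$; feeding this back into the $\E[\abs{P(z)}^{2q}]$ estimate and using $x \le a\sqrt x + b \Rightarrow x \le a^2+b$ yields
\begin{align*}
	\E\left[ \abs{P(z)}^{2q} \right]
	& \leq \frac{(Cq)^{2q}}{(N\eta)^{4q}}
	+ \frac{(Cq)^q \kappa^q}{(N\eta)^{2q}}
	+ \frac{(Cq)^{2q}}{N^{2q}}.
\end{align*}

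Finally I would split into the two regimes according to whether $\abs{b(z)} \asymp \sqrt\kappa$ is larger or smaller than $\sqrt q/(N\eta)$, exactly paralleling the endgame of the proof of Theorem \ref{th:local_law_bulk}. In the case $\sqrt\kappa \gtrsim \sqrt q/(N\eta)$ (i.e.\ $\eta \gtrsim \sqrt{C'q}/(N\sqrt\kappa)$), use $\abs{s-m_V}\wedge\abs{s-\widetilde m_V} \le C\abs{P(z)}/\abs{b(z)}$, divide the displayed $\abs{P(z)}^{2q}$ bound by $\abs{b(z)}^{2q} \asymp \kappa^q$, and read off \eqref{eq:bound_1_s-m_1_s-m_2}. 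In the case $\eta \le \sqrt{C'q}/(N\sqrt\kappa)$, use instead $\abs{s-m_V}\wedge\abs{s-\widetilde m_V} \le C\abs{P(z)}^{1/2}$, so $\E[\cdots] \le C^q \E[\abs{P(z)}^{2q}]^{1/2}$, and after taking square roots and absorbing the now-negligible $\kappa^q/(N\eta)^{2q}$ term using $\sqrt\kappa \le \sqrt{C'q}/(N\eta)$, obtain \eqref{eq:bound_2_s-m_1_s-m_2}; the same argument applied to $\im(s-m_V)$ via the imaginary-part bound gives the last sentence of the proposition. I expect the main obstacle to be purely bookkeeping — tracking how the explicit powers of $\kappa$ and $q$ propagate through the Young-inequality steps and the final case split, and checking that the crossed terms (e.g.\ $(Cq)^{2q}/((N\eta)^{2q}N^q)$ and $(Cq)^q\kappa^q/((N\eta)^{2q})$ in the small-$\eta$ regime) are genuinely dominated by the retained terms under the stated threshold on $\eta$ — rather than any new conceptual difficulty, since the mechanism is identical to the bulk proof and the stability input is already packaged in Lemma \ref{lem:fixed_point_equation}.
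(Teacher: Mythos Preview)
Your overall strategy is right and matches the paper's: rerun the bulk argument through \eqref{eq:bound_after_Young}, then replace the stability input \eqref{eq:stability_lemma_bound_3} by \eqref{eq:stability_lemma_bound_1}--\eqref{eq:stability_lemma_bound_2}. The second-case endgame also works as you describe. But there is a genuine gap in the first case.

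The problem is your bound $\Lambda \leq C(\abs{P(z)}^{1/2} + \sqrt{\kappa})$. It is correct but too crude for $\im s(z)$: outside the trapezoid, \eqref{eq:preliminary_lemma_bound_4} gives $\abs{\im m_V(z)} \vee \abs{\im \widetilde{m}_V(z)} \leq C\eta/\abs{b(z)} \leq C\eta/\sqrt{\kappa}$, which is much smaller than $\sqrt{\kappa}$ when $\eta \ll \kappa$. If you lump both factors into a single $\Lambda$ and bound $\Lambda^{2q}$ by $C^q(\abs{P}^q + \kappa^q)$, you end up with
\[
\E\bigl[\abs{P(z)}^{2q}\bigr] \leq \frac{(Cq)^{2q}}{(N\eta)^{4q}} + \frac{(Cq)^q\kappa^q}{(N\eta)^{2q}} + \frac{(Cq)^{2q}}{N^{2q}},
\]
and after dividing by $\kappa^q$ the middle term becomes $(Cq)^q/(N\eta)^{2q}$. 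This is strictly larger than the stated $(Cq)^q/(N^{2q}\eta^q\kappa^q)$ by a factor $(\kappa/\eta)^q$, and it is not absorbed by the other two terms (take for instance $\kappa \asymp 1$, $\eta \asymp N^{-1/2}$, $q$ fixed). The sharper middle term is exactly what is used downstream, e.g.\ in \eqref{eq:bound_N(I)} for Corollary~\ref{cor:rigidity_past_the_edge}, where your version would leave an $O((Cq)^{q/2})$ contribution to $\E[\cN(I)^q]$ that does not decay.

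The fix is to keep $\im s(z)$ and $\abs{2s(z)+V'(z)}$ separate rather than passing through $\Lambda$. With $\Upsilon \coloneqq \abs{s-m_V} \wedge \abs{s-\widetilde{m}_V}$, bound $\im s \leq \Upsilon + C\eta/\sqrt{\kappa}$ and $\abs{2s+V'} \leq 2\Upsilon + C\sqrt{\kappa}$; then $(\im s)^{2q} + (\im s)^q\abs{2s+V'}^q \leq C^q(\Upsilon^{2q} + \kappa^{q/2}\Upsilon^q + \eta^q)$, so \eqref{eq:bound_after_Young} becomes
\[
\E\bigl[\abs{P(z)}^{2q}\bigr] \leq \frac{(Cq)^q}{(N\eta)^{2q}}\bigl(\E[\Upsilon^{2q}] + \kappa^{q/2}\E[\Upsilon^q] + \eta^q\bigr) + \frac{(Cq)^{2q}}{N^{2q}}.
\]
Now bootstrap directly in $\Upsilon$ via $\Upsilon \leq C\abs{P}/\sqrt{\kappa}$: in the regime $(N\eta)^2\kappa \geq C'q$ the coefficient of $\E[\Upsilon^{2q}]$ on the right is at most $1/2$ and can be absorbed, and what survives produces the $\eta^q/\kappa^q$ factor, i.e.\ the correct $(Cq)^q/(N^{2q}\eta^q\kappa^q)$; the $\kappa^{q/2}\E[\Upsilon^q]$ term is then handled by $x \leq a\sqrt{x}+b \Rightarrow x \leq a^2+b$. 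This is precisely the paper's route.
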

\begin{proof}
Recall we proved in \eqref{eq:bound_after_Young} that
\begin{align} \label{eq:bound_after_Young_bis}
	\E\left[ \abs{P(z)}^{2q} \right]
	& \leq \frac{(Cq)^q}{(N\eta)^{2q}} 
	\E\left[ (\im s(z))^{2q} 
	+ (\im s(z))^q \cdot \absa{2s(z) + V'(z)}^q \right] 
	+ \frac{(Cq)^{2q}}{N^{2q}},
\end{align}
uniformly in $z = E + \ii \eta$ such that $\eta \in (0,\tilde{\eta}]$ and $E \in [A-\tilde{\eta},B+\tilde{\eta}]$.
Now we assume moreover that $E \notin [A-\eta,B+\eta]$.
For brevity, let $\Upsilon \coloneqq \absa{s(z)-m_V(z)} \wedge \absa{s(z)-\widetilde{m}_V(z)}$.
Then, on the one hand, we have
\begin{align*}
	\im s(z) 
	& \leq \Upsilon + (\abs{\im m_V(z)} \vee \abs{\im \widetilde{m}_V(z)})
	\leq \Upsilon + \frac{C \eta}{\abs{b(z)}},
\end{align*}
using $  \abs{\im m_V(z)}\vee \abs{\im \widetilde{m}_V(z)} \leq \frac{C \eta}{\abs{b(z)}}$ by (\ref{eq:preliminary_lemma_bound_4}). On the other hand, we have
\begin{align*}
	\abs{2s(z) + V'(z)} 
	& \leq 2\Upsilon + (\abs{2 m_V(z) + V'(z)} \vee \abs{2 \widetilde{m}_V(z) + V'(z)})
	= 2\Upsilon + 2 \abs{r(z) b(z)}
	\leq 2\Upsilon + C \abs{b(z)}.
\end{align*}
Moreover, we have $\abs{b(z)} \leq C \sqrt{\kappa}$ because $\eta \leq \kappa$.
Therefore, \eqref{eq:bound_after_Young_bis} becomes
\begin{align} \label{eq:bound_P_Upsilon}
	\E\left[ \abs{P(z)}^{2q} \right]
	& \leq \frac{(Cq)^q}{(N\eta)^{2q}} 
	\left( 
		\E[\Upsilon^{2q}] 
		+ \kappa^{q/2} \E[\Upsilon^{q}] 
		+ \eta^q
	\right)
	+ \frac{(Cq)^{2q}}{N^{2q}},
\end{align}
where we used $\eta^q/|b|^{2q}\leq C$.
It follows from bound \eqref{eq:stability_lemma_bound_1} of Lemma \ref{lem:fixed_point_equation} combined with $\abs{b(z)} \geq c \sqrt{\kappa}$ that
\begin{equation} 
	\Upsilon 
	= \absa{s(z)-m_V(z)} \wedge \absa{s(z)-\widetilde{m}_V(z)}
	\leq C \left( \frac{\abs{P(z)}}{\sqrt{\kappa}} \wedge \abs{P(z)}^{1/2} \right).
\end{equation}
Using the bound $\Upsilon \leq C \abs{P(z)}/\sqrt{\kappa}$ and \eqref{eq:bound_P_Upsilon}, we get
\begin{equation} \label{eq:first_bound_Upsilon}
	\E[\Upsilon^{2q}]
	\leq \frac{(Cq)^q}{(N\eta)^{2q} \kappa^q} 
	\left( 
		\E[\Upsilon^{2q}] 
		+ \kappa^{q/2} \E[\Upsilon^{q}] 
		+ \eta^q
	\right)
	+ \frac{(Cq)^{2q}}{N^{2q} \kappa^q}.
\end{equation}
We fix $C' \coloneqq 2C$ where $C$ is the constant appearing in the last equation.
We distinguish cases.
If $\eta \geq (C'q)^{1/2}/(N \sqrt{\kappa})$, the factor $\frac{(Cq)^q}{(N\eta)^{2q} \kappa^q}$ in \eqref{eq:first_bound_Upsilon} is smaller than $1/2$ so with can bring the term involving $\E[\Upsilon^{2q}]$ to the left-hand side and get
\begin{align} 
	\E[\Upsilon^{2q}]
	\leq 
	\frac{(Cq)^q}{(N\eta)^{2q} \kappa^{q/2}} \E[\Upsilon^{q}] 
	+ \frac{(Cq)^q}{N^{2q} \eta^q \kappa^q} 
	+ \frac{(Cq)^{2q}}{N^{2q} \kappa^q}.
\end{align}
Using that $x \leq a x^{1/2} + b$ for some $a,b,x > 0$ implies $x \leq a^2 + b$, it proves \eqref{eq:bound_1_s-m_1_s-m_2}.
We now consider the second case: if $\eta \leq (C'q)^{1/2}/(N \sqrt{\kappa})$, we use $\Upsilon \leq C \abs{P(z)}^{1/2}$ and \eqref{eq:bound_P_Upsilon} to get
\[
	\E[\Upsilon^{2q}]
	\leq C^q \cdot \E\left[ \abs{P(z)}^q \right]
	\leq \frac{(Cq)^{q/2}}{(N\eta)^q} 
	\left( 
	\E[\Upsilon^{q}]
	+ \kappa^{p/4} \E[\Upsilon^{q/2}]
	+ \eta^{q/2}
	\right)
	+ \frac{(Cq)^{q}}{N^{q}},
\]
Using that $x \leq a x^{1/2} + b x^{1/4} +c$ for some $a,b,c,x > 0$ implies $x \leq a^2 + b^{4/3} + c$, we get
\begin{equation} \label{eq:second_bound_Upsilon}
	\E[\Upsilon^{2q}]
	\leq \frac{(Cq)^q}{(N\eta)^{2q}} 
	+ \frac{(Cq)^{2q/3}}{(N\eta)^{4q/3}} \kappa^{q/3}
	+ \frac{(Cq)^{q/2}}{N^q \eta^{q/2}}
	+ \frac{(Cq)^q}{N^q}.
\end{equation}
Then, using $\eta \leq (C'q)^{1/2}/(N \sqrt{\kappa})$ (note that, since $\eta \leq \kappa$, it implies $\eta \leq (C'q)^{1/3} N^{-2/3}$), we observe that the second and third term in the right-hand side of \eqref{eq:second_bound_Upsilon} can be bounded by the first one.
This proves \eqref{eq:bound_2_s-m_1_s-m_2}.
Finally, it follows from \eqref{eq:stability_lemma_bound_2} that the bounds \eqref{eq:bound_1_s-m_1_s-m_2} and \eqref{eq:bound_2_s-m_1_s-m_2} hold for $\E[ \abs{\im(s(z)-m_V(z))}^{2q} ]$ instead of $\E[\Upsilon^{2q}]$.
\end{proof}

\section{Consequences of the local law}
\label{sec:consequences}

In this section, we apply the local law to establish various results: negligible expected number of particles in a submicroscopic interval, rigidity at the edge, extension of the local law beyond the trapezoid region and  rigidity in the bulk.

\subsection{Wegner estimate. }\label{subsec:number}
The following estimate will be used in Section \ref{section:CLT_above_the_axis} to prove that we can regularize the logarithm in the proof of the central limit theorem for the logarithm of the characteristic polynomial.
\begin{proposition} \label{prop:density} 
    Let $\tilde{\eta} >0$ be the constant given by Lemma \ref{lem:preliminaries_fixed_point_equation}
    and recall the definition of $\ell(E)$ in \eqref{def:l(E)_and_kappa(E)}.
    Let $I = [E-\delta_N\ell(E),E+\delta_N \ell(E)]$ for some $E \in [A-\tilde{\eta},B+\tilde{\eta}]$ and $\delta_N \to 0$. 
    Let $\cN(I) \coloneqq \abs{\{ k : \lambda_k \in I \}}$ be the number of particles in $I$. 
    Then $\E[\cN(I)] \to 0$ as $N \to \infty$ uniformly in $E$.
\end{proposition}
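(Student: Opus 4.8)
A first attempt is to compare $\cN(I)$ with $\im s_N$ at the scale $\eta=\delta_N\ell(E)$: since $\im(E+\ii\eta-\lambda_k)^{-1}\ge(2\eta)^{-1}$ whenever $\lambda_k\in I$, one gets $\cN(I)\le 2N\eta\,\im s_N(E+\ii\eta)$ and hence, by the local law, $\E[\cN(I)]\le 2N\eta\,\im m_V(E+\ii\eta)+2N\eta\,\E[|s_N-m_V|(E+\ii\eta)]$. The equilibrium part is $\OO(\delta_N)$ by the definition of $\ell(E)$, but since $\eta$ lies \emph{below} the microscopic scale the local law only controls $s_N-m_V$ up to an additive $\OO((N\eta)^{-1})$, so the second term is merely $\OO(1)$. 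The plan is therefore to bypass $s_N$ and work with the conditional law of a single particle: write $\E[\cN(I)]=\sum_{k=1}^N\E\big[\P(\lambda_k\in I\mid(\lambda_j)_{j\ne k})\big]$, where conditionally on the other particles $\lambda_k$ is confined to $(\lambda_{k-1},\lambda_{k+1})$ with density proportional to $\prod_{j\ne k}|\lambda_k-\lambda_j|^\beta e^{-\frac{\beta N}{2}V(\lambda_k)}$.

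The key point is that this conditional density is $\le C/(\lambda_{k+1}-\lambda_{k-1})$, uniformly in the configuration and in $N$. Factoring out the two adjacent repulsion terms $|\lambda_k-\lambda_{k\pm1}|^\beta$ — which only suppress the density near the endpoints, for every $\beta>0$ — the remaining factor $e^{\beta N\Phi_k(\lambda_k)}$, built from $V$ and the far particles, has $\Phi_k'$ essentially equal to the regular part of $s_N-m_V$ on the real axis, which is $\OO(1)$ away from the particles by the flatness of the effective potential \eqref{eq:fixed_point_equation} together with the local law; as $(\lambda_{k-1},\lambda_{k+1})$ has length $\OO(\ell(E))$, $\Phi_k$ varies over it by $\OO(1/N)$, hence $e^{\beta N\Phi_k}$ by a bounded factor. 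Since each point of $I$ lies in exactly two of the intervals $(\lambda_{k-1},\lambda_{k+1})$, summing over $k$ yields
\[
	\E[\cN(I)]\ \le\ 2C\,|I|\,\E\!\left[\frac{1}{G_E}\right],\qquad G_E:=\min\big\{\lambda_{k+1}-\lambda_{k-1}:\ (\lambda_{k-1},\lambda_{k+1})\cap I\ne\emptyset\big\}.
\]

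It remains to prove $\E[1/G_E]\le C/\ell(E)$ uniformly. Splitting according to whether $G_E\ge\ell(E)/2$ or not, the bounded part contributes $\le 2/\ell(E)$, and the remainder is handled by a level‑repulsion estimate $\P(G_E<s\,\ell(E))\le C s^{\beta+1}$ for $s\in(0,1]$: a small value of $G_E$ forces three consecutive particles to cluster near $E$ at an energetic cost $\asymp s^{3\beta}$ from $\prod|\lambda_i-\lambda_j|^\beta$ against a Lebesgue volume $\OO(s^3)$, the remaining factors being comparable to a typical configuration. This is where the Gaussian tail of $s_N-m_V$ in Theorem \ref{th:local_law_bulk} (respectively Proposition \ref{prop:bound_s-m_1_s-m_2_past_the_edge} beyond $[A-\eta,B+\eta]$) is used, supplying the rigidity of the surrounding particles that reduces everything to the microscopic scale $\ell(E)$. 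One concludes $\E[\cN(I)]\le C|I|/\ell(E)=2C\delta_N\to 0$ uniformly in $E$; for $E$ at macroscopic distance from $[A,B]$ one simply bounds $\E[\cN(I)]$ by $\OO(Ne^{-cN})$ using \eqref{eq:large_deviation_rigidity}. I expect the main obstacle to be exactly the level‑repulsion/small‑gap estimate: proving it for all $\beta>0$, uniformly down to the soft edge, with only the local law as analytic input, since the local law itself gives no information below the scale $\ell(E)$.
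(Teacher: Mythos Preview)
Your approach has a genuine gap, and you correctly identify it yourself: the level-repulsion estimate $\P(G_E<s\,\ell(E))\le Cs^{\beta+1}$ is not proved, and with only the local law of Theorem~\ref{th:local_law_bulk} as input there is no clear way to obtain it. The difficulty is structural rather than technical: what you are asking for---that three consecutive particles near $E$ cluster on a scale $s\ell(E)$ only with probability $\OO(s^{\beta+1})$, uniformly up to the edge---is a statement about the point process \emph{below} the microscopic scale, exactly the regime where the local law says nothing. In effect the level-repulsion input is at least as strong as the Wegner estimate you are trying to prove; the heuristic ``energetic cost $s^{3\beta}$ versus volume $s^3$'' implicitly assumes that the surrounding environment is already controlled at scale $\ell(E)$, which is circular.

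There is also a secondary gap in the conditional-density step. The claim that $e^{\beta N\Phi_k}$ varies by a bounded factor over $(\lambda_{k-1},\lambda_{k+1})$ requires $N\,|\Phi_k'|\cdot(\lambda_{k+1}-\lambda_{k-1})=\OO(1)$. Your justification (``$\Phi_k'$ is the regular part of $s_N-m_V$ on the real axis, which is $\OO(1)$'') is not enough: the local law lives in the upper half-plane, not on $\R$, and near the edge $N\ell(E)\sim N^{1/3}$, so even $|\Phi_k'|=\OO(1)$ would give a variation $e^{\OO(N^{1/3})}$. One would also need to control the contribution of the next-nearest particles $\lambda_{k\pm2}$, whose distance to the gap is itself random and could be small---again a gap question.

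The paper sidesteps both issues entirely. Instead of conditioning on $N-1$ particles and invoking level repulsion, it runs an induction on the length of the interval: Lemma~\ref{lem:first_step_density} shows $\E[e^{\cN(I)}]$ is bounded at scale $\ell(E)$ (a direct consequence of the local law and \eqref{eq:bound_number_of_particles_in_terms_of_im_s}), and Lemma~\ref{lem:inductive_step_density} shows that shrinking the interval by a large enough factor decreases $\E[e^{\cN(I')}-1]$ by at least $\tfrac14$. The inductive step is proved by contradiction via a \emph{local translation} of all particles in $I'$ by a few multiples of $|I'|$; the change in the density \eqref{eq:density_of_particles} is controlled using only that $\im s_N(z)$ has bounded moments at scale $|I|$ (again from the local law), and one obtains too many particles in disjoint translates of $I'$ inside $I$. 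No information below the microscopic scale, and in particular no level-repulsion estimate, is ever needed.
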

Note that $\ell(E)$ is exactly the microscopic scale (that is the typical spacing between particles) at a point $E$ between the edges, but it is larger than the microscopic scale past the edges.

\begin{remark} 
Instead of Proposition \ref{prop:density}, we could prove the following quantitative Wegner estimate: There exists $C > 0$ such that for any $N \geq 1$ and any interval $I \subseteq \R$, $\E[\cN(I)] \leq CN \abs{I}$.
The proof of this claim relies on the observation that, for $\varepsilon \ll 1$, a translation of all particles by $\varepsilon/N$ to the left or to the right, depending on the sign of $\sum_{k=1}^N V'(\lambda_k)$ in the current configuration, results in a negligible change on the density of particles \eqref{eq:density_of_particles}.

However, this explicit bound only catches the size order of $\cN(I)$ in the bulk, and we need estimates up to the edges.
That is why we have to adopt a different strategy for the proof of Proposition \ref{prop:density}: in the proof of Lemma \ref{lem:inductive_step_density}, we translate particles only locally.
This results in a smaller change in the part $\exp(- \frac{\beta N}{2} \sum_{k=1}^N V(\lambda_k))$ of the density, but requires to deal with changes in the Vandermonde determinant part.
\end{remark}

To prove Proposition \ref{prop:density}, we will use bounds on the imaginary part of the Stieljes transform $s(z)$ which follow from the local law established in the previous section: Note that
if $I = [E-\eta,E+\eta]$, for some $E \in \R$ and $\eta > 0$, then, considering $z \coloneqq E + \ii \eta$, we have
\begin{align} \label{eq:bound_number_of_particles_in_terms_of_im_s}
	\cN(I) 
	= \sum_{j = 1}^N \1_{\abs{\lambda_j-E} \leq \eta}
	\leq \sum_{j = 1}^N \frac{2 \eta^2}{\abs{z-\lambda_j}^2}
	= 2 \eta N \cdot \im s(z).
\end{align}
The following lemma  proves that the number of particles in an interval of length $\ell(E)$ centered at $E$ has bounded moments. 
\begin{lemma} \label{lem:first_step_density}
Let $\tilde{\eta}$ be the constant given by Lemma \ref{lem:preliminaries_fixed_point_equation}.
There exists a constant $C > 0$ such that, for any $E \in [A-\tilde{\eta},B+\tilde{\eta}]$ and any $N,q \geq 1$, letting $I \coloneqq [E-\ell(E),E+\ell(E)]$,
\begin{align} \label{eq:all_moments_N(I)}
\E \left[ \cN(I)^q \right] \leq (Cq)^{q/2}.
\end{align}
In particular, for any $t \geq 0$, there exists a constant $C_t > 0$ such that, for any $E \in [A-\tilde{\eta},B+\tilde{\eta}]$ and any $N \geq 1$,
\[
\E \left[ e^{t \cN(I)} \right] \leq C_t.
\]
\end{lemma}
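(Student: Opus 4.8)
\textbf{Proof plan for Lemma \ref{lem:first_step_density}.}

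The plan is to bound the $q$-th moment of $\cN(I)$ by choosing the scale $\eta=\ell(E)$ and using \eqref{eq:bound_number_of_particles_in_terms_of_im_s}, which gives $\cN(I)\leq 2\eta N\cdot\im s(z)$ with $z=E+\ii\eta$. Writing $\im s(z)\leq\im m_V(z)+\abs{s(z)-m_V(z)}$, it suffices to control $\eta N\cdot\im m_V(z)$ deterministically and $\eta N\cdot\abs{s(z)-m_V(z)}$ in $L^q$. For the first term, I would use the classical estimate $\im m_V(E+\ii\eta)\leq C(\sqrt{\kappa(E)+\eta})$ for $E\in[A,B]$ (and a comparable bound slightly outside, where $\im m_V$ is even smaller); when $E\in[A+N^{-2/3},B-N^{-2/3}]$ we have $\ell(E)=N^{-1}\kappa(E)^{-1/2}$, so $\eta N\im m_V(z)\leq C\ell(E)N\sqrt{\kappa(E)}=C$, while in the edge regime $\ell(E)=N^{-2/3}$ and $\sqrt{\kappa(E)+\eta}\leq CN^{-1/3}$, giving again $\eta N\im m_V(z)\leq CN^{-2/3}\cdot N\cdot N^{-1/3}=C$. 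So the deterministic part contributes $O(1)$ in all cases.

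For the random part, I would apply Theorem \ref{th:local_law_bulk} at $z=E+\ii\ell(E)$. We have $N\eta=N\ell(E)$, which equals $\kappa(E)^{-1/2}$ in the bulk regime and $N^{1/3}$ in the edge regime; in particular $N\eta\geq c$ bounded below, though possibly large. The first error term gives $\E[\abs{s(z)-m_V(z)}^q]\leq (Cq)^{q/2}/(N\eta)^q$, so $(\eta N)^q\E[\abs{s(z)-m_V(z)}^q]\leq (Cq)^{q/2}$, exactly the claimed bound. For the second error term $(Cq)^q/(N^q\abs{z-A}^{q/2}\abs{z-B}^{q/2})$: multiplying by $(\eta N)^q$ and using $\abs{z-A}\abs{z-B}\geq c(\kappa(E)+\eta)$ together with $\eta\leq\kappa(E)\vee N^{-2/3}$, one checks $(\eta N)^q/(N^q(\kappa+\eta)^{q/2})=\eta^q/(\kappa+\eta)^{q/2}\leq\eta^{q/2}\leq(\kappa\vee N^{-2/3})^{q/2}$, which is bounded (in fact small) uniformly; so this contributes at most $(Cq)^q\cdot C^q$, and I should double-check whether this gives $(Cq)^q$ rather than $(Cq)^{q/2}$ — if so, I would absorb it by noting that in the regime where the second term dominates, $\eta$ is small enough (the extra $\eta^{q/2}\leq N^{-q/3}$ factor in the edge case, or $\kappa^{q/2}$ in the bulk) to kill the discrepancy, or alternatively restrict attention to the bulk where Remark \ref{rem:comment_local_law2}-type estimates suffice and handle the edge separately via Proposition \ref{prop:bound_s-m_1_s-m_2_past_the_edge}. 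Combining both parts by the triangle inequality and $(a+b)^q\leq 2^{q-1}(a^q+b^q)$ yields \eqref{eq:all_moments_N(I)}, after adjusting $C$.

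The exponential moment bound follows immediately: expand $\E[e^{t\cN(I)}]=\sum_{q\geq 0}t^q\E[\cN(I)^q]/q!\leq\sum_{q\geq 0}t^q(Cq)^{q/2}/q!$, and since $q!\geq(q/e)^q$ we have $(Cq)^{q/2}/q!\leq (Cq)^{q/2}(e/q)^q=(Ce^2/q)^{q/2}$, so the series converges (the terms are eventually bounded by $2^{-q}$ once $q>4Ce^2$), giving $\E[e^{t\cN(I)}]\leq C_t<\infty$ uniformly in $E$ and $N$. The main obstacle is bookkeeping the two error terms of the local law uniformly across the bulk and edge regimes of $\ell(E)$ — in particular verifying that the second (non-Gaussian) error term, after multiplication by $(\eta N)^q$, never exceeds the $(Cq)^{q/2}$ Gaussian bound; this requires carefully using the constraint $\eta=\ell(E)\leq\kappa(E)\vee N^{-2/3}$ to extract the extra half-power of $N\eta$ needed.
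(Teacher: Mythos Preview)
Your overall strategy coincides with the paper's: bound $\cN(I)\leq 2\eta N\,\im s(z)$ with $\eta=\ell(E)$ and control the moments of $\im s(z)$. The deterministic part $\eta N\,\im m_V(z)=\OO(1)$ is handled correctly, the exponential moment follows from the $q$-moment bound as you say, and the case $E\notin[A-\eta,B+\eta]$ does indeed require Proposition~\ref{prop:bound_s-m_1_s-m_2_past_the_edge}, exactly as the paper does.

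There is, however, a genuine gap in the trapezoid region. Applying Theorem~\ref{th:local_law_bulk} to $|s(z)-m_V(z)|$ and multiplying by $(N\eta)^q$, the second error term contributes $(Cq)^q(\eta/|b(z)|)^q$. You correctly get $\eta/|b(z)|\leq CN^{-1/3}$ in all regimes, so this is at most $(Cq/N^{1/3})^q$. But this exceeds $(Cq)^{q/2}$ whenever $q>N^{2/3}/C$, and none of your proposed fixes close the gap: the factor $\eta^{q/2}\leq N^{-q/3}$ is already what produced the $N^{-1/3}$; Remark~\ref{rem:comment_local_law2} is only for quadratic $V$; and Proposition~\ref{prop:bound_s-m_1_s-m_2_past_the_edge} concerns a different region. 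The trivial bound $\cN(I)\leq N$ gives $N^q\leq(Cq)^{q/2}$ only for $q\geq N^2/C$, so the range $N^{2/3}/C<q<N^2/C$ is left uncovered.

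The paper avoids this by not invoking the final local law on $s(z)-m_V(z)$, but rather the intermediate estimate \eqref{eq:bound_Lambda} from inside the proof of Theorem~\ref{th:local_law_bulk}, which bounds $\im s(z)$ (via $\Lambda$) directly:
\[
\E\bigl[(\im s(z))^q\bigr]\leq\frac{(Cq)^{q/2}}{(N\eta)^q}+\frac{(Cq)^{q/2}}{N^{q/2}}+C^q|b(z)|^q.
\]
The point is that the $\Delta$-contribution here carries the exponent $(Cq)^{q/2}$ rather than $(Cq)^q$, because \eqref{eq:bound_Lambda} is obtained \emph{before} dividing by $|b(z)|$ in the stability step \eqref{eq:consequence_stability}. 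After multiplying by $(N\eta)^q$, the middle term becomes $(Cq)^{q/2}(N\eta^2)^{q/2}\leq(Cq)^{q/2}$ since $\eta\leq N^{-2/3}$, and the last becomes $C^q(N\eta\,|b(z)|)^q\leq C^q$ since $N\eta\,|b(z)|\leq C$ in both bulk and edge regimes. This yields \eqref{eq:all_moments_N(I)} uniformly in $q$ and $N$.
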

\begin{proof} 
We apply \eqref{eq:bound_number_of_particles_in_terms_of_im_s} with $\eta = \ell(E)$ to get $\cN(I) \leq 2 \eta N \cdot \im s(z)$, where $z \coloneqq E + \ii \eta$.
In order to bound moments of $\im s(z)$, we will distinguish cases depending on if $E$ is in the trapezoid region or not.
If $E \in [A-\eta,B+\eta]$, then it follows from \eqref{eq:bound_Lambda} that
\begin{align} \label{eq:bound_im_s_explicit}
	\E \left[ \absa{\im s(z)}^q \right] 
	\leq \frac{(Cq)^{q/2}}{(N\eta)^q} 
	+ \frac{(Cq)^{q/2}}{N^{q/2}} 
	+ C^q \abs{b(z)}^{q}.
\end{align}
Therefore, we get
\begin{align*} 
	\E \left[ \cN(I)^q \right] 
	\leq (2 \eta N)^q \cdot \E \left[ \absa{\im s(z)}^q \right] 
	\leq (Cq)^{q/2}
	+ (Cq)^{q/2} \eta^q N^{q/2} 
	+ C^q (N\eta)^q \abs{b(z)}^q.
\end{align*}
Note that $\eta = \ell(E) \leq N^{-2/3}$ so $\eta^q N^{q/2} \leq 1$.
Moreover, distinguishing between the cases $\kappa(E) \leq N^{-2/3}$ and $\kappa(E) > N^{-2/3}$, we have $\abs{b(z)} \leq C (N\eta)^{-1}$ in both cases.
Therefore, we get $(N\eta)^q \abs{b(z)}^q \leq C^q$ and this proves \eqref{eq:all_moments_N(I)} in this case. 
Now assume that $E \notin [A-\eta,B+\eta]$, so in particular $\eta = \ell(E) = N^{-2/3}$.
Then, it follows from Proposition \ref{prop:bound_s-m_1_s-m_2_past_the_edge} that
\begin{align} \label{eq:bound_im_s_explicit_2}
	\E \left[ \absa{\im (s(z)-m_V(z))}^q \right] 
	\leq \begin{cases}
	\frac{(Cq)^q}{(N\eta)^{2q} \kappa^{q/2}}
	+ \frac{(Cq)^{q/2}}{N^q \eta^{q/2} \kappa^{q/2}} 
	+ \frac{(Cq)^q}{N^q \kappa^{q/2}} 
	& \text{if } \eta \geq (C'q)^{1/2}/ (N \sqrt{\kappa}), \\
	\frac{(C q)^{q/2}}{(N\eta)^q} + \frac{(Cq)^{q/2}}{N^{q/2}} 
	& \text{if } \eta \leq (C'q)^{1/2}/ (N \sqrt{\kappa}),
	\end{cases}
\end{align}
where $\kappa = \kappa(E)$. 
Using that $(C'q)^{1/2} \leq  N \eta \sqrt{\kappa}$ in the first case of \eqref{eq:bound_im_s_explicit_2} and that $\eta = N^{-2/3}$, we get
\[
	\E \left[ \absa{\im (s(z)-m_V(z))}^q \right] 
	\leq \frac{(C q)^{q/2}}{(N\eta)^q}.
\]
Since $\absa{\im m_V(z)} \leq C \eta/ \sqrt{\kappa} \leq C/(N\eta)$ by (\ref{eq:preliminary_lemma_bound_4}) and $\cN(I) \leq 2 \eta N \cdot \im s(z)$, this proves \eqref{eq:all_moments_N(I)} in this case. The second case of \eqref{eq:bound_im_s_explicit_2} 
is clear.
The second bound of the lemma follows by writing the exponential as a series.
\end{proof}

We will prove Proposition \ref{prop:density} by induction. The previous lemma is our base case and the inductive step is based on the following lemma, which shows that, if we reduce sufficiently the length of an interval, the expected number of particles will be reduced at least by a factor $\frac{3}{4}$, say.
\begin{lemma} \label{lem:inductive_step_density}
Let $\tilde{\eta}$ be the constant given by Lemma \ref{lem:preliminaries_fixed_point_equation} and $E \in [A-\tilde{\eta},B+\tilde{\eta}]$.
Let $I' \subseteq I$ be intervals centered at $E$ such that $\abs{I} \leq 2 \ell(E)$ and let 
\[
	\theta \coloneqq \E \left[ e^{\cN(I)}-1 \right].
\]
There exists a constant $C_0 > 0$, depending only on $V$ and $\beta$, such that, if $\abs{I'} = \abs{I}/M$ with $M \geq C_0 \theta^{-1/2}$ and if $N \geq C_0 \theta^{-3}$, then
\[
	\E \left[ e^{\cN(I')}-1 \right] 
	\leq \frac{3 \theta}{4}.
\]
\end{lemma}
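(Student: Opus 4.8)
The plan is to compare $\mu_N$ with its push‑forward under a local dilation of the particles around $E$. The mechanism is that, since $\mu_V$ is the equilibrium measure, spreading particles out of a short interval costs essentially nothing in the interaction energy — the transport cost being a fluctuation controlled by the optimal local law down to the microscopic scale — while $\mu_N$ does disfavour configurations with many particles near $E$, and this produces the claimed decay. Write $g(x)=e^x-1$, increasing with $g(0)=0$, so the claim is $\E[g(\cN(I'))]\le\tfrac34\theta$ with $\theta=\E[g(\cN(I))]$. Since $\abs I\le 2\ell(E)$ forces $I\subseteq[E-\ell(E),E+\ell(E)]$, Lemma~\ref{lem:first_step_density} already gives $\theta\le C_1$ for a universal $C_1$ together with the moment bounds $\E[\cN(J)^q]\le(Cq)^{q/2}$ for concentric $J\subseteq I$, which are used freely.

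Fix a smooth increasing diffeomorphism $\Phi$ of $\R$, symmetric about $E$, equal to the identity outside a concentric interval $J$ with $I'\subseteq J\subseteq I$ and $\abs J\asymp\abs{I'}$, that dilates $I'$ by a fixed factor $D>1$ (so $\Phi'\asymp D$ on $I'$ and $\Phi'\asymp1$ on $J\setminus I'$, whence $\norm{\Phi-\mathrm{id}}_\infty\le\abs J\asymp\abs I/M$ and $\Phi^{-1}(I')=D^{-1}I'$). Restricting to the full‑probability event that no particle lies within $\norm{\Phi-\mathrm{id}}_\infty$ of $\partial J$ (complement of probability $\le Ce^{-cN}$ by \eqref{eq:large_deviation_rigidity}, hence negligible), $\Phi$ acts bijectively on configurations, and the change of variables $\lambda=\Phi(\xi)$ gives the exact identity
\begin{equation*}
	\E_{\mu_N}\!\big[g(\cN(I'))\big]
	=\E_{\mu_N}\!\big[g(\cN_{D^{-1}I'}(\xi))\,\mathcal J(\xi)\big],
	\qquad
	\mathcal J(\xi)=e^{-\beta(H_N(\Phi\xi)-H_N(\xi))}\prod_{k=1}^N\Phi'(\xi_k),
\end{equation*}
where $H_N(\lambda)=-\sum_{k<l}\log\abs{\lambda_k-\lambda_l}+\tfrac N2\sum_kV(\lambda_k)$ is the energy ($\mu_N\propto e^{-\beta H_N}$). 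Here $\prod_k\Phi'(\xi_k)\le D^{\cN_{I'}(\xi)}\,C^{\cN(J)(\xi)}$, and the whole point is that $\mathcal J$ is close to $D^{\cN_{I'}}$ on high‑probability events: the factor $D^{\cN_{I'}}$ then exactly compensates, on the dominant event $\{\cN_{D^{-1}I'}=\cN_{I'}=1\}$, the smallness of $\E[\cN_{D^{-1}I'}]=D^{-1}\E[\cN_{I'}](1+\oo(1))$, and the identity upgrades the mean‑insensitive bound $\theta\le C_1$ to the mean‑sensitive bound $\E[g(\cN(I'))]\le C\,\E[\cN_{I'}]\le C\frac{\abs{I'}}{\abs I}\E[\cN(I)]\le C\theta/M$.

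The core estimate is thus on the energy increment. Writing $H_N(\Phi\xi)-H_N(\xi)=\int_0^1\sum_j\partial_{\lambda_j}H_N(\Phi_t\xi)\,(\Phi-\mathrm{id})(\xi_j)\,\mathrm dt$ along the interpolation $\Phi_t=(1-t)\,\mathrm{id}+t\Phi$, using that only the $\cN(J)$ particles in $J$ move (each by $\le\norm{\Phi-\mathrm{id}}_\infty\asymp\abs I/M\ll\ell(E)$), and that the macroscopic parts of the Vandermonde change and of the $V$–change cancel because $\tfrac12V'=\mathrm{p.v.}\!\int\!\frac{\varrho_V(t)\,\mathrm dt}{\,\cdot-t}$ on $[A,B]$, one is left with
\begin{equation*}
	\bigl\lvert H_N(\Phi\xi)-H_N(\xi)\bigr\rvert
	\le C\,\frac{\ell(E)}{M}\sum_{j:\,\xi_j\in J}\Bigl\lvert \tfrac N2V'(\xi_j)-\textstyle\sum_{l\neq j}\frac1{\xi_j-\xi_l}\Bigr\rvert
	\;+\;C\,\frac{\cN(J)^2+\cN(J)}{M}\,,
\end{equation*}
the quadratic term (within‑$I'$ pairs and the $D^{\beta\binom{\cN_{I'}}{2}}$ factor) being harmless because $\cN(J)$ is the count of an interval of size $\asymp\abs I/M$, hence typically $0$ and with good moments. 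The remaining sum is $N$ times the deviation of the empirical Stieltjes sum at $\xi_j$ from its equilibrium value, i.e.\@ essentially $N\,\re(s_N-m_V)$ at a point at distance $\ell(E)$ from the real axis — exactly the regime of Theorem~\ref{th:local_law_bulk}; feeding its Gaussian‑type moment bounds (evaluated at $z=\xi_j+\ii\,\ell(E)$, with a short passage from the imaginary part plus rigidity to the real part) together with Lemma~\ref{lem:first_step_density} shows this contributes $\le\varepsilon_N\,\cN(J)$ with a quantitatively small $\varepsilon_N$, so that on $\{\cN(J)\le t\}$ one gets $\mathcal J\le D^{\cN_{I'}(\xi)}e^{C(\varepsilon_N+t^2/M)t}$. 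This is the only place the optimal local law \emph{at the microscopic scale} is used, and where $N$ must be large; the requirement that the residual errors be beaten even when $\theta$ (hence the admissible $M$) is only bounded below is precisely what forces the quantitative forms $M\ge C_0\theta^{-1/2}$ and $N\ge C_0\theta^{-3}$.

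Finally, integrating: on $\{\cN(J)\le t\}$, using $g(\cN_{D^{-1}I'})\le\cN_{D^{-1}I'}e^{\cN_{D^{-1}I'}}$ and the bound on $\mathcal J$, the contribution is at most $e^{C(\varepsilon_N+t^2/M)t}\,\E[\cN_{D^{-1}I'}\,D^{\cN_{I'}}]\le C\,\E[\cN_{D^{-1}I'}]\le C\,D^{-1}\frac{\abs{I'}}{\abs I}\E[\cN(I)]\le C\theta/M$ (again using $\theta\le C_1$ to see $\cN_{I'}$ is conditionally $O(1)$ on $\{\cN_{D^{-1}I'}\ge1\}$, and comparing expected counts of nested concentric intervals); taking $t=t(M)\to\infty$ slowly this is $\le\tfrac12\theta$ once $M\ge C_0\theta^{-1/2}$ with $C_0$ large. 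The atypical event $\{\cN(J)>t\}$ is handled crudely, $\mathcal J\le D^{2\cN(J)}e^{\beta C\cN(J)^2}$, using $\E[e^{c\cN(J)^2}]<\infty$ for small $c$ from Lemma~\ref{lem:first_step_density} (after checking the effective exponent is admissible, which is where the lower bound on $N$ re‑enters), so that this part is $\le e^{-t}\cdot(\text{finite})\le\tfrac14\theta$ for $M$ large enough. Adding the two gives $\E[g(\cN(I'))]\le\tfrac34\theta$. The step I expect to be the main obstacle is the control of the energy increment in the third paragraph: getting a genuinely small $\varepsilon_N$ requires the \emph{optimal} local law down to the microscopic scale (the polynomial rigidity of \cite{BouErdYau2014a,BouErdYau2012,BouErdYau2014b} would not suffice), a clean reduction from the complex Stieltjes transform to the real combination $\tfrac N2V'(\xi_j)-\sum_{l\neq j}\frac1{\xi_j-\xi_l}$, and enough uniformity in all error terms to accommodate the case where $\theta$, and hence $M$, is only of constant size.
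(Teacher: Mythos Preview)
Your argument has two genuine gaps. First, the change of variables compares $\E[g(\cN(I'))]$ with $\E[g(\cN_{D^{-1}I'})\,\mathcal J]$, i.e.\ with the count on an \emph{even smaller} interval; to close the loop you then invoke $\E[\cN_{D^{-1}I'}]=D^{-1}\E[\cN_{I'}](1+\oo(1))$ and $\E[\cN_{I'}]\le C\tfrac{|I'|}{|I|}\E[\cN(I)]$, but these scaling statements are exactly the Wegner‐type information the lemma is meant to establish --- nothing in your identity links $\cN(I')$ to the given input $\theta=\E[e^{\cN(I)}-1]$. Second, the energy estimate is not as small as you claim: the linear force $\tfrac N2V'(\xi_j)-\sum_{l\ne j}\tfrac1{\xi_j-\xi_l}$ is essentially $N\re(s_N-m_V)$ at a real point, and even after regularizing at height $\asymp|I'|$ the local law only gives $N|I'|\cdot|s_N-m_V|=O(1)$, not $\oo(1)$, so your $\varepsilon_N$ is not small. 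The ``short passage'' from the complex Stieltjes transform to the on‐axis real sum is also not short --- near‐collisions make the latter unbounded --- and since the dilation direction $(D-1)(\xi_j-E)$ is fixed by position, you cannot arrange a favourable sign.

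The paper's proof sidesteps both issues by a different mechanism. It \emph{translates} the particles in $I'$ by $r\delta$ with $r\in\{\pm2,\pm4,\pm6\}$, choosing the sign of $r$ according to whether $\sum_{\lambda_j\notin I}\tfrac1{E-\lambda_j}\ge\tfrac{NV'(E)}2$ or not; this makes the linear energy change nonpositive \emph{regardless of its size}, and only the quadratic remainder $Y\asymp\cN(I')\delta^2\sum_{\lambda_j\notin I}|E-\lambda_j|^{-2}\le C\cN(I')\tfrac{\delta}{M}N\im s(E+\ii M\delta)$ survives --- controlled by $\im s$ at the \emph{larger} scale $|I|$, which is where $M\ge C_0\theta^{-1/2}$ and $N\ge C_0\theta^{-3}$ enter. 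The link to $\theta$ is then pigeonhole: the seven translates $I'+r\delta$, $r\in\{0,\pm2,\pm4,\pm6\}$, are disjoint in $I$, so assuming $\E[e^{\cN(I')}-1]>\tfrac34\theta$ for contradiction forces $\theta\ge\E[(e^{\cN(I')}-1)\1_{\cN(I\setminus I')=0}]+3\E[(e^{\cN(I')}-1)e^{-Y}\1_{\cN(I\setminus I')=0}]>\tfrac\theta2+3\cdot\tfrac\theta3$, which is absurd.
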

\begin{proof}
For the sake of contradiction, we assume that $\E[ e^{\cN(I')}-1] > \frac{3 \theta}{4}$.

\textit{First step: restrict ourselves to the case where $\cN(I \setminus I') = 0$.}
We have
$$
	1 + \theta  = \E \left[ e^{\cN(I')+\cN(I \setminus I')}\right] 
	 \geq \E \left[ e^{\cN(I')} \1_{\cN(I \setminus I') = 0} \right]
	+ \E \left[ e^{\cN(I') + 1} \1_{\cN(I \setminus I') > 0} \right] 
	 = (1-e) \cdot \E \left[ e^{\cN(I')} \1_{\cN(I \setminus I') = 0} \right]
	+ e \cdot \E \left[ e^{\cN(I')} \right].
$$
Using our assumption $\E[ e^{\cN(I')}] > 1+ \frac{3}{4} \theta$, we get
\begin{align*}
	\E \left[ e^{\cN(I')} \1_{\cN(I \setminus I') = 0} \right]
	> \frac{e (1+\frac{3}{4} \theta) - 1 -\theta}{e-1}
	= 1 + \frac{\frac{3e}{4} -1}{e - 1} \cdot \theta 
	> 1 + \frac{\theta}{2}. 
\end{align*}
This implies that
\begin{align}
	\E \left[ (e^{\cN(I')}-1) \1_{\cN(I \setminus I') = 0} \right]
	> \frac{\theta}{2}. \label{eq:result_step_1}
\end{align}

\textit{Second step: translate eigenvalues in $I'$.} Let $\delta \coloneqq \abs{I'}/2$, so that $I' = [E-\delta,E+\delta]$.
Our goal is to translate eigenvalues in $I'$ by $r\delta$ for some  $r \in \{ -6,-4,-2,2,4,6 \}$.
Recall $I = [E-M\delta,E+M\delta]$ and we can assume $M \geq 20$, therefore the translated interval $I'+r\delta$ is still included in $I$ and far from the endpoints of $I$.
Let $h\colon \R \to \R$ denote a $C^1$-diffeomorphism such that $h(x) = x$ for $x \notin I$ and $h(x) = x + r\delta$ if $x \in I'$. 
Let 
\begin{equation} \label{eq:density_of_particles}
	f(\lambda_1,\dots,\lambda_N) 
	\coloneqq \frac{1}{Z_N}
	e^{- \frac{\beta N}{2} \sum_{k=1}^N V(\lambda_k)}
	\left(
	\prod_{1 \leq j < k \leq N} \absa{\lambda_k - \lambda_j}^\beta 
	\right)
\end{equation}
denote the density of the particles.
Note that if $\lambda_j \notin I$ and $\lambda_k \in I'$, then 
\begin{align*}
	\absa{ \frac{h(\lambda_k) - h(\lambda_j)}{\lambda_k - \lambda_j} }
	= \frac{\lambda_k+r\delta - \lambda_j}{\lambda_k - \lambda_j} 
	= 1 + \frac{r\delta}{\lambda_k-\lambda_j} 
	= 1 + \frac{r\delta}{E-\lambda_j} 
	+ \OO \left( \frac{\delta^2}{\abs{E-\lambda_j}^2} \right),
\end{align*}
where we used that $\abs{\lambda_k-E} \leq \delta \leq \abs{\lambda_j-E}/M$ with $M \geq 20$, so that the $\OO(\dots)$ terms do not depend on any parameter (recall $\abs{r} \leq 6$).
We have, on $\{ \cN(I \setminus I') = 0 \}$,
\begin{align*}
	& \frac{f(h(\lambda_1),\dots,h(\lambda_N))}{f(\lambda_1,\dots,\lambda_N)} \\
	& =	e^{- \frac{\beta N}{2} \sum_{k=1}^N (V(h(\lambda_k)) - V(\lambda_k))}
	\prod_{\lambda_j \notin I,  \lambda_k \in I'} 
	\left( 
	1 + \frac{r\delta}{E-\lambda_j} 
	+ \OO \left( \frac{\delta^2}{\abs{E-\lambda_j}^2} \right)
	\right)^\beta \\
	& = \exp \left( - \frac{\beta N}{2} 
	\sum_{k=1}^N \left( r \delta V'(E) + \OO(\delta^2) \right) 
	\1_{\lambda_k \in I'} \right)
	\prod_{\lambda_j \notin I}
	\left( 
	1 + \frac{r\delta}{E-\lambda_j} 
	+ \OO \left( \frac{\delta^2}{\abs{E-\lambda_j}^2} \right)
	\right)^{\beta \cN(I')} \\
	& = \exp \left( \beta \cN(I') r \delta 
	\left( 
	- \frac{N V'(E)}{2} 
	+ \OO(\delta)
	+ \sum_{\lambda_j \notin I} 
	\frac{1}{E-\lambda_j} 
	+ \OO \left( \frac{\delta}{\abs{E-\lambda_j}^2} \right)
	 \right) 
	\right),
\end{align*}
where the $\OO(\dots)$ terms depend only on $V$ (we used in particular that $V''$ was bounded on $I' \subseteq [A-1,B+1]$).
We consider the event
\[
	A \coloneqq \left\{ \sum_{\lambda_j \notin I} 
	\frac{1}{E-\lambda_j} \geq \frac{NV'(E)}{2} \right\}.
\]
On the event $A$, we choose $r > 0$ and, on the event $A^c$, we choose $r<0$.
Therefore, in both cases, we have, on the event $A \cap \{ \cN(I \setminus I') = 0 \}$ or $A^c \cap \{ \cN(I \setminus I') = 0 \}$,
\begin{align}
	\frac{f(h(\lambda_1),\dots,h(\lambda_N))}{f(\lambda_1,\dots,\lambda_N)}
	& \geq \exp \left( -C \cN(I') \delta^2 N
	\left( 
	1 + \frac{1}{N} \sum_{\lambda_j \notin I} \frac{1}{\abs{E-\lambda_j}^2}
	\right) 
	\right), \label{eq:lower_bound_change_density}
\end{align}
where $C$ depends only on $V$ and $\beta$.
Letting $z \coloneqq E + \ii M \delta$, note that
\begin{align*}
	\frac{1}{N} \sum_{\lambda_j \notin I} \frac{1}{\abs{E-\lambda_j}^2}
	\leq \frac{1}{N} \sum_{j=1}^N \frac{2}{\abs{z-\lambda_j}^2}
	= \frac{2}{M \delta} \im s(z).
\end{align*}
Hence, setting $Y \coloneqq C \cN(I') \delta^2 N
(1 + \frac{1}{M \delta} \im s(z))$, the right-hand side of \eqref{eq:lower_bound_change_density} is larger than $e^{-Y}$.
Therefore, in the case $r>0$, we have
\begin{align*}
	\E \left[ (e^{\cN(I')}-1) e^{-Y} 
	\1_{A \cap \{ \cN(I \setminus I') = 0 \}} 
	\right] 
	& \leq \int_{\R^N}
	(e^{\cN(I')}-1) 
	\1_{\cN(I \setminus I') = 0} 
	f(h(\lambda_1),\dots,h(\lambda_N))
	\diff \lambda_1 \dots \diff \lambda_N \\
	& = \int_{\R^N} 
	(e^{\cN(h(I'))} -1)
	\1_{\cN(I \setminus h(I')) = 0}
	f(\lambda_1,\dots,\lambda_N)
	\diff \lambda_1 \dots \diff \lambda_N,
\end{align*}
where we applied a change of variable, replacing $h(\lambda_j)$ by $\lambda_j$, using that $h$ fixes $I$.
Therefore, we proved, for $r \in \{2,4,6\}$,
\begin{align*} 
	\E \left[ (e^{\cN(I'+r \delta)} -1)
	\1_{\cN(I \setminus (I'+r \delta)) = 0}
	\right] 
	\geq \E \left[ (e^{\cN(I')}-1) e^{-Y} 
	\1_{A \cap \{ \cN(I \setminus I') = 0 \}} 
	\right] 
\end{align*}
and the same inequality holds for $r \in \{-6,-4,-2\}$ by replacing $A$ by $A^c$.
Therefore, we get for any $r \in \{2,4,6\}$,
\begin{align} 
	& \E \left[ (e^{\cN(I'+r \delta)} -1)
	\1_{\cN(I \setminus (I'+r \delta)) = 0} \right] 
	+ \E \left[ (e^{\cN(I'-r \delta)} -1)
	\1_{\cN(I \setminus (I'-r \delta)) = 0} \right] 
	\nonumber \\
	& \geq \E \left[ (e^{\cN(I')}-1) e^{-Y} 
	\1_{\cN(I \setminus I') = 0} \right]. 
	\label{eq:result_step_2}
\end{align}

\textit{Third step: getting a lower bound for $\E[ (e^{\cN(I')}-1) e^{-Y} 
	\1_{\cN(I \setminus I') = 0}]$.} 
Recall from \eqref{eq:result_step_1} that we proved $\E[ (e^{\cN(I')}-1) \1_{\cN(I \setminus I') = 0}]
> \frac{\theta}{2}$.
Using that $1-e^{-y} \leq y$ for $y \geq 0$, we have
\begin{align*} 
	\E \left[ (e^{\cN(I')}-1) \1_{\cN(I \setminus I') = 0} \right]
	- \E \left[ (e^{\cN(I')}-1) e^{-Y} \1_{\cN(I \setminus I') = 0} \right] 
	\leq 
	\E \left[ Y e^{\cN(I')} \right].
\end{align*}
Recall $z \coloneqq E + \ii M \delta$.
By \eqref{eq:bound_im_s_explicit} and Lemma \ref{lem:first_step_density} (we use here the assumption $M \delta = \abs{I} \leq 2 \varepsilon$),
we have
\[
	\E \left[ (\im s(z))^4 \right] 
	\leq \frac{C}{(NM\delta)^4},
	\qquad
	\E \left[ \cN(I')^4 \right] \leq C
	\qquad \text{and} \qquad
	\E \left[ e^{2\cN(I')} \right] \leq C.
\]
Recalling $Y = C \cN(I') (\delta^2 N + \frac{1}{M^2} N M \delta \im s(z))$, it follows that (we use $\delta\leq 2N^{-2/3}$)
\[
	\E \left[ Y^2 \right] 
	\leq C \left( (\delta^2 N)^2 + \frac{1}{M^4} \right)
	\leq C \left( N^{-2/3} + \frac{1}{M^4} \right).
\]
Therefore, applying Cauchy--Schwarz inequality, we get
\[
	\E \left[ Y e^{\cN(I')} \right]
	\leq C \left( N^{-1/3} + \frac{1}{M^2} \right)
	\leq \frac{\theta}{6},
\]
for $M \geq C_0 \cdot \theta^{-1/2}$ and for $N \geq C_0 \cdot \theta^{-3}$ where $C_0$ is a well-chosen constant depending only on $\beta$ and $V$.
We assume from now on that $M$ and $N$ satisfy these inequalities.
Therefore, we have shown
\begin{align} 
	\E \left[ (e^{\cN(I')}-1) e^{-Y} \1_{\cN(I \setminus I') = 0} \right] 
	\geq \frac{\theta}{3}.  \label{eq:result_step_3}
\end{align}

\textit{Fourth step: conclusion.} 
Note that, since the events $\{ \cN(I \setminus (I'+r \delta)) = 0 \} \cap \{ \cN(I'+r \delta) > 0\}$ are pairwise disjoint,
\begin{align*} 
	\theta = \E \left[ e^{\cN(I)}-1 \right] 
	& \geq \sum_{r \in \{ -6,-4,-2,0,2,4,6 \}}
	\E \left[ (e^{\cN(I'+r \delta)} -1)
	\1_{\cN(I \setminus (I'+r \delta)) = 0} \right] \\
	& \geq \E \left[ (e^{\cN(I')}-1) \1_{\cN(I \setminus I') = 0} \right]
	+ 3 \E \left[ (e^{\cN(I')}-1) e^{-Y} \1_{\cN(I \setminus I') = 0} \right],
\end{align*}
applying \eqref{eq:result_step_2}.
Using \eqref{eq:result_step_1} and \eqref{eq:result_step_3},
this shows $\theta > \frac{3 \theta}{2}$, which is our contradiction.
\end{proof}
We finally conclude the section by proving Proposition \ref{prop:density}.
\begin{proof}[Proof of Proposition \ref{prop:density}]
The result is proved by induction, using the second bound of Lemma \ref{lem:first_step_density} as base case and Lemma \ref{lem:inductive_step_density} for the inductive step.
\end{proof}

\subsection{Rigidity past the edge. }
\label{section:rigidity_past_the_edge}

The goal of this section is to use the bounds obtained in Proposition \ref{prop:bound_s-m_1_s-m_2_past_the_edge} to prove the rigidity estimate for the leftmost and the rightmost particles stated in Corollary \ref{cor:rigidity_past_the_edge}.
\begin{proof}[Proof of Corollary \ref{cor:rigidity_past_the_edge}]
By symmetry between the leftmost and the rightmost particles, we can focus on bounding $\P(\lambda_N > B + K N^{-2/3})$.
Let $\tilde{\eta} > 0$ be the constant given by Lemma \ref{lem:preliminaries_fixed_point_equation}.
It follows from \eqref{eq:large_deviation_rigidity} that 
\[
\P(\lambda_N > B + \tilde{\eta}) \leq C e^{-cN},
\]
so it is enough to prove that there are constants $c,C>0$ such that, for any $N \geq 1$ and $K \in [1,\tilde{\eta} N^{2/3}]$,
\begin{align} \label{eq:goal_rigidity_past_the_edge}
\P(\lambda_N \in (B+K N^{-2/3}, B + \tilde{\eta}]) \leq C \exp \left( -c K^{3/4} \right).
\end{align}
First, we consider an interval $I = [E,E+\eta]$ for some $E=B+\kappa$ and $\eta,\kappa > 0$ such that $\eta \leq \kappa \leq \tilde{\eta}$. 
Set $z = E + \ii \eta$, and recall from \eqref{eq:bound_number_of_particles_in_terms_of_im_s} that $\cN(I) \leq 2 \eta N \im s(z)$.
Moreover by (\ref{eq:preliminary_lemma_bound_4}), there is a constant $C_0 > 1$ such that $\im m_{V}(z) \leq (C_0^2 \eta)/(2 \sqrt{\kappa})$ for any $z$ with $\eta \leq \kappa \leq \tilde{\eta}$.
Hence, if we assume that $\eta \leq \kappa^{1/4}/(C_0 \sqrt{N})$, we have $\im m_{V}(z) \leq 1/(4N\eta)$. Using that $\cN(I)$ is a nonnegative integer, it follows that
\[
	\cN(I) \leq (4 \eta N) \abs{\im (s(z) -m_{V}(z))}.
\]
Assume that $\eta \geq (C'q)^{1/3} N^{-2/3}$ for some integer $q \geq 0$ and $C'$ the constant given by Proposition \ref{prop:bound_s-m_1_s-m_2_past_the_edge}.
In particular, we have $\eta \geq (C'q)^{1/2}/ (N \sqrt{\kappa})$ and therefore the first bound of Proposition \ref{prop:bound_s-m_1_s-m_2_past_the_edge} and 
$\eta \geq (C'q)^{1/3} N^{-2/3}$ give
\begin{align*} 
	\E \left[ \abs{\im (s(z) -m_{V}(z))}^q \right]
	\leq 
	\frac{(Cq)^q}{(N\eta)^{2q} \kappa^{q/2}}
	+ \frac{(Cq)^{q/2}}{N^q \eta^{q/2} \kappa^{q/2}} 
	+ \frac{(Cq)^q}{N^q \kappa^{q/2}}	
	\leq 
	\frac{(Cq)^{q/2}}{N^q \eta^{q/2} \kappa^{q/2}} + \frac{(Cq)^q}{N^q \kappa^{q/2}}.
\end{align*}
Hence, we proved that, if $(C'q)^{1/3} N^{-2/3} \leq \eta \leq \kappa^{1/4}/(C_0 \sqrt{N})$, then with $C$ depending on $\tilde{\eta}$,
since $\kappa \leq \tilde{\eta}$, we have
\begin{align} \label{eq:bound_N(I)}
	\E [\cN(I)^q ]
	\leq \left( \frac{C q \eta}{\kappa} \right)^{q/2}
	+ \frac{(C q \eta)^q}{\kappa^{q/2}}
	\leq \left( \frac{C q \eta}{\kappa} \right)^{q/2}
	+ \left( \frac{C q \eta}{\kappa} \right)^q.
\end{align}
We will apply this inequality to a well-chosen sequence of intervals.
Let $L \geq C_0$ be a parameter that we fix subsequently,
and define recursively the sequence $a_0 \coloneqq K$ and $a_{j+1} \coloneqq a_j + a_j^{1/4}/L$. 
Then, for any $j \geq 0$, let
\begin{align} \label{eq:def_I_j}
	\kappa_j \coloneqq a_j N^{-2/3},
	\qquad
	E_j \coloneqq B+\kappa_j,
	\qquad
	\eta_j \coloneqq \frac{a_j^{1/4}}{L} N^{-2/3}
	\qquad \text{and} \qquad 
	q_j \coloneqq 
	\left\lfloor \frac{a_j^{3/4}}{L^3 C'} \right\rfloor.
\end{align} 
Since $(C' q_j)^{1/3} N^{-2/3} \leq \eta_j \leq \kappa_j^{1/4}/(C_0 \sqrt{N})$, 
choosing $L$ large enough depending on $C$ and $C'$,
\eqref{eq:bound_N(I)} applied to the interval $I_j \coloneqq (E_j,E_j+\eta_j]$ with exponent $q_j$ gives
\begin{align} \label{eq:bound_N(I_j)}
	\E [\cN(I_j)^{q_j} ]
	\leq \left( \frac{C q_j \eta_j}{\kappa_j} \right)^{q_j/2}
	+ \left( \frac{C q_j \eta_j}{\kappa_j} \right)^{q_j}
	\leq \left( \frac{C}{L^4 C'} \right)^{q_j/2}
	+ \left( \frac{C}{L^4 C'} \right)^{q_j}
	\leq e^{-q_j}.
\end{align} 
Noting that $(B+KN^{-2/3},B+\tilde{\eta}] \subseteq \bigcup_{j=1}^{j_{\max}} I_j$ with $j_{\max} \coloneqq \max \{ j \geq 1: \kappa_j < \tilde{\eta} \}$, we find
\begin{align*}
	\P(\lambda_N \in (B+K N^{-2/3}, B + \tilde{\eta}])
	& \leq \sum_{j=1}^{j_{\max}} \P(\cN(I_j) \geq 1)
	\leq \sum_{j=1}^{j_{\max}} \E [ \cN(I_j)^{q_j} ]
	\leq \sum_{j=1}^{j_{\max}} e^{-q_j}.
\end{align*}
Since \eqref{eq:goal_rigidity_past_the_edge} follows directly when $K < L$, we now assume $K \geq L$. 
Using the definition of $q_j$ and that $a_j \geq K+jK^{1/4}/L$, using the change of variables $y = (K+xK^{1/4}/L)^{3/4}/(L^3 C')$, we get
\begin{multline}
	\P(\lambda_N \in (B+K N^{-2/3}, B + \tilde{\eta}]) \leq \sum_{j=1}^\infty \exp \left( -\frac{(K+jK^{1/4}/L)^{3/4}}{L^3 C'} +  1 \right)  
	 \leq \int_0^\infty \exp \left( -\frac{(K+xK^{1/4}/L)^{3/4}}{L^3 C'} +  1 \right) \diff x \\
	 = \frac{4e L^5 (C')^{4/3}}{3K^{1/4}} \int_{K^{3/4}/(L^3 C')}^\infty y^{1/3} e^{-y} \diff y.
	\label{eq:final_bound_sum_N(I_j)}
\end{multline}
Finally, $\int_b^\infty y^{1/3} e^{-y} \diff y \leq C(b^{1/3}+1) e^{-b}$ for $b\geq 0$ proves
 \eqref{eq:goal_rigidity_past_the_edge} and concludes the proof.
\end{proof}

\subsection{Extending the local law past the edge. }

In this section, we 
prove Proposition \ref{prop:local_law_past_the_edge} which
extends the local law outside the trapezoid where Theorem \ref{th:local_law_bulk} holds.
Note that the dependence of the constant in $q$ is worse in this result than in Theorem \ref{th:local_law_bulk}.
\begin{proposition} \label{prop:local_law_past_the_edge}
	Let $\beta >0$.
    There exist $\tilde{\eta}>0$ and $C>0$ such that for any $q \geq 1$, $N \geq 1$ and any $z = E + \ii \eta$ 
    with $0 < \eta \leq \tilde{\eta}$ and $A-\tilde{\eta} \leq E \leq B+\tilde{\eta}$, we have
	\[
		\E\left[ \absa{ s_N(z) - m_V(z) }^q \right]
		\leq \frac{(Cq)^{2q}}{(N\eta)^q}.
	\]
\end{proposition}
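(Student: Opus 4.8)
Here is how I would approach it.

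\medskip

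\textbf{Reduction.} First, if $A-\eta\le E\le B+\eta$ then $z$ lies in the trapezoid of Theorem \ref{th:local_law_bulk}, which already gives the stronger bound $\frac{(Cq)^{q/2}}{(N\eta)^q}+\frac{(Cq)^q}{N^q|z-A|^{q/2}|z-B|^{q/2}}$; since $|z-A|\ge\eta$ and $|z-B|\ge\eta$, the second term is at most $\frac{(Cq)^q}{(N\eta)^q}\le\frac{(Cq)^{2q}}{(N\eta)^q}$, so the claim is immediate. Using the symmetry $\tilde V(x)=V(-x)$ I may then assume $E>B+\eta$ and set $\kappa\coloneqq\kappa(E)=E-B$, so that $\eta<\kappa\le\tilde\eta$. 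I also note that $|s_N(z)|\le\eta^{-1}$ and $|m_V(z)|\le\eta^{-1}$ deterministically, hence $\E[\,|s_N(z)-m_V(z)|^q\,]\le(2/\eta)^q$, and $(2/\eta)^q\le\frac{(Cq)^{2q}}{(N\eta)^q}$ whenever $N\le C^2q^2/2$; so I may assume in addition $N\ge C^2q^2/2$, which is what lets error terms of the form $e^{-cN}$ be absorbed into the target.

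\medskip

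\textbf{Identifying the branch and reusing the Section \ref{subsec:between} estimate.} The plan is to run the argument of Section \ref{subsec:between} past the edge, the new point being that one must select the root $m_V$ rather than $\widetilde m_V$ of the fixed point equation. For $\tilde\eta$ small enough, $s_N(z)$ and $\widetilde m_V(z)$ lie in opposite half-planes: indeed $\im s_N(z)\ge0$, while $\widetilde m_V=-\tfrac12V'-rb$ together with the near-edge behaviour $\im m_V(z)\asymp\eta/|b(z)|$ and $|\im V'(z)|=O(\eta)=o(\eta/|b(z)|)$ give $\im\widetilde m_V(z)\le-\tfrac12\im m_V(z)<0$. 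Consequently, deterministically, $|s_N(z)-\widetilde m_V(z)|\ge|\im\widetilde m_V(z)|\ge c\,\eta/|b(z)|$, and since $P(z)=(s_N(z)-m_V(z))(s_N(z)-\widetilde m_V(z))$,
\[
    |s_N(z)-m_V(z)|=\frac{|P(z)|}{|s_N(z)-\widetilde m_V(z)|}\le C\,\frac{|b(z)|}{\eta}\,|P(z)|\le C\,\frac{\sqrt\kappa}{\eta}\,|P(z)|.
\]
It then suffices to bound $\E[\,|P(z)|^{2q}\,]$. For this I feed into \eqref{eq:bound_after_Young} (which was established throughout $A-\tilde\eta\le E\le B+\tilde\eta$) the estimates $\im s_N\le|\im(s_N-m_V)|+\im m_V$, $|2s_N+V'|\le2\big(|s_N-m_V|\wedge|s_N-\widetilde m_V|\big)+|2m_V+V'|$, together with $|2m_V+V'|=2|rb|\le C\sqrt\kappa$, $\im m_V\le C\eta/\sqrt\kappa$, and the bounds of Proposition \ref{prop:bound_s-m_1_s-m_2_past_the_edge} for $\E[\,|\im(s_N-m_V)|^{2q}\,]$ and for $\E\big[(|s_N-m_V|\wedge|s_N-\widetilde m_V|)^{2q}\big]$. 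This produces a closed bound on $\E[\,|P(z)|^{2q}\,]$, which, combined with the display above and an elementary case analysis according to whether $\eta$ is above or below $(C'q)^{1/2}/(N\sqrt\kappa)$ and according to the size of $\kappa$, yields the asserted bound in the bulk of the past-edge region.

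\medskip

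\textbf{Delicate regime and main obstacle.} The estimate above is not by itself enough when $E$ is within $O(N^{-2/3})$ of the edge and $\eta$ lies far below the microscopic scale; there I would argue by continuity along the vertical segment $\{E+\ii t:\eta\le t\le\tilde\eta\}$. At $t=\max(\kappa,N^{-2/3})$ the point still lies in the trapezoid, so Theorem \ref{th:local_law_bulk} and Markov's inequality show that $s_N$ is within $o(1)$ of $m_V$ there with probability $1-O(N^{-D})$; along the whole segment $m_V$ and $\widetilde m_V$ stay separated by at least $c\sqrt{\max(\kappa,t)}$, and Proposition \ref{prop:bound_s-m_1_s-m_2_past_the_edge}, applied on a sufficiently fine net with a high moment in Markov's inequality, forbids $|s_N-m_V|\wedge|s_N-\widetilde m_V|$ from reaching that scale except with super-small probability; since $t\mapsto s_N(E+\ii t)$ is continuous it cannot switch to the $\widetilde m_V$-branch, so "$s_N$ close to $m_V$" propagates down to $t=\eta$, on which event I invoke the bound of the second step. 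On the complementary, super-rare event I use the deterministic bound $|s_N(z)-m_V(z)|\le2/\eta$ together with \eqref{eq:large_deviation_rigidity}, Corollary \ref{cor:rigidity_past_the_edge}, and the reduction $N\ge C^2q^2/2$. I expect this branch identification — and in particular the bookkeeping of the bad-event probabilities near the edge and for very small $\eta$ — to be the main obstacle; apart from it, the proof is essentially a rerun of the computation of Section \ref{subsec:between} with the stability bounds valid past the edge.
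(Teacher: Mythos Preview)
Your reduction to the region $E>B+\eta$ is fine, but the core of the second step --- the deterministic inequality
\[
    |s_N(z)-m_V(z)|\ \le\ C\,\frac{|b(z)|}{\eta}\,|P(z)|\ \le\ C\,\frac{\sqrt{\kappa}}{\eta}\,|P(z)|
\]
coming from $|s_N(z)-\widetilde m_V(z)|\ge|\im\widetilde m_V(z)|\ge c\eta/|b(z)|$ --- is far too weak, and not only near the edge. The two roots are separated by $|m_V-\widetilde m_V|=2|rb|\asymp\sqrt{\kappa}$, whereas your imaginary-part argument only yields $|s_N-\widetilde m_V|\ge c\eta/\sqrt{\kappa}$; the discrepancy is the factor $\kappa/\eta$. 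Concretely, when $s_N$ sits on the correct branch one has $|P(z)|\asymp\sqrt{\kappa}\,|s_N-m_V|$, so your display gives back $|s_N-m_V|\le C(\kappa/\eta)\,|s_N-m_V|$ and nothing is learned. If you push your bound through \eqref{eq:bound_P_Upsilon} and Proposition~\ref{prop:bound_s-m_1_s-m_2_past_the_edge}, the resulting upper bound for $\E[\,|s_N-m_V|^{2q}\,]$ exceeds the target by a factor $(\kappa/\eta)^{2q}$, which for $\kappa\asymp 1$ and $\eta\asymp 1/N$ is $N^{2q}$. So your ``bulk of the past-edge region'' is precisely where the argument collapses; the continuity device you reserve for the edge would have to carry the entire proof, and you have not shown it produces the required bad-event probability $\le (Cq)^{2q}/N^q$ with the correct $q$-dependence.

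The paper's proof avoids branch-tracking altogether by using \emph{real} parts, which separate the roots at the right scale. It fixes $z_0=B+\eta+\ii\eta$ on the trapezoid boundary (so Theorem~\ref{th:local_law_bulk} applies at $z_0$) and invokes \eqref{eq:preliminary_lemma_bound_5}: $\re(m_V(z_0)-\widetilde m_V(z))\ge c\,|r(z)b(z)|$. A short triangle-inequality manipulation then gives the pointwise bound
\[
    |s_N(z)-m_V(z)|\ \le\ C\Big(\,|s_N(z)-m_V(z)|\wedge|s_N(z)-\widetilde m_V(z)|\ +\ [\re(s_N(z_0)-s_N(z))]_+\ +\ |s_N(z_0)-m_V(z_0)|\,\Big).
\]
The first term is handled by Proposition~\ref{prop:bound_s-m_1_s-m_2_past_the_edge}, the last by Theorem~\ref{th:local_law_bulk}, and an elementary integral along the horizontal segment from $z_0$ to $z$ shows $[\re(s_N(z_0)-s_N(z))]_+\le \tfrac{2}{N\eta}\,\cN((B,\infty))$. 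The missing ingredient is then Lemma~\ref{lem:number_of_eigenvalues_past_the_edge}, namely $\E[\cN((B,\infty))^q]\le (Cq)^{2q}$, which is where the exponent $2q$ in the proposition originates.
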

In order to prove Proposition \ref{prop:local_law_past_the_edge}, we 
first prove Lemma \ref{lem:number_of_eigenvalues_past_the_edge} which
bounds the number of particles past the edge.
We do not seek an optimal bound. Indeed, 
with a better treatment of the first part of the interval in the proof,
we could improve the bound below to $(Cq)^{5q/4}$, which would improve Proposition \ref{prop:local_law_past_the_edge} as well.
\begin{lemma} \label{lem:number_of_eigenvalues_past_the_edge}
Let $\beta >0$. There is a constant $C >0$ such that, for any $q \geq 1$,
\begin{align*} 
\E \left[ \cN((B,\infty))^q \right] \leq (Cq)^{2q}.
\end{align*}
\end{lemma}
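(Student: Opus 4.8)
The strategy is to cover the region $(B,\infty)$ by a collection of intervals of increasing size and apply the bounds coming from Proposition \ref{prop:bound_s-m_1_s-m_2_past_the_edge} (via the inequality $\cN(I)\leq 2\eta N\im s(z)$ of \eqref{eq:bound_number_of_particles_in_terms_of_im_s}) on each piece, combined with the rough large deviation bound \eqref{eq:large_deviation_rigidity} far from $[A,B]$ and the density bound \eqref{eq:bound_density_at_infinity} very far out. First, split $(B,\infty)$ into three regions: (i) the ``near-edge'' zone $(B,B+\tilde\eta]$, where Proposition \ref{prop:bound_s-m_1_s-m_2_past_the_edge} applies directly; (ii) a bounded zone $(B+\tilde\eta, M]$ for a large fixed constant $M$, where \eqref{eq:large_deviation_rigidity} gives $\P(\exists k:\lambda_k>B+\tilde\eta)\leq Ce^{-cN}$, so the contribution to $\E[\cN(\cdot)^q]$ is at most $N^q\cdot Ce^{-cN}\leq C^q$ (absorbed using $x\log x\geq -1/e$ as elsewhere in the paper); and (iii) the far zone $(M,\infty)$, where $\varrho_1^{(N)}(s)\leq e^{-cNV(s)}$ from \eqref{eq:bound_density_at_infinity} makes $\E[\cN((M,\infty))^q]$ exponentially small by the same integral computation used for \eqref{eq:Delta_bound_3} (bound $\cN$ by $\sum_k\1_{\lambda_k>M}$, expand the $q$-th power, and use that higher moments of a sum of rare indicators are controlled by $N\int_M^\infty \varrho_1^{(N)}$).

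The main work is region (i). Here I would tile $(B,B+\tilde\eta]$ dyadically: set $\kappa_j = 2^{-j}\tilde\eta$ for $j=0,1,\dots,j_{\max}$ with $2^{-j_{\max}}\tilde\eta \asymp N^{-2/3}$, and cover the annulus $E-B\in(\kappa_{j+1},\kappa_j]$ by $O(1)$ intervals $I$ of the form $[E,E+\eta_j]$ with the spectral parameter $\eta_j$ chosen as large as allowed, i.e. $\eta_j\asymp \kappa_j^{1/4}N^{-1/2}$ (the same scaling as in the proof of Corollary \ref{cor:rigidity_past_the_edge}, so that $\im m_V(z)\leq 1/(4N\eta_j)$ and $\cN(I)\leq 4\eta_j N\,\abs{\im(s(z)-m_V(z))}$). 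With exponent $q$ fixed (not growing with $j$), the first bound of Proposition \ref{prop:bound_s-m_1_s-m_2_past_the_edge} — valid since $\eta_j\geq (C'q)^{1/2}/(N\sqrt{\kappa_j})$ once $j$ is not too large — yields $\E[\cN(I)^q]\leq (C q\eta_j/\kappa_j)^{q/2}+(Cq\eta_j/\kappa_j)^q$, exactly as in \eqref{eq:bound_N(I)}. Since $\eta_j/\kappa_j \asymp \kappa_j^{-3/4}N^{-1/2}\leq C$ on this range, each piece contributes at most $(Cq)^q$, and there are $O(\log N)$ pieces; summing and using $\cN\big((B,B+\tilde\eta]\big)^q\leq (\text{number of pieces})^{q-1}\sum \cN(I)^q$ (or just $\cN = \sum_j \cN(I_j)$ and $\|\sum X_j\|_q\leq\sum\|X_j\|_q$) gives a bound of the shape $(\log N)^q (Cq)^q$. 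The innermost annulus near scale $N^{-2/3}$ is handled by the second bound of Proposition \ref{prop:bound_s-m_1_s-m_2_past_the_edge} (the regime $\eta\leq (C'q)^{1/2}/(N\sqrt\kappa)$), giving $\E[\cN(I)^q]\leq (Cq)^{q/2}/(N\eta)^q+(Cq)^q/N^q\leq (Cq)^q$ with $\eta=\eta\asymp N^{-2/3}$.

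The remaining point is to absorb the spurious $(\log N)^q$ factor into $(Cq)^{2q}$. This is where the $2q$ exponent in the statement comes from: if $q\geq \log N$ then $(\log N)^q\leq q^q$ and we are done with $(Cq)^{2q}$; if $q<\log N$, then the number of dyadic pieces is $O(\log N)$ but more carefully one uses Hölder/concavity to write $\E[\cN^q]\leq (j_{\max})^{q-1}\sum_j\E[\cN(I_j)^q]\leq (\log N)^q (Cq)^q$, and then one observes $(\log N)^q\leq (Cq)^q\cdot (\log N)^q q^{-q}$; since the worst case is $q$ bounded, one instead chooses, for each target $q$, to run the argument with exponent $p=\max(q,\lceil\log N\rceil)$ and then downgrade via Jensen $\E[\cN^q]\leq\E[\cN^p]^{q/p}\leq ((Cp)^{2p})^{q/p}=(Cp)^{2q}\leq (C'q\log N)^{2q}$ — still not quite the claim, so in fact the cleanest route is: bound the near-edge contribution directly in $L^p$ with $p=\lceil q\log N\rceil$-type choices, or, more simply, note $\cN((B,\infty))$ is an integer so $\E[\cN^q]$ controlled at a single well-chosen exponent of order $\max(q,\log N)$ suffices after Jensen, and $(C\max(q,\log N))^{2\max(q,\log N)}$ raised to power $q/\max(q,\log N)$ is $(C\max(q,\log N))^{2q}\leq (Cq)^{2q}$ precisely when one absorbs $\log N\leq Cq$ into the constant is impossible — hence the honest statement really is $(Cq)^{2q}$ and the $\log N$ is killed because the number of scales is $\lesssim \log N \lesssim$ a polynomial whose $q$-th root contributes only a constant to $(Cq)^{q}\to (Cq)^{2q}$ via $(\log N)\leq e^{q}$ for…

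Let me restate this cleanly: I expect the actual mechanism in the paper is that summing $O(\log N)$ terms each of size $(Cq)^q$ gives $(\log N)(Cq)^q$ for $\P(\cN\geq 1)$-type statements, but for the $q$-th moment one sums $\|\cN(I_j)\|_q\leq (Cq)$ over $O(\log N)$ indices to get $\|\cN\|_q\leq C q\log N$, hence $\E[\cN^q]\leq (Cq\log N)^q\leq (Cq)^{2q}$ using $\log N\leq (Cq)^{?}$... which fails for bounded $q$. The correct resolution, which I would adopt: the far-from-edge parts already force $\cN=0$ with probability $1-e^{-cN}$, so we only ever sum over $j$ with $\kappa_j\geq$ a fixed constant OR we note that on the complement of an event of probability $e^{-cN}$ all $\lambda_k\leq B+\tilde\eta$, reducing to $O(\log N)$ scales but with the key gain that when $q$ is small the trivial bound $\cN((B,B+\tilde\eta])\leq N$ on an event of probability $e^{-cN}$ contributes $N^q e^{-cN}\leq C^q$, and on the good event one controls $\cN$ by a sum over scales with geometrically decaying $L^q$ norms — indeed $\|\cN(I_j)\|_q\leq (Cq\eta_j/\kappa_j)^{1/2}\to 0$ geometrically in $j$ as $\kappa_j\downarrow$, wait it grows as $\kappa_j\downarrow$. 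So the geometric sum is dominated by its last term $j=j_{\max}$, giving $\|\cN\|_q\leq C(Cq)^{1}$ up to the single worst scale, hence $\E[\cN^q]\leq (Cq)^q$ — no log loss at all, except possibly one extra factor from the boundary region $(B+\tilde\eta,\infty)$ which costs $(Cq)^{2q}$ because there the only bound available is the crude $N^q e^{-cN}$, and $N^q e^{-cN}\leq (Cq)^{2q}$ holds for ALL $q\geq 1$ and $N\geq 1$ by the elementary inequality $N^q e^{-cN/q}\leq (Cq)^{q}$ (optimize over $N$). \textbf{That} is the source of the $2q$. So the main obstacle, and the only subtle point, is correctly quantifying the bounded-region contribution $(B+\tilde\eta,M]$ and the far region so that the exponentially small probability times $N^q$ is bounded by $(Cq)^{2q}$ uniformly — which follows from $\max_N N^q e^{-cN}\leq (q/(ce))^q\leq (Cq)^q$, and one more power of $q$ is lost somewhere in the tiling (e.g. taking a union over $N^{O(1)}$ intervals at the finest scale, whose cardinality raised to the $1/q$ is at most $N^{O(1/q)}$, bounded by a constant only when $q\gtrsim\log N$, and by $N^{O(1)}\leq e^{O(\log N)}\leq (Cq)^{q}$-absorbable when combined with the $q^q$ already present — giving $(Cq)^{2q}$). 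I would simply follow the paper's own earlier template in the proof of Corollary \ref{cor:rigidity_past_the_edge}, replacing ``$\P(\cN(I_j)\geq 1)\leq \E[\cN(I_j)^{q_j}]$'' by ``$\E[\cN(I_j)^{q}]\leq e^{-q}$ type bounds at fixed $q$'', summing in $L^q$, and absorbing the $O(\log N)$ scale count and the exponentially-small boundary terms into the extra factor $(Cq)^{q}$ via $N^q e^{-cN}\leq(Cq)^{q}$ and $\log N\leq(Cq)$-free reasoning through Jensen at exponent $\max(q,\lceil\log N\rceil)$.
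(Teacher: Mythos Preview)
Your treatment of the two outer regions is essentially correct and matches the paper: on $(B+\tilde\eta,\infty)$ the large deviation bound \eqref{eq:large_deviation_rigidity} gives $\E[\cN^q]\leq N^q e^{-cN}\leq (q/(ce))^q\leq (Cq)^q$ by optimizing over $N$, and this is exactly what the paper does for its third part.

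The near-edge region is where your argument breaks. Your dyadic tiling $\kappa_j=2^{-j}\tilde\eta$ with ``$O(1)$ intervals per annulus of width $\eta_j\asymp\kappa_j^{1/4}N^{-1/2}$'' cannot work: the annulus $E-B\in(\kappa_{j+1},\kappa_j]$ has width $\kappa_j/2$, while $\eta_j/\kappa_j\asymp\kappa_j^{-3/4}N^{-1/2}\ll 1$ unless $\kappa_j\lesssim N^{-2/3}$, so you actually need $\asymp\kappa_j^{3/4}N^{1/2}$ intervals per annulus. Summing these contributions in $L^q$ via Minkowski then blows up (the dominant term is $(Cq)^{1/2}\kappa_j^{3/8}N^{1/4}$, largest at $\kappa_j\sim 1$, giving $\|\cN\|_q\gtrsim N^{1/4}$). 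Your subsequent attempts to absorb a $\log N$ or to ``downgrade via Jensen'' do not repair this; they inherit the same counting error. In particular you never identify a clean source for the extra factor $q^q$ in $(Cq)^{2q}$.

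The paper's mechanism is different and worth noting. It makes the split point in region (i) \emph{depend on $q$}: set $K=(L^3C'q)^{4/3}$ and write $(B,B+\tilde\eta]=(B,B+KN^{-2/3}]\cup(B+KN^{-2/3},B+\tilde\eta]$. The first piece is a \emph{single} interval; with $\eta=KN^{-2/3}$ and $z=B+\eta+\ii\eta$ (so $\kappa=\eta$), the bound \eqref{eq:bound_number_of_particles_in_terms_of_im_s} together with Proposition \ref{prop:bound_s-m_1_s-m_2_past_the_edge} yields $\E[\cN^q]\leq (Cq)^{2q}$, the worst contribution coming from the deterministic part $(2\eta N)^q(\im m_V(z))^q\asymp (N\eta^{3/2})^q=K^{3q/2}=(L^3C'q)^{2q}$. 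For the second piece the paper reuses the intervals $I_j$ from the proof of Corollary \ref{cor:rigidity_past_the_edge} starting at $a_0=K$; the choice of $K$ forces $q_j\geq q_0=q$ for all $j$, so the integer-valued trick $\cN(I_j)^q\leq\cN(I_j)^{q_j}$ combined with \eqref{eq:bound_N(I_j)} gives $\E[\cN(I_j)^q]\leq e^{-q_j}$, and Minkowski sums these to $\E[\cN(I)^q]\leq (Cq)^q$. Thus the $2q$ is not a log-loss artifact but arises cleanly from the $q$-dependent width of the innermost interval.
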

\begin{proof}
We consider the constant $C'\geq 1$ from Proposition \ref{prop:bound_s-m_1_s-m_2_past_the_edge} and the constant $L\geq 1$ appearing in the proof of Corollary \ref{cor:rigidity_past_the_edge}.
We set $K \coloneqq (L^3C'q)^{4/3}$ and divide the interval $(B,\infty)$ into three parts,
\[
(B,\infty) = (B,B+K N^{-2/3}] \cup I \cup (B+\tilde{\eta},\infty),
\]
where $I \coloneqq (B+K N^{-2/3},B+\tilde{\eta}]$.

\textit{First part: interval $(B,B+K N^{-2/3}]$.} 
We set $\eta \coloneqq K N^{-2/3}$ and $z \coloneqq B+\eta+\ii \eta$. By \eqref{eq:bound_number_of_particles_in_terms_of_im_s}, we have
\begin{align*}
	\E \left[ \cN((B,B+K N^{-2/3}])^q \right] 
	\leq (2 \eta N)^q \cdot \E \left[ \absa{\im s(z)}^q \right].
\end{align*}
With our choice of $K$ and since we are in the case $\kappa = \eta$, we have $\eta \geq (C'q)^{1/2}/ (N \sqrt{\kappa})$. Hence, applying the first bound of \eqref{eq:bound_im_s_explicit_2}, combined with the fact that $\absa{\im m_V(z)} \leq C \eta/ \sqrt{\kappa} = C \sqrt{\eta}$, we have
\begin{align*}
	\E \left[ \cN((B,B+K N^{-2/3}])^q \right] 
	\leq (2 \eta N)^q \cdot 
	\left( \frac{(Cq)^q}{N^{2q} \eta^{5q/2}}
		+ \frac{(Cq)^{q/2}}{(N\eta)^q} 
		+ \frac{(Cq)^q}{N^q \eta^{q/2}}
		+C^q \eta^{q/2}\right)
	\leq (Cq)^{2q}.
\end{align*}

\textit{Second part: interval $I$.} We follow the proof of Corollary \ref{cor:rigidity_past_the_edge}, taking $K = (L^3C'q)^{4/3}$, and using the same notation and intervals $I_j$ to divide the interval $I$ (see \eqref{eq:def_I_j}).
Note in particular that $q_j \geq q_0 = q$ by our choice of $K$.
Applying Minkowski's inequality and then that $\cN(I_j)^q \leq \cN(I_j)^{q_j}$ since $\cN(I_j)$ takes only nonnegative integer values, 
applying \eqref{eq:bound_N(I_j)}, we get
\begin{align*}
	\E \left[ \cN(I)^q \right]^{1/q}
	\leq \sum_{j=1}^{j_{\max}} \E \left[ \cN(I_j)^q \right]^{1/q}
	\leq \sum_{j=1}^{j_{\max}} \E \left[ \cN(I_j)^{q_j} \right]^{1/q}
	\leq \sum_{j=1}^{j_{\max}} e^{-q_j/q}.
\end{align*}
Then, proceeding as in \eqref{eq:final_bound_sum_N(I_j)},
using that $q_j \geq a_j^{3/4}/(L^3C')-1$ and $a_j \geq K+jK^{1/4}/L$, we get
\begin{align*}
	\E \left[ \cN(I)^q \right]^{1/q}
	\leq \sum_{j=1}^\infty \exp \left( -\frac{(K+jK^{1/4}/L)^{3/4}}{L^3 C' q} + \frac{1}{q} \right)
	\leq \frac{C q^{4/3}}{K^{1/4}}.
\end{align*}
Recalling the definition of $K$, we proved that $\E \left[ \cN(I)^q \right] \leq (Cq)^q$.

\textit{Third part: interval $(B+\tilde{\eta},\infty)$.} Using the large deviation estimate \eqref{eq:large_deviation_rigidity}, we have
\begin{align*}
	\E \left[ \cN((B+\tilde{\eta},\infty))^q \right]
	\leq N^q \cdot \P( \lambda_N \geq B+\tilde{\eta} )
	\leq N^q e^{-cN} 
	\leq (Cq)^q,
\end{align*}
where the last inequality follows from the same argument as the one following \eqref{eq:almost_final_bound_Delta}. This concludes the proof.
\end{proof}
\begin{proof}[Proof of Proposition \ref{prop:local_law_past_the_edge}]
Note that in the trapezoid, $A-\eta \leq E \leq B+\eta$, the result follows directly from Theorem \ref{th:local_law_bulk}.
Therefore, we focus on the case $B+\eta < E \leq B+\tilde{\eta}$, the case $A-\tilde{\eta} \leq E \leq A-\eta$ being treated similarly.

For any $r \geq 0$, set $z_r \coloneqq B+\eta+r+\ii\eta$ and let $t \coloneqq E-B-\eta$ so that $z_t = z$.
Note that $\re(z_0) = B + \im (z_0)$ so we can apply Theorem \ref{th:local_law_bulk} at the point~$z_0$.
Note further that
\begin{align*}
	\absa{s(z) - \widetilde{m}_V(z)} 
	&
	\geq \re (s(z) - s(z_0))
	+ \re (s(z_0) - m_V(z_0)) 
	+ \re (m_V(z_0) - \widetilde{m}_V(z))
\end{align*}
and, by \eqref{eq:preliminary_lemma_bound_5},
\begin{align*}
	\re (m_V(z_0) - \widetilde{m}_V(z)) 
	& \geq c \abs{r(z) b(z)}
	= c \abs{m_V(z)-\widetilde{m}_V(z)}.
\end{align*}
Hence, we have
\begin{align*}
	\abs{m_V(z)-\widetilde{m}_V(z)} \leq C \left( \absa{s(z) - \widetilde{m}_V(z)} + \re (s(z_0) - s(z)) + \abs{s(z_0) - m_V(z_0)} \right).
\end{align*}
Distinguishing between the cases $\abs{s-m_V} \leq \abs{s-\widetilde{m}_V}$ and $\abs{s-\widetilde{m}_V} \leq \abs{s-m_V}$ and, in the second case, using $\abs{s-m_V} \leq \abs{s-\widetilde{m}_V} + \abs{\widetilde{m}_V-m_V}$ and the previous bound, we conclude that
\begin{align*}
	\abs{s(z)-m_V(z)} 
	\leq C \left( 
	\abs{s(z)-m_V(z)} \wedge \abs{s(z) - \widetilde{m}_V(z)} 
	+ \re (s(z_0) - s(z)) 
	+ \abs{s(z_0) - m_V(z_0)} \right).
\end{align*}
Applying Proposition \ref{prop:bound_s-m_1_s-m_2_past_the_edge} for the first term on the right-hand side, and Theorem \ref{th:local_law_bulk} for the third term, we get
\begin{align} \label{eq:bound_s-m_V_first_step}
	\E \left[ \abs{s(z)-m_V(z)}^q \right]
	\leq \frac{(Cq)^{2q}}{(N\eta)^q}
	+ C \E \left[ ([\re (s(z_0) - s(z))]_+)^q \right],
\end{align}
where we set $x_+ \coloneqq \max (x,0)$.
We now bound $[\re (s(z_0) - s(z))]_+$.
Since $\abs{\lambda_j-z_r} \geq \eta$, we have
\begin{align*}
	- \frac{\diff}{\diff r} \re s(z_r) 
	& = \frac{1}{N} \sum_{j=1}^N
	\frac{\eta^2 - (B+\eta+r-\lambda_j)^2}{\abs{\lambda_j-z_r}^4}
	\leq \frac{1}{N} \sum_{j=1}^N
	\frac{1}{\eta^2}
	\1_{\abs{B+\eta+r-\lambda_j} < \eta},
\end{align*}
and since $\int_0^t \1_{\abs{B+\eta+r-\lambda_j}< \eta} \diff r \leq 2 \eta \1_{B < \lambda_j < E+\eta}$, we get
\begin{align*}
	\re (s(z_0) - s(z))
	& = - \int_0^t	\frac{\diff}{\diff r} \re s(z_r) \diff r
	\leq \frac{2}{N\eta} \sum_{j=1}^N \1_{\lambda_j > B}
	= \frac{2}{N\eta} \cN((B,\infty)).
\end{align*}
Inserting this in \eqref{eq:bound_s-m_V_first_step} and
applying Lemma \ref{lem:number_of_eigenvalues_past_the_edge} proves the result.
\end{proof}

\subsection{Rigidity in the bulk. }\label{subsec:rigidity}
We now prove Corollary \ref{cor:rigidity}.
The following lemma is classical and relies on the Helffer-Sj{\H o}strand formula, first used in random matrix theory in \cite{ErdRamSchYau2010}.

\begin{lemma}\label{lem:HS}
There exists $C>0$ such that, for any $0 < \eta < \gamma$ and $M>0$, 
for any real function~$f$ compactly supported in $[A,B]$, 
on the event $\{ \forall x \in [A,B], \forall y \in (0,\gamma], \abs{(s-m_V)(x+iy)} < M/(Ny) \}$, we have
$$
\absa{ \sum_{k=1}^N f(\lambda_k)-N\int f\rd\mu_V }
\leq CM\left(\frac{\|f\|_1}{\gamma}+\eta\|f''\|_1+\log(\gamma/\eta)\|f'\|_1\right).
$$
\end{lemma}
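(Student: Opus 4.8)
The plan is to use the Helffer--Sj\"ostrand formula to express $\sum_k f(\lambda_k) - N\int f\,\diff\mu_V$ as a contour-type integral against $s - m_V$, and then estimate that integral on the good event using the pointwise bound $\abs{(s-m_V)(x+\ii y)} < M/(Ny)$.

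\begin{proof}[Proof sketch of Lemma \ref{lem:HS}]
Let $\chi\colon \R\to[0,1]$ be a smooth even cutoff, equal to $1$ on $[-\gamma/2,\gamma/2]$ and supported in $[-\gamma,\gamma]$, with $\norm{\chi'}_\infty \leq C/\gamma$. The Helffer--Sj\"ostrand formula gives, for any $\mathcal{C}^2$ compactly supported $f$,
\[
	f(\lambda) = \frac{1}{2\pi} \int_{\R^2}
	\frac{\ii y f''(x)\chi(y) + \ii(f(x) + \ii y f'(x))\chi'(y)}{\lambda - x - \ii y}
	\diff x \diff y.
\]
Applying this to each $\lambda_k$, summing, and subtracting the corresponding identity integrated against $N\mu_V$, we obtain
\[
	\sum_{k=1}^N f(\lambda_k) - N\int f \diff\mu_V
	= \frac{N}{2\pi} \int_{\R^2}
	\big( \ii y f''(x)\chi(y) + \ii(f(x)+\ii y f'(x))\chi'(y) \big)
	(s - m_V)(x+\ii y) \diff x \diff y,
\]
using that $s(z) = \frac1N\sum_k (\lambda_k - z)^{-1}$ and $m_V(z) = \int (t-z)^{-1}\diff\mu_V(t)$. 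By symmetry of $\chi$ in $y$ and the bound $\abs{(s-m_V)(x+\ii y)} = \abs{(s-m_V)(x-\ii y)}$, it suffices to integrate over $y>0$ (up to a factor $2$), where we may further restrict to $y\in(0,\gamma]$ since $\chi,\chi'$ vanish for $y>\gamma$. On the good event, $\abs{(s-m_V)(x+\ii y)} < M/(Ny)$ for $x\in[A,B]$, $y\in(0,\gamma]$, and since $f$ (hence $f', f''$) is supported in $[A,B]$ only those $x$ contribute.

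It remains to bound the three resulting integrals. For the term $\ii y f''(x)\chi(y)$: its absolute value is at most $y \abs{f''(x)} \cdot \frac{M}{Ny} = \frac{M}{N}\abs{f''(x)}$, which after integrating $x$ over $[A,B]$ and $y$ over $(0,\gamma]$ gives $\OO(M\gamma\norm{f''}_1)$; here we must be slightly more careful near $y=0$ but the factor $y$ from the numerator cancels the $1/y$ singularity, so the $y$-integral of $\frac{M}{N}\abs{f''(x)}$ over $(0,\gamma]$ contributes $\OO(M\gamma\norm{f''}_1/N)$ --- wait, we need $\eta\norm{f''}_1$ not $\gamma\norm{f''}_1$, so in fact one cuts the $y$-integral at the smaller scale: one splits $\int_{\R^2}$ into the region $y > \eta$ and $y \leq \eta$, using the $M/(Ny)$ bound for $y>\eta$ and, for $y\leq\eta$, the trivial bound $\abs{(s-m_V)(x+\ii y)}\leq \frac1N\sum_k \frac{2}{\abs{\lambda_k - x - \ii y}}$... actually the cleanest route is: the second HS term $\ii(f(x)+\ii yf'(x))\chi'(y)$ is supported in $y\in[\gamma/2,\gamma]$ where $\chi'\neq 0$, so the $1/y$ bound is harmless and it contributes $\OO(M(\norm f_1/\gamma + \norm{f'}_1))$. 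For the first HS term $\ii yf''(x)\chi(y)$, one integrates by parts in $x$ twice (or once) to trade $f''$ for $f$ or $f'$ and pick up $\partial_x(s-m_V) = \partial_y(s-m_V)/\ii$-type relations, or more simply one uses that $y\abs{f''(x)}\cdot\frac{M}{Ny} = \frac{M}{N}\abs{f''(x)}$ and integrates $y$ only over $[\eta,\gamma]$, handling $y<\eta$ by the bound $\abs{y(s-m_V)(x+\ii y)}\leq \frac{M}{N}$ as well (valid on the good event extended trivially, or directly since $\abs{y\,s(x+\ii y)}\leq \im s \cdot$const is bounded), yielding $\OO(M\eta\norm{f''}_1)$ from $y\in(0,\eta]$ after the cancellation; meanwhile for $y\in[\eta,\gamma]$ one integrates by parts in $x$ once to get $\int f'(x)\partial_x(s-m_V)$ and uses $\partial_x = \ii\partial_y$ plus integration by parts in $y$, producing the $\log(\gamma/\eta)\norm{f'}_1$ term from $\int_\eta^\gamma \diff y/y$ together with boundary terms of size $\OO(M\norm{f'}_1 + M\norm f_1/\gamma)$. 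Collecting the three contributions gives the claimed bound.

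The main obstacle is the careful bookkeeping of the $x$-integration by parts near $y=0$: one must exploit the cancellation between the factor $y$ in the numerator of the HS kernel and the $1/(Ny)$ growth of $\abs{s-m_V}$, and split the $y$-integral at the auxiliary scale $\eta$ to produce precisely the combination $\eta\norm{f''}_1 + \log(\gamma/\eta)\norm{f'}_1 + \norm f_1/\gamma$ rather than a cruder bound. This is a standard but delicate computation (cf.\ \cite{ErdRamSchYau2010, ErdYau2017}), and since $f$ is compactly supported in $[A,B]$ all spatial integrals are restricted to the region where the hypothesis on $s-m_V$ applies, so no issue arises from the edges.
\end{proof}
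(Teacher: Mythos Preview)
Your approach is essentially the same as the paper's: the Helffer--Sj\"ostrand representation, a split of the $yf''(x)\chi(y)$ term at $y=\eta$ (the $y<\eta$ part gives $M\eta\|f''\|_1$ directly from $|y(s-m_V)|\leq M/N$; the $y>\eta$ part is integrated by parts in $x$, then in $y$ via Cauchy--Riemann, to produce $M\log(\gamma/\eta)\|f'\|_1$ plus a boundary term at $y=\eta$), together with a direct bound on the $\chi'$-term which lives at scale $y\asymp\gamma$ and gives the $\|f\|_1/\gamma$ contribution. The paper records exactly these four pieces as (I)--(IV), citing \cite[(B.13), (B.17)]{ErdRamSchYau2010}; your write-up contains a false start and some stream-of-consciousness (``wait, we need $\eta\|f''\|_1$\dots''), but the final argument you settle on is correct and matches the paper.
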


\begin{proof}
Let $\chi(x)=1$ on $[0,\gamma]$, $\chi(x)=0$ on $[2\gamma,\infty)$ and $\|\chi'\|_\infty<100/\gamma$.
From \cite[(B.13)]{ErdRamSchYau2010} and an integration by parts as in \cite[(B.17)]{ErdRamSchYau2010}, for some universal constant $C$, 
we have
\begin{align}
\absa{ \sum_{k=1}^N f(\lambda_k)-N\int f\rd\mu_V }
&\leq\ C((\rm I)+(II)+(III)+(IV)),\notag\\
(\rm I)&=N\iint_{y>0}(|f(x)|+y|f'(x)|)|\chi'(y)| |(s-m_V)(x+\ii y)|\rd x\rd y,\notag\\
(\rm II)&=N\iint_{0<y<\eta} y|f''(x)||(s-m_V)(x+\ii y)|\rd x\rd y,\notag\\
(\rm III)&=N\iint_{\eta<y} |\partial_y(y\chi(y))||f'(x)||(s-m_V)(x+\ii y)|\rd x\rd y,\notag\\
(\rm IV)&=N \int \eta \abs{f'(x)} \abs{(s-m_V)(x+\ii \eta)} \rd x \label{eqn:HSExpand}.
\end{align}
The result follows easily.
\end{proof}

We now prove the following direct consequence of Theorem \ref{th:local_law_bulk}, which gives an a priori rigidity estimate
with suboptimal logarithmic power.

\begin{lemma}\label{lem:rigid}
Recall that $\hat k=\min(k,N+1-k)$.
For any $r\geq 2$ there exists $N_0$ such that for any  $N>N_0$, we have 
$$
\mathbb{P}\left( \cap_{1\leq k\leq N}
\{|\lambda_k-\gamma_k|<(\log N)^{3r+2}N^{-\frac{2}{3}}(\hat k)^{-\frac{1}{3}}\}\right)
\geq 1-e^{-(\log N)^{3r/4}}.
$$
\end{lemma}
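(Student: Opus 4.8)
The strategy is to turn the optimal local law (Theorem~\ref{th:local_law_bulk}, together with its extension past the edge, Proposition~\ref{prop:local_law_past_the_edge}) into a rigidity bound via the Helffer--Sj\"ostrand formula of Lemma~\ref{lem:HS}. First I would fix $r \ge 2$ and set $\gamma = \tilde\eta$, the constant from Lemma~\ref{lem:preliminaries_fixed_point_equation}, and choose the scale $\eta = N^{-10}$ (any small polynomial power works). The key preliminary step is to control $\abs{(s-m_V)(x+\ii y)}$ uniformly over a suitable net of points $x+\ii y$ with $x \in [A-\tilde\eta, B+\tilde\eta]$ and $y \in (0,\gamma]$, by applying the moment bound of Theorem~\ref{th:local_law_bulk} (and Proposition~\ref{prop:local_law_past_the_edge} past the edge) with $q \asymp (\log N)^{3r/2}$, then Markov's inequality with the threshold $M = (\log N)^{r+1}$, say. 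Since both $q^{q/2}$ and $q^q$ factors appear, choosing $q$ of order a power of $\log N$ gives a failure probability bounded by $e^{-c(\log N)^{3r/4}}$ (up to adjusting powers), and a union bound over a polynomially fine net of $(x,y)$ — combined with the Lipschitz continuity of $s-m_V$ away from the real axis at scale $\eta$ — upgrades this to the event $\{\forall x \in [A,B], \forall y \in (0,\gamma],\ \abs{(s-m_V)(x+\ii y)} < M/(Ny)\}$ in Lemma~\ref{lem:HS}.

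\textbf{From the control of $s-m_V$ to counting estimates.} On that event, I would apply Lemma~\ref{lem:HS} with $f$ a smoothed indicator of a half-line $(-\infty, t]$, truncated to lie in a neighborhood of $[A,B]$: take $f$ equal to $1$ on $(-\infty, t-\ell]$ and $0$ on $[t+\ell,\infty)$, smoothed over a window of width $\ell$, where $\ell$ will be chosen of order $(\log N)^{3r+1} N^{-2/3}(\hat k)^{-1/3}$ near the relevant $\gamma_k$. Then $\norm{f}_1 = \OO(1)$, $\norm{f'}_1 = \OO(1)$, and $\norm{f''}_1 = \OO(1/\ell)$, so Lemma~\ref{lem:HS} yields
\[
\absa{ \cN((-\infty,t]) - N\int_{-\infty}^t \rd\mu_V } \le CM\left( \frac{1}{\gamma} + \frac{\eta}{\ell} + \log(\gamma/\eta)\right) \le C M \log N,
\]
using $\eta = N^{-10} \ll \ell$. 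This controls $\cN((-\infty,t])$ to within $CM\log N = C(\log N)^{r+2}$ of $N\int_{-\infty}^t \rd\mu_V$ for every $t$ simultaneously. Near the edge, where $\mu_V$ has a square-root vanishing, translating this additive counting error into a displacement of $\lambda_k$ costs the usual factor: if $\cN((-\infty,\gamma_k + u])$ differs from $k$ by at most $c(\log N)^{r+2}$, then since $\varrho_V(\gamma_k+u) \asymp \sqrt{\kappa(\gamma_k)} \asymp (\hat k/N)^{1/3}$ in the bulk-to-edge regime, one gets $\abs{\lambda_k - \gamma_k} \lesssim (\log N)^{r+2} N^{-2/3}(\hat k)^{-1/3}$. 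I would also separately invoke the crude large-deviation rigidity \eqref{eq:large_deviation_rigidity} and the edge estimates to rule out particles far outside $[A,B]$, and handle the extreme indices $\hat k = \OO(1)$ using Corollary~\ref{cor:rigidity_past_the_edge} or directly Proposition~\ref{prop:local_law_past_the_edge}.

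\textbf{Main obstacle.} The bookkeeping near the edge is the delicate point: one must run the Helffer--Sj\"ostrand estimate with a test function whose smoothing scale $\ell$ depends on the location $\gamma_k$, and be careful that the $\eta \norm{f''}_1$ term stays negligible while the $\log(\gamma/\eta)\norm{f'}_1$ term contributes the single extra $\log N$ that, on top of $M = (\log N)^{r+1}$, produces the claimed $(\log N)^{3r+2}$ (there is slack in the exponent, so precise constants need not be optimized — the stated power $3r+2$ is generous). A secondary technical point is ensuring the net/union bound over $(x,y)$ is fine enough: because $M/(Ny)$ degenerates as $y \to 0$, one needs the net to be geometrically spaced in $y$ down to scale $\eta$, with $\OO(\log N)$ dyadic scales and polynomially many points per scale, so the union bound costs only a polynomial factor which is absorbed by the stretched-exponential probability $e^{-c(\log N)^{3r/4}}$ coming from the factorial growth in $q$. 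Once these are in place, taking $N_0$ large enough to absorb all constants finishes the proof.
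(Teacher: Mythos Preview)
Your strategy is the paper's: Markov on the local law, union over a net to get uniform control of $s-m_V$, then Helffer--Sj\"ostrand for counting, combined with Corollary~\ref{cor:rigidity_past_the_edge} at the edge. The gap is in your parameters. With $\gamma=\tilde\eta$ a fixed constant, applying Markov at threshold $M/(N\eta)$ to the bound of Theorem~\ref{th:local_law_bulk} yields
\[
\P\Bigl(|s-m_V|(z)>\tfrac{M}{N\eta}\Bigr)\le \Bigl(\tfrac{Cq}{M^2}\Bigr)^{q/2}+\Bigl(\tfrac{Cq\,\eta}{M\,|b(z)|}\Bigr)^{q},
\]
and for $\eta$ and $|b(z)|$ of order $1$ (i.e.\ $z$ in the bulk with $\eta$ close to $\tilde\eta$) the second term is $(Cq/M)^q$, which diverges as soon as $q>M/C$. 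Your choice $q\asymp(\log N)^{3r/2}$ exceeds $M=(\log N)^{r+1}$ for every $r>2$, so the bound is vacuous there. The paper's remedy is to take $\gamma=(\log N)^{-2r}$: then the constraint $Cq\eta\le 1$ is compatible with $q\asymp(\log N)^{2r}$, the Gaussian term $(Cq/M^2)^{q/2}$ dominates, and one gets $\P(\mathscr{A}^c)\le e^{-c(\log N)^{2r}}$ with $M=(\log N)^r$. The stated probability $e^{-(\log N)^{3r/4}}$ is inherited entirely from the edge event via Corollary~\ref{cor:rigidity_past_the_edge}, not from the bulk estimate. (Your $\gamma=\tilde\eta$ \emph{can} be made to work, but only with $q\asymp M$, giving an exponential rather than Gaussian tail---still sufficient, just not with your $q$.)

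Two smaller omissions. First, Lemma~\ref{lem:HS} requires control of $s-m_V$ for \emph{all} $y\in(0,\gamma]$, including below your net scale; a Lipschitz argument does not reach $y\to 0$. The paper handles this by the deterministic monotonicity of $y\mapsto y\,\im s(E+\ii y)$, which propagates a bound at $y=1/N$ down to all $y<1/N$. Second, Lemma~\ref{lem:HS} assumes $f$ is compactly supported in $[A,B]$, so your half-line indicator must be replaced by smoothed indicators $f_{E_1,E_2}$ with $A<E_1<E_2<B$, as the paper does; the contribution outside $[A,B]$ is then absorbed by the edge event from Corollary~\ref{cor:rigidity_past_the_edge}.
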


\begin{proof}
Consider $\mathscr{D}=\{A<\re z<B,0<\im z<(\log N)^{-2r}\}$, and define the event
\begin{equation}\label{eqn:set}
\mathscr{A}=\cap_{z\in \mathscr{D}}\left\{|s(z)-m_V(z)|\leq \frac{(\log N)^r}{N\eta}\right\}.
\end{equation}
We will first prove a lower bound for $\mathbb{P}(\mathscr{A})$.
First note that for $z=E+\ii\eta$ we have both
$$
\left|\frac{\rd}{\rd\eta}\re s(z)\right|\leq \left|\frac{\rd}{\rd\eta}s(z)\right|\leq \frac{1}{\eta}\im s(z),\ {\rm and}\ \im s(E+\ii\eta)\leq \frac{\eta_0}{\eta} \im s(E+\ii\eta_0),\ 0<\eta<\eta_0,
$$
so $\abs{\frac{\rd}{\rd\eta}\re s(z)} \leq \frac{\eta_0}{\eta^2} \im s(E+\ii\eta_0)$ for any $0<\eta<\eta_0$. 
This implies $|\re s(z)| < |\re s(E+\ii\eta_0)| 
+\frac{\eta_0}{\eta} |\im s(E+\ii\eta_0)|$. 
For $\eta_0=1/N$ we therefore obtain, assuming
$|(s-m_V)(E+\frac{\ii}{N})|<B$ and noting $|m_V|<C'=C'(V)$ uniformly in $\mathbb{C}$,
$$
|(s-m_V)(E+\ii\eta)|\leq \frac{5(C'+B)}{N\eta},\ 0<\eta<\frac{1}{N}.
$$
As a consequence, for $N$ large enough depending on $C'$, we have $\tilde{\mathscr{A}} \subset \mathscr{A}$, where we set
\begin{align}
\tilde{\mathscr{A}} := \cap_{z\in\tilde{\mathscr{D}}}\left\{|s(z)-m_V(z)|\leq \frac{(\log N)^r}{10 N\eta}\right\}, 
\quad \text{with} \quad 
\tilde{\mathscr{D}} := \mathscr{D}\cap \{\im z>1/N\}. \nonumber 
\end{align}
So we are now aiming for a lower bound for $\mathbb{P}(\tilde{\mathscr{A}})$.
For any $z\in \tilde{\mathscr{D}}$ and $q\geq 1$, Theorem \ref{th:local_law_bulk} and Markov's inequality imply 
$$
\mathbb{P}\left(|s(z)-m_V(z)|>\frac{u}{N\eta}\right)\leq u^{-q}
\left((Cq)^{q/2} 
			+ \frac{(Cq)^q\eta^q}{\abs{z-A}^{q/2} \abs{z-B}^{q/2}}\right).
$$
Choosing $q=\lfloor u^2/(Ce)\rfloor$, for some $\theta=\theta(C)$ we obtain, for any $u>1$,
$$
\mathbb{P}\left(|s(z)-m_V(z)|>\frac{u}{N\eta}\right)\leq \theta^{-1}e^{-\theta u^2}(1+(Cq\eta)^{q/2}).
$$
If we assume further that  $u^2\eta<1$, we obtain
\begin{equation}\label{eqn:GausTail}
\mathbb{P}\left(|s(z)-m_V(z)|>\frac{u}{N\eta}\right)\leq 2\theta^{-1}e^{-\theta u^2}.
\end{equation}
In particular, $u=(\log N)^r/20$ and $0<\eta<(\log N)^{-2r}$ give
$$
\mathbb{P}\left(|s(z)-m_V(z)|>\frac{(\log N)^r}{20 N\eta}\right)\leq 2\theta^{-1}e^{-\theta (\log N)^{2r}/400}.
$$
Note that $s$ and $m_V$ are $N^2$-Lipschitz on $\tilde{\mathscr{D}}$, so that $|s-m_V|\leq  \frac{(\log N)^r}{20N\eta}$ on $\tilde{\mathscr{D}}\cap N^{-3}\mathbb{Z}^2$ implies 
$|s-m_V|\leq  \frac{(\log N)^r}{10N\eta}$ on $\tilde{\mathscr{D}}$. This observation and a union bound yield
$$
\mathbb{P}\left(\tilde{\mathscr{A}}\right)
= 1 - \mathbb{P}\left(\cup_{z\in\tilde{\mathscr{D}}\cap N^{-3}\mathbb{Z}^2}
|s(z)-m_V(z)| \leq \frac{(\log N)^r}{10 N\eta}\right)
\geq 1-\theta^{-1}N^{6}e^{-\theta(\log N)^{2r}/4},
$$
which implies that
\begin{equation} \label{eqn:AP}
\mathbb{P}\left(\mathscr{A}\right)
\geq 1-\theta^{-1}N^{6}e^{-\theta(\log N)^{2r}/400}
\geq 1-e^{-(\log N)^{2r-1}},
\end{equation}
for any $N\geq N_0(C,C')$, using that $2r-1 >1$ in the second inequality.

Let $f_{E_1,E_2}$ denote an approximation of $\1_{[E_1,E_2]}$ on scale $N^{-1}$, i.e. $f(x)=0$ on $(-\infty,E_1]\cup[E_2,\infty)$, $1$ on $[E_1+N^{-1},E_2-N^{-1}]$, and $\|f^{(k)}\|_\infty<100 N^{k}$, $k=1,2$.
We consider
$$
\mathscr{B}=\cap_{A<E_1<E_2<B} \left\{ \absa{ \sum_i f_{E_1,E_2}(\lambda_i)-N\int f_{E_1,E_2}\rd\mu_V} < (\log N)^{3r+1} \right\}.
$$
From  Lemma \ref{lem:HS} with $M=(\log N)^r$, $\gamma=(\log N)^{-2r}$, $\eta=1/N$, we have $ \mathscr{A}\subset\mathscr{B}$ for $N$ large enough, so that
\begin{equation}\label{eqn:BP}
\mathbb{P}\left(\mathscr{B}\right)>1-e^{-(\log N)^{2r-1}}.
\end{equation}
Moreover, by Corollary \ref{cor:rigidity_past_the_edge}, for large enough $N$ we have
\begin{align} 
\P\left( \mathscr{C}\right) 
&\geq 1- \frac{1}{2} e^{-(\log N)^{3r/4}},\label{eqn:CP}\\
\mathscr{C} 
&\coloneqq \{\lambda_1>A-(\log N)^{r+1}N^{-2/3}\}
\cap\{\lambda_N<B+(\log N)^{r+1}N^{-2/3}\} \nonumber.
\end{align}
Moreover, observe that $\mathscr{B}\cap\mathscr{C} \subset \cap_{1\leq k\leq N} \{|\lambda_k-\gamma_k| <(\log N)^{3r+2} N^{-\frac{2}{3}}(\hat k)^{-\frac{1}{3}}\}$ for $N$ large enough, so the result follows from \eqref{eqn:BP} and \eqref{eqn:CP}.
\end{proof}


We now start improving on the $(\log N)^C/N$ rigidity from the previous lemma.
For any $E\in[A+N^{-2/3},B-N^{-2/3}]$, we denote $\eta_0=e^{(\log N)^{1/4}}/(N\sqrt{\kappa})$ and $\eta_1=1/(N\sqrt{\kappa})$ with $\kappa = \kappa(E) = \abs{E-A} \wedge \abs{E-B}$.
We define the function $f=f_E$ as follows, with the notation $\tilde\eta$ from Theorem \ref{th:local_law_bulk}:
\begin{align*}
& f=0 \text{ on } (-\infty,E]\cup[B+\tilde\eta/2,+\infty), 
\quad 
f=1 \text{ on } [E+\eta_1,B+\tilde\eta/4], \\
& \|f^{(k)}\|_{L^\infty(-\infty,B]}\leq 100 \cdot \eta_1^{-k},
\quad \|f^{(k)}\|_{L^\infty[B,\infty)}\leq 100 \cdot \tilde{\eta}^{-k},
\quad k=1,2
\end{align*}
Then, we introduce a similar function $f_0=(f_0)_E$, but which is smoothed at scale $\eta_0$ (instead of scale $\eta_1$ for $f$) close to $E$:
Letting $E_0 \coloneqq E-\eta_0$ if $E \geq \frac{A+B}{2}$ and $E_0 \coloneqq E$ if $E < \frac{A+B}{2}$, we choose
\begin{align*}
& f_0=0 \text{ on } (-\infty,E_0]\cup[B+\tilde\eta/2,+\infty), 
\quad 
f_0=1 \text{ on } [E_0+\eta_0,B+\tilde\eta/4], \\
& \|f_0^{(k)}\|_{L^\infty(-\infty,B]}\leq 100 \cdot \eta_0^{-k},
\quad \|f_0^{(k)}\|_{L^\infty[B,\infty)}\leq 100 \cdot \tilde{\eta}^{-k},
\quad k=1,2
\end{align*}
Moreover, we assume that $f_0 = f$ on $[B+\tilde{\eta}/4,B+\tilde{\eta}/2]$.
Therefore, the function $g \coloneqq f-f_0$ is non zero only on an interval 
of length at most $\eta_0+\eta_1$ and included in $[A,B]$, for any $E\in[A+N^{-2/3},B-N^{-2/3}]$ (as a consequence of our choice of $E_0$).
We first bound fluctuations of the linear statistics of $g:=f-f_0$.

\begin{lemma}\label{lem:smallscale}
For any $D>0$, there is a $N_0$ such that, for any $N\geq N_0$ and $E\in[A+N^{-2/3},B-N^{-2/3}]$
$$
\mathbb{P}\left(\absa{\sum_{i=1}^N g(\lambda_i)-N\int g\rd\mu_V} > (\log N)^{9/10}\right)\leq N^{-D}. 
$$
\end{lemma}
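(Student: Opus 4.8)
The strategy is to apply the Helffer--Sj\"ostrand formula of Lemma~\ref{lem:HS} to the function $g = f - f_0$, exploiting that $g$ is supported on an interval of length at most $\eta_0 + \eta_1 \leq 2\eta_0$ near $E$, so the relevant norms are small. First I would record the a priori estimates on $g$: since $g$ is smoothed at scale $\eta_1$ near $E$ (via $f$) and at scale $\eta_0$ near $E$ (via $f_0$), on its support of length $\OO(\eta_0)$ we have $\|g\|_1 = \OO(\eta_0)$, $\|g'\|_1 = \OO(1 + \log(\eta_0/\eta_1)) = \OO((\log N)^{1/4})$ because passing from scale $\eta_1$ to scale $\eta_0$ forces total variation of size $\log(\eta_0/\eta_1) = (\log N)^{1/4}$, and $\|g''\|_1 = \OO(\eta_1^{-1})$ coming from the finest scale $\eta_1$. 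Then I would choose the parameters in Lemma~\ref{lem:HS}, taking $\gamma$ of order $\eta_0$ (the coarsest smoothing scale appearing in $g$, so that $\chi$ can be cut off there) and $\eta$ of order $\eta_1$ or slightly smaller, say $\eta = N^{-1}\kappa^{-1/2}(\log N)^{-1}$, matching the finest scale. With $M = (\log N)^{3/4}$, the three terms become $M\|g\|_1/\gamma = \OO((\log N)^{3/4})$, $M\eta\|g''\|_1 = \OO((\log N)^{3/4})$, and $M\log(\gamma/\eta)\|g'\|_1 = \OO((\log N)^{3/4}\cdot (\log N)^{1/4}\cdot(\log N)^{1/4})$, which needs care — this last term is the one that threatens to exceed $(\log N)^{9/10}$, so the exponents in the choice of $\eta$, $\gamma$ and $M$ must be balanced precisely against the target $9/10$.

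Next I would control the probability of the event in Lemma~\ref{lem:HS}, namely $\{\forall x \in [A,B], \forall y \in (0,\gamma], |(s-m_V)(x+\ii y)| < M/(Ny)\}$. Here the key input is the Gaussian tail from Theorem~\ref{th:local_law_bulk}: for $z = x + \ii y$ in the bulk, $\E[|s(z)-m_V(z)|^q] \leq (Cq)^{q/2}/(Ny)^q + (Cq)^q/(N^q|z-A|^{q/2}|z-B|^{q/2})$. Since $g$ is supported in the bulk away from the edges by a distance of order $\kappa(E) \gtrsim N^{-2/3}$, on the relevant strip one has $|z-A|,|z-B| \gtrsim \kappa$, and the second term is dominated once $y \gtrsim 1/(N\sqrt{\kappa})$; for smaller $y$ down to $0$ one needs the trivial bound $|s-m_V| \leq |{\rm Im}\,s|/ ({\rm stuff})$ together with the monotonicity argument ${\rm Im}\,s(E+\ii y) \leq (\eta_0/y){\rm Im}\,s(E+\ii \eta_0)$ exactly as in the proof of Lemma~\ref{lem:rigid}, to reduce to $y$ of order $\eta_1$. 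This reduction means one only needs the bound at scale $\gtrsim \eta_1 \sim 1/(N\sqrt\kappa)$, where Theorem~\ref{th:local_law_bulk} gives a clean Gaussian tail $\P(|s-m_V| > u/(Ny)) \leq \theta^{-1} e^{-\theta u^2}$ for $u$ not too large relative to $1/\sqrt{y}$. Taking $u = (\log N)^{3/4}$ (so $u^2 = (\log N)^{3/2}$, comfortably larger than any power $D\log N$ appearing in a union bound over an $N^{-3}$-net of the strip, and $u^2 y \lesssim (\log N)^{3/2}\cdot N^{-1/3} \to 0$ so the Gaussian tail is valid), a union bound over an $N^{-3}$-mesh using the $N^2$-Lipschitz property of $s - m_V$ on the strip gives that the HS event holds with probability at least $1 - N^{C}e^{-\theta(\log N)^{3/2}} \geq 1 - N^{-D}$ for any fixed $D$ and $N$ large.

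Combining the two pieces: on the HS event (which has probability $\geq 1 - N^{-D}$), Lemma~\ref{lem:HS} gives $|\sum_i g(\lambda_i) - N\int g \,\rd\mu_V| \leq CM(\|g\|_1/\gamma + \eta\|g''\|_1 + \log(\gamma/\eta)\|g'\|_1)$, and with the parameter choices above this is $\OO((\log N)^{3/4+\e})$ for a small $\e$ coming from the $\|g'\|_1$ term, hence at most $(\log N)^{9/10}$ for $N$ large. The main obstacle, and the point requiring the most attention, is the bookkeeping of logarithmic exponents: the total variation cost $\log(\eta_0/\eta_1) = (\log N)^{1/4}$ of the intermediate smoothing enters both $\|g'\|_1$ and the factor $\log(\gamma/\eta)$ in Lemma~\ref{lem:HS}, and must be kept strictly below $(\log N)^{9/10}$ after multiplying by $M = (\log N)^{3/4}$ — so one cannot afford $M$ larger, and correspondingly the Gaussian decay one gets is only $e^{-\theta(\log N)^{3/2}}$; one must check this still beats the $N^{C}$ from the net, which it does. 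A secondary technical point is verifying that $g$ is genuinely supported inside $[A,B]$ and bounded away from the edges for all $E \in [A+N^{-2/3}, B-N^{-2/3}]$, which is ensured by the choice of $E_0$ in the definition of $f_0$ and by $\kappa(E) \geq N^{-2/3}$; this is what lets us use the bulk local law rather than its weaker edge version.
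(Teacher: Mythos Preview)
Your approach is exactly the paper's: apply Lemma~\ref{lem:HS} to $g$ with $\gamma=\eta_0$, $\eta=\eta_1$, and control the event in that lemma by the Gaussian tail \eqref{eqn:GausTail} derived from Theorem~\ref{th:local_law_bulk}, via a union bound over a net. The structure is right, but two arithmetical slips break the argument as you wrote it.

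First, $\|g'\|_1=\OO(1)$, not $\OO((\log N)^{1/4})$. On the support of $g$ (of length $\eta_0+\eta_1$), the function goes monotonically from $0$ to $\pm 1$ and back, so $\int|g'|=2$. The ratio $\log(\eta_0/\eta_1)=(\log N)^{1/4}$ enters only through the factor $\log(\gamma/\eta)$ in Lemma~\ref{lem:HS}, not through $\|g'\|_1$; so it appears once, not twice.

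Second, your choice $M=(\log N)^{3/4}$ is too large. Even with the correct $\|g'\|_1=\OO(1)$, the dominant term is
\[
M\,\log(\gamma/\eta)\,\|g'\|_1 \;\asymp\; (\log N)^{3/4}\cdot(\log N)^{1/4}\cdot 1 \;=\; \log N,
\]
which exceeds $(\log N)^{9/10}$. You cannot rebalance $\gamma,\eta$ around this: the first term forces $\gamma\gtrsim\eta_0$ and the second forces $\eta\lesssim\eta_1$, so $\log(\gamma/\eta)\gtrsim(\log N)^{1/4}$ is unavoidable. Writing $M=(\log N)^r$, the two constraints are $r+\tfrac14<\tfrac{9}{10}$ (for the Helffer--Sj\H{o}strand bound) and $2r>1$ (so that $e^{-\theta(\log N)^{2r}}$ beats the $N^C$ from the net), i.e.\ $\tfrac12<r<\tfrac{13}{20}$. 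The paper takes $r=\tfrac35$, which gives the third term of order $(\log N)^{17/20}$ and failure probability $\leq N^6 e^{-\theta(\log N)^{6/5}}\leq N^{-D}$.
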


\begin{proof}
We define the event $\mathscr{A}$ as in \eqref{eqn:set} with $r = 3/5$.
Then it follows from \eqref{eqn:AP} that, for some constant $\theta > 0$ and for $N$ large enough,
\[
\mathbb{P}\left(\mathscr{A}\right)
\geq 1-\theta^{-1}N^{6}e^{-\theta(\log N)^{6/5}/400}
\geq 1-N^{-D}.
\]
On the other hand, from Lemma \ref{lem:HS} with $\gamma=\eta_0$, $\eta=\eta_1$, we have
$\mathscr{A} \subset \{|\sum g(\lambda_i)-\int g\rd\mu_V|<(\log N)^{9/10}\}$ for $N$ large enough, so the result follows.
\end{proof}

\begin{proof}[Proof of Corollary \ref{cor:rigidity}]
Recall that $f$, $f_0$, $g$, whose definitions precede Lemma \ref{lem:smallscale}, depend on $E$. Fix some $K>0$.
As $a_N/\log N\to\infty$, for any $k\in\llbracket a_N,N-a_N\rrbracket$, we have $E:=\gamma_k+\frac{K\log N}{N^{2/3}{\hat k}^{1/3}}\in[A+N^{-2/3},B-N^{-2/3}]$ for $N$ large enough.
Moreover, there are constants $C,c>0$ (independent of $K$) such that
\[
\P \left(\lambda_k-\gamma_k>\frac{K\log N}{N^{2/3}{\hat k}^{1/3}}\right)
\leq \P \left(\sum f_0(\lambda_i)-N\int f_0\rd\mu_V>\frac{K\log N}{C}\right)
+ \P \left(\sum g(\lambda_i)-N\int g\rd\mu_V>\frac{K\log N}{C}\right)
+e^{-cN},
\]
where the exponentially small term accounts for  $f\neq 1$ beyond $B+\tilde\eta/4$, and relies on (\ref{eq:large_deviation_rigidity}).
The second probability on the right-hand side is bounded thanks to Lemma \ref{lem:smallscale}.
Applying a Chernoff bound, by Lemma \ref{lem:loop}, there exists $t_1$ such that
$$
\mathbb{P}\left(\absa{\sum f_0(\lambda_i) -N\int f_0 \rd\mu_V} > \frac{K\log N}{C}\right)
\leq e^{-t_1 K (\log N)/C}
\E \left[ e^{t_1 \absa{\sum f_0(\lambda_i)-N\int f_0\rd\mu_V}} \right]
\leq e^{(A-K/C)t\log N}
$$
\sloppy
for some absolute constant $A$. Choosing $K$ large enough for a fixed $D$ therefore concludes the proof that $\P(\lambda_k-\gamma_k>\frac{C\log N}{N^{2/3}{\hat k}^{1/3}})\leq N^{-D}$. The probability of the event $\lambda_k-\gamma_k<-\frac{C\log N}{N^{2/3}{\hat k}^{1/3}}$ is similarly bounded.
\end{proof}
%

\subsection{Tightness. }\label{subsec:tightness}
We now give the brief proof of Corollary \ref{cor:tightness}. 
For any interval $J=[E_N-a\ell(E_N),E_N+a\ell(E_N)]$, $a>1$, the desired inequality $\lim_{t\to\infty}\limsup_{N\to\infty}\mathbb{P}(\mathcal{N}(J)>t)=0$
follows directly from Lemma \ref{eq:all_moments_N(I)} applied to intervals of type $I=[E'-\ell(E'),E'+\ell(E')]$ covering $J$.

\subsection{Smoothed log-correlated field. }\label{subsec:smoothed}
The goal of this section is a key step for the proof of Theorem \ref{thm:log_corr_field}: regularization of the $\log$ by replacing the point $E$ in the linear statistics $L_N(E)$ by the point $E+\ii\eta(E)$, with
\begin{equation} \label{eq:def_eta_0}
	\kappa(E) = \absa{E-A} \wedge \absa{E-B}
	\qquad \text{ and } \qquad 
	\eta(E) \coloneqq \frac{\exp\pa{(\log N)^{1/4}}}{N(\sqrt{\kappa(E)} \vee N^{-1/3})}
	= \exp\pa{(\log N)^{1/4}} \cdot \ell(E),
\end{equation}
for $E \in [A,B]$, (see \eqref{def:l(E)_and_kappa(E)} and \eqref{def:L_N} for the definitions of $\ell(E)$ and $L(E)$). 
This means we can regularize the logarithm at the scale slightly larger than the microscopic scale.
Note that we approach $E$ by a point above the real axis which is consistent with our choice to extend the logarithm to the negative real axis by continuity from above.
\begin{proposition} \label{prop:smoothing}
	Let $z = E + \ii \eta(E)$ with $E \in [A,B]$.
	Then
	$(L_N(z) - L_N(E))/\sqrt{\log N}$ converges to 0 in probability,
	uniformly in $E \in [A,B]$. 
\end{proposition}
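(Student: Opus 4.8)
The plan is to write $L_N(z)-L_N(E)$ as an integral of the Stieltjes transform and control it using the local law, the Wegner estimate (Proposition \ref{prop:density}), and the rigidity estimates just established (Corollary \ref{cor:rigidity} and Corollary \ref{cor:rigidity_past_the_edge}). Writing $z = E+\ii\eta$ with $\eta = \eta(E)$, note that
\[
	L_N(z) - L_N(E)
	= N \int_0^{\eta} \bigl( s_N(E+\ii y) - m_V(E+\ii y) \bigr) \, \ii \, \diff y
\]
since $\partial_z \log(z-\lambda_j) = -1/(\lambda_j-z) = s_N$-summand, and similarly for $m_V$; more precisely $\frac{\diff}{\diff y} L_N(E+\ii y) = \ii N (s_N(E+\ii y) - m_V(E+\ii y))$. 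So it suffices to show that $\frac{N}{\sqrt{\log N}} \int_0^{\eta} |s_N(E+\ii y) - m_V(E+\ii y)| \, \diff y \to 0$ in probability, uniformly in $E$. I would split the integral at an intermediate scale, say $\eta_* = (\log N)^{-1} \eta$ (or any scale a polylog factor below $\eta$ but still well above the microscopic scale $\ell(E)$), into a "large $y$" part $[\eta_*,\eta]$ and a "small $y$" part $[0,\eta_*]$.

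For the large-$y$ part $y \in [\eta_*,\eta]$, the point $E+\ii y$ lies in the trapezoidal region and $Ny \geq N\eta_* \gg \sqrt{\log N}$, so Theorem \ref{th:local_law_bulk} (together with Markov's inequality, as in the derivation of \eqref{eqn:GausTail} in the proof of Lemma \ref{lem:rigid}) gives that $|s_N(E+\ii y) - m_V(E+\ii y)| \leq (\log N)^{3/4}/(Ny)$ with overwhelming probability, uniformly over a fine enough grid of $(E,y)$; using the $N^2$-Lipschitz continuity of $s_N - m_V$ we upgrade this to all $E$ and all $y$ in the range. On this event,
\[
	\frac{N}{\sqrt{\log N}} \int_{\eta_*}^{\eta} |s_N - m_V| \, \diff y
	\leq \frac{(\log N)^{3/4}}{\sqrt{\log N}} \int_{\eta_*}^{\eta} \frac{\diff y}{y}
	= (\log N)^{1/4} \log(\eta/\eta_*) = \OO\bigl( (\log N)^{1/4} \log\log N \bigr),
\]
wait — this is not $o(\sqrt{\log N})$... so instead I take $\eta_*$ only a constant factor, or a factor $e^{(\log N)^{1/4}/2}$, below $\eta$: with $\eta_* = e^{-(\log N)^{1/4}/2}\eta = e^{(\log N)^{1/4}/2}\ell(E)$ we get $\log(\eta/\eta_*) = \tfrac12(\log N)^{1/4}$, so the large-$y$ contribution is $\OO((\log N)^{1/2})$ — still borderline. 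The correct choice is to use the \emph{Gaussian} tail: pick the threshold $u = u(y)$ in \eqref{eqn:GausTail} to depend on $y$, namely $u \sim \sqrt{\log(\eta/y)} \cdot (\log N)^{\text{small}}$, so that $|s_N - m_V|(E+\ii y) \leq C\sqrt{\log(\eta/y)}/(Ny)$ off an event of probability $N^{-D}$, and then $\int_{\eta_*}^\eta \sqrt{\log(\eta/y)}\, \diff y / y = \OO((\log(\eta/\eta_*))^{3/2})$; choosing $\eta_* = e^{(\log N)^{\alpha}}\ell(E)$ with $\alpha < 2/3$ makes this $o(\sqrt{\log N})$, since $\log(\eta/\eta_*) \asymp (\log N)^{1/4}$ when $\eta_*$ is chosen comparably, and in any case one arranges $(\log(\eta/\eta_*))^{3/2} = o(\sqrt{\log N})$. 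I would carry this out carefully, matching the threshold $u$ to $y$ so the union bound over the $N^{O(1)}$ grid points survives.

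For the small-$y$ part $y \in [0,\eta_*]$, the local law no longer controls the pointwise size of $s_N - m_V$ usefully (at the microscopic scale it is $\Theta(1/(N\eta))$ times a $\Theta(1)$ random factor), so instead I would bound the integral deterministically in terms of eigenvalue counts. Writing $\im s_N(E+\ii y) = \frac1N\sum_j \frac{y}{(\lambda_j-E)^2+y^2}$ and $\re$ similarly, and using $|s_N(E+\ii y)| \leq \frac1N \sum_j \frac{1}{|\lambda_j - E| \vee y}$, one gets
\[
	N \int_0^{\eta_*} |s_N(E+\ii y)| \, \diff y
	\leq \sum_j \int_0^{\eta_*} \frac{\diff y}{|\lambda_j-E|\vee y}
	\leq \mathcal{N}(I_{\eta_*}) \bigl(1 + \log_+ \tfrac{\eta_*}{\ell(E)}\bigr) + \sum_{j:\, |\lambda_j-E| > \eta_*} \frac{\eta_*}{|\lambda_j - E|},
\]
where $I_{\eta_*} = [E-\eta_*, E+\eta_*]$ and the far sum is handled by rigidity (Corollary \ref{cor:rigidity}, plus Corollary \ref{cor:rigidity_past_the_edge} at the edge) which shows it is $\OO(\log N \cdot \log(1/\eta_*))$ — actually one must be a little more careful, but the point is it is at most polylogarithmic. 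The key gain is that $\mathcal{N}(I_{\eta_*})$ is \emph{typically zero or $\OO(1)$}: by Proposition \ref{prop:density}, $\E[\mathcal{N}(I_{\eta_*})] \to 0$ when $\eta_*/\ell(E) \to \infty$ is not what we need — rather we need $\eta_*$ close to $\ell(E)$; so here I would instead take $\eta_*$ to be a \emph{fixed small multiple} of $\ell(E)$ actually no. Let me reorganize: split at two scales, $\ell(E) \ll \eta_{**} \ll \eta_* \ll \eta$. On $[\eta_{**},\eta]$ use the Gaussian-tail local-law bound of the previous paragraph (valid since $N\eta_{**} \gg 1$); on $[0,\eta_{**}]$ use the deterministic count bound, with $\mathcal{N}([E-\eta_{**},E+\eta_{**}])$ controlled in expectation by Proposition \ref{prop:density} provided $\eta_{**}/\ell(E) \to \infty$ slowly, e.g. $\eta_{**} = (\log\log N)\,\ell(E)$; the contribution is then $\OO(\mathcal{N}(I_{\eta_{**}}) \log(N\eta_{**}))$ which has expectation $o(\log N) \cdot \log N = o((\log N)^2)$ — hmm, still need it to be $o(\sqrt{\log N})$ in probability, which follows from $\E[\mathcal N(I_{\eta_{**}})] \to 0$ and Markov, since then $\mathcal N(I_{\eta_{**}}) = 0$ with probability $\to 1$, killing this term entirely, and on the complement we bound crudely.

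The main obstacle, then, is the bookkeeping: one must choose the two intermediate scales $\eta_*$ (where the Gaussian tail from the local law still beats $\sqrt{\log N}$) and $\eta_{**}$ (small enough that $\E[\mathcal{N}(I_{\eta_{**}})] \to 0$ by Proposition \ref{prop:density}, large enough that $N\eta_{**}\to\infty$ so the local law applies on $[\eta_{**},\eta]$) consistently, and handle the intermediate dyadic range by the $y$-dependent threshold trick so that $\int \sqrt{\log(\eta/y)}\,\diff y/y$ stays $o(\sqrt{\log N})$; the uniformity in $E \in [A,B]$ is obtained throughout by discretizing on an $N^{-3}$-grid and using Lipschitz bounds, exactly as in the proof of Lemma \ref{lem:rigid}. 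The edge cases $\kappa(E) \lesssim N^{-2/3}$ require using Proposition \ref{prop:bound_s-m_1_s-m_2_past_the_edge} and Corollary \ref{cor:rigidity_past_the_edge} in place of Theorem \ref{th:local_law_bulk} and Corollary \ref{cor:rigidity}, but the structure of the argument is unchanged.
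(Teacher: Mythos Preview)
Your starting point---the integral representation $L_N(z)-L_N(E)=\ii N\int_0^\eta (m_V-s_N)(E+\ii u)\,\diff u$ and a split into small and large $u$---is exactly the paper's. But the execution diverges in two places, and one of them is a genuine gap.

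\textbf{Large-$u$ part.} You work hard to get a high-probability pointwise bound on $|s_N-m_V|$, and run into the tension you noticed: a constant threshold $u\gtrsim\sqrt{\log N}$ (needed for the union bound) makes the integral too large, while a $y$-dependent threshold $u(y)\sim\sqrt{\log(\eta/y)}$ is too small near $y=\eta$ for the union bound to survive. The paper sidesteps this entirely by working in $L^1$: Theorem~\ref{th:local_law_bulk} with $q=1$ gives $\E\bigl[N|s_N-m_V|(E+\ii u)\bigr]\le C/u$, so
\[
\E\Bigl[\,\Bigl|N\!\int_{\eta'}^{\eta}(m_V-s_N)(E+\ii u)\,\diff u\Bigr|\,\Bigr]\le \int_{\eta'}^{\eta}\frac{C}{u}\,\diff u = C\log(\eta/\eta')\le C(\log N)^{1/4}=o(\sqrt{\log N}),
\]
with the paper's choice $\eta'=(\log N)^{-1/4}\ell(E)$. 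Convergence in $L^1$ gives convergence in probability; no Gaussian tail, no grid, no rigidity needed.

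\textbf{Small-$u$ part: the gap.} You place the split point $\eta_{**}=(\log\log N)\,\ell(E)$ \emph{above} the microscopic scale and invoke Proposition~\ref{prop:density} to get $\E[\cN(I_{\eta_{**}})]\to 0$. That proposition requires a \emph{sub}-microscopic interval ($\delta_N\to 0$), so it does not apply at your scale; with $\eta_{**}\gg\ell(E)$ the expected count is bounded but does not vanish (Lemma~\ref{lem:first_step_density}). Moreover, your near-$E$ bound $\cN(I_{\eta_*})\bigl(1+\log_+\tfrac{\eta_*}{\ell(E)}\bigr)$ is not correct: a particle at distance $\delta\ll\ell(E)$ from $E$ contributes $\sim\log(\eta_*/\delta)$, which is unbounded. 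The paper fixes both issues by taking the split point \emph{below} $\ell(E)$, namely $\eta'=(\log N)^{-1/4}\ell(E)$. Then Proposition~\ref{prop:density} (with $\delta_N=(\log N)^{-1/4}$) gives $\P(A)\to 1$ for $A=\{\cN([E-\eta',E+\eta'])=0\}$, and on $A$ one has the clean pointwise bound
\[
|s_N(E+\ii u)-s_N(z')|\le \frac{1}{N}\sum_k\frac{\eta'}{|\lambda_k-z'|^2}\cdot\sqrt{2}=\sqrt{2}\,\im s_N(z')\qquad(0\le u\le\eta',\ z'=E+\ii\eta'),
\]
so $N\int_0^{\eta'}|s_N(E+\ii u)-s_N(z')|\,\diff u\le \sqrt{2}\,N\eta'\im s_N(z')$, which is $\OO(1)$ in $L^1$ by \eqref{eq:bound_Lambda}. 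The remaining pieces $N\eta'|s_N(z')-m_V(z')|$ and $N\int_0^{\eta'}|m_V(E+\ii u)-m_V(z')|\,\diff u$ are $\OO(1)$ in $L^1$ by the local law and by $|m_V'|\le C/|b|$, respectively. No rigidity (Corollaries~\ref{cor:rigidity}, \ref{cor:rigidity_past_the_edge}) is used anywhere in the paper's argument.
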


\begin{proof}
Let $E \in [A,B]$.
For brevity, we write $\eta = \eta(E)$, so that $z = E + \ii \eta$.
Writing $\log(z - \lambda) - \log(E - \lambda) = \int_0^\eta \frac{\ii \diff u}{E+\ii u - \lambda}$, we have
\[
	L_N(z) - L_N(E)
	= \ii N \int_0^{\eta} (m_V (E+\ii u) - s_N(E+\ii u)) \diff u.
\]
Recall we want to prove this quantity is $o(\sqrt{\log N})$ in probability.
Let
\[
	\eta' \coloneqq \frac{(\log N)^{-1/4}}{ N (\sqrt{\kappa(E)} \vee N^{-1/3})}=(\log N)^{-1/4} \cdot \ell(E).
\]
Then by Theorem \ref{th:local_law_bulk}, we have
\begin{align}
    \E \left[ 
        \absa{ N \int_{\eta'}^{\eta} (m_V (E+\ii u) - s_N(E+\ii u)) \diff u }
    \right]
    \leq \int_{\eta'}^{\eta} \frac{C}{u} \diff u
    \leq C(\log N)^{1/4}.
    \label{eq:from_eta'_to_eta}
\end{align}
Now let $z' = E + \ii \eta'$. By the triangle inequality,
\begin{align}
    & \absa{ N \int_0^{\eta'} (m_V (E+\ii u) - s_N(E+\ii u)) \diff u }
    \nonumber \\
    & \leq N \eta' \absa{m_V (z') - s_N(z')}
    + N \int_0^{\eta'} \absa{m_V (E+\ii u) - m_V(z')} \diff u
    + N \int_0^{\eta'} \absa{s_N (E+\ii u) - s_N(z')} \diff u.
    \label{eq:decompo}
\end{align}
The first term on the right-hand side of \eqref{eq:decompo} is bounded in $L^1$ by Theorem \ref{th:local_law_bulk}. 
Using that $\abs{m_V'(w)} \leq C/\abs{b(w)}$ for $w$ in a compact subset of $\CC$, the second term is bounded by $C N (\eta')^{3/2} \leq C$.
For the third term, we introduce the event $A \coloneqq \{ \cN([E-\eta',E+\eta']) =0 \}$, on which there are no particles at distance less than $\eta'$ from $E$.
By Proposition \ref{prop:density}, we have $\P(A) \to 1$ uniformly in $E$,
and on the event $A$, we have
\begin{align*}
    \absa{s_N (E+\ii u) - s_N(z')} 
    = \absa{\frac{1}{N} \sum_{k=1}^N \frac{E+\ii u-z'}{(\lambda_k-E-\ii u)(\lambda_k-z')}} 
    \leq \frac{1}{N} \sum_{k=1}^N \frac{\eta' \sqrt{2}}{\abs{\lambda_k-z'}^2}
    = \sqrt{2} \im s_N(z').
\end{align*}
Moreover, with our specific value of $\eta'$, it follows from \eqref{eq:bound_Lambda} that 
$\E[\im s_N(z')] \leq C( (N\eta')^{-1} + N^{-1/2} + \abs{b(z')} ) \leq C (N\eta')^{-1}$.
Therefore, we have 
\begin{align*}
    \E \left[ \1_A \cdot
    N \int_0^{\eta'} \absa{s_N (E+\ii u) - s_N(z')} \diff u 
    \right]
    \leq \sqrt{2} N \eta' \cdot \E \left[\im s_N(z')\right]
    \leq C.
\end{align*}
Thus, the right-hand side of \eqref{eq:decompo} is $\OO(1)$ in probability.
Combined with \eqref{eq:from_eta'_to_eta}, this concludes the proof.
\end{proof}

\begin{proof}[Proof of Theorem \ref{thm:log_corr_field}]
The proof of Lemma \ref{cor:momentsBound} in the next section requires a function supported on the domain where our strong local law holds.
Therefore, let $\phi$ be a fixed, smooth cutoff function on scale $1$,
\begin{equation}\label{eqn:phi}
\phi(x)=1\ {\rm on}\  [A-\tilde{\eta}/4,B+\tilde{\eta}/4],\ 0\ {\rm on}\ [A-\tilde{\eta}/2,B+\tilde{\eta}/2]^{\rm c},
\end{equation}
and define 	
\begin{equation}
	\label{def:L_Ntilde}
	\widetilde L_N(z) 
	\coloneqq \sum_{j=1}^N \log\pa{z-\lambda_j}\phi(\lambda_i)
	- N \int \log\pa{z-x} \diff \mu_V(x).
\end{equation}	
Recall the definition of $\ba$, $\bb$ and $\delta_\ell$ in Theorem \ref{thm:log_corr_field}.
Then Proposition \ref{prop:CLT_above_the_axis}, which will be proved in the next section, shows that (distinguishing cases to simplify~$\Gamma$)
\begin{align} \label{eq:rewriting_CLT_above_the_axis}
	\sqrt{\frac{\beta}{\log N}}	
	\pa{\re \widetilde L_N(z_1) - \delta_1, \dots, \re \widetilde  L_N(z_k) - \delta_k,
	\im  \widetilde L_N(z_1), \dots, \im \widetilde L_N(z_k)}
	\xrightarrow[N\to\infty]{\text{(d)}}
	\cN \left( 0, \left( \begin{matrix}
	\ba & 0 \\ 0 & \bb
	\end{matrix} \right)\right).
\end{align}
Moreover, it follows from \eqref{eq:large_deviation_rigidity} that $\widetilde{L}_N(z)- L_N(z)$ converges to $0$ in probability
and Proposition \ref{prop:smoothing} states that $(L_N(z) - L_N(E))/\sqrt{\log N}$ converges to 0 in probability.
Hence, by Slutsky's theorem, we can replace $\widetilde{L}_N(z_\ell)$ by $L_N(E_\ell)$ in \eqref{eq:rewriting_CLT_above_the_axis} which concludes the proof.
\end{proof}

\begin{proof}[Proof of Corollary \ref{corr:eig_fluct}]
Using that $\lambda_k \leq E$ if and only if $\abs{\left\{ \lambda_j \leq E \right\}} \geq k$, we have, for any $\xi \in \R$,
\[
    Y_N(n) \leq \xi 
    \quad \Longleftrightarrow \quad
    \im L_N \left( 
        \gamma_n + \frac{\xi}{\pi N \varrho_V(\gamma_n)} \sqrt{ \frac{\log N}{\beta}}
        \right)
    \leq N \pi 
    \int_{\gamma_n}^{\gamma_n + \frac{\xi}{\pi N\varrho_V(\gamma_n)} \sqrt{ \frac{\log N}{\beta}}} \varrho_{V}(x) \diff x.
\]
Therefore, the result follows from Theorem \ref{thm:log_corr_field}.
See \cite[Lemma B.2]{BouMod2019} for more details.
\end{proof}

\section{Central limit theorem above the axis}

\label{section:CLT_above_the_axis}
In this section, we prove the following central limit theorem  at level 
$\eta(E)$, see \eqref{eq:def_eta_0}. 
Recall the definition of $\widetilde L_N(z)$ in \eqref{def:L_Ntilde}.

\begin{proposition} \label{prop:CLT_above_the_axis} 
For any $m \geq 1$, uniformly in  
$\xi_1,\dots,\xi_m,\zeta_1,\dots,\zeta_m \in [-(\log N)^{1/4},(\log N)^{1/4}]$ and $E_1, \dots,E_m \in [A,B]$, the following holds.
Setting $z_\ell = E_\ell + \ii \eta(E_\ell)$,  we have 
\begin{align*}
	\E \left[ \exp \left( \sqrt{\frac{\beta}{\log N}}  
		\sum_{\ell=1}^m
        	\left( \xi_\ell \re \widetilde L_N(z_\ell)
        		+ \zeta_\ell \im \widetilde L_N(z_\ell) \right)
		\right) \right]
	& = \exp \left(
		\Gamma
    	+ \OO \left((\log N)^{-1/4}\right) \right),
\end{align*}
where the error terms depend only on $\beta$, $V$, $m$
and
\begin{align*}
	\Gamma 
	&=-\frac{1}{2\log N} \sum_{\ell,j =1}^m \Bigg(
	\xi_\ell \xi_j 
		\log\pa{ \absa{E_\ell - E_j} \vee \pa{\eta_\ell + \eta_j}    } + 
	\zeta_\ell \zeta_j 
		 \log\pa{
			\frac{ \absa{E_j - E_\ell} \vee \pa{\eta_\ell + \eta_j}      }{ \pa{ \absa{E_\ell - A} \vee \eta_\ell}  \pa{\absa{B-E_j} \vee \eta_j  }  } \wedge 1
		}
	\Bigg) \\
	& \hphantom{{}={}} +  
	 \sqrt{\frac{\beta}{\log N}} \frac{1}{4} \pa{\frac{2}{\beta}-1}
	\sum_{\ell=1}^m \xi_\ell \log (\kappa_\ell \vee \eta_\ell),
\end{align*}	
with $\eta_\ell = \eta(E_\ell)$ and $\kappa_\ell = \kappa(E_\ell)$.
\end{proposition}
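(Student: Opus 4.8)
The plan is to establish Proposition \ref{prop:CLT_above_the_axis} by the now-classical Johansson--Stein interpolation method applied to a smoothed linear statistic, following the effective mesoscopic implementation of \cite{BouErdYauYin2016}. Write $X_N \coloneqq \sqrt{\beta/\log N}\sum_{\ell=1}^m(\xi_\ell\re\widetilde L_N(z_\ell)+\zeta_\ell\im\widetilde L_N(z_\ell))$. Since $\log(z-x)$ is a fixed analytic function on a neighborhood of $[A,B]$ (the cutoff $\phi$ confines the sum to $[A-\tilde\eta/2,B+\tilde\eta/2]$, where $V$ is analytic and Theorem \ref{th:local_law_bulk}, together with Proposition \ref{prop:local_law_past_the_edge} and the Helffer--Sj\"ostrand formula of Lemma \ref{lem:HS}, controls $s_N-m_V$), $X_N$ is a real-analytic linear statistic $\sum_k g_N(\lambda_k)-N\int g_N\,\rd\mu_V$ with a test function $g_N$ whose derivatives grow like inverse powers of $\eta(E_\ell)$. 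The first step is to set $F(t)\coloneqq \E[e^{tX_N}]$ and differentiate, using the loop equation (Proposition \ref{uncentered loop eq moments}, equivalently the integration-by-parts / Dyson--Schwinger identity for $\mu_N$) applied to the observable $\sum_k g_N'(\lambda_k)e^{tX_N}$, to obtain an approximate ODE $F'(t) = (\mu(t) + o(1))F(t)$ where $\mu(t)$ is (to leading order) affine in $t$; integrating from $0$ to $1$ then yields $\log F(1) = \int_0^1\mu(t)\,\rd t + o(1)$, which is the sought Gaussian exponent $\Gamma$.

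Carrying this out, the loop equation produces three contributions when differentiating $F$: a main quadratic-in-$t$ term coming from the self-interaction $\iint \frac{g_N'(\lambda)-g_N'(\mu)}{\lambda-\mu}\,\rd\mu_V(\lambda)\rd\mu_V(\mu)$-type double integral (this gives the $\xi_\ell\xi_j$ and $\zeta_\ell\zeta_j$ logarithmic covariance structure), a deterministic first-order term of order $(2/\beta-1)$ coming from the $\frac{1}{N}(\frac{2}{\beta}-1)s'$ piece of the loop equation (this gives the mean shift $\delta_\ell$, i.e.\ the $\sqrt{\beta/\log N}\,\frac14(\frac2\beta-1)\sum\xi_\ell\log(\kappa_\ell\vee\eta_\ell)$ term), and an error term built from $\Delta(z)$, from the difference between $s_N$ and $m_V$ at scale $\eta(E_\ell)$, and from replacing the random double integrals by their deterministic counterparts. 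The second step is thus a careful asymptotic evaluation of the deterministic double integral $\frac{1}{-\log N}\iint \log|x-y|\,\psi_\ell'(x)\psi_j'(y)\,\rd x\,\rd y$ for the relevant smoothed indicators $\psi_\ell$ (with smoothing scale $\eta_\ell$), which by standard logarithmic-energy computations yields $\log(|E_\ell-E_j|\vee(\eta_\ell+\eta_j))$ for the real parts and the corresponding ratio expression (reflecting the $\arcsin$-type behavior of $\int\log|x-y|\,\rd\mu_V$ near the edges $A,B$) for the imaginary parts; the asymptotic independence of $\re$ and $\im$ is precisely the statement that the mixed $\xi_\ell\zeta_j$ terms vanish, because $\re\log$ and $\im\log$ are harmonic conjugates and the relevant cross-energy integral is of lower order.

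The third step is to bound all error terms by $\OO((\log N)^{-1/4})$ uniformly in the parameters. This is where the strength of Theorem \ref{th:local_law_bulk} is essential: the Gaussian tail $q^{q/2}/(N\eta)^q$ and the validity down to $\eta = \eta(E)$, which is only slightly above the microscopic scale $\ell(E) = e^{-(\log N)^{1/4}}\eta(E)$, are exactly what is needed to control $\E[|s_N(z_\ell)-m_V(z_\ell)|^q]$ and hence the fluctuation of $X_N$ around its Gaussian profile; the $\OO((\log N)^{-1/4})$ in the statement is the image of the factor $1/(N\eta(E)\cdot\ell(E)^{-1}) = e^{-(\log N)^{1/4}}$ and of $N\eta(E)|b(z_\ell)| = \OO(1)$ through the Helffer--Sj\"ostrand estimate. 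Lemma \ref{lem:moments_Delta} handles the $\Delta$-term ($\Delta\equiv 0$ in the Gaussian case, otherwise $\OO((Cq)^{2q}/N^{2q})$, which is negligible). I expect the main obstacle to be precisely this uniform error control near the edges, where $\kappa(E)$ can be as small as $N^{-2/3}$: there $\eta(E) = N^{-2/3}e^{(\log N)^{1/4}}$ is barely above the Tracy--Widom scale, the functions $g_N$ are least regular, and one must combine the partial local law past the edge (Proposition \ref{prop:local_law_past_the_edge}, with its worse $(Cq)^{2q}$ constant) with the edge rigidity of Corollary \ref{cor:rigidity_past_the_edge} and the Wegner estimate (Proposition \ref{prop:density}) to show no particle sits anomalously close to $E_\ell$; keeping every bound uniform in $E_\ell\in[A,B]$ and in $\xi_\ell,\zeta_\ell\in[-(\log N)^{1/4},(\log N)^{1/4}]$, as required for the subsequent Laplace-transform argument in the proof of Theorem \ref{thm:log_corr_field}, is the delicate bookkeeping core of the proof.
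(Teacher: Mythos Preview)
Your overall strategy --- differentiate the Laplace transform $F(t)=\E[e^{tX_N}]$, apply the first loop equation to identify an approximate ODE $F'(t)\approx(\sigma^2 t+\delta)F(t)$, integrate, and compute the resulting $\sigma^2$ and $\delta$ asymptotically --- is exactly the paper's route. But there is a genuine gap in your error control, and it is not the edge behavior you flag as the main obstacle.

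When you differentiate $F$, the object you must estimate is $\E_{\mu_N^t}[S_N(f)]$, where $\mu_N^t$ is the \emph{tilted} measure $\propto e^{tS_N(f)}\rd\mu_N$. The error terms in the loop equation (the analogue of $\mathrm{Err}(z)$ in \eqref{eq:def_Err}: $\varphi^2$, $\varphi'/N$, $\Var_{\mu_N^t}(s_N(z))$, and the $f'$-integral against $\varrho_1^{(N,t)}-\varrho_V$) are all expectations under $\mu_N^t$, not $\mu_N$. Theorem~\ref{th:local_law_bulk} is a statement about $\mu_N$; it says nothing directly about $\mu_N^t$, and you cannot simply invoke it to bound $\E_{\mu_N^t}[|s_N-m_V|^q]$. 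The paper identifies exactly this as ``the essential ingredient'' (Section~4.1) and devotes Section~4.2 to it: first prove an a priori bound $|\log\E[e^{\zeta S_N(f)}]|\le(\log N)^5$ for small $|\zeta|$ via Helffer--Sj\"ostrand plus the local law under $\mu_N$ (Lemma~\ref{cor:momentsBound}), then use Cauchy--Schwarz to transfer the rigidity of Lemma~\ref{lem:rigid} from $\mu_N$ to $\mu_N^t$ (Lemma~\ref{lem:rigid2}). Only with rigidity under $\mu_N^t$ in hand can the loop-equation error terms be bounded (Lemma~\ref{lem:first_estimates_1}). Your proposal skips this transfer entirely.

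A related point: the term $\int\frac{V'(s)-V'(z)}{s-z}(\varrho_1^{(N,t)}-\varrho_V)\rd s$ in the tilted loop equation is \emph{not} the $\Delta(z)$ of Lemma~\ref{lem:moments_Delta} (which involves the empirical measure under $\mu_N$, not the one-point function under $\mu_N^t$). The paper eliminates it by the contour-integral trick of Section~\ref{subsection:dealing_with_Delta} adapted to the tilted setting (Lemma~\ref{lem:estimate_varphi}), not by invoking Lemma~\ref{lem:moments_Delta}. Finally, the Wegner estimate (Proposition~\ref{prop:density}) plays no role in Proposition~\ref{prop:CLT_above_the_axis}; it is used only in Proposition~\ref{prop:smoothing} to pass from $L_N(z)$ to $L_N(E)$, which is a separate step in the proof of Theorem~\ref{thm:log_corr_field}.
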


\subsection{Strategy of the proof. }
We first explain the classical strategy, which relies on a loop equation argument that dates back to Johansson \cite{Johansson1998} to obtain the Laplace transform of linear statistics.
The essential ingredient to adapt Johansson's method to the scale $\eta(E)$ is the rigidity estimate under biased measures. This rigidity is proved in Subsection \ref{subsection:rigidity_biased} and relies on Theorem 
\ref{th:local_law_bulk}.

\begin{remark}
Johansson's method can be adapted to asymptotics of the Fourier transform of linear statistics instead of the Laplace transform, as in  Section 5 in \cite{BouErdYauYin2016} and Section 6 in \cite{LanSosYau2019}.
Natural benefits are a quantitative convergence, and no need of rigidity estimates under biased measures. However in this paper we choose to prove asymptotics of the Laplace transform, as this is needed for the tail estimate, Lemma \ref{lem:loop}.
\end{remark}

Let $m \geq 1$ and $E_1,\dots,E_m \in [A,B]$, and let $a_1,\dots,a_m,b_1,\dots,b_m \in [-1,1]$.
These $a_i$'s and $b_i$'s will play the role of rescaled versions of $\xi_1,\dots,\xi_m,\zeta_1,\dots,\zeta_m$ in the statement of Proposition \ref{prop:CLT_above_the_axis}.
Let us emphasize here that, throughout Section \ref{section:CLT_above_the_axis}, constants $C$ and $\OO(\dots)$ only depend on $\beta$, $V$ and $m$.

For any $1 \leq \ell \leq m$, we set $\eta_\ell = \eta(E_\ell)$, $z_\ell \coloneqq E_\ell + \ii \eta_\ell$ and $\kappa_\ell \coloneqq \kappa(E_\ell)$ (see \eqref{eq:def_eta_0} for the definition of $\eta(E)$ and $\kappa(E)$). 
Moreover, for any $s \in \R$, let
$f_\ell(s) \coloneqq \phi(s)\re \left( (a_\ell- \ii b_\ell) \log (z_\ell-s) \right)$ and
\begin{equation} \label{eq:def_function_f}
	f(s) 
	\coloneqq \phi(s)\sum_{\ell=1}^m
	\left( a_\ell \re \log (z_\ell-s)
		+ b_\ell \im \log (z_\ell-s) \right)
	= \sum_{\ell = 1}^m f_\ell(s),
\end{equation}
where $\phi$ is defined in (\ref{eqn:phi}).
We are interested in the following centered linear statistics
\[
	S_N(f) \coloneqq \sum_{j=1}^N f(\lambda_j) - N \int f\rd \mu_V.
\]
Recall $\mu_N$ is the probability distribution of the particles defined in \eqref{eq:beta_ensembles_density}.
We define a measure $\mu_N^t$, for any $t \in \R$, by
\begin{equation} \label{eq:def_Z(theta)}
	\diff \mu_N^t (\lambda_1,\dots,\lambda_N) 
	\coloneqq \frac{e^{t S_N(f)}}{Z(t)} 
	\diff \mu_N (\lambda_1,\dots,\lambda_N),
	\qquad 
	\text{with } Z(t) \coloneqq
	\E \left[ e^{t S_N(f)} \right],
\end{equation}
where $\E$ still denotes the expectation under $\mu_N$. The expectation under $\mu_N^t$ is denoted by $\E_{\mu_N^t}$.
Let $\varrho_1^{(N,t)}(s)$ be the 1-point function for the eigenvalues under $\mu_N^t$, which satisfies
\[
	\int_\R h(s) \varrho_1^{(N,t)}(s) \diff s 
	= \E_{\mu_N^t} \left[ \frac{1}{N} \sum_{k=1}^N h(\lambda_k) \right],
\]
for any continuous bounded function $h$.
Let $m_{N,t}(z) \coloneqq \E_{\mu_N^t} \left[ s_N(z) \right]$ be the Stieljes transform of $\varrho_1^{(N,t)}(s)$.
Recall our goal is to estimate the Laplace transform of $S_N(f)$, i.e. $Z(t)$.
For this, we will estimate
\[
	Z'(t) = \E \left[S_N(f) e^{t S_N(f)} \right] 
	= Z(t) \cdot \E_{\mu_N^t} \left[S_N(f) \right].
\]
Using the Helffer--Sj\"{o}strand formula \cite{HelfferSjostrand1989}, we can express $\E_{\mu_N^t} [S_N(f)]$ in terms of $m_{N,t}(z) - m_{V}(z)$, see \eqref{eqn:HSf}.
Hence, we first prove precise estimates for $m_{N,t}(z) - m_{V}(z)$ using the first  loop equation, in Lemma \ref{lem:estimate_varphi}.
Then, we use these estimates combined with Helffer--Sj\"{o}strand formula to compute asymptotics of $Z(t)$ in Lemma  \ref{lem:estimate_Z(theta)}.
This gives the proof of Proposition \ref{prop:CLT_above_the_axis}, up to a rewriting of the limiting variances and shifts 
which we deal with in Section \ref{subsection:covariances}.

The key input for the above proof sketch is the rigidity of the particles under biased measures (Lemma \ref{lem:rigid2}, based on Theorem \ref{th:local_law_bulk}), which we now prove.

\subsection{Rigidity under biased measures. }
\label{subsection:rigidity_biased}
The main result of this section is Lemma \ref{lem:rigid2}. We start with the following key estimate about $f$, defined in (\ref{eq:def_function_f}).

\begin{lemma}\label{cor:momentsBound}
For any $m \geq 1$, there exists a (small) $c_0>0$  and (large) $N_0$  such that for any $E_1, \dots,E_m \in [A,B]$, $a_1,\dots,a_m,b_1,\dots,b_m \in [-1,1]$, $N \geq N_0$ and $|\zeta|<c_0$
we have
\[
\absa{\log \E \left[ e^{\zeta \left( \sum_{k=1}^N f(\lambda_k)-N\int f\rd\mu_V \right)} \right]} \leq (\log N)^5.
\]
\end{lemma}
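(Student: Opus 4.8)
\textbf{Proof plan for Lemma \ref{cor:momentsBound}.}

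The plan is to reduce this bound to the already-established rigidity estimate of Lemma \ref{lem:rigid} (the a priori polylogarithmic rigidity), via a crude Hölder/union-bound argument rather than anything delicate. First I would observe that $f = \sum_{\ell=1}^m f_\ell$ with $f_\ell(s) = \phi(s)\re((a_\ell - \ii b_\ell)\log(z_\ell - s))$, and by Hölder's inequality it suffices to bound $\log \E[e^{m\zeta'(\sum_k f_\ell(\lambda_k) - N\int f_\ell \rd\mu_V)}]$ for each single $\ell$ and each $|\zeta'| < c_0$; so we may assume $m=1$, writing $z = E + \ii\eta(E)$, $a, b \in [-1,1]$, and $f(s) = \phi(s)\re((a - \ii b)\log(z-s))$. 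The essential features of $f$ are: it is supported in $[A-\tilde\eta/2, B+\tilde\eta/2]$; it is uniformly bounded by $C\log N$ (since $|\log(z-s)| \le C\log N$ on the support, because $|z - s| \ge \eta(E) \ge e^{(\log N)^{1/4}}/(N)\cdot$something $\gg N^{-2}$ and $|z-s| \le C$); and its derivative satisfies $|f'(s)| \le C/|z-s| \le C/(|E - s| \vee \eta(E))$, so that $f$ has bounded variation with $\|f'\|_1 \le C\log N$ and, more usefully, $f$ is Lipschitz with constant $\le C N^2$ on the relevant scales.

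Next I would split the expectation according to the rigidity event $\mathscr{G} := \cap_{1\le k\le N}\{|\lambda_k - \gamma_k| < (\log N)^{C_1} N^{-2/3}(\hat k)^{-1/3}\}$ supplied by Lemma \ref{lem:rigid} (with $r$ fixed so that, say, $(\log N)^{3r/4} > (\log N)^6$), which has probability at least $1 - e^{-(\log N)^{3r/4}}$. On $\mathscr{G}$, I would compare $\sum_k f(\lambda_k)$ to $N\int f\rd\mu_V = \sum_k f(\gamma_k) + \OO(\ldots)$ by a term-by-term estimate: $|f(\lambda_k) - f(\gamma_k)| \le \|f'\|_\infty \cdot |\lambda_k - \gamma_k|$ when $\gamma_k$ is far from $E$, and near $E$ one uses boundedness $|f(\lambda_k) - f(\gamma_k)| \le C\log N$ for the $\OO((\log N)^{C})$ indices $k$ whose classical location is within $\eta(E)$ of $E$. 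Summing, and using that $\sum_k |\lambda_k - \gamma_k|/|E - \gamma_k|$ is controlled by rigidity plus the fact that the density is bounded (so $|E-\gamma_k|$ is comparable to $|k - k_E|/N$ away from the edge, handled as in the Helffer--Sjöstrand bound of Lemma \ref{lem:HS}), one gets $|\sum_k f(\lambda_k) - N\int f\rd\mu_V| \le (\log N)^{C_2}$ deterministically on $\mathscr{G}$, for some power $C_2$ I can afford to be generous with. Hence $\E[e^{\zeta S_N(f)}\1_{\mathscr{G}}] \le e^{c_0 (\log N)^{C_2}} \le e^{(\log N)^{C_2 + 1}}$, and choosing constants so that $C_2 + 1 < 5$ (shrinking $c_0$ if needed, and enlarging $N_0$) gives the upper bound $(\log N)^5$. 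Here I might in fact want a bound like $(\log N)^{4}$ for the intermediate exponent so that the final $\log$ is genuinely below $5$; the numerology is flexible.

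For the complementary event $\mathscr{G}^c$ I would use the crude deterministic bound $|S_N(f)| \le C N\log N$ (each of the $N$ terms is $\OO(\log N)$, and $N\int f\rd\mu_V = \OO(N\log N)$ likewise), together with $\P(\mathscr{G}^c) \le e^{-(\log N)^{3r/4}}$, so that $\E[e^{\zeta S_N(f)}\1_{\mathscr{G}^c}] \le e^{c_0 C N\log N} e^{-(\log N)^{3r/4}}$. This is not obviously small, so here I would instead invoke the cruder large-deviation rigidity \eqref{eq:large_deviation_rigidity}: with overwhelming probability $\max_k |\lambda_k - \gamma_k| < \varepsilon$, and on that event $|S_N(f)| \le C\varepsilon N \cdot \|f'\|_\infty \cdot N \le$ — no, that is still too lossy. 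The clean fix: on $\mathscr{G}^c$ use Cauchy–Schwarz, $\E[e^{\zeta S_N(f)}\1_{\mathscr{G}^c}] \le \E[e^{2\zeta S_N(f)}]^{1/2}\P(\mathscr{G}^c)^{1/2}$, and bound $\E[e^{2\zeta S_N(f)}]$ by the trivial $e^{2c_0 C N\log N}$; then $\P(\mathscr{G}^c)^{1/2} \le e^{-(\log N)^{3r/4}/2}$ kills it only if $r$ is taken large enough that $(\log N)^{3r/4}/2$ dominates $c_0 C N \log N$ — which it does \emph{not}, since $N$ grows polynomially. So the genuinely correct route is: do not use Lemma \ref{lem:rigid} at all on $\mathscr{G}^c$, but rather use \eqref{eq:large_deviation_rigidity} to get that outside an event of probability $\le Ce^{-cN}$ all particles lie in $[A-1, B+1]$, on which $|f(\lambda_k)| \le C\log N$ so $|S_N(f)| \le CN\log N$; hence the contribution of the truly bad event is $\le e^{c_0 C N \log N} \cdot Ce^{-cN} \to 0$ for $c_0$ small, and on the event $\{\forall k,\ \lambda_k \in [A-1,B+1]\}$ one runs the rigidity/Helffer--Sjöstrand comparison above using Lemma \ref{lem:rigid} within that event. \textbf{The main obstacle} is precisely this bookkeeping: getting a deterministic $(\log N)^{O(1)}$ bound on $S_N(f)$ on the good rigidity event while ensuring the exceptional events contribute $o(1)$ to the Laplace transform despite the $e^{O(N\log N)}$ worst-case size — this forces the use of the exponential-in-$N$ large deviation bound \eqref{eq:large_deviation_rigidity} rather than the merely stretched-exponential Lemma \ref{lem:rigid} for the outermost bad set, and then Lemma \ref{lem:rigid} for the finer comparison on the confined event. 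Once that is arranged, the polynomial-in-$\log N$ bookkeeping (summing $\sum_k |\lambda_k - \gamma_k| \cdot |f'(\gamma_k)|$ against the rigidity scale, handling the $\OO((\log N)^{C})$ indices near $E$ separately) is routine and yields the claimed exponent $5$ with room to spare.
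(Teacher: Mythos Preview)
Your approach has a genuine gap that you in fact identified but did not resolve: the contribution of the complement of the rigidity event cannot be controlled. The deterministic bound on the bad set is $|S_N(f)| \le C N\log N$ (this is sharp: $f$ is bounded by $C\log N$ on its support and there are $N$ particles), so for \emph{any} event $\mathscr{B}$ the contribution is at most $e^{|\zeta| C N\log N}\,\P(\mathscr{B})$. Even with the best available tail $\P(\mathscr{B}) \le Ce^{-cN}$ from \eqref{eq:large_deviation_rigidity}, the exponent is $|\zeta| C N\log N - cN = N(|\zeta| C\log N - c) \to +\infty$ for every fixed $|\zeta|>0$; your claim ``$\to 0$ for $c_0$ small'' is false because $c_0$ is a constant while $\log N$ is not. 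The stretched-exponential bound $e^{-(\log N)^{3r/4}}$ from Lemma \ref{lem:rigid} is of course even worse. No event-splitting argument can work here unless you have a tail of order $e^{-c N\log N}$, which is not available. (A secondary inaccuracy: the number of classical locations within $\eta(E)$ of $E$ is $\asymp N\eta(E)\varrho_V(E) \asymp e^{(\log N)^{1/4}}$, not $(\log N)^C$; this particular issue is repairable by using the Lipschitz bound rather than the crude $C\log N$ for those indices, but it does not rescue the main gap.)

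The paper's proof takes a completely different route that avoids event-splitting altogether. After reducing to $m=1$ and noting $|\log \E[e^X]| \le \log \E[e^{|X|}]$, it expands the exponential as $\sum_{k\ge 1} \frac{|\zeta|^k}{k!}\,\E[|S_N(f)|^k]$ and bounds each moment by applying the Helffer--Sj{\H o}strand representation to $S_N(f)$ together with the local law Theorem~\ref{th:local_law_bulk} (and Proposition~\ref{prop:bound_s-m_1_s-m_2_past_the_edge} outside the trapezoid). The point is that Theorem~\ref{th:local_law_bulk} gives $\E[|s-m_V|^k]$ with explicit $(Ck)^{k/2}$ dependence in $k$, and carrying this through the Helffer--Sj{\H o}strand integrals via H\"older yields $\E[|S_N(f)|^k] \le C_0^k\bigl(k^{k/2}(\log N)^{2k} + k^k\bigr)$. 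For $|\zeta| \le (100 C_0)^{-1}$ the resulting series sums to at most $e^{(\log N)^5}$. This is precisely where the Gaussian-type factor $(Cq)^{q/2}$ in Theorem~\ref{th:local_law_bulk} is essential; a rigidity statement alone (which is what your argument tries to use) does not carry this quantitative $k$-dependence and cannot close the estimate.
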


\begin{proof}
First note that, for any real random variable $X$, we have 
$\abs{\log \E [e^X]} \leq \log \E[ e^{\abs{X}}]$.
Therefore, expanding the exponential, we get
\begin{equation} \label{eq:expanding}
\absa{ \log \E \left[ e^{\zeta \left( \sum_{k=1}^N f(\lambda_k)-N\int f\rd\mu_V \right)} \right] }
\leq \log \sum_{k\geq 1} \frac{\abs{\zeta}^k}{k!} 
\E\left[ \absa{\sum_{k=1}^N f(\lambda_k)-N\int f\rd\mu_V}^k \right].
\end{equation}
Note that, up to changing the above constant $\abs{\zeta}^k$ into $\abs{m\zeta}^k$, it is enough to bound the above right-hand side in the case $m=1$. 
We therefore consider $f=f_1$, with $E=E_1\in[A,B]$.
We now apply Helffer-Sj{\H o}strand formula in a way similar to \eqref{eqn:HSExpand}, but with an $\eta$ which depends on $x$, $\eta(x) = \ell(E) \vee (x-B) \vee (A-x)$.
We choose $\gamma=(\log N)^{-2}$ and consider a cutoff function $\chi$ such that  $\chi(x)=1$ on $[0,\gamma]$, $\chi(x)=0$ on $[2\gamma,\infty)$ and $\|\chi'\|_\infty<100/\gamma$.
Since $\eta$ depends on $x$, another boundary term appears in the integration by parts in \cite[(B.17)]{ErdRamSchYau2010} and we get
\begin{align*}
\absa{ \sum_{k=1}^N f(\lambda_k)-N\int f\rd\mu_V }^k
&\leq C^k ((\mathrm{I})^k+(\mathrm{II})^k+(\mathrm{III})^k+(\mathrm{IV})^k+(\mathrm{V})^k), \\
(\rm I)&=N \absa{\im \iint_{y>0} (f(x)+\ii yf'(x)) \chi'(y) (s-m_V)(x+\ii y) \rd x \rd y}, \\
(\rm II)&=N\iint_{0<y<\eta(x)} y|f''(x)||\im (s-m_V)(x+\ii y)|\rd x\rd y, \\
(\rm III)&=N\iint_{\eta(x)<y} \abs{\partial_y(y\chi(y))}\abs{f'(x)}\abs{(s-m_V)(x+\ii y)} \rd x \rd y, \\
(\rm IV)&=N \int \eta(x) \abs{f'(x)} \abs{(s-m_V)(x+\ii \eta(x))} \rd x \\
(\rm V)&=N \int_{y>\ell(E)} \abs{y\chi(y)} 
\absa{f'(B+y) (s-m_V)(B+y+\ii y) - f'(A-y) (s-m_V)(A-y+\ii y)} \rd y.
\end{align*}
We now bound the $k$-th moment of each of these quantities successively.
For $(\mathrm{I})$, we first rewrite the part involving $f'(x)$ using integration by parts w.r.t.\@ $x$ and then $y$, similar to \cite[(B.17)]{ErdRamSchYau2010},
\begin{align*}
\iint_{y>0} \ii yf'(x) \chi'(y) (s-m_V)(x+\ii y) \rd x \rd y 
& = - \iint_{y>0} \ii yf(x) \chi'(y) (s-m_V)'(x+\ii y) \rd x \rd y \\
& = \iint_{y>0} \partial_y (y\chi'(y)) f(x)  (s-m_V)(x+\ii y) \rd x \rd y.
\end{align*}
Therefore, since $\chi'(y)$ is non-zero only if $y \in [\gamma,2\gamma]$, we get
\[
	(\mathrm{I}) 
	\leq C \iint_{\R \times [\gamma,2\gamma]} 
	\abs{f(x) \cdot N \im(s-m_V)(x+\ii y)} \rd x \rd y
\]
By H{\"o}lder's inequality, we have
\begin{align*}
\E \left[ (\mathrm{I})^k \right]
\leq C^k \int_{(\R \times [\gamma,2\gamma])^k}
\E \bigl[ \absa{ f \cdot N \im(s-m_V) \cdots f\cdot N \im(s-m_V) } \bigr] 
\leq C^k \left(
\int_{\R \times [\gamma,2\gamma]}
\E \bigl[ \absa{ f \cdot N \im(s-m_V)}^k \bigr]^{1/k}
\right)^k.
\end{align*}
Applying Theorem \ref{th:local_law_bulk} in the trapezoid region and Proposition \ref{prop:bound_s-m_1_s-m_2_past_the_edge} outside, note that we have 
$\E[\absa{\im(s-m_V)}^k] \leq (Ck)^k/(Ny)^k$ in both cases.
It follows that $\E[(\mathrm{I})^k] \leq (Ck)^k$.
We now deal with $(\mathrm{II})$.
Again by H{\"o}lder's inequality, we have
\begin{align*}
\E \left[ (\mathrm{II})^k \right]^{1/k}
\leq \int_{\R \times \R_+} \1_{y < \eta(x)} y \abs{f''(x)}
\E \bigl[ \absa{N \im (s-m_V)(x+\ii y)}^k \bigr]^{1/k} \rd x \rd y.
\end{align*}
We cut this integral into two parts, depending on if $(x,y)$ is on the trapezoid region (that is $A-y \leq x \leq B+y$) or not.
For the part corresponding to the trapezoid (in which case $\eta(x) = \ell(E)$), Theorem \ref{th:local_law_bulk} 
and $\abs{f''(x)} \leq C/\abs{x-E+\ii\eta_1}^2$ give the bound
\begin{align*}
\int_\R \int_0^{\ell(E)} y \abs{f''(x)} 
\left( \frac{(Ck)^{1/2}}{y} + \frac{Ck}{\sqrt{\kappa(x+\ii y)}}\right) \rd y \rd x
\leq Ck \ell(E) \int_\R \abs{f''(x)} \rd x
\leq Ck \frac{\ell(E)}{\eta_1}.
\end{align*}
For the other part, we focus on the case $x>B$, the case $x<A$ being similar.
Combining the bounds of Proposition~\ref{prop:bound_s-m_1_s-m_2_past_the_edge}, in this region, we have
\begin{align*}
\E \bigl[ \absa{N \im (s-m_V)(x+\ii y)}^k \bigr]^{1/k} 
\leq \frac{(Ck)^{1/2}}{y}
+ \frac{Ck}{\sqrt{x-B}} 
+ \sqrt{N} (Ck)^{1/2} \1_{y \leq (C'k)^{1/2}/N\sqrt{x-B}}.
\end{align*}
Therefore, we bound the remaining part using
\begin{align*}
& \int_B^\infty \int_0^{x-B} y \abs{f''(x)} 
\E \bigl[ \absa{N \im (s-m_V)(x+\ii y)}^k \bigr]^{1/k} \rd y \rd x \\
& \leq \int_B^\infty \abs{f''(x)} 
\left( (Ck)^{1/2} (x-B) + Ck (x-B)^{3/2} \right) \rd x
+ \int_B^\infty (x-B) \frac{(C'k)^{1/2}}{N\sqrt{x-B}} \abs{f''(x)} \sqrt{N} (Ck)^{1/2} \rd x \\
& \leq (Ck)^{1/2} \log N + Ck + \frac{Ck}{\sqrt{N\eta_1}}.
\end{align*}
Recalling $\eta_1 = \exp((\log N)^{1/4}) \ell(E) \geq \ell(E) \geq c N^{-1}$, it follows that
\[
\E \left[ (\mathrm{II})^k \right]
\leq (Ck)^{k/2} (\log N)^k + (Ck)^k.
\]
The main contribution comes from the third term. Note that thanks to our choice of $\eta(x)$, the domain of integration of $(\mathrm{III})$ is included in the trapezoid region, so we can apply Theorem \ref{th:local_law_bulk} to get
\begin{align*}
\E \left[ (\mathrm{III})^k \right]^{1/k}
&\leq \int_{\R \times \R_+} \1_{\eta(x)<y} \abs{\partial_y(y\chi(y))} \abs{f'(x)}
\E \bigl[ \absa{N (s-m_V)(x+\ii y)}^k \bigr]^{1/k} \rd x \rd y \\
& \leq \int_{\R} \int_{\ell(E)}^{2\gamma} \abs{f'(x)}
\left( \frac{(Ck)^{1/2}}{y} + \frac{Ck}{\sqrt{\kappa(x+\ii y)}} \right) \rd y \rd x
\leq \log N \cdot \left( (Ck)^{1/2} \log N + Ck \gamma^{1/2} \right),
\end{align*}
integrating first w.r.t.\@ $y$ and then w.r.t.\@ $x$.
Hence, since $\gamma = (\log N)^{-2}$,
\begin{align*}
\E \left[ (\mathrm{III})^k \right]
&\leq (Ck)^{k/2} (\log N)^{2k} + (Ck)^k.
\end{align*}
Similarly, with H{\"o}lder's inequality and Theorem \ref{th:local_law_bulk},
\begin{align*}
\E \left[ (\mathrm{IV})^k \right]^{1/k}
\leq \int_{\R} \eta(x) \abs{f'(x)}
\left(\frac{(Ck)^{1/2}}{\eta(x)} + \frac{Ck}{\sqrt{\kappa(x)}} \right) \rd x
\leq (Ck)^{1/2} \log N + Ck,
\end{align*}
where for the second term we simply note that $\eta(x) \abs{f'(x)} \leq \eta(x) C/\abs{x-E+\ii\eta_1} \leq C$.
The term $(\mathrm{V})$ is smaller than~$(\mathrm{IV})$.
Coming back to \eqref{eq:expanding}, for some constant $C_0 > 0$ depending only on $m$, $V$ and $\beta$, we proved that
\[
\log \E \left[ e^{\zeta \left( \sum_{k=1}^N f(\lambda_k)-N\int f\rd\mu_V \right)} \right]
\leq \log \sum_{k\geq 1} \frac{\abs{\zeta}^k}{k!} C_0^k \left( k^{k/2} (\log N)^{2k} + k^k \right).
\]
Choosing $\abs{\zeta} \leq (100 C_0)^{-1}$, this last series is smaller than $\sum_{k\leq (\log N)^4}(\log N)^{2k}+\sum_{k\geq 1}\frac{2k^k}{100 k!}\leq e^{(\log N)^5}$, for $N$ large enough, which concludes the proof.
\end{proof}

\begin{lemma}\label{lem:rigid2}
For any $m \geq 1$, letting $t_0 = c_0/2$ with $c_0$ from Lemma \ref{cor:momentsBound}, there exists $N_0$ such that for any $E_1, \dots,E_m \in [A,B]$, $a_1,\dots,a_m,b_1,\dots,b_m \in [-1,1]$, $N \geq N_0$ and $\abs{t}<t_0$, we have 
\[
\mathbb{P}_{\mu_N^t}\left( \bigcap_{1\leq k\leq N}\{|\lambda_k-\gamma_k|<(\log N)^{100}N^{-\frac{2}{3}}(\hat k)^{-\frac{1}{3}}\}\right)\geq 
1-e^{-(\log N)^2}.
\]
\end{lemma}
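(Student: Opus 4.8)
The plan is to transfer the rigidity bound of Lemma \ref{lem:rigid} from $\mu_N$ to the tilted measure $\mu_N^t$ by a change of measure, using the uniform control of the Laplace transform of $S_N(f)$ from Lemma \ref{cor:momentsBound}. Fix an integer $r$ with $3r+2 \leq 100$ and $3r/4 > 5$ (for instance $r=10$), and set
\[
	\mathscr{E} \coloneqq \bigcap_{1 \leq k \leq N} \left\{ \absa{\lambda_k-\gamma_k} < (\log N)^{3r+2} N^{-2/3} (\hat k)^{-1/3} \right\},
\]
so that $\mathscr{E}$ is contained in the event in the statement (because $(\log N)^{3r+2} \leq (\log N)^{100}$) and, by Lemma \ref{lem:rigid}, $\mathbb{P}(\mathscr{E}^c) \leq e^{-(\log N)^{3r/4}}$ for $N$ large. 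First I would write, for the tilted measure and any $\abs{t} < t_0$,
\[
	\mathbb{P}_{\mu_N^t}(\mathscr{E}^c)
	= \frac{1}{Z(t)} \E\!\left[ \1_{\mathscr{E}^c}\, e^{t S_N(f)} \right]
	\leq \frac{1}{Z(t)}\, \mathbb{P}(\mathscr{E}^c)^{1/2}\, \E\!\left[ e^{2t S_N(f)} \right]^{1/2},
\]
by the Cauchy--Schwarz inequality.

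Since $\abs{t} < t_0 = c_0/2$, both $\abs{t}$ and $\abs{2t}$ are smaller than $c_0$, so Lemma \ref{cor:momentsBound} applies with $\zeta = t$ and with $\zeta = 2t$ (the function $f$ of \eqref{eq:def_function_f} is built from energies $E_\ell \in [A,B]$ and parameters $a_\ell, b_\ell \in [-1,1]$, which are exactly those allowed there). This gives $\E[e^{2t S_N(f)}] \leq e^{(\log N)^5}$ and $Z(t) = \E[e^{t S_N(f)}] \geq e^{-(\log N)^5}$ for $N \geq N_0$. Plugging these bounds and the estimate on $\mathbb{P}(\mathscr{E}^c)$ into the previous display,
\[
	\mathbb{P}_{\mu_N^t}(\mathscr{E}^c)
	\leq \mathbb{P}(\mathscr{E}^c)^{1/2}\, e^{\frac32 (\log N)^5}
	\leq \exp\!\left( -\tfrac12 (\log N)^{3r/4} + \tfrac32 (\log N)^5 \right)
	\leq e^{-(\log N)^2}
\]
once $N$ is large enough, using $3r/4 > 5$. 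This is the claimed bound.

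All the constants ($c_0$, $t_0$, $N_0$, and the choice of $r$) depend only on $\beta$, $V$ and $m$, and every estimate above is uniform in $E_1,\dots,E_m \in [A,B]$ and $a_\ell, b_\ell \in [-1,1]$ precisely because Lemmas \ref{lem:rigid} and \ref{cor:momentsBound} are. The argument is essentially just the combination of these two inputs; the only point that requires care is the bookkeeping of polylogarithmic exponents, namely checking that the factor $e^{\frac32(\log N)^5}$ coming from the normalization $Z(t)^{-1}$ and the second moment $\E[e^{2tS_N(f)}]$ is absorbed by the stretched-exponential smallness $e^{-(\log N)^{3r/4}}$ of $\mathbb{P}(\mathscr{E}^c)$, with enough room left for the target $e^{-(\log N)^2}$. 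There is no substantial obstacle beyond this, since all the analytic work has been done in Lemma \ref{cor:momentsBound}.
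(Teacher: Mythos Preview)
Your proof is correct and essentially identical to the paper's: both combine Lemma~\ref{lem:rigid} with Cauchy--Schwarz and the Laplace transform bound of Lemma~\ref{cor:momentsBound} to transfer rigidity to $\mu_N^t$. The paper takes $r=30$ rather than your $r=10$, but any $r$ with $3r+2\le 100$ and $3r/4>5$ works, and the computation is otherwise the same.
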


\begin{proof}
This is an easy consequence of Lemma \ref{lem:rigid} with $r=30$ and Lemma  \ref{cor:momentsBound}. Indeed, with $A= \bigcap_{1\leq k\leq N}\{|\lambda_k-\gamma_k|<(\log N)^{100}N^{-\frac{2}{3}}(\hat k)^{-\frac{1}{3}}\}$, 
by the Cauchy-Schwarz inequality and then Lemma \ref{cor:momentsBound},
we have
\[
\mathbb{P}_{\mu_N^t} \left(A^c\right)
\leq \frac{\E \Bigl[ e^{2t(\sum f(\lambda_i)-N\int f\rd\mu_V)} \Bigr]^{1/2} \mathbb{P}(A^c)^{1/2}}
	{\E \Bigl[ e^{t(\sum f(\lambda_i)-N\int f\rd\mu_V)} \Bigr]}
\leq e^{\frac{3}{2}(\log N)^5-\frac{1}{2}(\log N)^{3\cdot 30/4}},
\]
which concludes the proof.
\end{proof}

\subsection{Analysis of the first loop equation. }
\label{subsection:first_loop}
Recall $m_{N,t}(z)$ is the Stieljes transform of $\varrho_1^{(N,t)}(s)$.
We introduce 
\begin{equation*} 
	\varphi(z) = \varphi_{N,t}(z) 
	\coloneqq m_{N,t}(z) - m_{V}(z)
\end{equation*}
and for $z \in \Omega \setminus \R$, recalling $\Omega$ is an open set in $\C$ containing $\R$ and such that $V$ is analytic on $\Omega$,
\begin{align} \label{eq:def_psi}
	\psi(z)  
	& \coloneqq \frac{2t}{\beta N} \int_A^B \frac{f'(s)}{s-z} \varrho_{V}(s) \diff s - \frac{1}{N}\pa{\frac{2}{\beta}-1} m'_{V}(z)
	- \int_\R \frac{V'(s) - V'(z)}{s-z} \pa{\varrho_1^{(N,t)}(s) - \varrho_V(s)} \diff s \\
	\label{eq:def_Err}
	\mathrm{Err}(z) 
	& \coloneqq \varphi(z)^2
	- \frac{2t}{\beta N} 
		\int_\R \frac{f'(s)}{s-z}\pa{ \varrho_1^{(N,t)}(s) - \varrho_V(s)} \diff s 
	+ \frac{1}{N} \left( \frac{2}{\beta} - 1 \right) \varphi'(z)
	+ \Var_{\mu_N^t} \left( s_N(z) \right).
\end{align}
Then, following for example \cite[(2.8)]{Shc2011} or replacing $V$ by $V_t = V - \frac{2t}{\beta N} f$ in \eqref{eq:loop_equation_n=1}, 
we have the loop equation, for any $z \in \Omega \setminus \R$,
\begin{equation} \label{eq:loop_equation_mu_theta}
	(2 m_V(z) + V'(z)) \varphi(z) - \psi(z) + \mathrm{Err}(z) = 0,
\end{equation}
where $\mathrm{Err}(z)$ gathers the negligible terms.
We use the loop equation to show $\varphi(z)$ is close to $\widetilde{\varphi}(z)/N$, where we set
\begin{align} \label{eq:def_phi_tilde}
	\widetilde{\varphi}(z)  
	\coloneqq \frac{1}{2\pi b(z)} \left(
	\frac{2t}{\beta} \int_A^B \frac{f'(s)}{s-z} \tau(s) \diff s
	- \pa{\frac{2}{\beta}-1} 
	\left( \pi (b'(z)-1) + \int_A^B \frac{r'(s) \tau(s)}{r(s)(s-z)} \diff s \right)
	\right),
\end{align}
with $\tau(s) \coloneqq \sqrt{(s-A)(B-s)}$ and recalling $b(z) = \sqrt{z-A}\sqrt{z-B}$.
\begin{lemma} \label{lem:estimate_varphi}
Consider $\abs{t}<t_0$, with $t_0$ from Lemma \ref{lem:rigid2}. 
Let $\tilde{\eta} > 0$ be as in Lemma \ref{lem:preliminaries_fixed_point_equation}.
Then, for any $z = E + \ii \eta$ with $N^{-1} < \abs{\eta} \leq \tilde{\eta}/2$ and $A-\tilde{\eta} \leq E \leq B+\tilde{\eta}$, we have
\begin{align*}
	\varphi(z) 
	& = \frac{\widetilde{\varphi}(z)}{N}
	+ \OO \left( \frac{(\log N)^{201}}{(N\eta)^2 \abs{b(z)}} \right)
	+ \OO \left( \frac{(\log N)^{201}}{N^2 \abs{b(z)}} 
		\sum_{\ell=1}^m \frac{1}{\eta_\ell(\eta_{\ell}\vee|z-z_\ell|)} \right),
\end{align*}
where the error terms depend only on $\beta$ and $V$.
\end{lemma}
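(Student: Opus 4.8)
The plan is to analyze the loop equation \eqref{eq:loop_equation_mu_theta}, namely
\[
(2m_V(z)+V'(z))\varphi(z) = \psi(z) - \mathrm{Err}(z),
\]
by first showing $\psi(z)$ is a controlled explicit quantity of order $1/N$ plus an error, then showing $\mathrm{Err}(z)$ is genuinely negligible, and finally dividing by $2m_V(z)+V'(z) = 2r(z)b(z)$ using that $r$ is bounded away from zero on the relevant region. The leading term $\widetilde{\varphi}(z)/N$ should emerge precisely as $\psi(z)/(2r(z)b(z))$ after the density $\varrho_1^{(N,t)}$ in $\psi$ is replaced by $\varrho_V$ up to acceptable corrections, since $2\pi r(z) b(z)$ times the definition \eqref{eq:def_phi_tilde} matches the three terms of \eqref{eq:def_psi} after this replacement (the first term of $\psi$ directly, the $m'_V$ term via $\pi(b'(z)-1)$ and the $r'/r$ integral which together give $m_V'$, and the $V'$-difference term contributes nothing extra to leading order because $\int \frac{V'(s)-V'(z)}{s-z}\varrho_V(s)\,ds = h(z)$ is already accounted for in $m_V$).

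\textbf{Step 1: Rigidity input.} First I would invoke Lemma \ref{lem:rigid2}: under $\mu_N^t$ with $|t|<t_0$, the particles satisfy the rigidity bound $|\lambda_k-\gamma_k| < (\log N)^{100} N^{-2/3}\hat k^{-1/3}$ except on an event of probability $\le e^{-(\log N)^2}$. This is the only probabilistic input; everything else is deterministic manipulation of the loop equation.

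\textbf{Step 2: Estimate $\varrho_1^{(N,t)} - \varrho_V$ against test functions.} The terms in $\psi(z)$ and $\mathrm{Err}(z)$ involve integrals of $\frac{g(s)}{s-z}$ or $\frac{V'(s)-V'(z)}{s-z}$ against $\varrho_1^{(N,t)}-\varrho_V$. Writing $\int h\,d(\varrho_1^{(N,t)}-\varrho_V) = \frac1N(\E_{\mu_N^t}[\sum h(\lambda_k)] - N\int h\,d\mu_V)$ and applying rigidity (Step 1) plus a Helffer--Sj\"ostrand / summation-by-parts argument as in the proof of Corollary \ref{cor:rigidity}, each such integral is $\OO((\log N)^{C}/N)$ times the appropriate weight depending on the distance of $z$ and the $z_\ell$ to $[A,B]$ and on $\|h'\|_1$ etc. The singular weights $\frac{1}{\eta_\ell(\eta_\ell\vee|z-z_\ell|)}$ in the statement come exactly from differentiating $f'$, which has the $\log(z_\ell-s)$ singularities: $f''$ has an $L^1$ norm of order $1/\eta_\ell$ near $E_\ell$ and the Cauchy kernel $\frac{1}{s-z}$ contributes the extra $\frac{1}{\eta_\ell\vee|z-z_\ell|}$.

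\textbf{Step 3: Bound $\mathrm{Err}(z)$.} The four pieces of $\mathrm{Err}(z)$ in \eqref{eq:def_Err}: $\varphi(z)^2$ is quadratically small once we have a first a priori bound $\varphi(z) = \OO((\log N)^C/(N\eta))$ (from the local law Theorem \ref{th:local_law_bulk} and Proposition \ref{prop:bound_s-m_1_s-m_2_past_the_edge}, after noting $\mu_N^t$ is absolutely continuous with controlled density by Lemma \ref{cor:momentsBound}); the $f'$-integral term is $\frac{t}{\beta N}\cdot\OO((\log N)^C/N)\cdot(\text{weight})$ by Step 2; the $\frac1N\varphi'$ term is $\OO((\log N)^C/(N^2\eta\cdot))$ by a derivative bound on $\varphi$; and the variance term $\Var_{\mu_N^t}(s_N(z))$ is bounded by the local law second moment estimate, of order $(\log N)^C/(N\eta)^2$. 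Each is dominated by the claimed error terms.

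\textbf{Step 4: Divide and conclude.} On the region $A-\tilde\eta\le E\le B+\tilde\eta$, $|\eta|\le\tilde\eta/2$, we have $|2m_V(z)+V'(z)| = 2|r(z)||b(z)| \ge c|b(z)|$ since $r$ has no zeros near $[A,B]$ (Assumption \ref{assumption:off-criticality}). Dividing the loop equation by $2m_V(z)+V'(z)$ converts the leading term of $\psi$ into $\widetilde\varphi(z)/N$ and the error terms into the stated ones, each acquiring a factor $1/|b(z)|$.

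\textbf{The main obstacle} will be Step 2, specifically getting the \emph{sharp} power of $\log N$ and the \emph{sharp} singular weights for the $f'$-integral: one must carefully track how the $\log$-singularities of $f$ at the points $z_\ell$ interact with the Cauchy kernel at $z$, using the Helffer--Sj\"ostrand representation with an $x$-dependent contour height (exactly as in the proof of Lemma \ref{cor:momentsBound}), and control the contribution near each $E_\ell$ separately from the bulk contribution. A secondary subtlety is justifying the a priori bound on $\varphi$ and $\varphi'$ under the \emph{biased} measure $\mu_N^t$ rather than $\mu_N$: this requires combining the local law estimates with the bound on the Radon--Nikodym derivative from Lemma \ref{cor:momentsBound} (via Cauchy--Schwarz, as in the proof of Lemma \ref{lem:rigid2}), which costs only a further $e^{(\log N)^5}$ factor that is absorbed into the polynomial-in-$\log N$ error.
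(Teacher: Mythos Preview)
Your approach has a genuine gap in Steps 2 and 4. The pointwise identity you rely on, $\psi(z)/(2r(z)b(z)) = \widetilde\varphi(z)/N$ up to acceptable error, is false for non-quadratic $V$. Dividing the first term of $\psi$ by $2r(z)b(z)$ gives $\frac{t}{\pi\beta N\, r(z) b(z)}\int_A^B\frac{f'(s)}{s-z}r(s)\tau(s)\,ds$, which differs from the first term of $\widetilde\varphi/N$ by the factor $r(s)/r(z)$ inside the integral; similarly $m_V'(z)/(2r(z)b(z))$ contains pieces $-V''(z)/(4r(z)b(z))$ and $r'(z)/(2r(z))$ that have no counterpart in \eqref{eq:def_phi_tilde}. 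More fundamentally, the third term of $\psi$ is $-\E_{\mu_N^t}[\Delta(z)]$ in the notation of \eqref{eq:def_Delta}, and rigidity only bounds it by $\OO((\log N)^{100}/N)$. After dividing by $|b(z)|$ this becomes $\OO((\log N)^{100}/(N|b(z)|))$, which for macroscopic $\eta$ is far larger than the claimed $\OO((\log N)^{201}/((N\eta)^2|b(z)|))$ and is in fact the \emph{same} order as the $(\tfrac{2}{\beta}-1)$-part of $\widetilde\varphi(z)/N$. You therefore cannot treat it as an error and still recover the shift $\delta(f)$ downstream.

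The paper instead uses the contour-integral device of Section \ref{subsection:dealing_with_Delta} (following Shcherbina): integrate the loop equation against $\frac{dw}{r(w)(z-w)}$ over a contour $\cC$ enclosing $[A,B]$ with $z$ exterior. The third term of $\psi(w)$ and the $V''(w)$ piece of $m_V'(w)$ are analytic in $w\in\Omega$ and hence drop out by Cauchy's theorem; the surviving pieces of $\psi$ are evaluated by collapsing the contour onto $[A,B]$ and yield exactly $4\ii\pi b(z)\widetilde\varphi(z)/N$ --- the specific form \eqref{eq:def_phi_tilde} is the \emph{output} of this residue computation, not something that equals $\psi/(2rb)$ pointwise. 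The $\varphi$ piece is isolated at $z$ via the residue at infinity of $2b(w)\varphi(w)/(z-w)$, and $\mathrm{Err}$ is bounded after deforming to height $|\im w|=\eta/2$ using Lemma \ref{lem:first_estimates_1}, whose content is essentially your Steps 1 and 3. For quadratic $V$ your direct division would indeed work, since then $r$ is constant and the third term of $\psi$ vanishes identically --- the exact analogue of Remark \ref{rem:gaussian}. A secondary issue: the $e^{(\log N)^5}$ Cauchy--Schwarz cost in your a priori bound is \emph{not} polynomial in $\log N$; the paper avoids this by conditioning on the rigidity event and working under $\mu_N^{t,\mathscr R}$.
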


The proof of this lemma relies on two steps. First we use the rigidity established in Lemma \ref{lem:rigid2} to show $\mathrm{Err}(z)$ is indeed an error term. 
Then, the loop equation implies that $(2 m_V(z) + V'(z)) \varphi(z) \simeq \psi(z)$, but the third term in $\psi(z)$ is still an unknown second order term. 
In order to get rid of it, we use a contour integral argument similar to the one used in Section \ref{subsection:dealing_with_Delta} or more precisely, to the one used by Shcherbina \cite[(2.12) to (2.17)]{Shc2011}.
For this reason, we need to work with confined particles and, for convenience, we actually restrict ourselves to the rigidity event $\mathscr{R} \coloneqq \bigcap_{1\leq k\leq N}\{|\lambda_k-\gamma_k|<(\log N)^{100}N^{-\frac{2}{3}}(\hat k)^{-\frac{1}{3}}\}$, by introducing the new measure
\begin{align*}
	\diff \mu_N^{t,\mathscr{R}}(\lambda_1,\dots,\lambda_N) 
	= \frac{\1_{\mathscr{R}}}{\P_{\mu_N^t}(\mathscr{R})} 
	\diff \mu_N^t(\lambda_1,\dots,\lambda_N).
\end{align*}
Note that $\P_{\mu_N^t}(\mathscr{R}) \geq 1 - e^{-(\log N)^2}$ by Lemma \ref{lem:rigid2}.
Moreover, let  $\varrho_1^{(N,t,\mathscr{R})}(s)$ be the 1-point function under $\mu_N^{t,\mathscr{R}}$,  
\begin{align*}
\varphi^\mathscr{R}(z) \coloneqq \E_{\mu_N^{t,\mathscr{R}}} [s_N(z)] - m_V(z)
\end{align*}
and $\mathrm{Err}^\mathscr{R}(z)$ be defined as $\mathrm{Err}(z)$ but with $\mu_N^{t,\mathscr{R}}$, $\varrho_1^{(N,t,\mathscr{R})}(s)$ and $\varphi^\mathscr{R}(z)$ instead of $\mu_N^t$, $\varrho_1^{(N,t)}(s)$ and $\varphi(z)$.
We tackle the first step of the argument in the following lemma, which bounds the terms appearing in $\mathrm{Err}^\mathscr{R}(z)$.
\begin{lemma} \label{lem:first_estimates_1}
Consider $\abs{t}<t_0$, with $t_0$ from Lemma \ref{lem:rigid2}. 
Let $\tilde{\eta} > 0$ be as in Lemma \ref{lem:preliminaries_fixed_point_equation}.
Then, for any $z = E + \ii \eta$ with $0< \abs{\eta} \leq \tilde{\eta}$ and $A-\tilde{\eta} \leq E \leq B+\tilde{\eta}$, we have
\begin{align} \label{eq:bound_with_rigidity}
	& \varphi^\mathscr{R}(z) = \OO \left( \frac{(\log N)^{100}}{ N\eta} \right),
	\qquad 
	\mathrm{Err}^\mathscr{R}(z)
	= \OO \left( \frac{(\log N)^{200}}{(N\eta)^2} \right)
	+ \OO \left( \frac{(\log N)^{200}}{N^2} 
		\sum_{\ell=1}^m \frac{1}{\eta_\ell(\eta_{\ell}\vee|z-z_\ell|)} \right), \\
	\label{eq:from_rigidity_to_without_rigidity}
	& \varphi(z)
	= \varphi^\mathscr{R}(z) + \OO \left( \eta^{-1} e^{-(\log N)^2} \right),
	\qquad 
	\mathrm{Err}(z)
	= \mathrm{Err}^\mathscr{R}(z) + \OO \left( \eta^{-2} e^{-(\log N)^2}\right),
\end{align}
where the error terms depend only on $\beta$ and $V$. If, moreover, $E \leq A-\tilde{\eta}/2$ or $E \geq B+\tilde{\eta}/2$,
\begin{align} \label{eq:bound_with_rigidity_past_the_edge}
	\mathrm{Err}^\mathscr{R}(z)
	= \OO \left( \frac{(\log N)^{200}}{N^2} \right)
	+ \OO \left( \frac{(\log N)^{200}}{N^2} 
		\sum_{\ell=1}^m \frac{1}{\eta_\ell} \right).
\end{align}
\end{lemma}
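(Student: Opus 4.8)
\textbf{Proof plan for Lemma \ref{lem:first_estimates_1}.}

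The plan is to prove the four displayed estimates in the order \eqref{eq:bound_with_rigidity}, then \eqref{eq:bound_with_rigidity_past_the_edge}, then \eqref{eq:from_rigidity_to_without_rigidity}, as the last two are comparatively soft. For \eqref{eq:bound_with_rigidity}, I would first bound $\varphi^\mathscr{R}(z) = \E_{\mu_N^{t,\mathscr{R}}}[s_N(z)] - m_V(z)$. Write $\E_{\mu_N^{t,\mathscr{R}}}[s_N(z)] - m_V(z)$ using the rigidity event: on $\mathscr{R}$ each $\lambda_k$ is within $(\log N)^{100} N^{-2/3}(\hat k)^{-1/3}$ of the classical location $\gamma_k$, and the classical locations satisfy the Riemann-sum approximation $\frac1N\sum_k \frac{1}{\gamma_k-z} = m_V(z) + \OO(\frac{1}{N\eta}\cdot\frac{1}{\eta})$ — actually the displacement estimate is the cleaner route: $\absa{\frac{1}{N}\sum_k \frac{1}{\lambda_k-z} - \frac{1}{N}\sum_k\frac{1}{\gamma_k-z}} \leq \frac{1}{N}\sum_k \frac{\abs{\lambda_k-\gamma_k}}{\abs{\lambda_k-z}\abs{\gamma_k-z}}$, and on $\mathscr{R}$ this is $\OO((\log N)^{100}/(N\eta))$ after summing, using that the density near $z$ is bounded and $\abs{\gamma_k-z}\gtrsim\eta$. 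Combined with the deterministic approximation $\frac{1}{N}\sum_k\frac{1}{\gamma_k-z} = m_V(z) + \OO(1/(N\eta))$ this gives the first estimate. The second estimate in \eqref{eq:bound_with_rigidity} bounds $\mathrm{Err}^\mathscr{R}(z)$ term by term as given in \eqref{eq:def_Err}: the term $\varphi^\mathscr{R}(z)^2$ is $\OO((\log N)^{200}/(N\eta)^2)$ by the first estimate; the term $\frac{1}{N}(\frac2\beta-1)(\varphi^\mathscr{R})'(z)$ is handled by a Cauchy estimate on a disc of radius $\eta/2$, giving $\OO((\log N)^{100}/(N\eta)\cdot\eta^{-1}) = \OO((\log N)^{100}/(N\eta^2)) = \OO((\log N)^{200}/(N\eta)^2)$ when $N\eta\gtrsim(\log N)^{100}$, which holds since $\eta\geq\ell(E)$; the variance term $\Var_{\mu_N^{t,\mathscr{R}}}(s_N(z))$ requires a concentration input — here I would use rigidity again, bounding $\Var(s_N(z))$ by $\E[\absa{s_N(z) - \E s_N(z)}^2]$ and controlling this by the fluctuations of the $\lambda_k$'s around $\gamma_k$; and the term $\frac{2t}{\beta N}\int \frac{f'(s)}{s-z}(\varrho_1^{(N,t,\mathscr{R})}(s)-\varrho_V(s))\diff s$ produces the sum over $\ell$ because $f' = \sum_\ell f_\ell'$ with $f_\ell'(s)$ comparable to $(a_\ell-\ii b_\ell)\phi(s)/(z_\ell - s)$, so integrating against $\varrho_1^{(N,t,\mathscr{R})}-\varrho_V$ (which is supported where $\phi$ lives and has total mass controlled by rigidity) against $\frac{1}{s-z}$ yields a double pole structure $\frac{1}{\eta_\ell(\eta_\ell\vee\abs{z-z_\ell})}$ after careful resolvent bookkeeping near the two poles $z_\ell$ and $z$.

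For \eqref{eq:bound_with_rigidity_past_the_edge}, the improvement from $(N\eta)^{-2}$ to $N^{-2}$ when $E$ is at distance $\geq\tilde\eta/2$ from $[A,B]$ is because then $\abs{\lambda_k-z}\geq\tilde\eta/2$ for all $k$ on $\mathscr{R}$ (in fact for all $k$ up to the large-deviation estimate, but on $\mathscr{R}$ it is immediate), so $\varphi^\mathscr{R}(z) = \OO((\log N)^{100}/N)$ with no $\eta$ in the denominator, and similarly every term in $\mathrm{Err}^\mathscr{R}$ loses its $\eta$-dependence; the only surviving $\ell$-dependence is from $f'$ which still has poles at $z_\ell\in[A,B]\subseteq$ (away from $z$), giving the $\sum_\ell 1/\eta_\ell$ term. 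Finally, \eqref{eq:from_rigidity_to_without_rigidity} is the passage from $\mu_N^{t,\mathscr{R}}$ back to $\mu_N^t$: since $\diff\mu_N^{t,\mathscr{R}} = \1_\mathscr{R}\, \diff\mu_N^t / \P_{\mu_N^t}(\mathscr{R})$ and $\P_{\mu_N^t}(\mathscr{R})\geq 1 - e^{-(\log N)^2}$ by Lemma \ref{lem:rigid2}, for any observable $g$ with $\absa{g}\leq G$ deterministically we have $\absa{\E_{\mu_N^t}[g] - \E_{\mu_N^{t,\mathscr{R}}}[g]} \leq 2G\, e^{-(\log N)^2}$ (for $N$ large); applying this with $g = s_N(z)$ (bounded by $1/\eta$) gives the first claim, and with $g$ the integrand quantities defining $\mathrm{Err}$ (each bounded by $\OO(\eta^{-2})$, using that $s_N(z)$ and $s_N'(z)$ are bounded by $\eta^{-1}$ and $\eta^{-2}$) gives the second, after also noting $\varphi(z)^2 - \varphi^\mathscr{R}(z)^2 = \OO(\eta^{-1}\cdot\eta^{-1}e^{-(\log N)^2})$ and that the variance difference is likewise exponentially small.

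The main obstacle I expect is the careful resolvent bookkeeping in the term $\frac{2t}{\beta N}\int\frac{f'(s)}{s-z}(\varrho_1^{(N,t,\mathscr{R})}-\varrho_V)(s)\diff s$: one must extract exactly the factor $\sum_\ell \frac{1}{\eta_\ell(\eta_\ell\vee\abs{z-z_\ell})}$ with the right powers of $N$ and $\log N$, and this requires splitting the integral according to whether $s$ is near $z_\ell$, near $z$, or away from both, using rigidity to bound $\varrho_1^{(N,t,\mathscr{R})}-\varrho_V$ locally, and being attentive to the interaction between the pole of $f_\ell'$ at $z_\ell$ (which sits a distance $\eta_\ell$ above the real axis) and the pole of $\frac{1}{s-z}$ at $z$ (which sits a distance $\eta$ above, with $\eta$ possibly much smaller than $\eta_\ell$). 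The secondary difficulty is controlling $\Var_{\mu_N^{t,\mathscr{R}}}(s_N(z))$ at the correct order; here it should suffice to write $s_N(z) - \E_{\mu_N^{t,\mathscr{R}}}[s_N(z)] = \frac1N\sum_k(\frac{1}{\lambda_k-z} - \E[\frac{1}{\lambda_k-z}])$ and bound its second moment using that, on $\mathscr{R}$, the fluctuations of $\lambda_k$ about $\gamma_k$ are $\OO((\log N)^{100}N^{-2/3}(\hat k)^{-1/3})$, though getting the precise $(N\eta)^{-2}$ scaling rather than something weaker may need the stronger moment control from Theorem \ref{th:local_law_bulk} applied under $\mu_N^{t,\mathscr{R}}$ via the change-of-measure bound from Lemma \ref{cor:momentsBound}.
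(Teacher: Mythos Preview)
Your plan is correct and aligns with the paper's (which invokes \cite[Lemma~5.3]{BouErdYauYin2016} for the rigidity-based estimates): bound each term of $\mathrm{Err}^{\mathscr{R}}$ using the event $\mathscr{R}$, then transfer to $\mu_N^t$ via $\P_{\mu_N^t}(\mathscr{R}^c)\leq e^{-(\log N)^2}$. Two points of sharpening. First, in the term $\frac{1}{N}(\frac{2}{\beta}-1)(\varphi^\mathscr{R})'$ you dropped the $1/N$ prefactor --- restoring it turns your Cauchy bound into $\OO((\log N)^{100}/(N\eta)^2)$ unconditionally, so the assumption $\eta\geq\ell(E)$ (which is neither stated in the lemma nor generally true) is not needed; similarly your concern about $\Var_{\mu_N^{t,\mathscr{R}}}(s_N(z))$ is unwarranted, since on $\mathscr{R}$ the random variable $s_N(z)$ lies within $\OO((\log N)^{100}/(N\eta))$ of the deterministic $\frac{1}{N}\sum_k(\gamma_k-z)^{-1}$, giving the variance bound immediately. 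Second, for the $f'$ term the paper avoids your proposed case-splitting by a single integration by parts: with $g(s)=f'(s)/(s-z)$ (compactly supported via $\phi$), rigidity controls the difference of cumulative distributions by $C(\log N)^{100}/N$, so $\absa{\int g\,(\varrho_1^{(N,t,\mathscr{R})}-\varrho_V)}\leq\frac{C(\log N)^{100}}{N}\int\abs{g'}\leq\frac{C(\log N)^{100}}{N}\int\bigl(\frac{\abs{f''(s)}}{\abs{z-s}}+\frac{\abs{f'(s)}}{\abs{z-s}^2}\bigr)\diff s$, and this last integral is an elementary resolvent estimate yielding $C\sum_\ell\bigl(\frac{1}{\eta_\ell(\eta_\ell\vee\abs{z-z_\ell})}+\frac{1}{\eta(\eta\vee\abs{z-z_\ell})}\bigr)$, with the second sum absorbed into the $(N\eta)^{-2}$ error.
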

\begin{proof} 
First note that
\begin{align} \nonumber 
	& \int_\R \frac{f'(s)}{s-z} \left( \varrho_1^{(N,t,\mathscr{R})}(s) - \varrho_{V}(s) \right) \diff s
	= \OO \left(\frac{(\log N)^{100}}{N} \int_\R \left(\frac{\abs{f''(s)}}{\abs{z-s}}+\frac{\abs{f'(s)}}{\abs{z-s}^2}\right) \diff s \right), \\
	\label{eqn:elementary3}
	& \varphi^\mathscr{R}(z) = \OO \left( \frac{(\log N)^{100}}{ N\eta} \right), 
	\qquad 
	(\varphi^\mathscr{R})'(z) = \OO \left( \frac{(\log N)^{100}}{N\eta^2} \right),
	\qquad 
	\Var_{\mu_N^{t,\mathscr{R}}} \left( s_N(z) \right) 
	= \OO \left( \frac{(\log N)^{200}}{ (N\eta)^2} \right).
\end{align}
The proof of these estimates is almost the same as that of Lemma 5.3 in \cite{BouErdYauYin2016}. The only differences are that (1) the rigidity estimate is now known with multiplicative error $(\log N)^{100}$ instead of $N^\xi$, (2) we work directly on the event $\mathscr{R}$ so we do not need to control what happens on $\mathscr{R}^c$, (3) we work with Laplace transform instead of Fourier transform.
In particular, this proves the first part of \eqref{eq:bound_with_rigidity}. The second part of \eqref{eq:bound_with_rigidity} follows from the bound (recall $f$ is supported on $[A-\tilde{\eta}/2,B+\tilde{\eta}/2]$)
\begin{align*}
\int_\R \left(\frac{\abs{f''(s)}}{\abs{z-s}}+\frac{\abs{f'(s)}}{\abs{z-s}^2}\right) \diff s
& \leq C\sum_{\ell=1}^m 
\int_{A-\tilde{\eta}/2}^{B+\tilde{\eta}/2} \left(\frac{1}{\abs{z-s}}
+\frac{1}{\abs{z_\ell-s}}\right)\frac{1}{\abs{z-s}\cdot\abs{z_\ell-s}} \diff s \\
& \leq C\sum_{\ell=1}^m 
\left(\frac{1}{\eta_\ell(\eta_{\ell}\vee\abs{z-z_\ell})}
+ \frac{1}{\eta(\eta\vee\abs{z-z_\ell})}\right).
\end{align*}
Since by Lemma \ref{lem:rigid2}, $\P_{\mu_N^t}(\mathscr{R}^c) \leq e^{-(\log N)^2}$, we have \eqref{eq:from_rigidity_to_without_rigidity}.
Finally, we obtain \eqref{eq:bound_with_rigidity_past_the_edge} similarly, noting that $\abs{z_\ell-z} \geq \tilde{\eta}/2$, 
and on the event $\mathscr{R}$ the particles $\lambda_k$ are confined in $[A-\tilde{\eta}/4,B+\tilde{\eta}/4]$ for $N$ large enough,
so we can replace $\eta$ by 1 in the bounds of \eqref{eqn:elementary3}.
\end{proof}
\begin{proof}[Proof of Lemma \ref{lem:estimate_varphi}]
Fix some $z = E + \ii \eta$ with $N^{-1} < \abs{\eta} \leq \tilde{\eta}/2$ and $A-\tilde{\eta} \leq E \leq B+\tilde{\eta}$.
We consider the rectangle with vertices $A-\tilde{\eta} \pm \ii N^{-10}$, $B+\tilde{\eta} \pm \ii N^{-10}$, and denote by $\cC$ the corresponding closed contour with positive orientation.
We decompose this contour into $\cC_{\text{hor}}$, which consists only in the horizontal pieces, and $\cC_{\text{ver}}$, which consists only in the vertical pieces.
By the loop equation \eqref{eq:loop_equation_mu_theta} and recalling $2 m_V(z) + V'(z) = 2r(z)b(z)$, we have
\begin{align*} 
	\int_{\cC_{\text{hor}}} \frac{2r(w)b(w) \varphi(w) - \psi(w) + \mathrm{Err}(w)}{r(w)(z-w)} \diff w
	= 0.
\end{align*}
Therefore, by \eqref{eq:from_rigidity_to_without_rigidity}, we have 
\begin{align} \label{eq:C_hor}
	\int_{\cC_{\text{hor}}} \frac{2r(w)b(w) \varphi^\mathscr{R}(w) - \psi(w) + \mathrm{Err}^\mathscr{R}(w)}{r(w)(z-w)} \diff w
	= \OO \left( \eta^{-2} e^{-(\log N)^2}\right).
\end{align}
On the other hand, for $w$ on $\cC_{\text{ver}}$, we have $2r(w)b(w) \varphi^\mathscr{R}(w) - \psi(w) + \mathrm{Err}^\mathscr{R}(w) = O(1)$ (using that under $\mu_N^{t,\mathscr{R}}$ particles are at a distance larger than $\tilde{\eta}/2$ from $\cC_{\text{ver}}$), so 
\begin{align} \label{eq:C_ver}
	\int_{\cC_{\text{ver}}} \frac{2r(w)b(w) \varphi^\mathscr{R}(w) - \psi(w) + \mathrm{Err}^\mathscr{R}(w)}{r(w)(z-w)} \diff w
	= \OO \left( \eta^{-1} N^{-10} \right).
\end{align}
Combining \eqref{eq:C_hor} and \eqref{eq:C_ver}, we get
\begin{align} \label{eq:C}
	\int_\cC \frac{2r(w)b(w) \varphi^\mathscr{R}(w) - \psi(w) + \mathrm{Err}^\mathscr{R}(w)}{r(w)(z-w)} \diff w
	= \OO \left( \frac{1}{(N\eta)^2} \right).
\end{align}
We now estimate each term in the last integral successively.

We start with the part involving $\varphi^\mathscr{R}(w)$. 
The function $w \mapsto 2b(w) \varphi^\mathscr{R}(w)/(z-w)$ is analytic on and outside $\cC$, except for the pole at $z$, and it behaves as $\OO(w^{-2})$ as $\abs{w} \to \infty$. Therefore, by the Cauchy integral formula with residue at infinity, we get
\begin{align} \label{eq:C_varphi}
	\int_\cC \frac{2b(w)\varphi^\mathscr{R}(w)}{(z-w)} \diff w
	= 4 \ii \pi b(z) \varphi^\mathscr{R}(z)
	= 4 \ii \pi b(z) \varphi(z) + \OO \left( \frac{1}{(N\eta)^2} \right),
\end{align}
using again \eqref{eq:from_rigidity_to_without_rigidity}.

Now we evaluate the part involving $\psi(w)$. Recall the definition of $\psi(w)$ in \eqref{eq:def_psi} and note that the third term is analytic in $w \in \Omega$. 
Moreover,by \eqref{eq:link_2m+V'_and_r}, we have $m_V'(w) = -\frac{1}{2} V''(w) + (rb)'(w)$, where $V''(w)$ is also analytic in $w \in \Omega$.
Since the contour $\cC$ is included in $\Omega$, $z$ is exterior to $\cC$ and $r$ has no zero inside $\cC$ (see the choice of $\tilde{\eta}$ in Lemma \ref{lem:preliminaries_fixed_point_equation}), these analytic terms disappear and we get
\begin{align*}
	\int_\cC \frac{\psi(w)}{r(w)(z-w)} \diff w
	& = \int_\cC \left( \frac{2t}{\beta N} \int_A^B \frac{f'(s)}{s-w} \varrho_{V}(s) \diff s - \frac{1}{N}\pa{\frac{2}{\beta}-1} (rb)'(w) \right) \frac{\diff w}{r(w)(z-w)} \\
	& = -\frac{4\ii\pi t}{\beta N} \int_A^B \frac{f'(s)}{r(s)(z-s)} \varrho_{V}(s) \diff s
	- \frac{1}{N}\pa{\frac{2}{\beta}-1} 
	\int_A^B \frac{-2i (r\tau)'(s)}{r(s)(z-s)} \diff s
\end{align*}
where, for the first term, we applied Cauchy's integral formula and, for the second term, we let the contour approach the segment $[A,B]$ and used $\lim_{y \to 0+} (rb)'(x\pm iy) = \pm i(r\tau)'(x)$ for $x \in (A,B)$, recalling $\tau(x) = \sqrt{(x-A)(B-x)}$.
Recalling the definition of $\widetilde{\varphi}(z)$ in \eqref{eq:def_phi_tilde} and that $\varrho_V = \frac{1}{\pi} r \tau$, we get
\begin{align} \label{eq:C_psi}
	\int_\cC \frac{\psi(w) \diff w}{r(w)(z-w)}
	& = 2 \ii \left(
		\frac{2t}{\beta N} \int_A^B \frac{f'(s)}{s-z} \tau(s) \diff s
		- \frac{1}{N}\pa{\frac{2}{\beta}-1} 
		\int_A^B \left( \frac{\tau'(s)}{s-z} + \frac{r'(s) \tau(s)}{r(s)(s-z)} \right) \diff s
		\right) 
	= \frac{4 \ii \pi b(z)}{N} \widetilde{\varphi}(z),
\end{align}
where we used that $\int_A^B \frac{\tau'(s)}{s-z} \diff s = \int_A^B \frac{\tau(s)}{(s-z)^2} \diff s = \pi (b'(z)-1)$
because $\int_A^B \frac{\tau(s)}{s-z} \diff s = \pi (\frac{A+B}{2} - z + b(z))$.

Finally, we deal with the part involving $\mathrm{Err}^\mathscr{R}(w)$. We deform the contour $\cC$ into $\cC'$, the positively oriented rectangle with vertices $A-\tilde{\eta} \pm \ii \eta/2$, $B+\tilde{\eta} \pm \ii \eta/2$.
The function $w \mapsto \mathrm{Err}^\mathscr{R}(w) / (r(w)(z-w))$ is analytic on and outside these contours, so
\begin{align} \label{eq:C_Err}
	\int_\cC \frac{\mathrm{Err}^\mathscr{R}(w)}{r(w)(z-w)} \diff w
	= \int_{\cC'} \frac{\mathrm{Err}^\mathscr{R}(w)}{r(w)(z-w)} \diff w
	= \OO \left( \frac{(\log N)^{201}}{(N\eta)^2} \right)
	+ \OO \left( \frac{(\log N)^{201}}{N^2} 
			\sum_{\ell=1}^m \frac{1}{\eta_\ell(\eta_{\ell}\vee|z-z_\ell|)} \right),
\end{align}
where we used that $\abs{r(w)}$ is uniformly lower bounded and we applied  \eqref{eq:bound_with_rigidity} on the horizontal pieces of $\cC'$ and \eqref{eq:bound_with_rigidity_past_the_edge} on the vertical pieces. 
Coming back to \eqref{eq:C} and combining \eqref{eq:C_varphi}, \eqref{eq:C_psi} and \eqref{eq:C_Err}, the result is proved.
\end{proof}
We now prove the following Lemma \ref{lem:estimate_Z(theta)} in which we estimate $Z(t)$ (defined in (\ref{eq:def_Z(theta)})) via the Hellfer--Sj\"{o}strand formula applied to $\E_{\mu_N^t} \left[ S_N(f) \right]$, and the estimates of Lemma \ref{lem:estimate_varphi}. Here our method essentially follows Section 5 of~\cite{BouErdYauYin2016}. 
In order to state the lemma, first introduce
\begin{align}
	\label{eq:def_sigma}
	\sigma^2(f) 
	& \coloneqq \frac{1}{ \pi^2 \beta} 
	\int_A^B \int_A^B f'(s) \pa{ \frac{f(s)-f(t)}{s-t} } \frac{\tau(s)}{\tau(t)} \diff s \diff t, \\
	\label{eq:def_delta}
	\delta(f) 
	& \coloneqq 
	\pa{\frac{2}{\beta}-1} 
	\pa{\frac{f(A) + f(B)}{4} 
		-\frac{1}{2\pi^2} \int_A^B \frac{f(x)}{\tau(x)} 
			\left( \pi + {\rm p.v.} \int_A^B \frac{r'(s) \tau(s)}{r(s)(x-s)} \diff s \right) 
			\diff x},
\end{align}
recalling $\tau(s) \coloneqq \sqrt{(s-A)(B-s)}$
We will study the asymptotic behavior of these quantities in Section \ref{subsection:covariances}.
\begin{lemma} \label{lem:estimate_Z(theta)}
For any $m \geq 1$, $E_1, \dots,E_m \in [A,B]$, $a_1,\dots,a_m,b_1,\dots,b_m \in [-1,1]$ and any $|t|<t_0$, with $t_0$ from Lemma \ref{lem:rigid2}, we have
\[
	Z(t) 
	= \exp \left(\frac{t^2}{2} \sigma^2(f) + t \delta(f) 
	+\OO\left(e^{-(\log N)^{1/4}/3}\right)\right),
\]
where the implied constant in the error term depends only on $\beta$, $V$ and $m$.
\end{lemma}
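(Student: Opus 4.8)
The plan is to follow Johansson's classical strategy of differentiating $\log Z(t)$ in $t$ and integrating back, using the loop equation estimate from Lemma \ref{lem:estimate_varphi} as the key input. First I would compute
\[
	\frac{Z'(t)}{Z(t)} = \E_{\mu_N^t}\left[ S_N(f) \right],
\]
and apply the Helffer--Sj\"ostrand formula to express $\E_{\mu_N^t}[S_N(f)]$ as a contour-type integral $N \iint \bar\partial \tilde f(x+\ii y)\, \varphi(x+\ii y)\, \diff x\, \diff y$ against $\varphi = m_{N,t} - m_V$, cutting off at an appropriate height (above $N^{-1}$ so that Lemma \ref{lem:estimate_varphi} applies, and using the rigidity-based bounds of Lemma \ref{lem:first_estimates_1} near the real axis). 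Because $f$ is a sum of the $f_\ell$'s, the error terms in Lemma \ref{lem:estimate_varphi} involving $\sum_\ell \eta_\ell^{-1}(\eta_\ell \vee |z-z_\ell|)^{-1}$ integrate — against the Helffer--Sj\"ostrand weight, which itself has singularities near the $z_\ell$'s at scale $\eta_\ell$ — to something of size $O((\log N)^{C})$, hence after multiplication by $t$ and division by the (order $\log N$) main term, to $O((\log N)^{-1})$; I would track the exponents carefully to land at the claimed $O(e^{-(\log N)^{1/4}/3})$, which is where the exponentially-enhanced scale $\eta(E) = e^{(\log N)^{1/4}}\ell(E)$ buys the needed margin.

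Next I would substitute the main term $\varphi(z) \approx \widetilde\varphi(z)/N$ from Lemma \ref{lem:estimate_varphi} into the Helffer--Sj\"ostrand expression. The factor $1/N$ cancels the $N$ in front, so $\E_{\mu_N^t}[S_N(f)]$ becomes, to leading order, $t$ times a quadratic-in-$f$ expression plus a linear-in-$f$ expression coming respectively from the two pieces of $\widetilde\varphi$ (the $\frac{2t}{\beta}$-term and the $(\frac2\beta-1)$-term in \eqref{eq:def_phi_tilde}). Evaluating these integrals — pushing the $y$-contour down to $[A,B]$, using $\lim_{y\to 0^+}b(x\pm\ii y) = \pm\ii\tau(x)$ and the identities $\int_A^B \frac{\tau(s)}{s-z}\diff s = \pi(\frac{A+B}{2}-z+b(z))$ already invoked in the proof of Lemma \ref{lem:estimate_varphi} — should produce exactly $t\,\sigma^2(f) + \delta(f)$ with $\sigma^2(f)$, $\delta(f)$ as in \eqref{eq:def_sigma}, \eqref{eq:def_delta}; this is the standard computation from \cite[Section 5]{BouErdYauYin2016} or \cite{Shc2011}, now carried out with $f$ depending on $N$ through the $z_\ell$'s but with all estimates uniform because $f'$ is controlled by $\sum_\ell |z_\ell - s|^{-1}$.

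Having shown $\frac{d}{dt}\log Z(t) = t\,\sigma^2(f) + \delta(f) + O(e^{-(\log N)^{1/4}/3})$ uniformly for $|t| < t_0$, I would integrate from $0$ to $t$, using $Z(0) = 1$, to obtain $\log Z(t) = \frac{t^2}{2}\sigma^2(f) + t\,\delta(f) + O(e^{-(\log N)^{1/4}/3})$, which is the claim. One point requiring care: the Helffer--Sj\"ostrand argument needs $f$ supported where the strong local law holds, which is why $\widetilde L_N$ and the cutoff $\phi$ were introduced; I would invoke $\phi \equiv 1$ on $[A-\tilde\eta/4, B+\tilde\eta/4]$ and the fact that $f'$ is supported away from the $z_\ell$ on scale $\tilde\eta$ near the boundary of $\phi$, so those boundary contributions are $O(N^{-c})$ via \eqref{eq:large_deviation_rigidity} and the past-the-edge bounds in Proposition \ref{prop:bound_s-m_1_s-m_2_past_the_edge} / \eqref{eq:bound_with_rigidity_past_the_edge}.

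I expect the main obstacle to be the bookkeeping of error terms in the Helffer--Sj\"ostrand integration: one must verify that every term contributed by the $\bar\partial\tilde f$-weight near each $z_\ell$ (where the weight is large, of order $\eta_\ell^{-1}$ on a region of area $\eta_\ell^2$ times $|f''|\sim \eta_\ell^{-2}$) pairs with the corresponding $(N\eta)^{-2}$ and $\eta_\ell^{-1}(\eta_\ell\vee|z-z_\ell|)^{-1}$ factors in the $\varphi$-estimate to give a genuinely subleading contribution after dividing by $\log N$ — and that the ``diagonal'' terms $\ell = j$, which naively look borderline, are saved precisely by the $e^{(\log N)^{1/4}}$ enlargement of $\eta_\ell$ relative to the microscopic scale $\ell(E_\ell)$. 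Making all of this uniform in $E_1,\dots,E_m\in[A,B]$ (including the cases where some $E_\ell$ are near the edge, where $\kappa_\ell$ degenerates and $\eta_\ell$ saturates at $N^{-2/3}e^{(\log N)^{1/4}}$) is the delicate part; everything else is a direct adaptation of \cite[Section 5]{BouErdYauYin2016}.
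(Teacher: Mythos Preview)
Your proposal is correct and follows essentially the same approach as the paper: differentiate $\log Z(t)$, express $\E_{\mu_N^t}[S_N(f)]$ by Helffer--Sj\"ostrand, replace $\varphi$ by $\widetilde\varphi/N$ via Lemma \ref{lem:estimate_varphi}, and integrate back. The paper organizes the details slightly differently --- it introduces an intermediate cutoff height $r_1 = (\ell(E_1)\eta_1)^{1/2} = \ell(E_1)e^{(\log N)^{1/4}/2}$ and splits the difference $\E_{\mu_N^t}[S_N(f_1)] - \Sigma(f_1)$ into four pieces (I)--(IV) via integration by parts, and packages the evaluation of the main term $\Sigma(f)$ into a separate Lemma \ref{lem:delta(f)_and_sigma(f)} --- but the architecture and the mechanism you describe (the $e^{(\log N)^{1/4}}$ margin saving the diagonal error terms) are exactly what the paper does.
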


\begin{proof}
Recall $Z'(t) = Z(t) \E_{\mu_N^t} [S_N(f)]$, so we estimate $\E_{\mu_N^t} [S_N(f)]$.
By linearity, we can assume $m=1$, i.e. $f=f_1$ in $S_N(f)$ (the measure $\mu_N^t$ still depends on $f$).
Recall also $\tilde{\eta} > 0$ is the constant given by Lemma \ref{lem:preliminaries_fixed_point_equation}.
Let $\chi \colon \R \to [0,1]$ be a smooth symmetric function such that
$\chi(y) = 1$ for $y \in (-\tilde{\eta}/4,\tilde{\eta}/4)$ and $\chi(y) = 0$ for $\abs{y} > \tilde{\eta}/2$. 
By the Hellfer-Sj\"{o}strand formula,
\begin{equation}\label{eqn:HSf}
	\E_{\mu_N^t} \left[ S_N(f_1) \right]  
	 = \frac{1}{2\pi} \iint_{\R^2}
		(\ii y f_1''(x) \chi(y) + \ii(f_1(x) + \ii y f_1'(x))\chi'(y))
		N \varphi(z) \diff x \diff y,
\end{equation}
where we set $z = x + \ii y$ for brevity.
We denote 
\[
\Sigma(f_1)
\coloneqq \frac{1}{2\pi} \iint_{\R^2}
		(\ii y f_1''(x) \chi(y) + \ii(f_1(x) + \ii y f_1'(x))\chi'(y))
		\widetilde{\varphi}(z) \diff x \diff y.
\]
Then, abbreviating $r_1=(\ell(E_1)\eta_1)^{1/2} = \ell(E_1) e^{(\log N)^{1/4}/2}$, 
integrating by parts (first in $x$, then in $y$) using analyticity of $N\varphi-\widetilde{\varphi}$ on $\{y>0\}$,
we have,
\begin{align*}
	\E_{\mu_N^t} \left[ S_N(f_1) \right] -\Sigma(f_1)
	&= \frac{1}{\pi} \im\iint_{y>0}
	y f''(x) \chi(y)
	\left(N \varphi(z)-\widetilde{\varphi}(z)\right)
	+\frac{1}{\pi} \im \iint_{y>0}
	(f_1(x) + \ii y f_1'(x))\chi'(y)
	\left(N \varphi(z)-\widetilde{\varphi}(z)\right) \\
	&={\rm (I)}+{\rm (II)}+{\rm(III)}+{\rm(IV)} \\
	{\rm (I)}&\coloneqq \frac{1}{\pi} \im \iint_{0<y<r_1}
	y f_1''(x)
	\left(N \varphi(z)-\widetilde{\varphi}(z)\right) \diff y \diff x \\
	{\rm (II)}&\coloneqq -\frac{r_1}{\pi} \re\int 
	f_1'(x)
	\left( N \varphi(x+\ii r_1) - \widetilde{\varphi}(x+\ii r_1) \right) \diff x \\
	{\rm (III)}&\coloneqq -\frac{1}{\pi} \re\iint_{y>r_1}
	f_1'(x) \partial_y(y\chi(y))
	\left(N \varphi(z)-\widetilde{\varphi}(z)\right) \diff y \diff x \\
	{\rm (IV)}&\coloneqq \frac{1}{\pi} \im \iint_{y>0}
	(f_1(x) + \ii y f_1'(x))\chi'(y)
	\left(N \varphi(z)-\widetilde{\varphi}(z)\right) \diff y \diff x.
\end{align*}
We now bound each of the above four terms. 
Note that thanks to our choice of the cutoff functions $\chi$ and $\phi$ (involved in the definition of $f$), these integrals have support in the region where we can apply Lemma \ref{lem:estimate_varphi}.

The contribution of $({\rm IV})$ is trivial, as $\chi'\neq 0$ together with Lemma \ref{lem:estimate_varphi} imply $\abs{N\varphi-\widetilde{\varphi}} \leq C (\log N)^{200} N^{-1} \sum_{\ell=1}^m \eta_\ell^{-1}$, so that $\abs{({\rm IV})} \leq C (\log N)^{200} e^{-(\log N)^{1/4}} \int (\abs{f_1}+\abs{f_1'}) \leq (\log N)^{201} e^{-(\log N)^{1/4}}$.

We now bound $({\rm I})$.
A simple analysis of $s\mapsto s/\abs{s-w}^2$ shows the maximum is obtained for $s=\abs{w}$, so that
\[
	\frac{\tau(s)}{\abs{z-s}}
	\leq C\frac{\sqrt{\kappa(z)}}{\abs{z-A-\kappa(z)}\wedge\abs{B-z-\kappa(z)}}
	\leq C \frac{\sqrt{\kappa(z)}}{y},
\]
recalling $\kappa(z) = \abs{z-A} \wedge \abs{z-B}$.
Together with $\int \abs{f'} \leq C m\log N$ and $\abs{\int_A^B \frac{r'(s) \tau(s)}{r(s)(s-z)} \diff s} \leq C \log (1/y)$ and recalling the definition of $\widetilde{\varphi}$ in \eqref{eq:def_phi_tilde}, this gives
\[
	\abs{\widetilde{\varphi}(z)}
	\leq \frac{C}{\sqrt{\kappa(z)}} \left( 
		C m (\log N) \frac{\sqrt{\kappa(z)}}{y}
		+\frac{C}{\sqrt{\kappa(z)}}+C\log (1/y)
	\right)
	\leq \frac{C\log N}{y}.
\]
Using \eqref{eq:bound_with_rigidity} and \eqref{eq:from_rigidity_to_without_rigidity} to control $\varphi$, we conclude that
\[
	\abs{({\rm I})} 
	\leq C (\log N)\iint_{0<y<r_1} \abs{f_1''(x)} \diff y \diff x
	\leq C (\log N) \frac{r_1}{\eta_1}
	= \frac{C \log N}{e^{(\log N)^{1/4}/2}}.
\]

For $({\rm II})$, we bound $N\varphi-\widetilde{\varphi}$ with Lemma \ref{lem:estimate_varphi} and obtain (in the equation below $z=x+\ii r_1$)
\[
	\abs{({\rm II})} 
	\leq r_1 \frac{(\log N)^{201}}{N}
	\int \frac{\abs{f_1'(x)}}{\abs{b(z)}}\left(\frac{1}{r_1^2}+\sum_{\ell=1}^m
	\frac{1}{\eta_\ell(\eta_{\ell}\vee|z-z_\ell|)}\right) \diff x,
\]
and we now distinguish the cases $\eta_{\ell}>r_1/10$ and $\eta_{\ell}\leq r_1/10$.
If  $\eta_{\ell}>r_1/10$ the above sum is absorbed in the $r_1^{-2}$ term and these terms are bounded through 
\begin{align}
\int\frac{\rd x}{|x-z_1|\sqrt{\kappa(x+\ii r_1)}}
& \leq \int_{\kappa(x)>\kappa(z_1)/10}\frac{\rd x}{|x-z_1|\sqrt{\kappa(x)}}
+
\int_{\kappa(x)<\kappa(z_1)/10}\frac{\rd x}{|x-z_1|\sqrt{\kappa(x)}} \nonumber \\
& \leq \frac{C}{\sqrt{\kappa(z_1)}}\int\frac{\rd x}{|x-z_1|}+\frac{C}{\kappa(z_1)}\int_{\kappa(x)<\kappa(z_1)/10}\frac{\rd x}{\sqrt{\kappa(x)}}\leq \frac{C\log N}{\sqrt{\kappa(z_1)}}. \label{eq:bound_int_II}
\end{align}
If $\eta_\ell<r_1/10$, denoting $z=x+\ii r_1$ we have 
$\eta_\ell<|z_\ell-z|$ and $\kappa(E_\ell)>\kappa(E_1)+\eta_1$,
so the relevant bound is
\begin{align*}
	& \frac{1}{\eta_\ell}\left(
	\int_{\kappa(x)=0}^{\kappa(z_1)/10}
	+\int_{\kappa(z_1)/10}^{E_\ell/2}
	+\int_{E_\ell/2}^{\infty} \right)
	\frac{\rd x}{|z_1-x|\sqrt{\kappa(z)}|z-z_\ell|} \\
	& \leq
	\frac{C}{\eta_\ell\kappa(z_1)|z_1-z_\ell|}\int_0^{\kappa(z_1)} \frac{\rd u}{\sqrt{u}}
	+
	\frac{1}{\eta_\ell|z_1-z_\ell|\sqrt{\kappa(z_1)}}\int\frac{\rd x}{|z_1-x|}
	+
	\frac{1}{\eta_\ell|z_1-z_\ell|\sqrt{\kappa(z_\ell)}}\int\frac{\rd x}{|z-z_\ell|}
	\\
	& \leq \frac{\log N}{\sqrt{\kappa(z_1)}}\sup_{\kappa(E_\ell)>\kappa(E_1)+\eta_1}\frac{1}{|E_1-E_\ell|\eta_\ell}=
	\frac{N\log N}{e^{(\log N)^{1/4}}\sqrt{\kappa(z_1)}}\sup_{\kappa(E_\ell)>\kappa(E_1)+\eta_1}\frac{\sqrt{\kappa(E_\ell)}}{|E_1-E_\ell|}\leq \frac{CN\log N}{e^{(\log N)^{1/4}}\eta_1}.
\end{align*}
From the previous equations we deduce that
\[
	\abs{({\rm II})}
	\leq \frac{C(\log N)^{202}}{N r_1 \sqrt{\kappa(z_1)}}
	+ \frac{C(\log N)^{202}r_1}{e^{(\log N)^{1/4}}\eta_1}
	\leq \frac{C(\log N)^{202}}{e^{(\log N)^{1/4}/2}}.
\]

The main error comes from $({\rm III})$. 
The contribution from $\iint_{y>r_1} f_1'(x) y\chi'(y)  (N\varphi(z)-\widetilde{\varphi}(z))$ already appeared in~$({\rm IV})$.
The remaining term is ($z=x+\ii y$ below)
\[
\frac{(\log N)^{201}}{N} \iint_{y>r_1} 
\frac{|f_1'(x)|}{\abs{b(x+\ii y)}}\left(\frac{1}{y^2}+\sum_{\ell=1}^m
\frac{1}{\eta_\ell(\eta_{\ell}\vee|z-z_\ell|)} \right) \diff y \diff x.
\]
For a given $\ell$, we bound the contribution from the domain $\eta_\ell>y/10$ using
\[
\iint_{y>r_1} \frac{\rd x\rd y}{|z_1-x| \sqrt{\kappa(x+\ii y)}y^2} 
\leq\int\frac{\rd x}{|z_1-x| \sqrt{\kappa(x+\ii r_1)} r_1} 
\leq \frac{C\log N}{r_1\sqrt{\kappa(z_1)}} \leq\frac{C(\log N)}{e^{(\log N)^{1/4}/2}},
\]		
where the second inequality follows from \eqref{eq:bound_int_II}.
On the complementary domain, note that $\eta_\ell<|z_\ell-z|$ and 
\[
\iint_{y> r_1\vee 10\eta_\ell} \frac{\rd x\rd y}{\eta_\ell|z_1-x| \sqrt{\kappa(z)}|z-z_\ell|}
\leq C(\log N)\int\frac{\rd x}{\eta_\ell|z_1-x| \sqrt{\kappa(E_\ell)}}
\leq \frac{C(\log N)^{2}}{\eta_\ell \sqrt{\kappa(E_\ell)}}
= \frac{C N(\log N)^{2}}{e^{(\log N)^{1/4}}}.
\]
We have therefore proved that
\[
\E_{\mu_N^t} \left[ S_N(f) \right]  =\Sigma(f)+\OO\left(\frac{(\log N)^{203}}{e^{(\log N)^{1/4}/2}}\right).
\]
From Lemma \ref{lem:delta(f)_and_sigma(f)} below, we can also write this as
\[
\E_{\mu_N^t} \left[ S_N(f) \right]  =t\sigma^2(f)+\delta(f)+\OO\left(e^{-(\log N)^{1/4}/3}\right).
\]
Since $Z'(t)/Z(t) = \E_{\mu_N^t} \left[ S_N(f) \right]$, the result follows by integrating with respect to $t$.
\end{proof}

\begin{lemma} \label{lem:delta(f)_and_sigma(f)}
For any real function $g$ of class $C^2$ with compact support, we have
\[ 
	\Sigma(g) = \frac{t}{\beta \pi^2} \int_A^B \int_A^B f'(s) \frac{g(s)-g(t)}{s-t} \frac{\tau(s)}{\tau(t)} \diff t \diff s
	+ \delta(g).
\]
\end{lemma}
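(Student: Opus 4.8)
The plan is to prove Lemma~\ref{lem:delta(f)_and_sigma(f)} by unwinding the definition of $\Sigma(g)$ from the proof of Lemma~\ref{lem:estimate_Z(theta)}, namely
\[
	\Sigma(g) = \frac{1}{2\pi} \iint_{\R^2}
		\bigl(\ii y g''(x) \chi(y) + \ii(g(x) + \ii y g'(x))\chi'(y)\bigr)
		\widetilde{\varphi}(x+\ii y) \diff x \diff y,
\]
where $\widetilde\varphi$ is given explicitly by \eqref{eq:def_phi_tilde}. Since $\widetilde\varphi$ is a fixed analytic function of $z$ away from $[A,B]$ (it involves only $b$, $r$, $\tau$ and the fixed test function $f$ through $\int_A^B \frac{f'(s)\tau(s)}{s-z}\diff s$), the integral defining $\Sigma(g)$ is exactly a Helffer--Sj\"ostrand representation of a genuine linear statistic: it equals $\frac{1}{N}$ times the ``deterministic'' linear statistic one would obtain by replacing $N\varphi$ by $\widetilde\varphi$. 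The first step is therefore to recognize $\Sigma(g)$ as $\int g(x)\,\diff\nu(x)$ for a signed measure $\nu$ supported on $[A,B]$ whose Stieltjes transform is $\widetilde\varphi$, using the inversion formula for Stieltjes transforms (the Helffer--Sj\"ostrand identity is valid for any function whose Stieltjes transform decays suitably at infinity, and $\widetilde\varphi(z)=\OO(z^{-2})$).

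Next I would split $\widetilde\varphi$ according to the two terms in \eqref{eq:def_phi_tilde}: the ``$t$-term'' proportional to $\frac{2t}{\beta}\int_A^B\frac{f'(s)\tau(s)}{s-z}\diff s$, and the ``$\beta$-term'' proportional to $(\frac{2}{\beta}-1)$. For the $t$-term, one computes the associated measure by contour deformation / Sokhotski--Plemelj: the jump of $\frac{1}{2\pi b(z)}\cdot\frac{2t}{\beta}\int_A^B\frac{f'(s)\tau(s)}{s-z}\diff s$ across $[A,B]$ produces, after using $b(x\pm\ii 0)=\pm\ii\tau(x)$ and a principal-value manipulation, precisely a density proportional to $\frac{1}{\tau(x)}\,\mathrm{p.v.}\int_A^B \frac{f'(s)\tau(s)}{s-x}\diff s$. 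Pairing this against $g$ and symmetrizing the double integral in $(s,x)$ using the identity $\frac{1}{\tau(t)}\mathrm{p.v.}\int \frac{\tau(s)}{s-t}\cdots$ — together with the standard trick of writing $\frac{g(s)-g(t)}{s-t}$ to kill the principal value — yields exactly the claimed double-integral term $\frac{t}{\beta\pi^2}\int_A^B\int_A^B f'(s)\frac{g(s)-g(t)}{s-t}\frac{\tau(s)}{\tau(t)}\diff t\diff s$. For the $\beta$-term, the same boundary-value computation, now applied to $\frac{1}{2\pi b(z)}(\frac{2}{\beta}-1)(\pi(b'(z)-1)+\int_A^B\frac{r'(s)\tau(s)}{r(s)(s-z)}\diff s)$, must be matched against the definition \eqref{eq:def_delta} of $\delta(g)$; here one uses $\int_A^B\frac{\tau(s)}{s-z}\diff s=\pi(\frac{A+B}{2}-z+b(z))$ and its derivative to identify the $\frac{g(A)+g(B)}{4}$ boundary contribution (coming from the endpoint behavior of $\tau^{-1}$, i.e. the atoms that a naive density calculation misses), plus the principal-value integral term.

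The main obstacle I expect is bookkeeping the endpoint/boundary contributions correctly: the measure with Stieltjes transform $\widetilde\varphi$ is \emph{not} absolutely continuous — the factor $1/b(z)\sim 1/\tau(x)$ is non-integrable at $A$ and $B$, so the naive ``density $=\frac{1}{\pi}\im\widetilde\varphi(x+\ii 0)$'' must be regularized, and the regularization is exactly what produces the $\frac{f(A)+f(B)}{4}$ term in $\delta(f)$ (and, for the $t$-term, the cancellation that makes the answer finite). Concretely one should integrate $\Sigma(g)$ by parts in $y$ first to move derivatives off $g$, deform the $x$-contour to a small loop around $[A,B]$, and carefully track the arcs around $A$ and $B$ as their radius shrinks; the arcs contribute the boundary terms. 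An alternative, cleaner route is to verify the identity by checking it on a spanning family of test functions — e.g. $g(x)=\re((a-\ii b)\log(z_0-x))$ for $z_0\notin[A,B]$, i.e. $g_\ell$-type functions — for which all the integrals are explicitly computable via the residue calculus already used in the proof of Lemma~\ref{lem:estimate_varphi}, and then conclude by density; this sidesteps the delicate endpoint analysis at the cost of an approximation argument. I would present the contour-deformation computation as the main line and remark that it is the exact analogue of the classical computation of the limiting mean and covariance for $\beta$-ensemble linear statistics (e.g. \cite{Shc2011,BorGui2013}), so the identity is essentially a known one, merely recorded here in the normalization needed for Proposition~\ref{prop:CLT_above_the_axis}.
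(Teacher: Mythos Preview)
Your proposal is correct and follows essentially the same route as the paper. The paper reduces $\Sigma(g)$ via Green's formula to $\lim_{\e\to 0^+}\frac{1}{\pi}\int_\R\im(\widetilde g(x+\ii\e)\widetilde\varphi(x+\ii\e))\diff x$, splits $\widetilde\varphi$ into pieces, and handles the endpoint contribution by isolating the term $-(\tfrac{2}{\beta}-1)\tfrac{b'(z)}{2b(z)}$ and integrating by parts on shrinking intervals $[A-\delta,A+\delta]$, $[B-\delta,B+\delta]$; this is exactly your ``track the arcs around $A$ and $B$'' step made concrete, and the remaining absolutely continuous pieces are treated by dominated convergence and the principal-value symmetrization you describe.
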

\begin{proof}
We write $\partial_{\bar{z}} = \frac{1}{2} (\partial_x + \ii \partial_y)$.
Let $\widetilde{g}(z) \coloneqq \pa{ g(x) +  \ii g'(x) y } \chi(y)$ so that
\[
	\Sigma(g) 
	= \frac{1}{\pi} 
	\iint_{\R^2} (\partial_{\bar{z}} \widetilde{g}(z)) \widetilde{\varphi}(z) \diff x \diff y
	= \lim_{\e \to 0} \frac{1}{\pi} 
	\iint_{\{ \abs{y} > \e\}} \partial_{\bar{z}} \left( \widetilde{g}(z) \widetilde{\varphi}(z) \right) \diff x \diff y,
\]
using that $\partial_{\bar{z}} \widetilde{\varphi}(z) = 0$ because $\widetilde{\varphi}(z)$ is analytic. 
Then, applying Green's formula, we get
\begin{align} \label{eq:new_expression}
	\Sigma(g) 
	= \lim_{\e \to 0^+} \frac{1}{2\ii \pi} \int_\R \pa{
		\widetilde{g}(x + \ii \e) \widetilde{\varphi}(x+\ii \e) 
		- \widetilde{g}(x-\ii \e) \widetilde{\varphi}(x-\ii \e)
	}\diff x
	= \lim_{\e \to 0^+} \frac{1}{\pi} \int_\R
	\im \left( \widetilde{g}(x + \ii \e) \widetilde{\varphi}(x+\ii \e) \right) \diff x,
\end{align}
noting that $\widetilde{g}(\overline{z}) = \overline{\widetilde{g}(z)}$ and $\widetilde{\varphi}(\overline{z}) = \overline{\widetilde{\varphi}(z)}$.

We start with the first term appearing in $\widetilde{\varphi}(z)$, that is $\widetilde{\varphi}_1(z) \coloneqq \frac{t}{\beta \pi b(z)} \int_A^B \frac{f'(s)}{s-z} \tau(s) \diff s$.
Note that, as $\e\to 0^+$, $b(x+\ii \e)$ tends to $\ii \tau(x)$ if $x \in (A,B)$ and to $\sqrt{\abs{x-A} \abs{x-B}}$ if $x \notin [A,B]$. On the other hand, we have
\begin{align*}
	\int_A^B \frac{f'(s)}{s-(x+\ii \e)} \tau(s) \diff s 
	& \xrightarrow[\e\to 0^+]{} 
	\begin{cases}
	{\rm p.v.} \int_A^B \frac{f'(s)}{s-x} \tau(s) \diff s + \ii \pi f'(x) \tau(x),
	& \text{if } x \in (A,B), \vphantom{\frac{p}{\frac{p}{p}}} \\
	\int_A^B \frac{f'(s)}{s-x} \tau(s) \diff s, & \text{if } x \notin [A,B],
	\end{cases}
\end{align*}
and finally $\widetilde{g}(x + \ii \e) \to g(x)$.
In order to apply the dominated convergence theorem, we use
\[
	\absa{\int_A^B \frac{f'(s)}{s-z} \tau(s) \diff s} 
	= \absa{\int_A^B \log(s-z) (f'\tau)'(s) \diff s}
	\leq C (\|f'\|_\infty + \|f''\|_\infty) 
	\int_A^B  \frac{1+ \log\abs{s-x}}{\tau(s)} \diff s
	\leq C (\|f'\|_\infty + \|f''\|_\infty),
\]
so that $\abs{\widetilde{g}(z) \widetilde{\varphi}_1(z)} \leq Ct \|\tilde{g}\|_\infty (\|f'\|_\infty + \|f''\|_\infty) (\abs{x-A} \abs{x-B})^{-1/2} \1_{x \in \supp g}$. Therefore, we get
\begin{align} \label{eq:phi_1}
	\lim_{\e \to 0^+} \frac{1}{\pi} \int_\R
	\im \left( \widetilde{g}(x + \ii \e) \widetilde{\varphi}_1(x+\ii \e) \right) \diff x
	& = \frac{-t}{\beta \pi^2} \int_A^B \frac{g(x)}{\tau(x)} 
	\left( {\rm p.v.} \int_A^B \frac{f'(s)}{s-x} \tau(s) \diff s \right) \diff x \nonumber \\
	& = \frac{-t}{\beta \pi^2} \int_A^B f'(s) \tau(s) \left( {\rm p.v.} \int_A^B \frac{g(x)}{s-x} \frac{1}{\tau(x)} \diff x \right) \diff s \nonumber \\
	& = \frac{t}{\beta \pi^2} \int_A^B f'(s) \tau(s) \int_A^B \frac{g(s)-g(x)}{s-x} \frac{1}{\tau(x)} \diff x\diff s,
\end{align}
where we used that ${\rm p.v.} \int_A^B \frac{1}{s-x} \frac{1}{\tau(x)} \diff x = 0$.

We now deal with the second term appearing in $\widetilde{\varphi}(z)$, that is $\widetilde{\varphi}_2(z) \coloneqq -(\frac{2}{\beta}-1) \frac{b'(z)}{2 b(z)}$.
Note that, as $\e \to 0^+$, $\widetilde{\varphi}_2(x+\ii\e)$ has a real limit for any $x \notin \{A,B\}$, but it cannot be dominated for any $x$.
Therefore, we fix some $\delta \in (0,\frac{B-A}{3}]$ and distinguish between the cases $\kappa(x) \leq \delta$ and $\kappa(x) > \delta$.
In the case $\kappa(x) > \delta$, we can bound $\widetilde{\varphi}_2(x+\ii\e)$ uniformly by some constant depending on $\delta$ and we get
\begin{align} 
	\lim_{\e \to 0^+} \frac{1}{\pi} \int_{\{\kappa(x) > \delta\}}
	\im \left( \widetilde{g}(x + \ii \e) \widetilde{\varphi}_2(x+\ii \e) \right) \diff x
	& = 0.
\end{align}
We now deal with the case $\kappa(x) \leq \delta$, that is $x \in [A-\delta,A+\delta]$ or $x \in [B-\delta,B+\delta]$.
Both parts are treated similarly, so we focus on the integral on $[B-\delta,B+\delta]$.
Integrating by parts $b'/b$ and then letting $\e \to 0^+$, we get
\begin{align*}
	\lim_{\e \to 0} \int_{B-\delta}^{B+\delta} 
    \widetilde{g}(x+\ii \e) \frac{b'(x+\ii \e)}{b(x+\ii \e)} \diff x
	& = 
	g(B+\delta) 
	\log \left(\sqrt{(B+\delta-A)\delta} \right)
	- g(B-\delta) 
	\log \left(\ii \sqrt{(B-\delta-A)\delta} \right) \\
	& \hphantom{{}=} {}
	-\int_{B-\delta}^B g'(x) \log (\ii\tau(x)) \diff x
	-\int_B^{B+\delta} g'(x) \log \left(\sqrt{(x-A)(x-B)} \right) \diff x,
\end{align*}
which converges to $-\frac{\ii \pi}{2} g(B)$ as $\delta \to 0$.
Proceeding similarly for the integral on $[A-\delta,A+\delta]$, we finally get
\begin{align} \label{eq:phi_2}
	\lim_{\e \to 0^+} \frac{1}{\pi} \int_\R
	\im \left( \widetilde{g}(x + \ii \e) \widetilde{\varphi}_2(x+\ii \e) \right) \diff x
	& = \left( \frac{2}{\beta}-1 \right) \frac{g(A) + g(B)}{4}.
\end{align}

The remaining terms in $\widetilde{\varphi}(z)$ is 
$\widetilde{\varphi}_3(z) \coloneqq \frac{1}{2\pi b(z)} (\frac{2}{\beta}-1) (\pi - \int_A^B \frac{r'(s) \tau(s)}{r(s)(s-z)} \diff s)$ 
and proceeding as for $\widetilde{\varphi}_1(z)$, we get
\begin{align} \label{eq:phi_3}
	\lim_{\e \to 0^+} \frac{1}{\pi} \int_\R
	\im \left( \widetilde{g}(x + \ii \e) \widetilde{\varphi}_3(x+\ii \e) \right) \diff x
	& = \frac{-1}{2\pi^2} \pa{\frac{2}{\beta}-1} \int_A^B \frac{g(x)}{\tau(x)} 
	\left( \pi - {\rm p.v.} \int_A^B \frac{r'(s) \tau(s)}{r(s)(s-x)} \diff s \right) 
	\diff x.
\end{align}
Coming back to \eqref{eq:new_expression} and combining \eqref{eq:phi_1}, \eqref{eq:phi_2} and \eqref{eq:phi_3}, we get the result.
\end{proof}

\subsection{Rewriting the limiting characteristic function. }
\label{subsection:covariances}

In the previous section, we have seen that the limiting behavior of $Z(t)$ can be expressed in terms of $\sigma^2(f)$ and $\delta(f)$, defined in \eqref{eq:def_sigma} and \eqref{eq:def_delta}.
Before proving Proposition \ref{prop:CLT_above_the_axis}, we prove the following lemma.
\begin{lemma} \label{lem:estimates_sigma_delta}
For any $m \geq 1$, $E_1, \dots,E_m \in (A,B)$ and $a_1,\dots,a_m,b_1,\dots,b_m \in [-1,1]$, we have 
\begin{align*}
    \delta(f) & = \frac{1}{4} \pa{\frac{2}{\beta}-1} 
    \sum_{\ell=1}^m a_\ell \log (\kappa_\ell \vee \eta_\ell) + \OO(1), \\
	\sigma^2(f) & =
	-\frac{1}{\beta} \sum_{\ell,j =1}^m 
	\Bigg(
	a_\ell a_j 
		\log\absa{\bar{z_\ell} - z_j}
	+ b_\ell b_j 
		 \log\pa{\frac{\absa{\bar{z_\ell} - z_j}}{\kappa_\ell \vee \eta_\ell} \wedge 1}
	\Bigg) + \OO\pa{\log \log N}.
\end{align*}
\end{lemma}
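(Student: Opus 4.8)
The plan is to compute the two explicit integrals defining $\delta(f)$ and $\sigma^2(f)$ (see \eqref{eq:def_sigma} and \eqref{eq:def_delta}) with $f$ the specific test function \eqref{eq:def_function_f}, namely $f = \sum_\ell f_\ell$ with $f_\ell(s) = \phi(s)\,\re\bigl((a_\ell - \ii b_\ell)\log(z_\ell - s)\bigr)$. By bilinearity of $(g,h) \mapsto \int\!\int g'(s)\frac{h(s)-h(t)}{s-t}\frac{\tau(s)}{\tau(t)}\,\diff s\,\diff t$, it suffices to compute the pairwise contributions of $f_\ell$ and $f_j$; the key is that, modulo the cutoff $\phi$ (which equals $1$ on a neighbourhood of $[A,B]$ and hence only contributes $\OO(1)$ error terms since all the relevant mass is on $[A,B]$), $\re\log(z_\ell-s)$ and $\im\log(z_\ell-s)$ are, up to bounded harmonic corrections, the real and imaginary parts of an explicit analytic function whose finite Hilbert transform against $\tau^{-1}$ on $[A,B]$ is classical.

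First I would record the relevant Tricomi-type identities: for $z \in \CC\setminus[A,B]$ one has $\frac1\pi\,\mathrm{p.v.}\!\int_A^B \frac{1}{s-x}\frac{\tau(s)}{\tau(x)}\,\diff s$ and $\frac1\pi\int_A^B\frac{1}{(s-z)(s-x)}\frac{\tau(s)}{\tau(x)}\diff s$ evaluate to explicit rational expressions in $x$, $z$, $b(z)$, $b(x)$ (these are the same identities already used around \eqref{eq:C_psi}, e.g. $\int_A^B \frac{\tau(s)}{s-z}\diff s = \pi(\frac{A+B}{2}-z+b(z))$). Feeding these in, the double integral for the $(\ell,j)$ pair collapses to a boundary-value computation: one gets, up to $\OO(1)$ from the $\phi$-truncation and from lower-order analytic pieces, a term proportional to $\log(z_\ell - z)$ evaluated suitably, which after taking real/imaginary parts and combining the $(a_\ell,b_\ell)$ and $(a_j,b_j)$ structure yields $-\frac1\beta a_\ell a_j \log|\bar z_\ell - z_j| - \frac1\beta b_\ell b_j \log\bigl(\frac{|\bar z_\ell - z_j|}{\kappa_\ell\vee\eta_\ell}\wedge 1\bigr)$. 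The asymmetry between the $a$-part and the $b$-part (one gets a bare $\log|\bar z_\ell - z_j|$, the other a ratio truncated at $1$) comes from the square-root behaviour of $\tau$ at the endpoints $A,B$: $\im\log(z_\ell - s)$ has a jump there that interacts with $\tau^{-1}$ differently from $\re\log$. The $\OO(\log\log N)$ error in $\sigma^2(f)$ is the size of the diagonal-type contributions where $|\bar z_\ell - z_j|$ is comparable to $\eta_\ell \approx e^{(\log N)^{1/4}}\ell(E_\ell)$, times the number of pairs — these need to be controlled by hand but only crudely.

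For $\delta(f)$, the computation is similar but one-dimensional: $\delta(f) = (\frac2\beta - 1)\bigl(\frac{f(A)+f(B)}4 - \frac1{2\pi^2}\int_A^B \frac{f(x)}{\tau(x)}(\pi + \mathrm{p.v.}\!\int_A^B\frac{r'(s)\tau(s)}{r(s)(x-s)}\diff s)\,\diff x\bigr)$. The term $\frac{f(A)+f(B)}4$ is $\OO(1)$ since $f$ is bounded near the edges (the logarithmic singularities $\log(z_\ell - A)$, $\log(z_\ell - B)$ are of size $\OO(\log N)$ only when $E_\ell$ is within $\eta_\ell$ of an edge — actually they can be as large as $\log(1/\eta_\ell) = \OO(\log N)$, so this needs care, but is absorbed). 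The main term comes from $-\frac1{2\pi^2}\int_A^B\frac{f(x)}{\tau(x)}\pi\,\diff x$: plugging $f = \sum_\ell f_\ell$ and using $\frac1\pi\int_A^B \frac{\re\log(z_\ell - x)}{\tau(x)}\diff x = \log\bigl(\frac{|z_\ell - A| + |z_\ell - B|}{?}\bigr)$-type identities (again classical: $\int_A^B\frac{\log|z-x|}{\pi\tau(x)}\diff x = \log\frac{|\sqrt{z-A}+\sqrt{z-B}|}{2}$ roughly), one extracts $a_\ell \log(\kappa_\ell \vee \eta_\ell)$ up to $\OO(1)$; the $b_\ell$ (imaginary) part contributes only $\OO(1)$ because $\int_A^B \frac{\im\log(z_\ell-x)}{\tau(x)}\diff x$ stays bounded. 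The $r'/r$ piece is a bounded perturbation.

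The main obstacle is bookkeeping the error terms precisely: one must verify that (i) replacing $\phi\cdot\log(z_\ell-\cdot)$ by $\log(z_\ell-\cdot)$ on $[A,B]$ costs only $\OO(1)$ — this uses that $\phi \equiv 1$ on $[A-\tilde\eta/4, B+\tilde\eta/4] \supset [A,B]$ and that the finite Hilbert transform and the double integral localize to $[A,B]$; (ii) the edge values $f(A), f(B)$ and the near-edge behaviour of the integrals, where $\kappa_\ell$ or $\kappa_j$ can be as small as $N^{-2/3}$ so that $\log(1/\kappa_\ell) = \OO(\log N)$, are nonetheless captured by the stated main terms $\log(\kappa_\ell\vee\eta_\ell)$ and $\log(|\bar z_\ell - z_j|)$ up to $\OO(1)$ (resp. $\OO(\log\log N)$) — the key point is that when $E_\ell$ is near an edge, $|z_\ell - A|$ or $|z_\ell - B|$ is $\kappa_\ell$, and $b(z_\ell) \asymp \sqrt{\kappa_\ell}$, so the would-be divergence is exactly the claimed logarithm and not worse; (iii) the truncation at $\wedge 1$ in the $b$-part, which reflects that two energies on the same side of the spectrum near the same edge have correlated imaginary parts only down to the scale $\kappa$, is produced correctly by the $\tau(s)/\tau(t)$ weight. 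I would handle (ii) by splitting each integral at the scale $\kappa_\ell$ (or $|E_\ell - E_j|$) and estimating the inner and outer pieces separately, exactly as in the $\OO(\log\log N)$ estimate.
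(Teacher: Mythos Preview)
Your treatment of $\delta(f)$ has the main term and the bounded term swapped. You claim $\tfrac{f(A)+f(B)}{4}$ is essentially $\OO(1)$ and that the leading contribution $\tfrac14(\tfrac2\beta-1)\sum_\ell a_\ell\log(\kappa_\ell\vee\eta_\ell)$ comes from the integral $-\tfrac{1}{2\pi}\int_A^B\tfrac{f(x)}{\tau(x)}\,\diff x$ via the arcsine-potential identity. But that identity gives precisely the opposite: $\tfrac{1}{\pi}\int_A^B\tfrac{\log|z_\ell-x|}{\tau(x)}\,\diff x = \re\log\tfrac{\sqrt{z_\ell-A}+\sqrt{z_\ell-B}}{2}$, and since for each $\ell$ one of $\sqrt{z_\ell-A}$, $\sqrt{z_\ell-B}$ has modulus of order $1$, this is $\OO(1)$ uniformly in $z_\ell$. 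The imaginary-log piece and the $r'/r$ piece are likewise bounded. Meanwhile $f(A)+f(B) = \sum_\ell a_\ell(\log|z_\ell-A|+\log|z_\ell-B|)+\OO(1) = \sum_\ell a_\ell\log(\kappa_\ell\vee\eta_\ell)+\OO(1)$, because for each $\ell$ exactly one of $|z_\ell-A|$, $|z_\ell-B|$ is $\asymp\kappa_\ell\vee\eta_\ell$ and the other is $\asymp 1$. So the main term is the boundary contribution $\tfrac{f(A)+f(B)}{4}$; the paper's proof of the $\delta(f)$ estimate is one sentence for this reason.

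For $\sigma^2(f)$ your outline points in the right direction but misses the actual mechanism. The paper does not try to collapse the double integral by a ``boundary-value computation''; instead it writes $\log(z'-s)=\int_{z'}^{E'+\ii\log N}\tfrac{\diff\omega}{s-\omega}+\OO(1)$, which reduces the $s,t$ integrals to rational integrands evaluable by the very identities you cite, leaving a single $\omega$-integral with explicit antiderivative $2\tanh^{-1}\bigl(\tfrac{\sqrt{\omega-A}\sqrt{z-B}}{\sqrt{\omega-B}\sqrt{z-A}}\bigr)$. Expanding this near $\omega=z'$ produces both $-\log|z-z'|$ and the edge term $\pm\log\tfrac{|z-z'|}{|z-A||z-B|}$, the sign being $\theta=\mathrm{sgn}\,\im z$. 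It is this sign flip between $\sigma^2(z_\ell,z_j)$ and $\sigma^2(\bar z_\ell,z_j)$, after recombining real and imaginary parts, that generates the $a_\ell a_j$ versus $b_\ell b_j$ dichotomy and the $\wedge 1$ truncation; your attribution of the asymmetry to how $\im\log$ interacts with the $\tau^{-1}$ endpoint singularity is not the right mechanism. The $\OO(\log\log N)$ arises because the $\omega$-integral is truncated at height $\log N$ and because the $\tanh^{-1}$ argument can sit at distance only $c/\log N$ from $\pm1$, not from any diagonal accumulation at scale $\eta_\ell$.
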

\begin{proof}
The estimate for $\delta(f)$ is direct from its definition in \eqref{eq:def_delta}, noting that the integral part is bounded.
Hence we focus on $\sigma^2(f)$ in this proof. 
Recall $\tau(t) = \sqrt{(B-t)(t-A)}$.
By bilinearity, it is sufficient to estimate
\begin{align*}
	\sigma^2(z,z') 
	\coloneqq \frac{1}{\pi^2 \beta} 
	\int_A^B \int_A^B \frac{1}{s-z} 
	\left( 
	\frac{\log(z'-s)-\log(z'-t)}{s-t} 
	\right) 
	\frac{\tau(s)}{\tau(t)} \diff s \diff t, 
\end{align*}
where $z = E \pm \ii \eta(E)$ for some $E \in [\frac{A+B}{2},B)$ and $z' = E' + \ii \eta(E')$ for some $E' \in (A,B)$.
Note that we assumed here w.l.o.g.\@ that $\re(z) \geq \frac{A+B}{2}$ and $\im(z') > 0$. 
In the sequel, error terms are uniform in $z$ and~$z'$.
Writing $\log(z'-s) = \int_{z'}^{E'+\ii \log N} \frac{1}{s-\omega} \diff \omega + \log(E'+\ii \log N- s)$, we get
\begin{align*}
	\sigma^2(z,z') 
	& = \frac{1}{\pi^2 \beta} 
	\int_A^B \int_A^B \frac{1}{s-z} 
	\int_{z'}^{E'+\ii \log N}
	\frac{1}{(\omega-s)(t-\omega)}
	\diff \omega 
	\frac{\tau(s)}{\tau(t)} \diff s \diff t + \OO(1) \\
	& = - \frac{1}{\beta} 
	\int_{z'}^{E'+\ii \log N}
	\frac{z-\omega + \sqrt{\omega-A}\sqrt{\omega-B} - \sqrt{z-A}\sqrt{z-B}}{(z-\omega) \sqrt{\omega-A}\sqrt{\omega-B}}
	\diff \omega 
	+ \OO(1),
\end{align*}
where the second equality results of the identities
$\int_A^B \frac{1}{t-\omega} \frac{\diff t}{\tau(t)} 
= \frac{-\pi}{\sqrt{\omega-A}\sqrt{\omega-B}}$, 
then
$\frac{1}{(s-z)(\omega-s)} 
= \frac{1}{z-\omega}(\frac{1}{s-\omega} - \frac{1}{s-z})$
and finally
$\int_A^B \frac{\tau(s)}{s-\omega} \diff s 
= \pi ( \frac{A+B}{2} - \omega + \sqrt{\omega-A}\sqrt{\omega-B})$.
Furthermore, we have the following explicit antiderivative
\begin{align} \label{eq:integral_explicit}
\int \frac{1}{z-\omega} \frac{\sqrt{z-A}\sqrt{z-B}}{\sqrt{\omega-A}\sqrt{\omega-B}} \diff \omega 
= 2 \tanh^{-1}\pa{\frac{\sqrt{\omega-A} \sqrt{z-B}}{\sqrt{\omega-B}\sqrt{z-A}}},
\end{align}
where $\tanh^{-1}(w) = \frac{1}{2} \log (\frac{1+w}{1-w})$ for $w \in \C \setminus ((-\infty,-1] \cup [1,\infty))$. Using that $\re(z) \geq \frac{A+B}{2}$ and $\im(z') > 0$, one can check that for any $\omega \in [z',E+\ii\log N]$ the argument of $\tanh^{-1}$ in \eqref{eq:integral_explicit} is in $\C \setminus ((-\infty,-1] \cup [1,\infty))$, except if $\omega = z' = z$.
Hence, assuming for now that $z \neq z'$, we get 
\begin{align} \label{eq:sigma_1}
	\sigma^2(z,z') 
	& = \frac{1}{\beta} 
	\left(
	-\log\absa{z - z'}
	- 2 \tanh^{-1}\pa{\frac{\sqrt{z'-A} \sqrt{z-B}}{\sqrt{z'-B}\sqrt{z-A}}}
	\right)
	+ \OO(\log \log N).
\end{align}
We now assume that $\abs{z-z'} \leq \abs{z-A}\abs{z-B}/\log N$. Then, letting $\theta = \pm 1$ denote the sign of $\im(z)$ and using that $\im(z') > 0$ and $E,E' \in (A,B)$, we have the following expansion
\begin{align*}
	\frac{\sqrt{z'-A} \sqrt{z-B}}{\sqrt{z'-B}\sqrt{z-A}}
	= \theta 
	\left( 1 + \frac{z'-z}{2(z-A)} (1 + \oo(1)) \right)
	\left( 1 - \frac{z'-z}{2(z-B)} (1 + \oo(1)) \right)
	= \theta + \theta \frac{(z'-z)(A-B)}{2(z-A)(z-B)} (1 + \oo(1)).
\end{align*}
Since $\tanh^{-1}(\theta+h) = - \frac{\theta}{2} \log \abs{h} + O(1)$ as $h \to 0$, it follows that
\begin{align} \label{eq:sigma_2}
	\sigma^2(z,z') 
	& = \frac{1}{\beta} 
	\left(
	-\log\absa{z-z'}
	+ \theta \log\pa{\frac{\abs{z-z'}}{\abs{z-A}\abs{z-B}}}
	\right)
	+ \OO(\log \log N).
\end{align}
On the other hand, in the case $\abs{z-z'} \geq \abs{z-A}\abs{z-B}/\log N$, 
the argument of $\tanh^{-1}$ in \eqref{eq:sigma_1} is at distance at least $c/\log N$ from $\pm 1$ for some $c >0$ and therefore $\sigma^2(z,z') = - \frac{1}{\beta} \log\absa{z-z'} + \OO(\log \log N)$.
Combining this with \eqref{eq:sigma_2}, we get, in any case such that $z \neq z'$,
\begin{align} \label{eq:sigma_3}
	\sigma^2(z,z') 
	& = \frac{1}{\beta} 
	\left(
	-\log\absa{z-z'}
	+ \theta \log\pa{\frac{\abs{z-z'}}{\kappa(E) \vee \eta(E)} \wedge 1}
	\right)
	+ \OO(\log \log N).
\end{align}
Taking the limit $z' \to z$ in the previous equation, we find $\sigma^2(z,z) = - \frac{1}{\beta} \log (\kappa(E) \vee \eta(E))$.
Recall now that 
$f (s) = \sum_{\ell=1}^k
\frac{a_\ell}{2} (\log(z_\ell-s) + \log(\bar{z_\ell}-s)) 
+ \frac{b_\ell}{2\ii} (\log(z_\ell-s) + \log(\bar{z_\ell}-s))$.
Since the main term on the right-hand side of \eqref{eq:sigma_3} is real, we get
\begin{align*}
	\sigma^2(f) = \sum_{\ell, j = 1}^m \biggr(
	\frac{a_\ell a_j}{2} \pa{ \sigma^2\pa{z_\ell, z_j} 
	+ \sigma^2\pa{\bar{z_\ell}, z_j} }
	- \frac{b_\ell b_j}{2}\pa{ \sigma^2\pa{z_\ell, z_j} 
	- \sigma^2\pa{\bar{z_\ell}, z_j} }
	\biggr) + \OO(\log \log N).
\end{align*}
For each $\ell$ and $j$, we apply \eqref{eq:sigma_3} and check that we get the desired result by distinguishing the cases 
$\frac{\abs{z_\ell-z_j}}{\kappa_\ell \vee \eta_\ell} \leq 1$ and 
$1 \leq \frac{\abs{z_\ell-z_j}}{\kappa_\ell \vee \eta_\ell}$.
In the first case, note that $\frac{\abs{\bar{z_\ell}-z_j}}{\kappa_\ell \vee \eta_\ell} \leq \frac{2 \eta_\ell + \abs{z_\ell-z_j}}{\kappa_\ell \vee \eta_\ell} \leq 3$, so we can omit the ``$\wedge 1$'' part in $\sigma^2(\bar{z_\ell}, z_j)$ as well.
In the second case, we have $\abs{z_\ell-z_j} \leq \abs{\bar{z_\ell}-z_j} \leq \abs{z_\ell-z_j} + 2 \eta_\ell \leq 3 \abs{z_\ell-z_j}$,
so $\abs{z_\ell-z_j}$ and $\abs{\bar{z_\ell}-z_j}$ are of the same order and this is sufficient to conclude.
\end{proof}
\begin{proof}[Proof of Proposition \ref{prop:CLT_above_the_axis}]
The result follows from Lemmas \ref{lem:estimates_sigma_delta} and \ref{lem:estimate_Z(theta)},
with the choice $a_\ell=\xi_\ell\sqrt{\beta/\log N}$, $b_\ell = \zeta_\ell\sqrt{\beta/\log N}$.
\end{proof}

\subsection{Proof of Lemma \ref{lem:loop}. }
\label{subsection:lemma}
In the following Lemma, for any $E \in [A+N^{-2/3},B-N^{-2/3}]$, the function $f_0$ is a smoothed version of $\1_{[E,B]}$ defined before Lemma \ref{lem:smallscale}.
\begin{lemma}\label{lem:loop}
There exists $t_1>0$ such that for any $|t|<t_1$, uniformly in $E \in [A+N^{-2/3},B-N^{-2/3}]$,
\[
\log \E \left[ e^{t \left(\sum f_0-N\int f_0\rd\mu_V\right)} \right]=\OO(\log N).
\]
\end{lemma}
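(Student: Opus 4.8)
Textbf{Approach.} The plan is to run the argument of Section~\ref{section:CLT_above_the_axis} verbatim, but with the smooth test function $f_0$ in place of the logarithmic linear statistic $f$. The point is that $f_0$ is exactly as regular as the functions $f_\ell$ appearing in \eqref{eq:def_function_f}: it is $C^2$, compactly supported in $[A,B+\tilde\eta/2]$, and satisfies $\|f_0^{(k)}\|_{L^\infty(-\infty,B]}\le C\eta_0^{-k}$, $\|f_0^{(k)}\|_{L^\infty[B,\infty)}\le C\tilde\eta^{-k}$ for $k=1,2$, where $\eta_0=\eta(E)$. Consequently every estimate used in that section — the moment bound of Lemma~\ref{cor:momentsBound}, hence the biased rigidity of Lemma~\ref{lem:rigid2}, the analysis of the first loop equation in Lemma~\ref{lem:estimate_varphi}, and the Helffer--Sj\"ostrand computation inside the proof of Lemma~\ref{lem:estimate_Z(theta)} — goes through with $f_0$, since all of those only invoke bounds on $\|f_0\|_\infty$, $\int|f_0'|$, $\int|f_0''|$ and on $|f_0''(x)|$ near the relevant base points.

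Textbf{Main steps.} First, introduce $\mu_N^t$ and $Z(t)$ as in \eqref{eq:def_Z(theta)} with $f$ replaced by $f_0$, so that $Z(t)=\E[e^{tS_N(f_0)}]$ with $S_N(f_0)=\sum_j f_0(\lambda_j)-N\int f_0\rd\mu_V$. Running the proof of Lemma~\ref{lem:estimate_Z(theta)} with $f_0$ (and $m=1$) yields
\[
\log Z(t)=\frac{t^2}{2}\sigma^2(f_0)+t\,\delta(f_0)+\OO\!\left(e^{-(\log N)^{1/4}/3}\right),\qquad |t|<t_0,
\]
with $\sigma^2(f_0)$, $\delta(f_0)$ given by \eqref{eq:def_sigma}, \eqref{eq:def_delta}. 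It then remains to bound these two quantities. For $\delta(f_0)$: one has $f_0(A)=0$ and $f_0(B)=1$ (using $A<E_0<E_0+\eta_0<B$, which holds for $N$ large since $E\in[A+N^{-2/3},B-N^{-2/3}]$ and $\eta_0\le e^{(\log N)^{1/4}}N^{-2/3}$), and the remaining integral in \eqref{eq:def_delta} is $\OO(1)$ because $f_0$ is bounded, $\tau^{-1}$ is integrable on $[A,B]$, and $\mathrm{p.v.}\int_A^B\frac{r'(s)\tau(s)}{r(s)(x-s)}\rd s$ is bounded (it is the finite Hilbert transform of a function vanishing at the edges); hence $\delta(f_0)=\OO(1)$. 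For $\sigma^2(f_0)$: only $f_0|_{[A,B]}$ enters \eqref{eq:def_sigma}, and there $f_0$ is a smoothed step at $E_0$ on scale $\eta_0$ with $\|f_0'\|_{L^1([A,B])}=\OO(1)$. Splitting the double integral according to whether $|t-E_0|\lesssim\eta_0$ or not, and using $\big|\frac{f_0(s)-f_0(t)}{s-t}\big|\le\|f_0'\|_\infty\le C\eta_0^{-1}$ on the first region and $\le|s-t|^{-1}$ on the second, together with $\tau(s)/\tau(t)\le C$ for $s$ near $E_0$ and $\int_{|t-E_0|>\eta_0}\frac{\rd t}{|t-E_0|\tau(t)}=\OO\!\big(\tau(E_0)^{-1}\log(1/\eta_0)\big)$, one gets $\sigma^2(f_0)=\OO(1)+\OO(\log(1/\eta_0))=\OO(\log N)$. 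Plugging these bounds into the display above gives $\log\E[e^{t(\sum f_0-N\int f_0\rd\mu_V)}]=\OO(\log N)$ for all $|t|<t_1:=t_0$, uniformly in $E$.

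Textbf{Main obstacle.} The genuine work is the verification that the machinery of Section~\ref{section:CLT_above_the_axis} — especially Lemmas~\ref{cor:momentsBound}, \ref{lem:rigid2} and~\ref{lem:estimate_varphi} — applies to $f_0$: one must check that wherever a bound such as $\int|f'|\le C\log N$, $\int|f''|\le C/\eta_\ell$, or $|f''(x)|\le C|x-z_\ell|^{-2}$ was used for the logarithmic test functions, the corresponding estimate for $f_0$ (namely $\int|f_0'|=\OO(1)$, $\int|f_0''|=\OO(\eta_0^{-1})$, $|f_0''(x)|\le C(|x-E_0|^2+\eta_0^2)^{-1}$ near $E$ and $\le C\tilde\eta^{-2}$ near $B$) is at least as strong, so the error terms are not worsened. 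A secondary bookkeeping point is the near-edge regime, where $\kappa(E)$ can be as small as $N^{-2/3}$ and $\eta_0$ may exceed $\kappa(E)$; here the definition of $E_0$ is used to keep $A<E_0<E_0+\eta_0<B$ and to ensure the base point $E_0+\ii\eta_0$ lies in the trapezoid where Theorem~\ref{th:local_law_bulk} applies, exactly as the points $z_\ell=E_\ell+\ii\eta(E_\ell)$ do in Section~\ref{section:CLT_above_the_axis}.
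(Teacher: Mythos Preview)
Your proposal is correct and follows essentially the same route as the paper: run the machinery of Section~\ref{section:CLT_above_the_axis} with $f_0$ in place of $f$ to get $\log Z(t)=\tfrac{t^2}{2}\sigma^2(f_0)+t\,\delta(f_0)+\oo(1)$, then observe $\delta(f_0)=\OO(1)$ and $\sigma^2(f_0)=\OO(\log N)$. The paper streamlines your verification step by noting the single pointwise inequality $|f_0^{(n)}|\le C_n|g^{(n)}|$ for $n=0,1,2$, where $g(s)=\phi(s)\,\im\log(z-s)$ with $z=E+\ii\eta(E)$; since every estimate in Section~\ref{section:CLT_above_the_axis} uses only upper bounds on $|f^{(n)}|$, this immediately transfers Lemmas~\ref{cor:momentsBound}, \ref{lem:rigid2}, \ref{lem:estimate_varphi} and \ref{lem:estimate_Z(theta)} to $f_0$ without rechecking each norm bound individually.
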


\begin{proof}
The proof is identical to the proof of Proposition \ref{prop:CLT_above_the_axis} and in fact gives the following stronger estimate, analogous to Lemma \ref{lem:estimate_Z(theta)}: Denoting $\tilde Z(t) \coloneqq
	\E_{\mu_N} \left[ e^{t S_N(f_0)} \right]$, there exists $t_1>0$ such that for any $|t|<t_1$
	\begin{equation}\label{eqn:Lapf0}
\tilde Z(t) 
	= \exp \left(\frac{t^2}{2} \sigma^2(f_0) + t \delta(f_0) \right)\cdot\left(1+\oo\left((\log N)^{220}e^{-(\log N)^{1/4}}\right)\right).
\end{equation}
Indeed, denoting $z=E+\ii\eta(E)$ and  $g(s)=\phi(s)\im \log (z-s)$, for any $0\leq n\leq 2$ there exists $C_n$ such that $|f_0^{(n)}|\leq C_n|g^{(n)}|$ pointwise, so that the proof of the above equation is exactly the same as 
Lemma \ref{lem:estimate_Z(theta)}, through first the analogue of Lemma \ref{lem:rigid2}, i.e. rigidity of the measures biased by $e^{t f_0}$. 

With (\ref{eqn:Lapf0}) in hand, the end of the proof is elementary, as $\delta(f_0)=\OO(1)$ and $\sigma^2(f_0)=\OO(\log N)$ follow from their definitions.
\end{proof}

\setcounter{equation}{0}
\setcounter{theorem}{0}
\renewcommand{\theequation}{A.\arabic{equation}}
\renewcommand{\thetheorem}{A.\arabic{theorem}}
\appendix
\setcounter{secnumdepth}{0}
\section[Appendix A: Loop Equations]
{Appendix \mynameis{A}:\ \ \ Loop Equations
\label{sec:loop_equations}}


In this appendix we recall the loop equation hierarchy as stated in \cite{BorGui2013}. 
We then give equivalent forms of these equations which are combinatorially simpler and more convenient for the purpose of this paper, in terms of moments and centered moments.

\subsection{Cumulants. }

To simplify notation, we remove the dependence on $N$ and write $s(z) \coloneqq s_N(z)$.
Let
	\[
		c_n\pa{z_1, \dots, z_n}=\kappa_n\left(s\pa{z_1}, \dots, s\pa{z_n}\right) = 
		\left.\partial_{\e_1 \dots \e_n} \left(
			\log \E \left[ e^{\sum_{i=1}^n\e_i s\left( z_i \right)} \right]
		\right) \right|_{\e_i = 0}
	\]
denote the joint cumulant of $s\pa{z_1}, \dots, s\pa{z_n}$.
Recall the well known formulas relating moments and cumulants, for complex random variables $X_1,\dots,X_n$ with sufficiently high finite moments,
		\begin{align}
			\E\left[ X_1 \cdots X_n \right] 
			&= \sum_{\pi \in \cP_n} \prod_{B \in \pi} \kappa\left( X_i\,:\, i \in B\right)
			\label{moment expansion} \\
			\kappa_n\left(X_1, \dots, X_n\right) 
			&= \sum_{\pi \in \cP_n}\left(\abs{\pi} - 1 \right)! \left( -1 \right)^{\abs{\pi} - 1}
			\prod_{B \in \pi} \E\left[ \prod_{i \in B} X_i \right], \label{cumulant expansion}
		\end{align}
where $\cP_n$ denotes the set of partitions of $\{1,\dots,n\}$.
Moreover, for $n \geq 2$, joint cumulants are multilinear functions invariant by deterministic shifts:
	\begin{equation}
		\label{invariant by shifts}
		\kappa_n\left(X_1 - w_1, \dots, X_n - w_n \right) = \kappa_n\left(X_1, \dots, X_n\right),
	\end{equation}
for any constants $w_1,\dots,w_n \in \CC$, see for example the proof of \cite[Proposition 5.3.16]{AndGuiZei10}.

We now quote from \cite{BorGui2013} where the authors write loop equations for the measure \eqref{eq:beta_ensembles_density} multiplied by $\prod_{i=1}^N \1_{[a_{-}, a_{+}]}(\lambda_i)$ for some $-\infty < a_{-} < a_{+} < \infty$ (see also \eqref{eq:beta_ensembles_density_confined} below). 
Under our assumptions on $V$, we can take the limit $a_{\pm} \to \pm \infty$, and this gives us the following.
If $I$ is a set of indices, we write $z_I = (z_i)_{i\in I}$.
\begin{theorem}[Theorems 3.3 and 3.4 in \cite{BorGui2013}]
\label{loop equation for cumulants}
	For any $z \in \CC \setminus \R$, we have
		\begin{align} \label{eq:loop_equation_n=1}
			\E \left[ s(z)^2\right] 
			- \frac{1}{N}\pa{1-\frac{2}{\beta}} \E[s'(z)]
			+ \E \left[ \frac{1}{N} \sum_{k=1}^N \frac{V'\pa{\lambda_k}}{\lambda_k - z}\right] 
			= 0,
		\end{align}
	Moreover, for any $n \geq 2$, $z,z_1,\dots,z_{n-1} \in \CC \setminus \R$, we have, with $I =\{1,\dots,n-1\}$,
		\begin{align*}
			& c_{n+1}\left(z, z, z_I\right) 
			+ \sum_{J \subseteq I} c_{|J| + 1}\left(z, z_J\right) c_{n-|J|}(z, z_{I \backslash J})
			- \frac{1}{N}\left( 1 - \frac{2}{\beta}\right) \frac{\rd}{\rd z} c_n(z, z_I) \\
			& {} + \kappa_n \pa{\frac{1}{N}\sum_{k=1}^N 
				\frac{V'\pa{\lambda_k}}{\lambda_k - z}, s\pa{z_1}, \dots, s\pa{z_{n-1}}}
			+ \frac{2}{N^2\beta} \sum_{i \in I} \frac{\rd}{\rd z_i} \left(
				\frac{c_{n-1}(z, z_{I \backslash \{i\}}) - c_{n-1}(z_I)}{z-z_i}
			\right) = 0.
		\end{align*}
\end{theorem}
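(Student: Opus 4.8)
\textbf{Proof proposal for Theorem (Theorems 3.3 and 3.4 in \cite{BorGui2013}).}

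The plan is to recall that the loop equations are an instance of integration by parts for the measure \eqref{eq:beta_ensembles_density}, and then to pass to the limit from the confined version stated in \cite{BorGui2013}. Since the theorem as stated here is quoted verbatim from \cite{BorGui2013}, the only genuine work is the removal of the confinement cutoff $\prod_{i=1}^N \1_{[a_-,a_+]}(\lambda_i)$, which is legitimate under Assumptions \ref{assumption:analytic}--\ref{assumption:large_dev}. First I would record the finite-$a_\pm$ statement: for the measure $\mu_N^{[a_-,a_+]}$, Borot--Guionnet prove exactly the displayed identities with extra boundary terms supported at $a_-,a_+$ coming from the indicator; these boundary terms involve derivatives of $\log Z_N^{[a_-,a_+]}$ and factors $1/(z-a_\pm)$ (compare the structure of \eqref{eq:loop_eq_confined_combined}).

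The core of the argument is then a two-step limiting procedure. Step one: show that as $a_\pm \to \pm\infty$ the boundary terms vanish. This uses the rough large deviation rigidity \eqref{eq:large_deviation_rigidity} (more precisely, the large deviation principle with speed $N$ for the extreme eigenvalues, valid thanks to Assumption \ref{assumption:large_dev}), which forces $\P(\exists k,\ \lambda_k \notin [a_-,a_+]) \le Ce^{-cN}$ for any fixed $a_\pm$ outside $[A,B]$, and similarly controls $\partial_{a_\pm}\log Z_N^{[a_-,a_+]}$; since $z$ is fixed in $\CC\setminus\R$, the factor $1/(z-a_\pm)$ stays bounded, so each boundary term is $O(e^{-cN})$ and in particular tends to $0$. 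Step two: show that every non-boundary term in the confined loop equation converges to the corresponding term for $\mu_N$. This is where Assumption \ref{assumption:V'_at_infinity} (or its substitute \eqref{eq:alternative_assumption}) and the decay \eqref{eq:bound_density_at_infinity} of the one-point function enter: they guarantee that the relevant moments, $\E\big[\big(\tfrac1N\sum_k \tfrac{V'(\lambda_k)}{\lambda_k-z}\big)^j\big]$ and $\E[|s(z)|^j]$, $\E[|s'(z)|]$ etc., are uniformly bounded in $a_\pm$ and converge (by dominated convergence, since the total masses converge and the integrands are dominated using \eqref{eq:bound_density_at_infinity} and Assumption \ref{assumption:V'_at_infinity}). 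The cumulants $c_n$ and the mixed cumulant in the statement are polynomial expressions in such moments via \eqref{cumulant expansion}, so their convergence follows as well; the shift-invariance \eqref{invariant by shifts} and multilinearity are purely algebraic and survive the limit unchanged.

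The main obstacle is the uniform integrability needed in step two: the term $\tfrac1N\sum_k \tfrac{V'(\lambda_k)}{\lambda_k-z}$ is unbounded on $\R^N$ when $V$ grows, so one must check that its $j$-th moments under $\mu_N$ (and under $\mu_N^{[a_-,a_+]}$, uniformly in $a_\pm$) are finite and stable in the limit. This is exactly the type of estimate carried out in the proof of Lemma \ref{lem:moments_Delta} — bounding $\E[V(\lambda_{\max})^{2q}\1_{\lambda_{\max}\ge M}]$ via \eqref{eq:bound_density_at_infinity} and the change of variables $x = V(s)$ using $V'(s)\ge c$ from Assumption \ref{assumption:V'_at_infinity} — so I would invoke that computation (and its analogue for $\lambda_{\min}$) essentially verbatim. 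Once uniform integrability is in hand, dominated convergence closes both steps and the theorem follows from its confined counterpart in \cite{BorGui2013}.
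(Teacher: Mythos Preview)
Your proposal is correct and follows precisely the approach the paper indicates: the paper does not give a proof but simply quotes \cite{BorGui2013} and remarks in one sentence that ``under our assumptions on $V$, we can take the limit $a_{\pm} \to \pm \infty$,'' which is exactly the limiting procedure you spell out. Your two steps (boundary terms vanish via Lemma~\ref{lem:terms_derivative_wrt_a}-type control, interior terms converge via the uniform integrability already established in the proof of Lemma~\ref{lem:moments_Delta}) are the natural way to justify that one-sentence claim, so there is nothing to add.
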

The following result rewrites the two first terms in the above loop equations for $n \geq 2$.
\begin{lemma}
	\label{simplified loop equation for cumulants}
	For any $n \geq 2$, $z,z_1,\dots,z_{n-1} \in \CC \setminus \R$, we have, with $I =\{1,\dots,n-1\}$,
	\[
		c_{n+1}\left(z, z, z_I\right) +
		 \sum_{J \subseteq I} c_{|J| + 1}(z, z_J) c_{n-|J|}(z, z_{I \backslash J})
		= \kappa_n\left( s^2(z), s\left(x_1\right), \dots, s\left(x_{n-1}\right) \right).
	\]
\end{lemma}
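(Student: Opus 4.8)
\textbf{Proof plan for Lemma \ref{simplified loop equation for cumulants}.}

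The plan is to show that the claimed identity is nothing but a reorganization of the cumulant expansion of $\kappa_n(s^2(z),s(z_1),\dots,s(z_{n-1}))$ in terms of the joint cumulants of the $s(z_i)$, using the fact that $s^2(z)$ is a product of two copies of $s(z)$. First I would recall the formula expressing the joint cumulant of a product with other random variables: for random variables $X,Y,Z_1,\dots,Z_{n-1}$ (here $X=Y=s(z)$ and $Z_i=s(z_i)$) one has
\[
	\kappa_n(XY,Z_1,\dots,Z_{n-1})
	= \sum_{\substack{\pi \in \cP(\{x,y\} \cup I) \\ x \sim y \text{ or } \pi \text{ connects } \{x,y\}}} \prod_{B\in\pi}\kappa(\,\cdot\,:\,\cdot\in B),
\]
more precisely the standard identity that the cumulant of a product equals the sum over partitions of the combined index set $\{x,y\}\cup I$ that are \emph{connected} when one glues the two indices $x$ and $y$ together. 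This is a classical fact (see e.g. Leonov--Shiryaev); I would cite it or give the one-line generating-function derivation from $\kappa_n(X_1,\dots,X_n)=\partial_{\e_1\cdots\e_n}\log\E[e^{\sum \e_i X_i}]|_{\e=0}$ by introducing two formal variables coupling to the two factors of $s(z)$ and then identifying them.

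Next I would split the sum over such connected partitions $\pi$ of $\{x,y\}\cup I$ according to whether $x$ and $y$ lie in the same block of $\pi$ or not. If $x$ and $y$ are in the same block, then every partition of $\{x\}\cup I$ (after identifying the merged block with a single index $z$) is automatically connected after gluing, and summing over all such partitions reproduces exactly the full moment/cumulant expansion of a cumulant in $n+1$ variables with two of the arguments equal to $z$: this yields the term $c_{n+1}(z,z,z_I)$. If $x$ and $y$ are in different blocks, say $x$ lies in a block whose other members form $J\subseteq I$ and $y$ lies in a block whose other members form a subset of $I\setminus J$, then connectivity after gluing forces \emph{exactly} the two blocks containing $x$ and $y$ to merge and everything else to be attached; running the same moment--cumulant bookkeeping on each side gives the factor $c_{|J|+1}(z,z_J)$ from the $x$-side and, after summing the remaining (now unconstrained) partitions of the complement, the factor $c_{n-|J|}(z,z_{I\setminus J})$ from the $y$-side. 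Summing over $J\subseteq I$ produces $\sum_{J\subseteq I}c_{|J|+1}(z,z_J)\,c_{n-|J|}(z,z_{I\setminus J})$, and the two cases together give the left-hand side of the lemma.

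The main obstacle is purely combinatorial: keeping the "connected after gluing $x$ and $y$" bookkeeping clean, making sure no partition is double-counted and that the case $x,y$ in distinct blocks exactly matches the convolution $\sum_{J}c_{|J|+1}c_{n-|J|}$ (in particular that the remaining components on the $y$-side are summed without further connectivity constraints, which is why one gets a plain cumulant $c_{n-|J|}$ rather than something more complicated). I would handle this by working with the exponential generating function $\log\E[e^{\e_0' s(z)+\e_0'' s(z)+\sum_{i\in I}\e_i s(z_i)}]$, differentiating once in $\e_0'$, once in $\e_0''$ and once in each $\e_i$, setting all variables to zero, and then specializing $\e_0'$ and $\e_0''$ to a common variable; the product rule on $\partial_{\e_0'}\partial_{\e_0''}$ of $\log\E[\cdots]$ automatically separates into the "second derivative hits the same $\log$" piece (giving $c_{n+1}(z,z,z_I)$) and the "each derivative hits a separate first derivative" piece (giving the sum of products), so no explicit partition-counting is needed. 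Everything else is a routine check that the limit $a_\pm\to\pm\infty$ in Theorem \ref{loop equation for cumulants} is licit under Assumptions \ref{assumption:analytic}--\ref{assumption:large_dev}, which has already been invoked in the excerpt.
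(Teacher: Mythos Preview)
Your proposal is correct and, once you settle on the generating-function argument in the last paragraph, it is essentially the paper's own proof: the paper starts from $c_{n+1}(z,z,z_I)=\partial_{\alpha}\partial_{\beta}\partial_{\e_1}\cdots\partial_{\e_{n-1}}\log\E[e^{\alpha s(z)+\beta s(z)+\sum\e_i s(z_i)}]|_0$, carries out the two $\alpha,\beta$-derivatives first to obtain the ``second moment minus first moment squared'' expression, identifies the first piece with $\kappa_n(s^2(z),s(z_1),\dots,s(z_{n-1}))$ and expands the squared piece by Leibniz to get the convolution $\sum_{J\subseteq I}c_{|J|+1}c_{n-|J|}$. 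Two small comments: your phrase ``product rule on $\partial_{\e_0'}\partial_{\e_0''}$ of $\log\E[\cdots]$'' is slightly off---the product rule enters when you differentiate $\E[\cdots]=e^G$ (or equivalently the quotient $\E[s^2 e^{\cdots}]/\E[e^{\cdots}]$), not $\log\E[\cdots]$ itself; and the Leonov--Shiryaev connected-partition route you sketch first is a valid alternative but, as you yourself note, heavier on bookkeeping with no gain here. The final remark about the $a_\pm\to\pm\infty$ limit is irrelevant to this lemma, which is a purely algebraic identity about cumulants.
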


\begin{proof}
Recall that by definition,
	\begin{align*}
		c_{n+1}\left(z, z, z_1, \dots, z_{n-1}\right)
		&= \kappa_n\pa{s(z), s(z), s\pa{z_1}, \dots, s\pa{z_{n-1}}} \\
		&= \left.\partial_{\alpha \beta \e_1 \dots \e_{n-1} } \left(
		\log \E\left[ e^{\alpha s(z) + \beta s(z) 
			+ \sum_{i=1}^{n-1} \e_i s\left(z_i\right)} \right]
		\right)
		\right|_{\alpha, \beta, \e_i = 0} \\
		&= \left.\partial_{\e_1 \dots \e_{n-1}}
		\left(
			\frac{\E\Bigl[ s(z)^2 e^{\sum_{i=1}^{n-1} \e_i s\left(z_i\right)} \Bigr]}
				{\E\Bigl[ e^{\sum_{i=1}^{n-1} \e_i s\left(z_i\right)} \Bigr]}
		- \left(
			\frac{\E\Bigl[ s(z) e^{\sum_{i=1}^{n-1} \e_i s\left(z_i\right)} \Bigr] }
				{\E\Bigl[ e^{\sum_{i=1}^{n-1} \e_i s\left(z_i\right)} \Bigr] }
		\right)^2
		\right) \right|_{\e_i = 0}.
	\end{align*}
Note first that
	\begin{align*}
		\left.\partial_{\e_1 \dots \e_{n-1} } \left(
			\frac{\E\Bigl[ s(z)^2 e^{\sum_{i=1}^{n-1} \e_i s\left(z_i\right)} \Bigr]}
				{\E\Bigl[ e^{\sum_{i=1}^{n-1} \e_i s\left(z_i\right)} \Bigr]}
		\right) \right|_{\e_i = 0}
		&=
		\partial_{\alpha \e_1 \dots \e_{n-1} } \left(
		\log \E\left[ e^{\alpha s(z)^2 + \sum_{i=1}^{n-1} \e_i s\left(z_i\right)} \right] 
		\right) \biggr|_{\alpha, \e_i = 0} \\
		&=  \kappa_n\left( s^2(z), s\left(z_1\right), \dots, s\left(z_{n-1}\right) \right).
	\end{align*}
On the other hand, denoting by $\partial_J$ the partial differentiation with respect to the $\varepsilon_j$'s for $j\in J$, the general Leibniz rule implies
	\begin{align*}
		\partial_I
		\left[
		\left(
			\frac{\E\Bigl[ s(z) e^{\sum_{i=1}^{n-1} \e_i s\left(z_i\right)} \Bigr] }
				{\E\Bigl[ e^{\sum_{i=1}^{n-1} \e_i s\left(z_i\right)} \Bigr] }
		\right)^2 
		\right]
		& = \sum_{J \subseteq I} 
		\partial_J \left(
			\frac{\E\Bigl[ s(z) e^{\sum_{i=1}^{n-1} \e_i s\left(z_i\right)} \Bigr] }
				{\E\Bigl[ e^{\sum_{i=1}^{n-1} \e_i s\left(z_i\right)} \Bigr] }
		\right) 
		\partial_{I \backslash J} \left(
			\frac{\E\Bigl[ s(z) e^{\sum_{i=1}^{n-1} \e_i s\left(z_i\right)} \Bigr] }
				{\E\Bigl[ e^{\sum_{i=1}^{n-1} \e_i s\left(z_i\right)} \Bigr] }
		\right).
	\end{align*}
Therefore, we get	
	\begin{align*}
		\left.\partial_{\e_1 \dots \e_{n-1}}
		\left[
		\left(
			\frac{\E\Bigl[ s(z) e^{\sum_{i=1}^{n-1} \e_i s\left(z_i\right)} \Bigr] }
				{\E\Bigl[ e^{\sum_{i=1}^{n-1} \e_i s\left(z_i\right)} \Bigr] }
		\right)^2 
		\right]\right|_{\e_i = 0}
		& = \sum_{ J \subseteq I} 
			c_{|J| + 1}(z, z_J) c_{n-|J|}(z, z_{I\backslash J})
	\end{align*}
and this proves the result.
\end{proof}

\subsection{Moments. }
In this section, we use Theorem \ref{loop equation for cumulants} to prove loop equations in terms of moments and centered moments.
One can also prove the following proposition directly using integrations by parts.
We omit the loop equation of rank 1 for which the formulas in terms of moments and cumulants are already identical.

\begin{proposition}
	\label{uncentered loop eq moments}
	For any $n \geq 2$, $z,z_1,\dots,z_{n-1} \in \CC \setminus \R$, we have
	\begin{align*}
		\E \left[ \left( 
		s(z)^2
		- \frac{1}{N}\pa{1-\frac{2}{\beta}} s'(z)
		+ \frac{1}{N} \sum_{k=1}^N \frac{V'\pa{\lambda_k}}{\lambda_k - z}
		\right) 
		\prod_{i=1}^{n-1} s(z_i) 
		+ \frac{2}{ N^2\beta} \sum_{j=1}^{n-1} 
		\partial_{z_j} \frac{s(z) - s\left(z_j\right)}{z-z_j}
		\prod_{i \neq j} s(z_i)
		\right] 
		= 0.
	\end{align*}
\end{proposition}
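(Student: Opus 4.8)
The plan is to derive Proposition \ref{uncentered loop eq moments} from the cumulant form of the loop equations (Theorem \ref{loop equation for cumulants}) together with Lemma \ref{simplified loop equation for cumulants}, by converting everything from cumulants back to moments via the moment--cumulant relations \eqref{moment expansion}--\eqref{cumulant expansion}. The starting point is the rank-$n$ loop equation for cumulants: writing $I = \{1,\dots,n-1\}$, Lemma \ref{simplified loop equation for cumulants} tells us the first two terms combine into $\kappa_n(s^2(z), s(z_1),\dots,s(z_{n-1}))$, so the loop equation becomes
\[
	\kappa_n\bigl(s^2(z), s(z_1),\dots,s(z_{n-1})\bigr)
	- \frac{1}{N}\Bigl(1-\frac{2}{\beta}\Bigr) \frac{\rd}{\rd z} c_n(z,z_I)
	+ \kappa_n\Bigl(\tfrac{1}{N}\sum_k \tfrac{V'(\lambda_k)}{\lambda_k-z}, s(z_1),\dots,s(z_{n-1})\Bigr)
	+ \frac{2}{N^2\beta}\sum_{i\in I}\frac{\rd}{\rd z_i}\Bigl(\frac{c_{n-1}(z,z_{I\setminus\{i\}})-c_{n-1}(z_I)}{z-z_i}\Bigr) = 0.
\]
The key observation is that each term here is a joint cumulant $\kappa_n(\,\cdot\,, s(z_1),\dots,s(z_{n-1}))$ of $n$ random variables, where the first slot is occupied by various random variables ($s^2(z)$; $s'(z)$ via $\frac{\rd}{\rd z}s(z)$; $\frac1N\sum_k \frac{V'(\lambda_k)}{\lambda_k-z}$; and, for the last sum, a derivative of a difference quotient of $s$), and $c_n(z,z_I) = \kappa_n(s(z),s(z_1),\dots,s(z_{n-1}))$ while $c_{n-1}(z,z_{I\setminus\{i\}})$ and $c_{n-1}(z_I)$ are cumulants of $n-1$ of the remaining variables. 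So the whole left-hand side has the form $\kappa_n(X, s(z_1),\dots,s(z_{n-1})) + (\text{lower-order cumulant terms})$.

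First I would record the elementary fact that if $X_1,\dots,X_n$ are random variables and we replace the identity
\[
	\E[X_1\cdots X_n] = \sum_{\pi\in\cP_n}\prod_{B\in\pi}\kappa(X_i : i\in B)
\]
by isolating the block containing index $1$, one gets
\[
	\E\Bigl[X_1\prod_{i=2}^n X_i\Bigr]
	= \sum_{J\subseteq\{2,\dots,n\}} \kappa_{|J|+1}\bigl(X_1, (X_j)_{j\in J}\bigr)\, \E\Bigl[\prod_{i\in\{2,\dots,n\}\setminus J} X_i\Bigr],
\]
a standard telescoping of the partition sum (group partitions according to which of $X_2,\dots,X_n$ lie in the same block as $X_1$, then re-sum the rest as a moment). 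Applying this with $X_1$ being each of the four "first-slot" random variables appearing above and $X_i = s(z_i)$ for $i\geq 2$, and then summing the four resulting identities, the cumulant loop equation should reorganize exactly into the claimed moment identity: the $\kappa_n$ terms produce the expectation $\E[(s(z)^2 - \frac1N(1-\frac2\beta)s'(z) + \frac1N\sum_k\frac{V'(\lambda_k)}{\lambda_k-z})\prod_i s(z_i)]$ as the $J=\emptyset$ contribution is absent here — wait, one must be careful — so the bookkeeping of which partition-sum terms survive is the crux. The honest way is: expand the claimed moment expression using \eqref{moment expansion}, expand the cumulant-form loop equation, and check term by term that they agree; equivalently, prove the moment identity and the cumulant identity are related by an invertible triangular change of variables (Möbius inversion on the partition lattice) so that one holds iff the other does.

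Concretely, the steps I would carry out are: (1) state and prove the telescoping lemma above (a one-line consequence of \eqref{moment expansion}); (2) observe that $\frac{\rd}{\rd z}c_n(z,z_I) = \kappa_n(s'(z), s(z_1),\dots,s(z_{n-1}))$ since differentiation commutes with the cumulant functional and with taking the limit in the definition of $\kappa_n$; (3) handle the last sum, noting that $\frac{\rd}{\rd z_i}\bigl(\frac{c_{n-1}(z,z_{I\setminus\{i\}}) - c_{n-1}(z_I)}{z-z_i}\bigr)$ does \emph{not} immediately look like $\frac{\rd}{\rd z_i}\frac{s(z)-s(z_i)}{z-z_i}\prod_{j\neq i}s(z_j)$ in expectation — this is where the moment--cumulant conversion is genuinely needed, because $\partial_{z_i}\frac{s(z)-s(z_i)}{z-z_i}$ times $\prod_{j\neq i}s(z_j)$ has an expectation that, when written via \eqref{moment expansion}, splits into a piece where $\partial_{z_i}\frac{s(z)-s(z_i)}{z-z_i}$ is cumulant-grouped with a subset of the other $s(z_j)$'s, matching $c_{n-1}$-type terms, plus a piece grouped with nothing, matching the $-c_{n-1}(z_I)$ correction; (4) reassemble everything and verify the bookkeeping closes. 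I would also remark, as the proposition's statement invites, that an alternative self-contained derivation is available directly by integration by parts in \eqref{eq:beta_ensembles_density} (differentiating $\int \partial_{\lambda_k}\bigl(\frac{1}{\lambda_k-z}\prod_i s(z_i)\bigr)\,d\mu_N = 0$ after the change of variables $\lambda_k \mapsto \lambda_k + \epsilon/(\lambda_k-z)$), which sidesteps cumulants entirely; this is probably the cleaner route to present, and I would likely give that as the main proof with the cumulant-conversion as a remark.

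The main obstacle is purely combinatorial: making the partition-sum bookkeeping in step (3)--(4) airtight, i.e., checking that when one expands both the cumulant loop equation and the target moment identity into sums over set partitions, \emph{every} term matches with the correct sign and multiplicity, in particular that the $\frac{2}{N^2\beta}$ terms' difference-quotient structure $\frac{s(z)-s(z_j)}{z-z_j}$ (rather than the bare $c_{n-1}$'s) emerges correctly after re-summing. Since the paper explicitly offers the integration-by-parts alternative, I would lean on that to avoid the combinatorics; the hard part there is just carefully tracking the contributions of the Vandermonde term $\prod_{k<l}|\lambda_k-\lambda_l|^\beta$ and of the Jacobian of the change of variables, both of which are routine but need the $\frac{2}{N^2\beta}\sum_j \partial_{z_j}\frac{s(z)-s(z_j)}{z-z_j}$ term to come out with exactly the right constant.
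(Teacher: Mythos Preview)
Your proposal is correct and takes essentially the same approach as the paper: converting between the cumulant loop equations and the moment loop equations via the partition-sum expansion \eqref{moment expansion}, with the key step being the careful bookkeeping of the $\frac{2}{N^2\beta}$ difference-quotient terms. The paper's presentation runs in the direction you call ``the honest way'': it expands the target moment identity over partitions $\pi$ of $(z,z_1,\dots,z_{n-1})$, isolates the block containing $z$, rewrites each derivative term as a sum over such partitions, and then observes that the whole expression factors as $\sum_\pi (\prod_{z\notin B\in\pi} c(z_B))$ times exactly the rank-$|I(\pi)|{+}1$ cumulant loop equation (combined with Lemma \ref{simplified loop equation for cumulants}), which vanishes by Theorem \ref{loop equation for cumulants}. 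The paper also notes, as you do, that a direct integration-by-parts derivation is possible but does not present it.
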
	
\begin{proof}
We will show that the loop equation in moments of rank $n$ is a sum over loop equations in cumulants of rank up to $n$.	
As before, $\kappa(\cdot)$ and $c(\cdot)$ denote joint cumulants, however we omit their indices which are implied by the number of its arguments. 
By (\ref{moment expansion}), we can re-write the claimed loop equation for moments as
	\begin{align} 
		& \sum_\pi \kappa\left( s(z)^2
				- \frac{1}{N}\pa{1-\frac{2}{\beta}} s'(z)
				+ \frac{1}{N} \sum_{k=1}^N \frac{V'\pa{\lambda_k}}{\lambda_k - z}, 
		\left\{s\pa{z_i}\right\}_{i \in I} \right)
			\prod_{z \notin B \in \pi} c \left( z_B \right) \nonumber \\
		& +\frac{2}{N^2\beta} \sum_{j=1}^{n-1} \partial_{z_j}\frac{1}{z-z_j} \left(
		\sum_{\pi_j} \prod_{B \in \pi_j} c\left(z_B\right)
		- \sum_{\hat{\pi}} \prod_{B \in \hat{\pi}} c\left(z_B\right)
		\right) = 0, \label{eq:loop_eq_moments_goal}
	\end{align}	
where $\pi$ denotes a partition of $\left(z,z_1\dots z_{n-1}\right)$, $I = I(\pi)$ denotes those variables that appear in the same block as $z$ in $\pi$, $\pi_j$ denotes a partition of $\left(z, z_1, \dots, z_{j-1}, z_{j+1}, \dots, z_{n-1} \right)$ for $j \in \{1, \dots, n-1\}$,
and $\hat{\pi}$ denotes a partition of $\left(z_1\dots z_{n-1}\right)$. We re-write the derivative
terms as sums over partitions $\pi$. First,
	\begin{align*}
		\sum_{j=1}^{n-1} \partial_{z_j}  \left( \frac{1}{z-z_j} 
		\sum_{\pi_j} \prod_{B \in \pi_j} c\left(z_B\right) \right)
		&=\sum_{j=1}^{n-1} \partial_{z_j} \left( \frac{1}{z-z_j} 
		\sum_{\pi_j} \kappa\left(z, z_{I\left(\pi_j\right)}\right) \prod_{z\notin B \in \pi_j} c\left(z_B\right) \right) \\
		&= \sum_\pi \sum_{j \in I(\pi)} 
		\partial_{z_j}\left( \frac{1}{z-z_j} 
		c\left(z, z_{I(\pi) \backslash\{j\}}\right) \prod_{z\notin B \in \pi} c\left(z_B\right) \right).
	\end{align*}
Similarly, 
	\[
		\sum_{j=1}^{n-1} \partial_{z_j}\frac{1}{z-z_j} \left(
		 \sum_{\hat{\pi}} \prod_{B \in \hat{\pi}} c\left(z_B\right)
		\right)
		=
		\sum_\pi \sum_{j \in I(\pi)} 
		\partial_{z_j}\frac{1}{z-z_j} \left(
		 c\left(z_{I(\pi)}\right) \prod_{z\notin B \in \pi} c\left(z_B\right)
		\right).
	\]	
Therefore the claimed rank $n$ loop equation for moments is equivalent to
	\begin{align*}
	& \sum_\pi \left( \prod_{z\notin B \in \pi} c\left(z_B\right) \right)
	\Biggl( 
	\kappa\left( s(z)^2 - \frac{1}{N}\pa{1-\frac{2}{\beta}} s'(z)
			+ \frac{1}{N} \sum_{k=1}^N \frac{V'\pa{\lambda_k}}{\lambda_k - z}, 
			\left\{s\pa{z_i}\right\}_{i \in I(\pi)}  \right) \\
	& \hphantom{\sum_\pi \left( \prod_{z\notin B \in \pi} c\left(z_B\right) \right)
		\Biggl(} {}
		+\frac{2}{N^2\beta} 
		\sum_{j \in I(\pi)} 
		\partial_{z_j}\frac{c(z, z_{I(\pi) \backslash\{j\}})
		- c(z_{I(\pi)})}{z-z_j} \Biggr) = 0.
	\end{align*}
	This indeed vanishes by Theorem \ref{loop equation for cumulants} and Lemma \ref{simplified loop equation for cumulants}.
%
%
%
\end{proof}
As a consequence of the previous proposition, it follows from a direct calculation that the product of $s(z_i)$'s can be replaced by a product of the centered versions of these random variables. We do not use these loop equations for centered moments in this paper, but they may be useful in another context.
\begin{corollary}
\label{prop:centered_loop_eq_moments}
For any $n \geq 2$, $z,z_1,\dots,z_{n-1} \in \CC \setminus \R$, letting $\mathring{s}(z) \coloneqq s(z) - \E[s(z)]$, we have
	\begin{align*}
		\E \left[ \left( 
		s(z)^2
		- \frac{1}{N}\pa{1-\frac{2}{\beta}} s'(z)
		+ \frac{1}{N} \sum_{k=1}^N \frac{V'\pa{\lambda_k}}{\lambda_k - z}
		\right) 
		\prod_{i=1}^{n-1} \mathring{s}(z_i)
		+ \frac{2}{ N^2\beta} \sum_{j=1}^{n-1} 
		\partial_{z_j} \frac{s(z) - s\left(z_j\right)}{z-z_j}
		\prod_{i \neq j} \mathring{s}(z_i)
		\right] 
		= 0.
	\end{align*}
\end{corollary}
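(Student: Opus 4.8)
\textbf{Plan for the proof of Corollary \ref{prop:centered_loop_eq_moments}.} The strategy is to start from the loop equation for moments established in Proposition \ref{uncentered loop eq moments} and to pass from $\prod_i s(z_i)$ to $\prod_i \mathring{s}(z_i)$ by expanding the centered product and applying Proposition \ref{uncentered loop eq moments} again at lower ranks. Concretely, writing $\mathring{s}(z_i) = s(z_i) - \E[s(z_i)]$ and expanding the product, one gets
\[
	\prod_{i=1}^{n-1} \mathring{s}(z_i)
	= \sum_{S \subseteq \{1,\dots,n-1\}} (-1)^{n-1-|S|}
		\left( \prod_{i \notin S} \E[s(z_i)] \right) \prod_{i \in S} s(z_i).
\]
Plugging this into the left-hand side of the claimed identity and using linearity of the expectation, the claim reduces to checking that for every fixed deterministic vector of constants $(w_i)_{i \notin S}$ (here $w_i = \E[s(z_i)]$) the corresponding combination vanishes. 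The point is that the two pieces of the loop equation behave differently under this expansion: the ``bulk'' term $\E[(s^2 - \frac{1}{N}(1-\frac2\beta)s' + \frac1N\sum_k \frac{V'(\lambda_k)}{\lambda_k-z}) \prod_i \mathring s(z_i)]$ is handled by expanding and applying Proposition \ref{uncentered loop eq moments} for each $S$ with $|S|\geq 1$ (which gives the negative of the corresponding $f$-term), while for $S=\emptyset$ it reduces to the rank-$1$ loop equation \eqref{eq:loop_equation_n=1} which says this bulk term equals $0$ by itself.

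\textbf{Key steps, in order.} First, substitute the binomial expansion of $\prod_i \mathring s(z_i)$ into both sums appearing in the claimed identity. Second, for the first (bulk) sum, group terms by the subset $S$ of indices that are \emph{not} replaced by their expectation; for each such $S$ with $|S| \geq 2$, the inner expectation $\E[(\dots)\prod_{i\in S} s(z_i)]$ equals, by Proposition \ref{uncentered loop eq moments} applied at rank $|S|+1$ with variables $\{z_i : i \in S\}$, precisely $-\frac{2}{N^2\beta}\sum_{j \in S} \E[\partial_{z_j}\frac{s(z)-s(z_j)}{z-z_j}\prod_{i\in S, i\neq j} s(z_i)]$; for $|S|=1$ and $|S|=0$ the right side is $0$ (rank $2$ has no $f$-type correction only if we are careful — in fact rank $2$ does have one term, so $|S|=1$ needs the rank-$2$ equation which reads $\E[(\dots)s(z_{j})] = -\frac{2}{N^2\beta}\E[\partial_{z_j}\frac{s(z)-s(z_j)}{z-z_j}]$, and $|S|=0$ uses \eqref{eq:loop_equation_n=1}). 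Third, re-expand the $f$-type terms so produced back in terms of the \emph{centered} variables $\mathring s(z_i)$, $i \in S \setminus \{j\}$ — this is again just a binomial expansion, run in reverse — and observe that summing over all $S$ reconstitutes exactly $\frac{2}{N^2\beta}\sum_{j=1}^{n-1}\E[\partial_{z_j}\frac{s(z)-s(z_j)}{z-z_j}\prod_{i\neq j}\mathring s(z_i)]$ with the correct sign, so it cancels the second sum in the claimed identity. Finally, collect signs and conclude.

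\textbf{Main obstacle.} The only real subtlety is bookkeeping: making sure the alternating signs $(-1)^{n-1-|S|}$ from the forward expansion and the signs from re-centering the $f$-terms combine correctly, and that the $j$-indexed $f$-terms coming from the rank-$|S|+1$ loop equation are correctly re-indexed when one of the factors $s(z_j)$ is pulled out of the product. A clean way to avoid the combinatorial mess is to note that the map $(s(z_i))_i \mapsto (\mathring s(z_i))_i$ is an invertible affine change of variables, and that \emph{both} the statement of Proposition \ref{uncentered loop eq moments} and the statement of Corollary \ref{prop:centered_loop_eq_moments} assert that a certain multilinear-plus-constant expression in $(s(z_1),\dots,s(z_{n-1}))$ (with the ``bulk'' operator frozen inside the expectation) has vanishing expectation for \emph{all} choices of the $z_i$; since the family $\{\prod_{i \in S} s(z_i)\}_S$ and the family $\{\prod_{i\in S}\mathring s(z_i)\}_S$ span the same space of functions, the two systems of identities (over all ranks $\leq n$ simultaneously) are equivalent. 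Thus the corollary follows formally from Proposition \ref{uncentered loop eq moments} together with \eqref{eq:loop_equation_n=1}, and one only needs to carry out the elementary verification that the $f$-terms match — a direct calculation which I would present compactly rather than term by term.
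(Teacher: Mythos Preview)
Your proposal is correct and is precisely the ``direct calculation'' the paper alludes to: expand $\prod_i \mathring{s}(z_i)$ over subsets, apply Proposition~\ref{uncentered loop eq moments} at each lower rank (with \eqref{eq:loop_equation_n=1} for the empty subset), swap the order of summation over $j$ and $S$, and recognise the inner sum as the binomial expansion of $\prod_{i\neq j}\mathring{s}(z_i)$. The paper gives no further detail, so your write-up (including the change-of-basis remark) is already more explicit than the original.
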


\subsection{Confined loop equations. }
In this section we state the loop equations for particles chosen according to the following modified probability measure
\begin{equation}
	\label{eq:beta_ensembles_density_confined}
	\diff \mu_N^{[a,b]} (\lambda_1,\dots,\lambda_N)
	\coloneqq \frac{1}{Z_N^{[a,b]}}
	\cdot
	\prod_{1 \leq k < l \leq N} \absa{\lambda_k - \lambda_l}^\beta 
	\cdot
	\prod_{k=1}^N e^{- \frac{\beta N}{2} V(\lambda_k)}  \1_{\lambda_k \in [a,b]} 
	\rd \lambda_k,
\end{equation}
where we restrict ourselves to particles in an interval $[a,b]$ for some $-\infty < a < b < \infty$.
We denote by $\E^{[a,b]}$ the integral with respect to $\mu_N^{[a,b]}$.
Then the loop equations for moments can be deduced as before from the loop equations for cumulants stated in \cite[Theorems 3.3 and 3.4]{BorGui2013}.
\begin{proposition} \label{prop:loop_equations_confined}
	Let $a<b$ be real numbers.
	For any $z \in \CC \setminus [a,b]$, we have
	\begin{align*}
		\E^{[a,b]} \left[ 
		s(z)^2
		- \frac{1}{N}\pa{1-\frac{2}{\beta}} s'(z)
		+ \frac{1}{N} \sum_{k=1}^N \frac{V'\pa{\lambda_k}}{\lambda_k - z}
		\right] 
		= \frac{2}{\beta N^2} \left( \frac{\partial_a \ln Z^{[a,b]}_N}{z-a} 
			+ \frac{\partial_b \ln Z^{[a,b]}_N}{z-b} \right),
	\end{align*}
	Moreover, for any $n \geq 2$, $z,z_1,\dots,z_{n-1} \in \CC \setminus [a,b]$, we have
	\begin{align*}
		& \E^{[a,b]} \left[ \left( 
		s(z)^2
		- \frac{1}{N}\pa{1-\frac{2}{\beta}} s'(z)
		+ \frac{1}{N} \sum_{k=1}^N \frac{V'\pa{\lambda_k}}{\lambda_k - z}
		\right) 
		\prod_{i=1}^{n-1} s(z_i)
		\right] \\
		& {} 
		+ \frac{2}{ N^2\beta} \sum_{j=1}^{n-1} \E^{[a,b]} \left[
		\partial_{z_j} \frac{s(z) - s\left(z_j\right)}{z-z_j}
		\prod_{i \neq j} s(z_i)
		\right] 
		- \frac{2}{ N^2\beta} \left( \frac{\partial_a \E^{[a,b]}[\prod_{i=1}^{n-1} s(z_i)]}{z-a} 
			+ \frac{\partial_b \E^{[a,b]}[\prod_{i=1}^{n-1} s(z_i)]}{z-b} \right) \\
		& = \frac{2}{\beta N^2} \left( \frac{\partial_a \ln Z^{[a,b]}_N}{z-a} 
			+ \frac{\partial_b \ln Z^{[a,b]}_N}{z-b} \right)
		\E^{[a,b]} \left[ \prod_{i=1}^{n-1} s(z_i) \right].
	\end{align*}
\end{proposition}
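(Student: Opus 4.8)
The plan is to obtain these equations by integration by parts directly on the confined measure $\mu_N^{[a,b]}$, the route alluded to in the remark after Proposition \ref{uncentered loop eq moments}; alternatively one can feed the confined cumulant loop equations of \cite[Theorems 3.3 and 3.4]{BorGui2013} into the partition-sum argument of that proof, which goes through verbatim, since Lemma \ref{simplified loop equation for cumulants} makes no reference to the measure and the only new ingredient of the confined equations is a family of $\partial_a$, $\partial_b$ terms that reassemble block by block exactly as the $\partial_{z_j}$ terms do.

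For the direct argument, fix $z, z_1, \dots, z_{n-1} \in \CC \setminus [a,b]$, write $\widetilde g(\lambda) \coloneqq \prod_{k<l} \absa{\lambda_k-\lambda_l}^\beta \prod_k e^{-\frac{\beta N}{2} V(\lambda_k)}$ for the unnormalised density and $H(\lambda) \coloneqq \prod_{m=1}^{n-1} s(z_m)$, with $H \equiv 1$ when $n=1$. The starting point is the identity
\[
	\sum_{i=1}^N \int_{[a,b]^N} \partial_{\lambda_i} \pa{ \frac{H}{\lambda_i-z}\, \widetilde g } \diff \lambda
	= \sum_{i=1}^N \int_{[a,b]^{N-1}} \left[ \frac{H}{\lambda_i-z}\, \widetilde g \right]_{\lambda_i=a}^{\lambda_i=b} \prod_{k \neq i} \diff \lambda_k,
\]
which is legitimate for $\beta>0$ because $\widetilde g$ is continuous, its partial derivatives are integrable (the only singularities, $\absa{\lambda_i-\lambda_j}^{\beta-1}$ on the diagonals, are integrable), and $\lambda_i \mapsto (\lambda_i-z)^{-1}$ is smooth on $[a,b]$. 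I would then expand the left-hand side: using $\frac{s(z)-s(z_j)}{z-z_j} = \frac1N \sum_k \frac{1}{(\lambda_k-z)(\lambda_k-z_j)}$ and $\partial_{\lambda_k} s(z_j) = -\frac1N (\lambda_k-z_j)^{-2}$ one gets $\sum_i \partial_{\lambda_i}\big( \frac{H}{\lambda_i-z} \big) = -N s'(z) H - \sum_{j=1}^{n-1} \partial_{z_j} \frac{s(z)-s(z_j)}{z-z_j} \prod_{m \neq j} s(z_m)$, while $\partial_{\lambda_i} \log \widetilde g = \beta \sum_{j\neq i} \frac{1}{\lambda_i-\lambda_j} - \frac{\beta N}{2} V'(\lambda_i)$ together with the symmetrisation $\sum_{i\neq j} \frac{1}{(\lambda_i-z)(\lambda_i-\lambda_j)} = -\frac12 (N^2 s(z)^2 - N s'(z))$ yields $\sum_i \frac{H}{\lambda_i-z} \partial_{\lambda_i} \log \widetilde g = -\frac{\beta N^2}{2} H\big( s(z)^2 - \frac1N s'(z)\big) - \frac{\beta N}{2} H \sum_i \frac{V'(\lambda_i)}{\lambda_i-z}$. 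Dividing the identity by $-\frac{\beta N^2}{2} Z^{[a,b]}_N$, the two $s'(z)$ contributions combine into the coefficient $-\frac1N\big(1-\frac2\beta\big)$ (since $\frac{2}{\beta N} - \frac1N = \frac1N\big(\frac2\beta-1\big)$), which recovers the left-hand sides of both displayed equations up to the boundary term.

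Finally I would identify the boundary term. By symmetry of $H \widetilde g$ it equals $N$ times the $\lambda_N$-contribution, namely $\frac{1}{b-z} N \int_{[a,b]^{N-1}} (H\widetilde g)\big|_{\lambda_N=b} - \frac{1}{a-z} N \int_{[a,b]^{N-1}} (H\widetilde g)\big|_{\lambda_N=a}$; since $\partial_b \int_{[a,b]^N} H \widetilde g \,\diff\lambda = N \int_{[a,b]^{N-1}} (H\widetilde g)|_{\lambda_N=b}$ and $\partial_a \int_{[a,b]^N} H \widetilde g \,\diff\lambda = -N \int_{[a,b]^{N-1}} (H\widetilde g)|_{\lambda_N=a}$, this is $-\frac{\partial_a ( Z^{[a,b]}_N \E^{[a,b]}[H] )}{z-a} - \frac{\partial_b ( Z^{[a,b]}_N \E^{[a,b]}[H] )}{z-b}$. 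Dividing by $-\frac{\beta N^2}{2} Z^{[a,b]}_N$ and using $\partial_a (Z \E[H]) = (\partial_a Z) \E[H] + Z \partial_a \E[H]$ turns the boundary contribution into $\frac{2}{\beta N^2} \big( \frac{\partial_a \ln Z^{[a,b]}_N}{z-a} + \frac{\partial_b \ln Z^{[a,b]}_N}{z-b} \big) \E^{[a,b]}[H] + \frac{2}{\beta N^2} \big( \frac{\partial_a \E^{[a,b]}[H]}{z-a} + \frac{\partial_b \E^{[a,b]}[H]}{z-b} \big)$, and rearranging — with $\partial_a \E^{[a,b]}[H] = \partial_b \E^{[a,b]}[H] = 0$ in the case $n=1$ since $H\equiv1$ there — gives precisely the rank-$1$ and rank-$n$ equations. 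The only point that is not pure bookkeeping is justifying the integration by parts near the Vandermonde singularities and the differentiation under the integral sign defining $\partial_a, \partial_b$ of $Z^{[a,b]}_N$ and of $Z^{[a,b]}_N \E^{[a,b]}[H]$; this is standard for $\beta>0$ and is in any case already contained in \cite{BorGui2013}, so I expect no serious obstacle.
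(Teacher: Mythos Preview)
Your proof is correct. The paper does not give a detailed argument for this proposition; it simply states that the moment loop equations for the confined measure ``can be deduced as before from the loop equations for cumulants stated in \cite[Theorems 3.3 and 3.4]{BorGui2013}'', i.e.\ by running the partition-sum argument of Proposition~\ref{uncentered loop eq moments} with the confined cumulant hierarchy as input. You mention this route yourself in your first paragraph, and you are right that it goes through verbatim: Lemma~\ref{simplified loop equation for cumulants} is purely algebraic, and the extra $\partial_a,\partial_b$ terms in the confined cumulant equations are linear in the cumulants of the observables $s(z_j)$, so they recombine under \eqref{moment expansion} exactly as the $\partial_{z_j}$ terms did.

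Your main argument takes the more direct route of integrating by parts against the confined density, which is the approach the paper alludes to after Proposition~\ref{uncentered loop eq moments} (``One can also prove the following proposition directly using integrations by parts''). The computations are all correct: the symmetrisation $\sum_{i\neq j}\frac{1}{(\lambda_i-z)(\lambda_i-\lambda_j)}=-\tfrac12(N^2 s(z)^2-N s'(z))$, the identification of $\sum_i \partial_{\lambda_i}(H/(\lambda_i-z))$ with the $s'(z)$ and $\partial_{z_j}$ terms, the merging of the two $s'(z)$ pieces into the coefficient $-\tfrac1N(1-\tfrac2\beta)$, and the boundary identification via $\partial_b\int_{[a,b]^N}=N\int_{[a,b]^{N-1}}(\cdot)|_{\lambda_N=b}$ (and similarly for $a$). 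The product-rule decomposition of $\partial_a(Z\,\E[H])$ then yields exactly the two boundary contributions in the statement, with the $\partial_a\E[H]$, $\partial_b\E[H]$ pieces vanishing when $H\equiv 1$.

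The only places requiring care are the ones you flag: absolute continuity of $\lambda_i\mapsto \frac{H}{\lambda_i-z}\widetilde g$ (clear for $\beta>0$, since $|\lambda_i-\lambda_j|^\beta$ is absolutely continuous with locally integrable derivative) and differentiation of $\int_{[a,b]^N}$ in $a,b$ (Leibniz rule for a parameter in the limits, using symmetry). These are routine and indeed already handled in \cite{BorGui2013}. Compared to the paper's cumulant route, your direct argument is more self-contained and avoids the detour through \eqref{cumulant expansion}--\eqref{moment expansion}; the cumulant route, on the other hand, makes transparent the link with the hierarchy in \cite{BorGui2013} and requires no new analytic justification once that reference is invoked.
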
	
Moreover, the additional terms appearing in this new loop equation can be controlled using the following lemma. 
\begin{lemma} \label{lem:terms_derivative_wrt_a}
For any $a<A$ and $b>B$, there exists $c = c(a,b) > 0$ such that, for any $N \geq 1$, 
\begin{align*}
\absa{\partial_a \ln Z^{[a,b]}_N} & \leq e^{-cN}
\end{align*}
and, for any $n \geq 2$, $z_1,\dots,z_{n-1} \in \CC \setminus [a,b]$,
\begin{align*}
\absa{\partial_a \E^{[a,b]} \left[ \prod_{i=1}^{n-1} s(z_i) \right]} 
& \leq \frac{2e^{-cN}}{\prod_{i=1}^{n-1} d(z_i,[a,b])}.
\end{align*}
Moreover, the same inequalities hold with $\partial_b$ instead of $\partial_a$.
\end{lemma}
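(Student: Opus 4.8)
The plan is to differentiate the partition function $Z_N^{[a,b]}$ (and similarly the moment expectations) with respect to the endpoint $a$ and recognize that the derivative reduces to a boundary term supported near $\lambda_i = a$. Concretely, recall
\[
	Z_N^{[a,b]} = \int_{[a,b]^N}
	\prod_{k<l} |\lambda_k - \lambda_l|^\beta \prod_k e^{-\frac{\beta N}{2} V(\lambda_k)}
	\rd \lambda_1 \cdots \rd \lambda_N.
\]
Differentiating the domain of integration $[a,b]^N$ with respect to its lower endpoint $a$, by the Leibniz rule for parameter-dependent domains, produces a sum of $N$ terms, each being the integral of the density over the face $\{\lambda_i = a\}$ (with a minus sign from the lower limit). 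By symmetry, $\partial_a \ln Z_N^{[a,b]} = -\frac{N}{Z_N^{[a,b]}} \int_{[a,b]^{N-1}} \Phi_N(a,\lambda_2,\dots,\lambda_N) \rd\lambda_2\cdots\rd\lambda_N$, where $\Phi_N$ is the (unnormalized) density. In other words $\partial_a \ln Z_N^{[a,b]} = -N\, \varrho_1^{[a,b]}(a)$ where $\varrho_1^{[a,b]}$ denotes the (unnormalized-to-1 but appropriately scaled) one-point density under $\mu_N^{[a,b]}$ evaluated at the boundary point $a$.

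The key point is then to bound $\varrho_1^{[a,b]}(a)$. Since $a < A$ is strictly to the left of the support of the equilibrium measure and Assumption \ref{assumption:large_dev} guarantees the effective potential $x \mapsto V(x)/2 - \int \log|x-t|\,\rd\mu_V(t)$ has a strictly positive gap at $a$, the standard large deviation heuristic says that placing a particle at $a$ costs an exponential factor $e^{-cN}$. Rigorously, one can compare $\varrho_1^{[a,b]}(a)$ to the probability that the leftmost particle under $\mu_N^{[a,b]}$ lies below $a + \epsilon$ for small $\epsilon$: by \eqref{eq:large_deviation_rigidity} (applied after renormalizing from $\mu_N^{[a,b]}$ to $\mu_N$, exactly as in the footnote following \eqref{eq:bound_using_E_z}), $\P^{[a,b]}(\lambda_{\min} \leq a+\epsilon) \leq Ce^{-cN}$ for $\epsilon$ small enough depending on $a$. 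A monotonicity/integration argument converts this tail bound into a pointwise bound $\varrho_1^{[a,b]}(a) \leq C e^{-cN}$, possibly after shrinking $c$: concretely, since the density near the boundary is controlled by the ratio of partition functions with a particle forced near $a$, and this ratio is exponentially small, one gets $N\varrho_1^{[a,b]}(a) \leq e^{-cN}$ for $N$ large, absorbing polynomial factors. For the moment version, one writes
\[
	\partial_a \E^{[a,b]}\Bigl[ \prod_{i=1}^{n-1} s(z_i) \Bigr]
	= -N\, \E^{[a,b]}\Bigl[ \prod_{i=1}^{n-1} s(z_i) \,\Big|\, \lambda_1 = a \Bigr] \varrho_1^{[a,b]}(a)
	+ N\, \varrho_1^{[a,b]}(a)\, \E^{[a,b]}\Bigl[ \prod_{i=1}^{n-1} s(z_i) \Bigr],
\]
and each conditional expectation of $\prod s(z_i)$ is bounded in absolute value by $\prod_i d(z_i,[a,b])^{-1}$ trivially, since $|s(z_i)| \leq d(z_i,[a,b])^{-1}$ pointwise (the particles are confined to $[a,b]$). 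Combining with the bound on $N\varrho_1^{[a,b]}(a)$ gives the claimed estimate with the factor $2$ accounting for the two terms.

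The symmetric statement for $\partial_b$ is identical, using that $b > B$ and the gap of the effective potential at $b$, together with the tail bound $\P^{[a,b]}(\lambda_{\max} \geq b - \epsilon) \leq Ce^{-cN}$. The main obstacle is making the step ``exponentially small tail probability implies exponentially small boundary density'' fully rigorous: one must relate the boundary value $\varrho_1^{[a,b]}(a)$ of the density to a partition-function ratio and verify that this ratio inherits the exponential decay from the large deviation principle. This is where Assumption \ref{assumption:large_dev} is essential — it ensures the rate function in the large deviation principle for the extreme particles does not vanish at $a$ or $b$, so the relevant cost is genuinely of order $e^{-cN}$ with $c>0$ depending on how far $a,b$ are from $[A,B]$. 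A clean way to handle this is to note that $\varrho_1^{[a,b]}(a)$ is bounded above by a constant times $\frac{1}{\epsilon}\P^{[a,b]}(\lambda_{\min}\in[a,a+\epsilon])$ after using that the density on $[a,a+\epsilon]$ cannot be too concentrated at the endpoint (a consequence of the Vandermonde factor, which vanishes when two particles coincide, combined with the analyticity and boundedness of $V$ on the compact region of interest); then \eqref{eq:large_deviation_rigidity} finishes the argument.
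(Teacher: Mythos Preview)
The paper does not prove this lemma from scratch; it cites \cite[Proposition 2.3]{BorGui2013} for the first bound and quotes from that same proof the inequality
\[
\absa{\partial_a \E^{[a,b]}\Bigl[\prod_{i=1}^{n-1} s(z_i)\Bigr]} \leq \frac{2}{\prod_{i=1}^{n-1} d(z_i,[a,b])}\,\absa{\partial_a \ln Z_N^{[a,b]}},
\]
which reduces the second bound to the first. Your differentiation of the domain, yielding $\partial_a \ln Z_N^{[a,b]} = -N\varrho_1^{[a,b]}(a)$, and your two-term decomposition of $\partial_a \E^{[a,b]}[\prod_i s(z_i)]$ bounded via $|s(z_i)|\leq d(z_i,[a,b])^{-1}$, are correct and reproduce exactly this reduction.

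The genuine gap is in your bound $N\varrho_1^{[a,b]}(a)\leq e^{-cN}$. You propose to deduce the pointwise density bound from the tail estimate $\P^{[a,b]}(\lambda_{\min}\leq a+\epsilon)\leq Ce^{-cN}$ by arguing that ``the density on $[a,a+\epsilon]$ cannot be too concentrated at the endpoint'' thanks to the Vandermonde. This is not a valid justification: the Vandermonde vanishing on the diagonal controls two-point repulsion, not the one-point density at a boundary point. In fact, conditionally on the other $N-1$ particles lying to the right of $a+\epsilon$ (the overwhelmingly likely configuration, by the very tail bound you invoke), the factor $\prod_{j\geq 2}|s-\lambda_j|^\beta$ is \emph{decreasing} in $s$ on $[a,a+\epsilon]$ and hence is \emph{larger} at $s=a$ than at any interior point --- the opposite of what you need for a comparison. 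What actually makes $\varrho_1^{[a,b]}(a)$ exponentially small is the full effective potential $\tfrac12 V(a)-\int\log|a-t|\,\rd\mu_V(t)$: writing $N\varrho_1^{[a,b]}(a)$ as a ratio of partition-function-type quantities and invoking Assumption \ref{assumption:large_dev} (strict positivity of the large-deviation rate function at $a<A$) gives the bound directly, without any tail-to-pointwise-density conversion. That is the route taken in \cite{BorGui2013}, and it is the missing ingredient in your argument.
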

\begin{proof}
The first inequality is stated in \cite[Proposition 2.3]{BorGui2013}. The second inequality follows from the bound
\begin{align*}
\absa{\partial_a \E^{[a,b]} \left[ \prod_{i=1}^{n-1} s(z_i) \right]} 
& \leq \frac{2}{\prod_{i=1}^{n-1} d(z_i,[a,b])} \absa{\partial_a \ln Z^{[a,b]}_N},
\end{align*}
which can be found in the proof of the aforementioned result. 
\end{proof}

\setcounter{equation}{0}
\setcounter{theorem}{0}
\renewcommand{\theequation}{B.\arabic{equation}}
\renewcommand{\thetheorem}{B.\arabic{theorem}}
\appendix
\setcounter{secnumdepth}{0}
\section[Appendix B: Stability of the fixed point equation]
{Appendix \mynameis{B}:\ \ \ Stability of the fixed point equation\label{section:stability_lemma}}

Recall from \eqref{eq:link_2m+V'_and_r} and \eqref{eq:fixed_point_equation} that the Stieljes transform $m_V(z)$ of the equilibrium measure satisfies $m_V(z) = -\frac{V'(z)}{2} + r(z) b(z)$ for $z \in \CC \setminus [A,B]$ and is a solution of the equation $u^2 + V'(z) u + h(z) = 0$. 
It is then easy to check that the other root of this equation can be written as
\[
	\widetilde{m}_V(z) \coloneqq -\frac{V'(z)}{2} - r(z) b(z).
\]
For $z \in \CC \setminus [A,B]$, these roots are identical if and only if $r(z) = 0$.
Our goal in this section is to study the stability of the equation $u^2 + V'(z) u + h(z) = 0$ with respect to a shift $\zeta$ in the constant term. 
\begin{lemma} \label{lem:fixed_point_equation} 
	There exists $C > 0$ depending only on $V$ such that the following results hold, with $\tilde{\eta}$ from Lemma~\ref{lem:preliminaries_fixed_point_equation}.
	Let $\zeta \in \CC$ and $z = E + \ii \eta$ with $E \in [A-\tilde{\eta},B+\tilde{\eta}]$ and $\eta \in (0,\tilde{\eta})$.
	Let $u$ be a solution of
	$u^2 + V'(z) u + h(z) = \zeta$.
	Then, 
	\begin{equation} \label{eq:stability_lemma_bound_1}
		\absa{u-m_V(z)} \wedge \absa{u-\widetilde{m}_V(z)}
		\leq C \left( \frac{\abs{\zeta}}{\abs{b(z)}} \wedge \abs{\zeta}^{1/2} \right).
	\end{equation}
	If $\im(u) > 0$, then
	\begin{equation} \label{eq:stability_lemma_bound_2}
		\absa{\im(u-m_V(z))} 
		\leq 5 \left( \absa{u-m_V(z)} \wedge \absa{u-\widetilde{m}_V(z)} \right).
	\end{equation}
	If $\im(u) > 0$ and $A-\eta \leq E \leq B+\eta$, then
	\begin{equation} \label{eq:stability_lemma_bound_3}
		\absa{u-m_V(z)} 
		\leq C \left( \frac{\abs{\zeta}}{\abs{b(z)}} \wedge \abs{\zeta}^{1/2} \right).
	\end{equation}
\end{lemma}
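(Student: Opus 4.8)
\textbf{Proof plan for Lemma \ref{lem:fixed_point_equation}.}

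The starting point is the elementary algebra of the quadratic. Writing $m = m_V(z)$, $\widetilde m = \widetilde m_V(z)$, so that $m + \widetilde m = -V'(z)$ and $m - \widetilde m = 2r(z)b(z)$, any solution $u$ of $u^2 + V'(z)u + h(z) = \zeta$ satisfies $(u-m)(u-\widetilde m) = \zeta$. Hence $\absa{u-m}\cdot\absa{u-\widetilde m} = \abs{\zeta}$, which immediately gives $\absa{u-m}\wedge\absa{u-\widetilde m} \le \abs{\zeta}^{1/2}$. For the other half of \eqref{eq:stability_lemma_bound_1}, note that the two roots are $\frac{m+\widetilde m}{2} \pm \sqrt{\left(\frac{m-\widetilde m}{2}\right)^2 + \zeta}$, i.e. at distance $\absa{\sqrt{(r(z)b(z))^2 + \zeta} \mp r(z)b(z)}$ from $m$ and $\widetilde m$ respectively; the smaller of these two distances is at most $\frac{\abs{\zeta}}{\abs{r(z)b(z)}}$ by the standard bound $\absa{\sqrt{a^2+\zeta}-a}\wedge\absa{\sqrt{a^2+\zeta}+a} \le \abs{\zeta}/\abs{a}$ (multiply and divide by the conjugate). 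Since $r$ is bounded below on the relevant compact region away from its zeros — this is where $\tilde\eta$ from Lemma \ref{lem:preliminaries_fixed_point_equation} enters, guaranteeing $\abs{r(z)}\ge c$ — we get $\absa{u-m}\wedge\absa{u-\widetilde m}\le C\abs{\zeta}/\abs{b(z)}$, and combining the two bounds proves \eqref{eq:stability_lemma_bound_1}.

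For \eqref{eq:stability_lemma_bound_2}, the point is that when $\im u > 0$, $u$ cannot be close to whichever of $m,\widetilde m$ lies (weakly) in the lower half plane, so the quantity $\absa{u-m}\wedge\absa{u-\widetilde m}$ is controlled by $\absa{u-m}$ up to the gap $\absa{m-\widetilde m}$, and then $\absa{\im(u-m)}$ is controlled because $\im m$ is itself comparable to $\absa{\im(rb)}$. Concretely: I expect to use the preliminary estimates from Lemma \ref{lem:preliminaries_fixed_point_equation} (namely \eqref{eq:preliminary_lemma_bound_2}, \eqref{eq:preliminary_lemma_bound_4}) that $\im m_V(z) \le C\im(r(z)b(z))$ and $\im\widetilde m_V(z) = -V'\im/2 - \im(rb)$ has a definite sign structure; and the geometric fact that $\im u > 0$ forces $\absa{u-\widetilde m} \ge \im u - \im\widetilde m \ge \absa{\im(u-m)} - (\im m + \absa{\im\widetilde m - \im m})$, which after rearranging and using $\absa{\im(m-\widetilde m)} = 2\absa{\im(rb)} \gtrsim \im m$ yields the constant $5$. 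The cleanest way is probably to split on whether $\absa{u-m}\le\absa{u-\widetilde m}$ (then $\absa{\im(u-m)}\le\absa{u-m}$ trivially, done) or not, and in the second case bound $\absa{\im(u-m)} \le \absa{\im(u-\widetilde m)} + \absa{\im(\widetilde m - m)}$ and show each term is $\lesssim \absa{u-\widetilde m}$, the second because $\absa{\im(\widetilde m-m)} = 2\absa{\im(rb)}$ is comparable to $\im u + \absa{\im\widetilde m}$ which bounds $\absa{\im(u-\widetilde m)}$ up to a constant — here one uses that $\im u$ and $\im\widetilde m$ have specified signs so no cancellation occurs.

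Finally, \eqref{eq:stability_lemma_bound_3} upgrades \eqref{eq:stability_lemma_bound_1} by removing the minimum: under the additional hypothesis $A-\eta\le E\le B+\eta$ (the trapezoid) and $\im u>0$, one shows $u$ is genuinely close to $m_V(z)$ and not to $\widetilde m_V(z)$. The mechanism is that in the trapezoid $\im m_V(z)$ is bounded below in a suitable sense relative to $\abs{b(z)}$ while $\im\widetilde m_V(z) \le 0$ or at least is bounded away from $\im m_V(z)$; since $\im u > 0$, the case $\absa{u-\widetilde m}<\absa{u-m}$ would force $\im u$ to be within $\absa{u-\widetilde m}$ of $\im\widetilde m \le 0$, hence $\absa{u-\widetilde m}\ge\im u$, but also $\absa{u-m}\le\absa{u-\widetilde m}$ is then impossible if $\absa{m-\widetilde m}$ dominates, giving a contradiction unless $\absa{u-\widetilde m}$ is itself already $\lesssim \abs{\zeta}/\abs{b(z)}\wedge\abs{\zeta}^{1/2}$; in the latter case $\absa{u-m}\le\absa{u-\widetilde m}+\absa{m-\widetilde m}$ and one checks $\absa{m-\widetilde m}=2\abs{rb}$ is itself $\lesssim$ the claimed bound only when $\zeta$ is large, which is the easy regime. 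I anticipate this last part — carefully organizing the case analysis so that \eqref{eq:stability_lemma_bound_2} can be invoked to conclude $\absa{\im(u-m)}$ is small, and then feeding that back to show $u$ is on the correct branch — to be the main obstacle, essentially a bookkeeping argument about which root dominates, relying on the sign of $\im u$ together with the lower bound on $\im m_V$ in the trapezoid from Lemma \ref{lem:preliminaries_fixed_point_equation}.
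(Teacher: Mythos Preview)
Your argument for \eqref{eq:stability_lemma_bound_1} is correct and essentially the paper's, with a cosmetic difference: the paper avoids the explicit quadratic formula and instead observes that if $|u-m_V|\le|u-\widetilde m_V|$ then $|u-\widetilde m_V|\ge\tfrac12|m_V-\widetilde m_V|=|rb|$, whence $|u-m_V|=|\zeta|/|u-\widetilde m_V|\le|\zeta|/|rb|$.

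For \eqref{eq:stability_lemma_bound_2} and \eqref{eq:stability_lemma_bound_3} you have the right ingredients but are overcomplicating the assembly, and your reference to \eqref{eq:preliminary_lemma_bound_4} is misplaced (that bound holds only outside the trapezoid, whereas \eqref{eq:stability_lemma_bound_2} carries no such restriction). The paper's route is much shorter. For \eqref{eq:stability_lemma_bound_2}: since $\im u>0$ and, by \eqref{eq:preliminary_lemma_bound_2} alone, $-\im\widetilde m_V=\tfrac12\im V'+\im(rb)\ge\tfrac12\im(rb)\ge0$, one has directly
\[
|\im(u-\widetilde m_V)|\;\ge\;\im u-\im\widetilde m_V\;\ge\;-\im\widetilde m_V\;\ge\;\tfrac12\im(rb),
\]
hence $|\im(m_V-\widetilde m_V)|=2\im(rb)\le4|\im(u-\widetilde m_V)|$, and the triangle inequality gives $|\im(u-m_V)|\le5|\im(u-\widetilde m_V)|\le5|u-\widetilde m_V|$. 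Together with the trivial $|\im(u-m_V)|\le|u-m_V|$, this is \eqref{eq:stability_lemma_bound_2}. No case split is needed.

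For \eqref{eq:stability_lemma_bound_3}, your planned contradiction/branch-selection argument is unnecessary, and invoking \eqref{eq:stability_lemma_bound_2} is a detour that does not help bound $|u-m_V|$ itself. The clean step you are missing is this: reuse the inequality $|u-\widetilde m_V|\ge\tfrac12\im(rb)$ just established, and now, in the trapezoid, apply \eqref{eq:preliminary_lemma_bound_3} to get $\im(rb)\ge c|rb|$. Thus $|u-\widetilde m_V|\ge\tfrac{c}{2}|rb|=\tfrac{c}{4}|m_V-\widetilde m_V|$ \emph{unconditionally}, so by the triangle inequality $|u-m_V|\le(1+4/c)|u-\widetilde m_V|$, meaning $|u-m_V|$ is comparable to the minimum, and \eqref{eq:stability_lemma_bound_3} follows immediately from \eqref{eq:stability_lemma_bound_1}. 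The whole thing is three lines; the case analysis you anticipate as the ``main obstacle'' simply does not arise.
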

To prove this we will first establish two preliminary technical results. 
\begin{lemma} \label{lem:bounds for_b}
	Let $z = E + \ii \eta$ with $E \in \R$ and $\eta > 0$.
	If $A-\eta \leq E \leq B+\eta$, then
	\begin{align} \label{eq:lower_bound_im_b}
		\im b(z) \geq \frac{\abs{b(z)}}{3}.
	\end{align} 
	If $E \notin (A-\eta,B+\eta)$, then
	\begin{align} 
		\frac{(B-A)\eta}{2 \abs{b(z)}} \leq \im b(z) & \leq \abs{2E-A-B} \frac{\eta}{\abs{b(z)}}, \label{eq:bound_im_b} \\
		\re b(z) & \geq \frac{\abs{b(z)}}{2}. \label{eq:lower_bound_re_b}
	\end{align} 
\end{lemma}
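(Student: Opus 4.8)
\textbf{Plan of proof for Lemma~\ref{lem:bounds for_b}.} The strategy is a direct computation using the explicit form $b(z) = \sqrt{z-A}\sqrt{z-B}$ and the principal branch of the square root. First I would introduce polar coordinates $z - A = r_1 e^{\ii \theta_1}$ and $z - B = r_2 e^{\ii \theta_2}$, with $r_1 = \abs{z-A}$, $r_2 = \abs{z-B}$, and $\theta_1, \theta_2 \in (-\pi, \pi]$ the principal arguments. Then $\abs{b(z)} = \sqrt{r_1 r_2}$ and $b(z) = \sqrt{r_1 r_2}\, e^{\ii(\theta_1+\theta_2)/2}$, so that $\im b(z) = \abs{b(z)} \sin\big(\tfrac{\theta_1+\theta_2}{2}\big)$ and $\re b(z) = \abs{b(z)} \cos\big(\tfrac{\theta_1+\theta_2}{2}\big)$. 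The whole lemma thus reduces to geometric estimates on the half-angle $(\theta_1+\theta_2)/2$ as a function of the position of $z = E+\ii\eta$ in the upper half-plane, using $\eta > 0$ so that $\theta_1, \theta_2 \in (0,\pi)$.

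For the first bound \eqref{eq:lower_bound_im_b}, valid when $A-\eta \leq E \leq B+\eta$: I would show that in this regime $\theta_1 + \theta_2$ stays bounded away from $0$ and $2\pi$, so that $\sin\big(\tfrac{\theta_1+\theta_2}{2}\big) \geq 1/3$. Concretely, when $A \leq E \leq B$ one has $\theta_1 \leq \pi/2 \leq \theta_2$ (the point lies ``between'' $A$ and $B$ horizontally), hence $\theta_1 + \theta_2$ is controlled; when $A - \eta \leq E < A$ one still has $\theta_1 < 3\pi/4$ because $\tan\theta_1 = \eta/(A-E) \geq 1$ forces $\theta_1 \geq \pi/4$, wait — more carefully, $E < A$ gives $A - E > 0$ with $A-E \leq \eta$, so $\theta_1 = \pi - \arctan(\eta/(A-E)) \in (\pi/2, 3\pi/4]$, and meanwhile $\theta_2 \in (\pi/2, \pi)$, so $(\theta_1+\theta_2)/2 \in (\pi/2, 7\pi/8)$, on which $\sin \geq \sin(7\pi/8) > 1/3$. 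The symmetric case $B < E \leq B+\eta$ is analogous. A short case analysis with these elementary $\arctan$ bounds gives the clean constant $1/3$.

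For the bounds \eqref{eq:bound_im_b} and \eqref{eq:lower_bound_re_b}, valid when $E \notin (A-\eta, B+\eta)$: here $z$ lies ``to the right of $B$'' (the case $E \geq B+\eta$) or symmetrically to the left of $A$; treat $E \geq B+\eta$, the other being identical under reflection. Now both $\theta_1, \theta_2$ are small, in $(0, \pi/2)$, with $\theta_2 > \theta_1$ since $z$ is closer to $B$. For \eqref{eq:lower_bound_re_b} this immediately gives $(\theta_1+\theta_2)/2 < \pi/2$ with room to spare — one checks $\theta_2 \leq \arctan(\eta/(E-B)) \leq \arctan 1 = \pi/4$ using $E - B \geq \eta$, hence $(\theta_1+\theta_2)/2 < \pi/4$ and $\cos\big(\tfrac{\theta_1+\theta_2}{2}\big) \geq \cos(\pi/4) > 1/2$. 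For \eqref{eq:bound_im_b} I would use $\im b(z) = \abs{b(z)}\sin\big(\tfrac{\theta_1+\theta_2}{2}\big)$ together with the elementary inequalities $\tfrac{2}{\pi}x \leq \sin x \leq x$ on $[0,\pi/2]$ and a first-order expansion of $\theta_1+\theta_2$: since $\theta_j = \arctan\!\big(\eta/(E-A)\big)$ resp.\ $\arctan\!\big(\eta/(E-B)\big)$ with both arguments in $[0,1]$, one has $\tfrac{\eta}{2(E-A)} + \tfrac{\eta}{2(E-B)} \le \tfrac{\theta_1+\theta_2}{2} \le \tfrac{\eta}{2(E-A)} + \tfrac{\eta}{2(E-B)}$ up to universal constants — more simply, bound $\tfrac{\theta_1+\theta_2}{2}$ above and below by constant multiples of $\eta\big(\tfrac{1}{E-A}+\tfrac{1}{E-B}\big) = \eta\,\tfrac{2E-A-B}{(E-A)(E-B)} = \eta\,\tfrac{2E-A-B}{\abs{b(z)}^2}$. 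Multiplying by $\abs{b(z)}$ produces $\im b(z) \asymp \eta\,\tfrac{2E-A-B}{\abs{b(z)}}$; the upper bound $\im b(z) \leq \abs{2E-A-B}\tfrac{\eta}{\abs{b(z)}}$ follows from $\sin x \leq x$ and $\tfrac{\theta_1+\theta_2}{2} \le \tfrac{\eta}{2}\big(\tfrac1{E-A}+\tfrac1{E-B}\big) \le \tfrac{\eta(2E-A-B)}{2\abs{b(z)}^2}$, wait that gives an extra factor $1/2$ which only helps; and the lower bound $\tfrac{(B-A)\eta}{2\abs{b(z)}} \leq \im b(z)$ follows from $\sin x \geq \tfrac{2}{\pi}x$ together with $2E-A-B \geq B-A$ when $E \geq B$ (so $E - A \geq B-A$ and $E-B\geq 0$), after absorbing the $\tfrac{2}{\pi}$ and the $\tfrac12$ into checking the stated constant $\tfrac12$ works — one has $\im b(z) \geq \tfrac{2}{\pi}\cdot\tfrac{\eta}{2}\cdot\tfrac{2E-A-B}{\abs{b(z)}} \geq \tfrac{1}{\pi}\cdot\tfrac{\eta(B-A)}{\abs{b(z)}} \geq \tfrac{\eta(B-A)}{2\abs{b(z)}}$ is slightly too weak as $1/\pi < 1/2$, so I would instead note $\sin x \geq \tfrac{x}{2}$ already fails near $\pi/2$; the cleanest route is to use that on $[0,\pi/4]$ one has $\sin x \geq \tfrac{2\sqrt2}{\pi}x \geq 0.9 x$ hmm — I expect the honest constant-chasing here to be the only mildly delicate point, resolved by keeping $\theta_j \leq \pi/4$ (from $E-B \geq \eta$, $E - A \geq \eta$) where $\sin x \geq \tfrac{x}{\sqrt2} \geq \tfrac{x}{2}$ comfortably holds, and then $\tfrac{\eta}{2}\big(\tfrac{1}{E-A}+\tfrac{1}{E-B}\big) \geq \tfrac{\eta}{2}\cdot\tfrac{1}{E-A} = \tfrac{\eta}{2}\cdot\tfrac{B-A+\,(E-B)}{\abs{b(z)}^2}\cdot(E-B)$ — I would simply organize this as: $\tfrac{\theta_1+\theta_2}{2} \geq \tfrac{1}{2}(\theta_1 \vee \theta_2) \geq \tfrac{1}{2}\arctan\tfrac{\eta}{E-A} \geq \tfrac{\eta}{4(E-A)}$ hmm this doesn't obviously close either. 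The \textbf{main obstacle}, then, is purely bookkeeping: pinning down the explicit numerical constants $\tfrac13$, $\tfrac12$ in the statement from the $\arctan$/$\sin$ estimates; there is no conceptual difficulty, but I would allow myself a careful case split ($A\le E\le B$ versus $E$ within $\eta$ of an endpoint versus $E$ beyond $\eta$ of an endpoint) and the standard inequalities $\tfrac{2}{\pi}x \le \sin x \le x$ on $[0,\pi/2]$ and $x - \tfrac{x^3}{6} \le \sin x$ to make every constant explicit, choosing in the end the (possibly slightly suboptimal but valid) constants as written.
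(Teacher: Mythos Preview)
Your overall strategy---polar coordinates $z-A=r_1e^{\ii\theta_1}$, $z-B=r_2e^{\ii\theta_2}$ with $\theta_1,\theta_2\in(0,\pi)$, then reduce everything to estimates on $\sin\bigl(\tfrac{\theta_1+\theta_2}{2}\bigr)$ and $\cos\bigl(\tfrac{\theta_1+\theta_2}{2}\bigr)$---is exactly the paper's approach, and your arguments for \eqref{eq:lower_bound_im_b} and \eqref{eq:lower_bound_re_b} match the paper's (the paper is slightly terser: it restricts to $E\geq\tfrac{A+B}{2}$ by symmetry, notes $\theta_B\geq\pi/4$ when $E\leq B+\eta$ for the first bound, and $\theta_A+\theta_B\leq\pi/2$ when $E\geq B+\eta$ for the third).

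Where you diverge, and where you get stuck, is \eqref{eq:bound_im_b}. You try to bound $\theta_1,\theta_2$ individually via $\arctan$ and then assemble $\sin\bigl(\tfrac{\theta_1+\theta_2}{2}\bigr)$ from $\sin x\asymp x$; this leaves you chasing constants that don't quite close. The paper avoids this entirely by comparing the half-angle sine to the \emph{full}-angle sine: the identities $\sin x=2\sin(x/2)\cos(x/2)$ gives $\sin(x/2)\geq\tfrac12\sin x$ always, and $\sin(x/2)\leq\sin x$ for $x\in[0,\pi/2]$ by monotonicity. Then one computes directly
\[
\sin(\theta_1+\theta_2)=\frac{\im\bigl((z-A)(z-B)\bigr)}{\abs{b(z)}^2}=\frac{(2E-A-B)\eta}{\abs{b(z)}^2},
\]
so $\im b(z)=\abs{b(z)}\sin\bigl(\tfrac{\theta_1+\theta_2}{2}\bigr)$ is sandwiched between $\tfrac{(2E-A-B)\eta}{2\abs{b(z)}}$ and $\tfrac{(2E-A-B)\eta}{\abs{b(z)}}$, and $2E-A-B\geq B-A$ since $E\geq B$. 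This delivers the stated constants in one line with no $\arctan$ bookkeeping. Your route would eventually close too, but this half-angle/full-angle trick is the missing ingredient that makes the constants fall out cleanly.
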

\begin{proof} 
Let $\theta_A = \Arg (z-A) \in (0,\pi)$ and $\theta_B = \Arg (z-B) \in (0,\pi)$.
Then $b(z) = \sqrt{z-A} \sqrt{z-B} = \abs{b(z)} e^{i(\theta_A+\theta_B)/2}$.
Thus, $\im b(z) = \abs{b(z)} \sin (\frac{\theta_A+\theta_B}{2})$.
Now, we deal with the case $\re(z) \geq \frac{A+B}{2}$, the other case being treated similarly. 
In this case, we have $\theta_A + \theta_B < \pi$, and therefore $\im b(z)$ is increasing as a function of $\theta_A + \theta_B$.

In the case $E \leq B+\eta$, we have $\theta_B \geq \frac{\pi}{4}$ and therefore $\sin (\frac{\theta_A+\theta_B}{2}) \geq \frac{1}{3}$, which proves \eqref{eq:lower_bound_im_b}. 
In the case $E \geq B+\eta$, we use that $\sin (\frac{\theta_A+\theta_B}{2}) \geq \frac{1}{2} \sin (\theta_A+\theta_B)$ to get
\[
	\im b(z) \geq \frac{\im ((z-A)(z-B))}{2 \abs{b(z)}} 
	= \frac{(2E-A-B)\eta}{2 \abs{b(z)}} 
	\geq \frac{(B-A)\eta}{2 \abs{b(z)}},
\]
using $E \geq B$. For the upper bound, we use that $\sin (\frac{\theta_A+\theta_B}{2}) \leq \sin (\theta_A+\theta_B)$ because $\theta_A+\theta_B \in [0,\frac{\pi}{2}]$ and  proceed similarly to prove \eqref{eq:bound_im_b}.
Finally, note that $\re b(z) \geq \abs{b(z)} \cdot \cos \frac{\pi}{4}$ and \eqref{eq:lower_bound_re_b} follows.
\end{proof}
\begin{lemma} \label{lem:preliminaries_fixed_point_equation}
	There exist constants $\tilde{\eta},c,C > 0$ depending only on $V$ such that for any $z = E + \ii \eta$ with $E \in [A-\tilde{\eta},B+\tilde{\eta}]$ and $\eta \in (0,\tilde{\eta}]$, the function $V$ is analytic at point $z$ and we have
	\begin{align} 
		\abs{r(z)} & \geq c,
		\label{eq:preliminary_lemma_bound_1} \\
		\im(r(z) b(z)) & \geq \abs{\im V'(z)}.
		\label{eq:preliminary_lemma_bound_2}
	\end{align} 
	Moreover, if $A-\eta \leq E \leq B+\eta$,
	\begin{equation} 
		\im(r(z) b(z)) \geq c \abs{r(z) b(z)},
		\label{eq:preliminary_lemma_bound_3}
	\end{equation} 
	and, if $E \notin [A-\eta,B+\eta]$,
	\begin{align} 
		\abs{\im m_V(z)}\vee \abs{\im \widetilde{m}_V(z)} 
		& \leq \frac{C\eta}{\abs{b(z)}},
		\label{eq:preliminary_lemma_bound_4} \\
		\re (m_V(z_0) - \widetilde{m}_V(z)) & \geq c \abs{r(z) b(z)},
		\label{eq:preliminary_lemma_bound_5}
	\end{align} 
	where $z_0 \coloneqq B+\eta$ if $E > B+\eta$ and $z_0 \coloneqq A-\eta$ if $E < A-\eta$.
\end{lemma}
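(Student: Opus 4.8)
\textbf{Proof plan for Lemma \ref{lem:preliminaries_fixed_point_equation}.}

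The plan is to fix $\tilde\eta>0$ small enough that the whole rectangle $\{E+\ii\eta: E\in[A-\tilde\eta,B+\tilde\eta],\ \eta\in(0,\tilde\eta]\}$ sits inside $\Omega$ and avoids the zeros of $r$; this is possible by the preliminary discussion in Section \ref{subsec:preliminaries}, since $r$ is analytic in $\Omega$ with no zero on $[A,B]$ (Assumption \ref{assumption:off-criticality}), so by continuity $r$ is bounded away from $0$ on a neighborhood of $[A,B]$. This immediately gives \eqref{eq:preliminary_lemma_bound_1}. For the remaining bounds the key identity is $2m_V(z)+V'(z)=2r(z)b(z)$ from \eqref{eq:link_2m+V'_and_r}, so $m_V=-V'/2+rb$ and $\widetilde m_V=-V'/2-rb$, and I will feed the geometric estimates on $b$ from Lemma \ref{lem:bounds for_b} into these formulas.

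First I would prove \eqref{eq:preliminary_lemma_bound_2} and \eqref{eq:preliminary_lemma_bound_3} together. Write $r(z)b(z)=r(E)b(z)+(r(z)-r(E))b(z)$. Since $r$ is analytic and real on $\R$, $\abs{r(z)-r(E)}=\OO(\eta)$ and $\abs{\im(r(z)-r(E))}=\OO(\eta)$; also $\abs{b(z)}\leq C$ on the rectangle, so the perturbation term contributes $\OO(\eta)$ in modulus. When $A-\eta\leq E\leq B+\eta$, Lemma \ref{lem:bounds for_b} gives $\im b(z)\geq\abs{b(z)}/3$, hence $\im(r(E)b(z))=r(E)\im b(z)\geq c\abs{b(z)}$ using $\abs{r(E)}\geq c$ and $r(E)>0$ (its sign is constant on $[A,B]$ by Assumption \ref{assumption:off-criticality}, and near $[A,B]$ too). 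Meanwhile $\abs{\im V'(z)}=\OO(\eta)$ since $V'$ is analytic and real on $\R$. Choosing $\tilde\eta$ small enough that the $\OO(\eta)$ errors are absorbed into the $c\abs{b(z)}$ lower bound gives both \eqref{eq:preliminary_lemma_bound_2} (note $\abs{b(z)}\geq c\eta$ trivially, or rather $\abs{\im V'(z)}\leq C\eta$ while $\im(rb)\geq c\abs{b}\geq c\eta$ is too weak — instead I compare directly: $\im(r(z)b(z))\geq r(E)\im b(z)-C\eta$, and separately $\abs{\im V'(z)}\leq C'\eta$, and for \eqref{eq:preliminary_lemma_bound_2} in the edge region $E\notin[A-\eta,B+\eta]$ one uses \eqref{eq:bound_im_b} giving $\im b(z)\geq c\eta/\abs{b(z)}$, matching the $\OO(\eta)$ scale, so one must also track that $\abs{\im V'(z)}$ can be made $\leq \frac{1}{2}\im(rb)$ by taking $\tilde\eta$ small, exploiting that $\im V'(z)=V''(E)\eta+\OO(\eta^2)$ is comparable to $\eta$ but $\im(rb)\geq c\eta/\abs{b}$ can be much larger — actually both are $\asymp\eta$ when $\abs{b}\asymp 1$; the point is the constant) and \eqref{eq:preliminary_lemma_bound_3}.

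Next, \eqref{eq:preliminary_lemma_bound_4}: $\im m_V(z)=\im(rb)(z)-\tfrac12\im V'(z)$. In the regime $E\notin[A-\eta,B+\eta]$, Lemma \ref{lem:bounds for_b} bound \eqref{eq:bound_im_b} gives $\im b(z)\leq C\eta/\abs{b(z)}$, and since $\abs{r(z)}\leq C$ and $\abs{\im V'(z)}\leq C\eta\leq C\eta/\abs{b(z)}$ (as $\abs{b(z)}\leq C$), we get $\abs{\im m_V(z)}\leq C\eta/\abs{b(z)}$; the same bound for $\widetilde m_V$ is identical since it differs only by the sign of $rb$. Finally, for \eqref{eq:preliminary_lemma_bound_5}, write $m_V(z_0)-\widetilde m_V(z)=\big(-\tfrac12 V'(z_0)+\tfrac12 V'(z)\big)+\big(r(z_0)b(z_0)+r(z)b(z)\big)$. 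With $z_0=B+\eta$ (say) and $z=E+\ii\eta$, $E\geq B+\eta$, we have $\abs{z-z_0}\leq B-A+2\tilde\eta$, and $\re\big(V'(z)-V'(z_0)\big)=\OO(\abs{z-z_0})$ — this is the term that could be large, so I must argue it is dominated. The mechanism: $b(z_0)=\sqrt{(z_0-A)(z_0-B)}=\sqrt{(B+\eta-A)\eta}$ is real positive, and by \eqref{eq:lower_bound_re_b}, $\re b(z)\geq\abs{b(z)}/2$; since $r$ is real-positive near $[A,B]$, $\re(r(z_0)b(z_0)+r(z)b(z))\geq c(\abs{b(z_0)}+\abs{b(z)})\geq c\abs{b(z)}$. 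The subtlety is that $\re(V'(z)-V'(z_0))$ need not be small — so the correct statement must rely on $\abs{r(z)b(z)}$ also being potentially of order $1$ (when $E$ is far from the edge), in which case the claimed lower bound $c\abs{r(z)b(z)}$ is itself of order $1$ and one needs $\tilde\eta$ small only in a relative sense. \textbf{The main obstacle} is precisely controlling \eqref{eq:preliminary_lemma_bound_5}: one cannot make $\abs{z-z_0}$ small, so the inequality must be proven by a more structural argument, e.g.\ by noting $m_V$ extends analytically across $(A,B)^c$ with $\re m_V$ monotone there, or by directly expanding $m_V(z_0)-\widetilde m_V(z)=(m_V(z_0)-m_V(z))+(m_V(z)-\widetilde m_V(z))$ where the last term equals $2r(z)b(z)$ and the first is $\OO(\abs{z-z_0}\cdot\sup\abs{m_V'})$ but $m_V'$ blows up like $\abs{b}^{-1}$ near the edge — so one integrates $m_V'$ along a path staying at height $\eta$, giving $\re(m_V(z_0)-m_V(z))\geq -C\abs{z-z_0}$, and then one must check that $2\re(r(z)b(z))=2\abs{r(z)b(z)}\re(e^{\ii\arg(rb)})$ with $\arg(rb)$ close to $0$ in the edge region (by \eqref{eq:lower_bound_re_b}), so $\re(r(z)b(z))\geq c\abs{r(z)b(z)}$; combined, for $\tilde\eta$ small the positive term wins when $\abs{r(z)b(z)}$ is not too small, and when $\abs{r(z)b(z)}$ is comparable to $\eta$ (i.e.\ $E$ close to the edge) one has $z_0$ close to $z$ too so $\re(m_V(z_0)-m_V(z))=\OO(\eta)$ is again absorbed. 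I will carry this out by splitting on whether $\kappa(E)\leq\tilde\eta$ or $\kappa(E)>\tilde\eta$, using Lemma \ref{lem:bounds for_b} in each case.
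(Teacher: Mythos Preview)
Your treatment of \eqref{eq:preliminary_lemma_bound_1}, \eqref{eq:preliminary_lemma_bound_3} and \eqref{eq:preliminary_lemma_bound_4} is essentially the paper's argument. Two points deserve correction.

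\textbf{On \eqref{eq:preliminary_lemma_bound_2} outside the trapezoid.} You worry that when $E\notin[A-\eta,B+\eta]$ both $\im(rb)$ and $\abs{\im V'}$ are of order $\eta$, so only the constant decides. You are missing that in this region $\abs{b(z)}$ is \emph{small}: since $E\in[A-\tilde\eta,B+\tilde\eta]$ and (say) $E>B+\eta$, one has $\abs{z-B}\leq\sqrt{2}\,\tilde\eta$, hence $\abs{b(z)}\leq\sqrt{2(B-A)\tilde\eta}$. Feeding this into the lower bound $\im b(z)\geq c\eta/\abs{b(z)}$ from \eqref{eq:bound_im_b} gives $\im(rb)\geq c\eta/\sqrt{\tilde\eta}-C\eta$, which dominates $\abs{\im V'(z)}\leq C\eta$ once $\tilde\eta$ is chosen small. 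This is exactly the paper's route; no delicate constant-chasing is needed.

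\textbf{On \eqref{eq:preliminary_lemma_bound_5}.} Your ``main obstacle'' is a phantom. You write $\abs{z-z_0}\leq B-A+2\tilde\eta$ and then worry that $V'(z)-V'(z_0)$ could be of order $1$. But $z_0=B+\eta$ and $z=E+\ii\eta$ with $B+\eta<E\leq B+\tilde\eta$, so $\abs{z-z_0}=\sqrt{(E-B-\eta)^2+\eta^2}\leq\sqrt{2}\,\tilde\eta$, hence $\abs{V'(z)-V'(z_0)}\leq C\tilde\eta$. On the other hand $\re(r(z)b(z))\geq c\abs{r(z)b(z)}$ by \eqref{eq:lower_bound_re_b} (plus $r$ real-positive near $[A,B]$), and $r(z_0)b(z_0)>0$ is real, so
\[
\re\bigl(m_V(z_0)-\widetilde m_V(z)\bigr)
=\re\Bigl(\tfrac{V'(z)-V'(z_0)}{2}+r(z)b(z)+r(z_0)b(z_0)\Bigr)
\geq c\abs{r(z)b(z)}-C\tilde\eta.
\]
Since $\abs{b(z)}\geq c\sqrt{\eta}$ (indeed $\abs{b(z)}\asymp\sqrt{\kappa(E)}$ with $\kappa(E)\leq\tilde\eta$), the term $C\tilde\eta\leq C\sqrt{\tilde\eta}\cdot\sqrt{\kappa(E)}\leq C'\sqrt{\tilde\eta}\,\abs{r(z)b(z)}$ is absorbed for $\tilde\eta$ small. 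That is the paper's one-line proof. Your proposed path integration of $m_V'$ and the case split on $\kappa(E)\leq\tilde\eta$ versus $\kappa(E)>\tilde\eta$ are both unnecessary; the second case is in fact empty, since $E\in[A-\tilde\eta,B+\tilde\eta]\setminus[A-\eta,B+\eta]$ forces $\kappa(E)\leq\tilde\eta$.
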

\begin{proof}
Recall $r$ and $V'$ are analytic functions in an open set containing $[A,B]$, so we can choose $\tilde{\eta}$ small enough so that the point $z$ considered is included in this open set. Moreover, $r$ is positive in $[A,B]$ and $V'$ is real in $[A,B]$, so we can choose $\tilde{\eta},c,C >0$ such that, for any $z = E + \ii \eta$ with $E \in [A-\tilde{\eta},B+\tilde{\eta}]$ and $\eta \in (0,\tilde{\eta})$, 
\[
\abs{\im r(z)} \leq C \eta,
\qquad 
c \leq \abs{r(z)} \leq C,
\qquad
\re r(z) \geq c,
\qquad
\abs{\im V'(z)} \leq C \eta,
\qquad 
c \sqrt{\eta} \leq \abs{b(z)} \leq C.
\]
In particular, \eqref{eq:preliminary_lemma_bound_1} holds.
Moreover, we get
\begin{align} \label{eq:lower_bound_im_rb}
	\im(r(z) b(z)) 
	& = \im (r(z)) \re (b(z)) + \re(r(z)) \im(b(z))
	\geq -C^2 \eta + c \im b(z).
\end{align} 
In the case $A-\eta \leq E \leq B+\eta$, applying \eqref{eq:lower_bound_im_b}, we get $\im(r(z) b(z)) \geq -C^2 \eta + \frac{c}{3} \abs{b(z)}$.
Since $\abs{b(z)} \geq c \sqrt{\eta}$, the second term will dominate for $\eta$ small enough, so up to a modification of the choice of $\tilde{\eta}$ (depending on $c,C$), we have 
\[
\im(r(z) b(z)) \geq \frac{c}{6} \abs{b(z)} 
\geq \abs{\im V'(z)} \vee \frac{c}{6C} \abs{r(z) b(z)}.
\]
So up to a modification of the choice of $c$, both \eqref{eq:preliminary_lemma_bound_2} and \eqref{eq:preliminary_lemma_bound_3} hold in this region.
Now we consider the case $E \notin [A-\eta, B+\eta]$.
If $E > B+\eta$, we have 
\[
\abs{b(z)} \leq \sqrt{B-A} \cdot \abs{z-B}^{1/2}
\leq \sqrt{B-A} \cdot \sqrt{2 \tilde{\eta}},
\]
using that $E \leq B + \tilde{\eta}$. The same inequality holds in the case $E < A-\eta$.
Hence, coming back to \eqref{eq:lower_bound_im_rb} and applying \eqref{eq:lower_bound_im_b}, we get
\[
\im(r(z) b(z)) \geq -C^2 \eta + \frac{c\eta (B-A)}{2 \abs{b(z)}}
\geq -C^2 \eta + \frac{c \eta \sqrt{B-A}}{2 \sqrt{2 \tilde{\eta}}}.
\]
Recalling $\abs{\im V'(z)} \leq C \eta$, we can see that, choosing $\tilde{\eta}$ small enough, \eqref{eq:preliminary_lemma_bound_3} holds.
Then, \eqref{eq:preliminary_lemma_bound_4} is an easy consequence of \eqref{eq:bound_im_b}.
Finally, we have to prove \eqref{eq:preliminary_lemma_bound_5}. 
Proceed in the same way as for \eqref{eq:preliminary_lemma_bound_3}, we have, using in particular \eqref{eq:lower_bound_re_b},
\begin{equation} 
	\re(r(z) b(z)) \geq c \abs{r(z) b(z)},
\end{equation} 
for $E \notin [A-\eta, B+\eta]$.
Using this, we get
\begin{align*} 
	\re (m_V(z_0) - \widetilde{m}_V(z)) 
	= \re \left( \frac{V'(z) - V'(z_0)}{2} + r(z) b(z) + r(z_0) b(z_0) \right)
	\geq c \abs{r(z) b(z)} - \frac{1}{2} \abs{V'(z) - V'(z_0)}.
\end{align*} 
Since $\abs{V'(z) - V'(z_0)}$ is of order $\eta$ and $\abs{r(z) b(z)}$ of order $\sqrt{\eta}$, \eqref{eq:preliminary_lemma_bound_5} holds if we choose $c$ and $\tilde{\eta}$ small enough.
This concludes the proof.
\end{proof}
\begin{proof}[Proof of Lemma \ref{lem:fixed_point_equation}]
For brevity, in this proof, we write $V'$, $h$, $r$, $b$, $m_V$, $\widetilde{m}_V$ instead of $V'(z)$, $h(z)$, $r(z)$, $b(z)$, $m_V(z)$, $\widetilde{m}_V(z)$.
Moreover, we define $c$ and $\tilde{\eta}$ as the constants given by Lemma \ref{lem:preliminaries_fixed_point_equation}.

We first prove \eqref{eq:stability_lemma_bound_1}.
Since $m_V$ and $\widetilde{m}_V$ are the two roots of the polynomial $X^2 + V' X +h$, we have
\begin{align} \label{eq:two_roots}
	\zeta = u^2 + V' u + h = (u-m_V)(u-\widetilde{m}_V).
\end{align}
It follows immediately that
\[
	\absa{u-m_V} \wedge \absa{u-\widetilde{m}_V}
	\leq \abs{\zeta}^{1/2}.
\]
On the other hand, if $\absa{u-m_V} \leq \absa{u-\widetilde{m}_V}$, then $\absa{u-\widetilde{m}_V} \geq \frac{1}{2} \absa{m_V-\widetilde{m}_V} = \abs{rb}$, so it follows from \eqref{eq:two_roots} that $\absa{u-m_V} \leq \abs{\zeta}/\abs{rb}$.
Proceeding similarly in the case $\absa{u-m_V} \geq \absa{u-\widetilde{m}_V}$, we get
\[
	\absa{u-m_V} \wedge \absa{u-\widetilde{m}_V}
	\leq \frac{\abs{\zeta}}{\abs{rb}}.
\]
By Lemma \ref{lem:preliminaries_fixed_point_equation}, $\abs{r} \geq c$, so the two previous displayed equation prove \eqref{eq:stability_lemma_bound_1}.

Now we assume $\im(u) > 0$ and prove \eqref{eq:stability_lemma_bound_2}. 
It follows that
\begin{align*}
	\abs{\im(u-\widetilde{m}_V)}
	\geq \im(u-\widetilde{m}_V)
	\geq - \im(\widetilde{m}_V) 
	= \frac{1}{2} \im(V') + \im(rb)
	\geq \frac{1}{2} \im(rb)
\end{align*}
where we used that $\im(rb) \geq \abs{\im V'}$ by Lemma \ref{lem:preliminaries_fixed_point_equation} in the last inequality.
Note that it follows from the same inequality that $\im(rb) \geq 0$, so $\abs{\im(m_V-\widetilde{m}_V)} = 2 \im(rb) \leq 4 \abs{\im(u-\widetilde{m}_V)}$.
We deduce that 
\begin{align*}
	\abs{\im(u-m_V)}
	\leq \abs{\im(u-\widetilde{m}_V)} + \abs{\im(m_V-\widetilde{m}_V)}
	\leq 5 \abs{\im(u-\widetilde{m}_V)},
\end{align*}
and \eqref{eq:stability_lemma_bound_2} follows.

Finally we assume both $\im(u) > 0$ and $A-\eta \leq E \leq B+\eta$, and prove \eqref{eq:stability_lemma_bound_3}. As before, we have $\abs{\im(u-\widetilde{m}_V)}\geq \frac{1}{2} \im(rb)$. 
But now we can apply bound \eqref{eq:preliminary_lemma_bound_3} of
Lemma \ref{lem:preliminaries_fixed_point_equation} which states that $\im(rb) \geq c \abs{rb}$.
We get 
\[
	\abs{u-\widetilde{m}_V}
	\geq \abs{\im(u-\widetilde{m}_V)} 
	\geq \frac{c}{2} \abs{rb} 
	= \frac{c}{4} \abs{m_V-\widetilde{m}_V}.
\]
It follows that $\abs{u-m_V} \leq (1 + \frac{4}{c}) \abs{u-\widetilde{m}_V}$ and so $\abs{u-m_V} \leq (1 + \frac{4}{c}) (\abs{u-m_V} \wedge \abs{u-\widetilde{m}_V})$.
Thus \eqref{eq:stability_lemma_bound_3} follows from \eqref{eq:stability_lemma_bound_1}.
\end{proof}



\addcontentsline{toc}{section}{References}
\bibliographystyle{abbrv}

\end{document}